\documentclass[12pt]{amsart}

\setlength{\textheight}{22cm}
\setlength{\textwidth}{17cm}

\calclayout
\usepackage[utf8]{inputenc}

\usepackage{amsmath,amsthm,amssymb}
\usepackage{mathrsfs}

\usepackage{colonequals}
\usepackage{enumitem}
\usepackage{graphicx}
\usepackage{subcaption}
\raggedbottom

\usepackage[unicode,colorlinks=true,allcolors=blue,pdftitle={Vortex Lines Interaction in 3D Ginzburg--Landau}]{hyperref}

\usepackage[initials,nobysame,alphabetic,msc-links,backrefs]{amsrefs}
\usepackage{color}

\newtheorem{theorem}{Theorem}[section]
\newtheorem{theoremm}{Theorem}[section]

\newtheorem{proposition}{Proposition}[section]
\newtheorem{propositionn}{Proposition}[section]

\newtheorem{lemma}{Lemma}[section]
\newtheorem{definition}{Definition}[section]
\newtheorem{remark}{Remark}[section]
\newtheorem{condition}{Condition}[section]

\numberwithin{equation}{section}

\DeclareMathOperator{\diver}{div}
\DeclareMathOperator{\curl}{curl}
\DeclareMathOperator{\R}{R}

\newcommand{\distg}{\mathrm{dist}_{g^\perp}}

\newcommand{\de}{\delta}

\newcommand{\ep}{\varepsilon}
\newcommand{\tep}{{\tilde \varepsilon}}
\newcommand{\lep}{{|\log\ep|}}
\newcommand{\ga}{\Gamma}
\newcommand{\tga}{{\widetilde \Gamma}}

\newcommand{\nab}{\nabla}

\renewcommand{\L}{\mathbb L}
\newcommand{\N}{\mathbb N}
\newcommand{\RR}{\mathbb R}
\newcommand{\C}{\mathbb C}

\newcommand{\CC}{\mathcal C}
\newcommand{\F}{\mathcal F}

\newcommand{\NN}{\mathcal N}

\newcommand{\T}{\mathcal T}

\newcommand{\B}{\mathfrak B}

\renewcommand{\CC}{\mathscr C}

\newcommand{\e}{\mathbf e}
\newcommand{\dl}{\varrho}

\newcommand{\degone}{{f_0}}

\renewcommand{\deg}{\mathrm{deg}}
\newcommand{\dist}{\mathrm{dist}}
\newcommand{\ex}{\mathrm{ex}}
\newcommand{\he}{h_\ex}

\newcommand{\loc}{\mathrm{loc}}
\newcommand{\good}{\mathrm{good}}
\newcommand{\bad}{\mathrm{bad}}

\newcommand{\vu}{{\vec u}}
\newcommand{\vv}{{\vec v}}
\renewcommand{\(}{\left(}
\renewcommand{\)}{\right)}

\renewcommand{\u}{\mathbf{u}}
\newcommand{\A}{\mathbf{A}}

\newcommand{\fperp}{{F_\ep^\perp}}
\newcommand{\fepperp}{{F_\ep^\perp}}
\newcommand{\fpar}{{F_\ep^z}}
\newcommand{\eperp}{{e_\ep^\perp}}
\newcommand{\epar}{{e_\ep^z}}
\newcommand{\vol}{{\mathrm{vol}}}

\newcommand{\gperp}{{g^\perp}}
\newcommand{\gz}{{g_{33}}}
\newcommand{\tg}{{\tilde g}}
\newcommand{\tgz}{{\tg_{33}}}

\def\pr#1{\left\langle #1\right\rangle}
\def\pperp#1{\left\langle #1\right\rangle^{\perp}}

\def\prem#1{\left\langle #1\right\rangle^{\mathrm{2D}}}
\def\prll#1{\left\langle #1\right\rangle^{\parallel}}

\def\hal{{\frac12}}

\def\qell{{Q_\ell}}
\def\qelln{{Q_{\ell_n}}}

\def\Lb{{\mathscr L_B}}
\def\Ll{{\mathscr L_L}}
\def\Lag{{\mathscr L}}
\def\gell{{\tilde g}}

\def\bzn{{\underline z_n}}

\def\gan{{\ga_n}}
\def\vun{{\vu_n}}
\def\wun{{{\vec w}_n}}
\def\eln{{\ell_n}}
\def\cyl{{\mathscr C}}
\def\Cd{{\cyl_\delta}}
\def\Ud{{U_\delta}}
\def\ellzero{{L_0}}
\def\hci{{H_{c_1}}}

\newcommand{\fep}{{F_\ep}}
\newcommand{\tfep}{{\tilde F_\ep}}

\def\Xint#1{\mathchoice
{\XXint\displaystyle\textstyle{#1}}%
{\XXint\textstyle\scriptstyle{#1}}%
{\XXint\scriptstyle\scriptscriptstyle{#1}}%
{\XXint\scriptscriptstyle\scriptscriptstyle{#1}}%
\!\int}
\def\XXint#1#2#3{{\setbox0=\hbox{$#1{#2#3}{\int}$ }
\vcenter{\hbox{$#2#3$ }}\kern-.6\wd0}}

\def\dashint{\Xint-}

\usepackage[foot]{amsaddr}

\title[Vortex Lines Interaction in 3D Ginzburg--Landau]{Vortex lines interaction in the three-dimensional magnetic Ginzburg--Landau model}

\author[Carlos Rom\'{a}n]{Carlos Rom\'{a}n}
\address[Carlos Rom\'{a}n]{Facultad de Matem\'aticas e Instituto de Ingenier\'ia Matem\'atica y Computacional, Pontificia Universidad Cat\'olica de Chile, Vicu\~na Mackenna 4860, 7820436 Macul, Santiago, Chile}
\email{carlos.roman@uc.cl}

\author[Etienne Sandier]{Etienne Sandier}
\address[Etienne Sandier]{LAMA - CNRS UMR 8050, Universit\'e Paris-Est Cr\'eteil, 61 Avenue du G\'en\'eral de Gaulle, 94010 Cr\'eteil, France}
\email{sandier@u-pec.fr}

\author[Sylvia Serfaty]{Sylvia Serfaty}
\address[Sylvia Serfaty]{Courant Institute of Mathematical Sciences, New York University, 251 Mercer St., New York, NY 10012, United States\\ and Sorbonne Universit\'e,
 CNRS, Universit\'e de Paris,  Laboratoire Jacques-Louis Lions (LJLL), F-75005 Paris }
\email{serfaty@cims.nyu.edu}

\date{\today}

\begin{document}
\begin{abstract}We complete our study of the three dimensional Ginzburg--Landau functional with magnetic field,  in the asymptotic regime of a small inverse Ginzburg--Landau  parameter $\varepsilon$, and near the first critical field $H_{c_1}$ for which the first vortex filaments appear in energy minimizers.
   Under a nondegeneracy condition, we show a next order asymptotic expansion of $H_{c_1}$ as $\varepsilon \to 0$, and exhibit a sequence of transitions, with 
    vortex lines appearing one by one as the intensity of the applied magnetic field is increased: passing $H_{c_1}$ there is one vortex, then  increasing $H_{c_1}$ by an increment of order  $\log |\log\varepsilon|$ a second vortex line appears, etc. These vortex lines accumulate near a special curve $\Gamma_0$, solution to
    an isoflux problem. We derive a next order energy that the vortex lines must minimize in the asymptotic limit, after a suitable horizontal blow-up around $\Gamma_0$. This energy is  the sum of  terms where penalizations of  the length of the lines, logarithmic repulsion between the lines and magnetic confinement near $\Gamma_0$ compete. This elucidates the shape of vortex lines in superconductors.
\end{abstract}
\maketitle
\noindent
{\bf Keywords:} Ginzburg--Landau, vortices, vortex filaments, first critical field, phase transitions,  Abelian Higgs model, vortex interaction\\
{\bf MSC:}  35Q56, 82D55, 35J50, 49K10.

\section{Introduction}
This work is the conclusion of our study of the emergence of vortex lines in the three-dimensional full Ginzburg--Landau model from physics, i.e.~the model with gauge and with external magnetic field. The Ginzburg--Landau model is important as the simplest gauge theory, where  the $\mathbb{U}(1)$-gauge is Abelian (it is also known as the Abelian Higgs model) and where topological defects in the form of vortices arise. It is also one of the most famous models of condensed matter physics,  the widely used and studied model for superconductivity,  and also very similar to models for superfluids and Bose--Einstein condensates \cites{SSTS,DeG,Tin,TilTil}.

In the two-dimensional  version of the model, vortices are point-like topological defects, arising when the applied magnetic field is large enough, as superconductivity  defects  in which the magnetic flux can penetrate. Vortices interact logarithmically, the magnetic field acting as an effective  confinement potential. As a result of this competition between repulsion and confinement, in energy minimizers vortices form very interesting patterns, including a famous triangular lattice pattern called in physics Abrikosov lattice. 
The program carried out in particular  by the last two authors, see  \cite{SanSerBook}, culminating with \cite{SS1}, was to mathematically analyze the formation of these vortices and derive effective interaction energies that the limiting vortex pattern must minimize in a certain asymptotic limit, thus relating the minimization of the Ginzburg--Landau energy to discrete minimization problems, some of them of number theoretic nature.

Our main goal here was to accomplish the same in three dimensions, deriving effective interaction energies for vortex lines in three dimensions in order to precisely  understand and describe the vortex patterns  in superconductors. This is significantly more delicate in three dimensions than in two, since vortex lines carry much more geometry than vortex points. In particular, curvature effects and regularity questions, requiring the use of fine geometric measure theoretic tools coupled with differential forms, appear. In the three dimensional situation, the energetic cost of a vortex is  the balance between  its length cost,  the logarithmic interaction with other vortices, and the confinement effect of the magnetic field. While the length cost effect had been analyzed in  a simplified setting in the mathematical literature  \cites{Riv,LinRiv2,San,BetBreOrl}, the logarithmic repulsion effect analyzed, again in a simplified setting, more recently in \cite{ConJer}, our paper is the first to handle all three effects at the same time in the completely realistic physical setting of the full gauged energy. In particular, it settles questions raised since the turn of the century (for instance \cites{Riv,AftRiv}) about whether vortices  in three dimensional superconductors will be asymptotically straight or curved.

\subsection{Description of the model}
Let us now get into the details of the Ginzburg--Landau model, whose physical background  can be found in  the standard texts \cites{SSTS,DeG,Tin}. 

After nondimensionalization of the physical constants, one may reduce to studying  the energy functional
\begin{equation}
   \label{modgl}
   GL_\ep (u, A)\colonequals \frac12 \int_\Omega |\nab_A u|^2 + \frac1{2\ep^2}(1-|u|^2)^2+ \hal \int_{\RR^3} |H-H_{\ex}|^2.\end{equation}

Here $\Omega $ represents the material sample, we assume it to be a bounded simply connected subset of $\RR^3$ with regular boundary.
The function $u : \Omega \to \C$ is the {\it order parameter}, representing the local state of the material in this macroscopic theory ($|u|^2\le 1$ indicates the local density of superconducting electrons), while the vector-field $A: \RR^3 \to \RR^3$ is the gauge of the magnetic field, and the magnetic field induced inside the sample and outside is $H\colonequals \nab \times A$, as is standard in electromagnetism. The covariant derivative $\nab_A$ means $\nab - i A$. The vector field $H_{\ex}$ here represents an applied magnetic field and we will assume that $H_{\ex}= h_{\ex} H_{0,\ex}$ where $H_{0,\ex} $ is a fixed vector field and $h_{\ex}$ is a real parameter, representing an intensity that can be tuned.
Finally, the parameter $\ep>0$ is the inverse of the so-called Ginzburg--Landau parameter $\kappa$, a dimensionless ratio of all material constants,  that depends only on the type of material. In our mathematical analysis of the model, we will study the asymptotics of $\ep \to 0$, also called ``London limit" in physics, which corresponds to extreme type-II superconductors (type-II superconductors are those with large $\kappa$). This is the limiting where the {\it correlation length} is much smaller than the {\it penetration depth} of the magnetic field, effectively this means that vortex cores are very small.
The Ginzburg--Landau theory is an effective Landau theory, describing the local state at the mesoscale level by the order parameter $u$, but it can be formally derived as a limit of the microscopic quantum Bardeen--Cooper--Schrieffer theory \cite{BCS} near the critical temperature. This has been partially accomplished rigorously in \cite{FHSS}.

The Ginzburg--Landau model is a $\mathbb U(1)$-gauge theory, in which  all
the meaningful physical quantities are invariant under the gauge transformations
$u \to  u e^{i\Phi}$, $ A \to A + \nab \Phi$ where
$\Phi$  is any regular enough real-valued function. The Ginzburg--Landau energy and its
associated free energy
\begin{equation} \label{freee}F_\ep(u, A): = \frac12 \int_\Omega |\nab_A u|^2 + \frac1{2\ep^2}(1-|u|^2)^2+ \hal \int_{\RR^3} |H|^2
\end{equation} are gauge-invariant, as well as the density of superconducting Cooper pairs $|u|^
   2$, the induced magnetic field $H$, and the vorticity defined below.

Throughout this paper, we assume that $H_\ex\in L^2_{\loc} (\RR^3, \RR^3)$ is such that $\diver H_\ex=0$ in $\RR^3$. Consequently, there exists a vector potential $A_\ex \in H^1_{\loc}(\RR^3,\RR^3)$ such that 
$$\curl A_\ex= H_\ex\quad \text{and} \quad \diver A_\ex=0 \ \text{in} \ \RR^3.$$
The natural space for minimizing $GL_\ep$ in 3D is 
$H^1(\Omega, \C) \times [A_\ex+ H_{\curl}]$ where 
$$H_{\curl}\colonequals \{ A \in H^1_{\loc} (\RR^3, \RR^3) | \curl A \in L^2(\RR^3,\RR^3)\};$$ see \cite{Rom2}. 
Critical points $(u,A)$ of $GL_\ep$ in this space satisfy the Ginzburg--Landau equations 
	\begin{equation}\label{GLeq}
		\left\lbrace 
		\begin{array}{rcll}
			-(\nabla_A)^2u&=&\displaystyle\frac1{\ep^2}u(1-|u|^2)&\mathrm{in}\ \Omega\\
			\curl(H-H_{\ex})&=&(iu,\nabla_A u)\chi_\Omega &\mathrm{in}\ \RR^3\\
			\nabla_A u\cdot \nu&=&0&\mathrm{on}\ \partial \Omega\\
			\lbrack H-H_{\ex}\rbrack\times\nu&=&0&\mathrm{on}\ \partial\Omega, 
		\end{array}
		\right.
	\end{equation}
where $\chi_\Omega$ is the characteristic function of $\Omega$, $[\, \cdot \, ]$ denotes the jump across $\partial\Omega$, $\nu$ is the outer unit normal to the boundary,  $\nabla_A u\cdot \nu=\sum_{j=1}^3 (\partial_ju-iA_ju)\nu_j$, and the covariant Laplacian $(\nabla_A)^2$ is defined by
$$
(\nabla_A)^2u=(\diver -iA\cdot )\nabla_Au.
$$

\smallskip
We also note that rotating superfluids and rotating Bose--Einstein condensates can be described through a very similar Gross--Pitaevskii model, which no longer contains the gauge $A$, and where the  applied field $ H_\ex$ is replaced by a rotation vector whose intensity can be tuned. In the regime of low enough rotation  these models can be treated  with the same techniques as those developed for Ginzburg--Landau, see  \cites{Ser3,aftalionjerrard,aftalionbook,BalJerOrlSon2,TilTil} and references therein.

Type-II superconductors are known to exhibit  several phase transitions as a function of the intensity of the applied field. The one we focus on is  the onset of  vortex-lines.   Mathematically, these  are zeroes of the complex-valued order parameter function $u$ around which $u$ has a nontrivial winding number or degree (the rotation number of its phase). 
More precisely, it is   established that  there exists a first critical field $\hci$ of order $\lep$, such that if the intensity of the applied field $h_\ex$ is below $\hci$ then the material is superconducting and $|u|$ is roughly constant equal to 1, while when the intensity exceeds $\hci$,   vortex filaments appear in the sample. The order parameter  $u$ vanishes at the core of each vortex tube and has a nonzero winding number around it.    Rigorously, this was first derived by   Alama--Bronsard--Montero \cite{AlaBroMon} in the setting of a ball.  Baldo--Jerrard--Orlandi--Soner \cites{BalJerOrlSon1,BalJerOrlSon2} derived a mean-field model for many vortices and the main order of the first critical field in the general case, and \cite{Rom2} gave  a more precise expansion of $\hci$ in the general case, and moreover  proved that global minimizers have no vortices below $\hci$, while they do above this value.
One may also point out the paper \cite{JerMonSte} that  constructs locally minimizing solutions with vortices. 

In these papers, the occurrence of the  first vortex line(s) and its precise location in the sample is connected to what we named an {\it isoflux problem}, which we studied for its own sake in \cite{RSS1} and which is described below. Moreover,  we showed in that paper that if the intensity of the magnetic field $h_\ex$ does not exceed $\hci$ by more than $K\log \lep$, then the vorticity remains bounded independently of $\ep$, i.e.~informally the total length of the curves remains bounded, and we expect only a finite number of curves. From there, it is however quite difficult to extract the optimal number of curves or the individual curve behavior. In particular the coercivity of the energy with respect to the curve locations is quite delicate to identify, as we will see.

On the other hand,  as mentioned above, the two-dimensional version of the gauged Ginzburg--Landau model, in which vortices are essentially points instead of lines, was studied in details in the mathematics literature. In particular,  in  \cites{Ser,Ser2,SanSer1,SanSer2} (see \cites{SanSerBook} for a recap) the  first critical field was precisely computed, and it was shown that under some nondegeneracy condition the vortices appear one by one near a distinguished point $p$  of the bounded domain, as the magnetic field is increased: at $\hci$ one vortex appears, then when the external field is increased by an order $ \log \lep$ a second vortex appears, then when the external field is increased by an additional order $ \log \lep$ a third vortex appears, etc. Moreover, the vortices were shown to minimize, in the limit $\ep \to 0$ and after a suitable  rescaling around $p$, an effective``renormalized" interaction energy (thus called by analogy with the renormalized energy $W$  of \cite{BetBreHel}) of the form
\begin{equation*}
   - \sum_{i \neq j} \log |x_i-x_j|+ N \sum_{i=1}^N Q(x_i),\end{equation*} where $Q$ is a positive quadratic function,
which results from the logarithmic vortex repulsion and the magnetic confinement effect, see in particular \cite{SanSerBook}*{Chapter 9}.
Specific mathematical techniques for analyzing vortices in the two-dimensional model  had been  first developed for the simplified model without magnetic field, which is obtained by setting $A\equiv 0$ and $H \equiv 0$ in the two-dimensional version of \eqref{freee}, in particular in \cites{BetBreHel,San0,Jer,JerSon} and then extended to the situation with gauge in \cites{BetRiv,Ser,Ser2,SanSer1,SanSer2}.

As alluded to above, vortices in the context of  the same simplified model without magnetic field but in three dimensions were also  analyzed in  the mathematical  literature in  \cites{Riv,LinRiv1,LinRiv2,San,BetBreOrl}.
These works demonstrated that, in the absence of magnetic field effects, vortex lines   $\Gamma$ carry a leading order  energy proportional to their length, $\pi |\ga|\lep$,  while their interaction effect is a lower order effect (of order 1). Thus, in order to minimize the energy,  vortices (which in that setting only  occur because of an imposed Dirichlet boundary condition) should be straight lines, and their interaction becomes a negligible effect in the $\ep \to 0$ limit.
It was thus not clear whether magnetic effects could suffice to curve the vortices. A formal derivation was attempted in \cite{AftRiv} in the context of Bose--Einstein condensates, proposing an effective energy where length effects and interaction effects compete.

More recently, in \cite{ConJer},  the authors found a setting where the length and the interaction compete at next order: they study the same simplified Ginzburg--Landau model without gauge in a cylindrical domain, choose the Dirichlet boundary condition to have  a degree $N$ vortex at one point (say the North pole) of the boundary and a degree $-N$ at another point (say the South pole). The energy minimizers must  then have $N$ vortex filaments which connect the two poles, moreover to minimize the leading order energy these should all be nearly parallel and close to vertical straight lines. Since the vortices  repel each other logarithmically, these lines curve a little bit when leaving the poles, in order to separate by an optimal  distance shown  to be $1/\sqrt{\lep}$. When rescaling horizontally at that lengthscale, one sees well separated vortex lines with competition between the linearization of the length and the logarithmic interaction. The authors are able to extract an effective limiting energy
\begin{equation}\label{toda}\pi \int_0^L \sum_{i=1}^N \hal |u'_i(z)|^2-\sum_{i\neq j} \log |u_i(z)-u_j(z)|dz,\end{equation} where $z :(0,L)\mapsto (u_i(z),z)$ represent the rescaled curves.
 The critical points of this energy happen to solve a
``Toda lattice" ODE system. In \cite{DavDelMedRod}, solutions of the  Ginzburg--Landau equations (without gauge) and with vortex helices which are critical points of \eqref{toda} were constructed. 
 
This setting is however, a little bit artificial due to this particular boundary condition. 
In addition, in  all the problems without magnetic gauge, the number of vortex lines ends up automatically bounded as a result of enforcing a Dirichlet boundary condition with finite vorticity. This significantly simplifies the analysis, and as in the two-dimensional case, we need to deal with a more realistic  situation  where the number of vortices may a priori be unbounded, for this we rely on \cite{RSS1}, itself relying on \cite{Rom}.  

What we do here is derive the first interaction energy where length, interaction and magnetic effects compete, in the setting of the full physical model with gauge.
In addition, we do not restrict to geometries where the filaments are almost straight. We consider general geometries and magnetic fields that can lead to the optimal location of the first vortex as $h_\ex$ passes $\hci$ -- the solution to the isoflux problem --  to be a curved line, called $\ga_0$. The next vortices  obtained by increasing slightly $h_\ex$
will be nearly parallel to $\ga_0$, hence to each other, leading to  a curved version of the situation of \cite{ConJer}. However, we have to work in coordinates aligned with $\ga_0$, which turns out to be equivalent to studying Ginzburg--Landau functionals on a manifold.  In addition, technical difficulties will arise near the boundary, at the endpoints of the vortex filaments, where these may diverge away from $\ga_0$ to meet the boundary orthogonally, in contrast with the almost parallel setup of \cite{ConJer} where all the curves meet at their (fixed) endpoints. 
 The problem thus combines all the possible technical difficulties of vortex analysis in three dimensions, in particular, dealing with a priori unbounded numbers of vortices, dealing with
estimates on manifolds, and  dealing with nonflat boundaries.
We will make intensive use of the toolbox assembled in \cite{Rom} for energy bounds in three dimensions, as well as various methods for obtaining two-dimensional estimates \cites{SanSerBook,almeidabethuel}, to which we are eventually able to  reduce modulo appropriate slicing. We also provide a completely new upper bound construction, based on the Biot-Savart law, approximating the optimal Ginzburg--Landau energy for configurations with vorticity carried by prescribed curves. This is the first upper bound construction applicable to general curves, to be compared with prior constructions in \cites{AlbBalOrl,MonSteZie,ConJer} which all involve almost straight vortex lines.

We will give further detail on the proof techniques after the statement of the main theorem.

Before stating the main theorem, we need to introduce various notions and notation.

\subsection{Energy splitting}

The Ginzburg--Landau model  admits a unique state, modulo gauge transformations,  that we will call ``Meissner state'' in reference to the Meissner effect in physics, i.e.~the complete repulsion of the magnetic field by the superconductor when the superconducting density  saturates at $|u|=1$ with no vortices. It is  obtained by minimizing   $GL_\ep(u,A)$ under the constraint $|u| = 1$, so that in particular it is independent of $\ep$.  In the  gauge  where $\diver A = 0$, this state is of the form
\begin{equation*}
   (e^{ih_\ex\phi_0},h_\ex A_0),
\end{equation*}
where $\phi_0$, $A_0$ depend only on $\Omega$ and $H_{0,\ex}$, and was  first identified in \cite{Rom2}.  
 It is not a true critical point of \eqref{modgl} (or true solution of the associated Euler--Lagrange equations \eqref{GLeq}, but is a good  approximation of one as $\ep \to 0$. 
The energy of this state is easily seen to be proportional to $h_\ex^2$, we write
\begin{equation*}
   GL_\ep(e^{ih_\ex\phi_0}, h_{\ex}A_0)=: h_{\ex}^2 J_0.
\end{equation*}
Closely related is a magnetic field $B_0$ constructed in \cite{Rom2}, whose definition will be recalled later.

The {\it superconducting current} of a pair $(u,A)\in H^1(\Omega,\C)\times H^1(\Omega,\RR^3)$ is defined as the $1$-form
\begin{equation*}
   j(u,A)=(iu,d_A u)=\sum_{k=1}^3 (iu,\partial_k u-iA_ku)dx_k
\end{equation*}
and  the gauge-invariant  {\it vorticity} $\mu(u,A)$ of a configuration $(u,A)$ as
\begin{equation*}
   \mu(u,A)=dj(u,A)+dA.
\end{equation*}
Thus $\mu(u,A)$ is an exact $2$-form in $\Omega$. 
It can also be seen as a $1$-dimensional current, which is defined through its action on $1$-forms by the relation
$$
   \mu(u,A)(\phi)=\int_\Omega \mu(u,A)\wedge \phi.
$$
The vector field corresponding to $\mu(u,A)$ (i.e. the $J(u,A)$ such that $\mu(u,A)\wedge \phi = \phi(J(u,A))\,dV$ where $dV$ is the Euclidean volume form, is at the same time a gauge-invariant analogue of twice the Jacobian determinant, see for instance \cite{JerSon}, and a three-dimensional analogue of the gauge-invariant vorticity of \cite{SanSerBook}.

The vorticity $\mu(u, A)$ is concentrated in the vortices and, in the limit $\ep \to 0$, it is exactly supported on the limit vortex lines.

We now recall the  algebraic splitting of the Ginzburg--Landau energy from \cite{Rom2}, which   allows to follow the roadmap of \cite{SanSerBook} in three dimensions: for any $(\u, \A)$,   letting $u=e^{-ih_\ex\phi_0}\u$ and $A=\A-h_\ex A_0$, we have 
   \begin{equation*}
      GL_\ep(\u,\A)=h_\ex^2 J_0+F_\ep(u,A)
      -h_\ex\int_\Omega \mu(u,A)\wedge B_0+o(1),
   \end{equation*}

This formula allows, up to a small error, to exactly separate the energy of the Meissner state $h_{\ex}^2 J_0$, the positive free energy cost $F_\ep$ and the magnetic gain $-h_{\ex} \int \mu(u,A) \wedge B_0$ which corresponds to the value of the magnetic flux of $B_0$ through the vortex or rather the loop formed by the  vortex line on the one hand, and any curve lying on $\partial \Omega$ that allows to close it, see Figure~\ref{figure0}.

\begin{figure}[ht]
   \centering
   \includegraphics[scale=0.4]{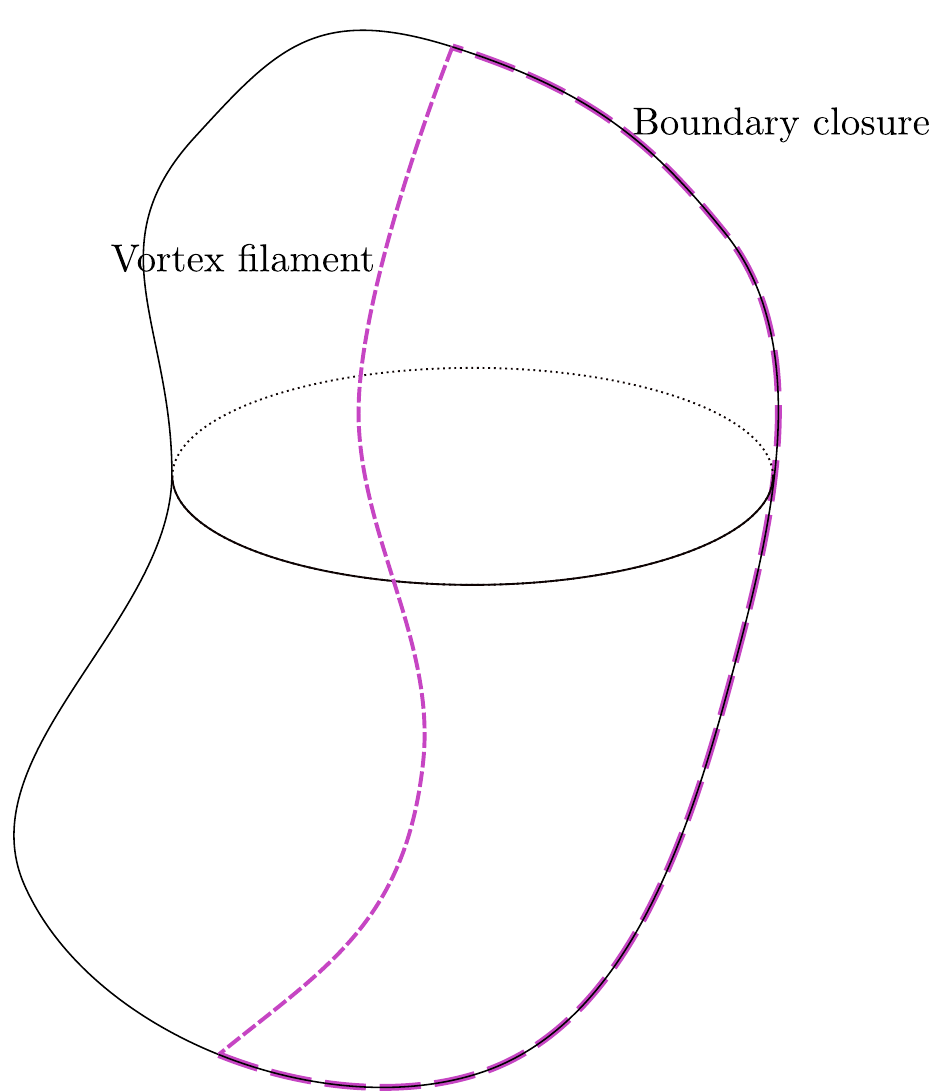}
   \caption{Vortex filament with boundary closure.}
   \label{figure0}
\end{figure}

 The first critical field is reached  when competitors with vortices  have an energy strictly less than that of the Meissner state, $ h_{\ex}^2 J_0$, that is when the magnetic gain beats the free energy cost. Approximating the energy cost of a curve $\Gamma$ by $\pi |\Gamma|\lep$ leads to the following isoflux problem.

\subsection{The isoflux problem and nondegeneracy assumption}The  isoflux problem characterizes the curves that maximize the magnetic  flux for a given length (hence the name isoflux, by analogy with isoperimetric), providing the critical value of $h_{\ex}$. 

Given a domain $\Omega \subset \RR^3$,
we  let $\NN$ be the space of normal $1$-currents supported in $\overline\Omega$, with boundary supported on $\partial\Omega$. We always denote
by $|\cdot |$ the mass of a current. Recall that normal currents are currents with finite mass whose boundaries have finite mass as well. 

We also let $X$ denote the class of currents in $\NN$ which are simple oriented Lipschitz curves. 
An element of $X$  must either be  a loop contained in $\overline\Omega$ or have its two endpoints on $\partial \Omega$.

Given $\sigma\in(0,1]$, we let $C_T^{0,\sigma}(\Omega)$ denote the space of vector fields $B \in C^{0,\sigma}(\Omega)$ such that $B\times\vec\nu=0$ on $\partial \Omega$, where hereafter $\vec\nu$ is the outer unit normal to $\partial\Omega$.  The symbol $^*$ will denote its dual space.
Such a $B$ may also be interpreted as 2-form, we will not distinguish the two in our notation.

For any vector field $B\in C_T^{0,1}(\Omega,\RR^3)$ and any  $\ga\in\NN$, we denote by $\pr {B,\ga}$ the value of $\ga$ applied to $B$, which corresponds to the circulation of the vector field $B$ on $\ga$ when $\ga$ is a curve.

We also let
\begin{equation}\label{defstar}
   \|\ga\|_*\colonequals \sup_{\|B\|_{C_T^{0,1}(\Omega,\RR^3)}\leq 1}\pr{B,\ga}
\end{equation}
be the dual norm to the norm in   $C_T^{0,1}(\Omega,\RR^3)$.

\begin{definition}[Isoflux problem]
   The isoflux problem relative to $\Omega $ and a vector field $B_0\in C_T^{0,1}(\Omega, \RR^3)$,  is  the question of maximizing  over $\NN$ the ratio
   \begin{equation}\label{defratio}
      \R(\ga)\colonequals\dfrac{\pr{B_0,\ga}}{|\ga|}.
   \end{equation}
\end{definition}

In \cite{RSS1}*{Theorem 1}, we proved that  the maximum is achieved, and under the additional condition
$$
\sup_{\mathcal C_{\mathrm{loops}}}\R<\sup_{\NN}\R,
$$
where $\mathcal C_{\mathrm{loops}}$ denotes the space of closed oriented Lipschitz curves (that is, loops) supported in $\overline \Omega$, then the supremum of the ratio $\R$ over $\NN$ is attained by an element of $X$ which is not a loop, and hence has two endpoints in $\partial \Omega$. We will denote it $\ga_0$.

A vortex line is thus seen to be  favorable if and only if
$h_{\ex} \ge H_{c_1}^0$ where
\begin{equation}\label{defhc1}
   H_{c_1}^0\colonequals \frac\lep{2\R_0},
\end{equation}
and 
\begin{equation}\label{R0}
   \R_0\colonequals \sup_{\Gamma \in X} \R(\Gamma)=\R(\ga_0).
\end{equation}
We refer the interested reader to the recent article \cite{pinning3D}, in which a weighted variation of the isoflux problem is derived within the framework of a pinned version of the 3D magnetic Ginzburg--Landau model.

\smallskip
To go further, we need to define {\em tube coordinates} around the optimal curve $\ga_0$, assumed to be smooth and meeting the boundary of $\Omega$ orthogonally at its two endpoints.

These coordinates, whose existence we prove in Proposition~\ref{prop:diffeo}, are defined in a $\delta$-tube around $\ga_0$. In this system of coordinates, $\ga_0(z)$ is mapped to $(0,0,z)$ and, if we denote by $g_{ij}$ the coefficients of the Euclidean metric in these coordinates, then we have $g_{13}=g_{23} = 0$ and $\gz(0,0,z) = 1$ for $z\in[0,\ellzero]$, where, throughout the article,  $\ellzero$ denotes the length of $\ga_0$.

We will denote by $\vu = (x,y)$ the two first coordinates and by $z$ the third coordinate.

Then $g(\vu,z)$ will denote the Euclidean metric in these coordinates, and we define $g^\bullet$ to be the metric along the $z$-axis, that is,  $g(0,0,z)$.

\begin{definition}[Strong nondegeneracy]\label{qdeu}
   We say that $\ga_0$ is a nondegenerate maximizer for the ratio $\R$ if it maximizes $\R$ and if the quadratic form
   $$Q(\vu): = -\frac{d^2}{dt^2}_{|t=0} \R(\ga_t)$$
   with $\ga_t(z)\colonequals\ga_0(z)+ t\vec{u}(z)$,
   is positive definite over the Sobolev space $H^1((0,\ellzero),\RR^2)$. We then let
   \begin{equation*}
   	\alpha_Q = \sup_{\substack{\vu\in H^1\\ \|\vu\|_{H^1}\le 1}} Q(\vu).\end{equation*}
\end{definition}
An explicit expression of $Q$ in terms of $B_0$, $\ga_0$ and $\Omega$ is given in Lemma \ref{lemformQ}.
We will show in Section \ref{sec:nondeg} that this strong nondegeneracy condition holds at least for small enough balls.

We may then define the {\it renormalized energy} of a family of curves $\ga_i^*(z) = \ga_0(z)+\vu_i^*(z)$, for $1\le i\le N$, by 
\begin{equation}\label{defW}
   W_N(\ga_1^*,\dots,\ga_N^*) = \pi\ellzero N\sum_{i=1}^N Q(\vu_i^*) - \pi\int_0^\ellzero \sum_{i\neq j} \log|\vu_i^*(z) - \vu_j^*(z)|_{g^\bullet},\end{equation}
where $|\cdot |_{g^\bullet}$ denotes the norm as measured in the $g^\bullet$ metric.

\subsection{Main theorem}
The next theorem shows that there exists a sequence of transitions at values of $\he$ that we now define.
\begin{definition}
 Given an integer $N \ge 1$, we define the $N$-th critical value by 
\begin{equation*}
H_N\colonequals \frac{1}{2\R_0}\( \lep+(N-1)\log \frac{\lep}{2\R_0} + k_N\),
\end{equation*} 
where 
$$k_N=(N-1)\log \frac1N + \frac{N^2-3N+2}{2}\log \frac{N-1}{N}+\frac1{\pi\ellzero}\Big( \min W_N -\min W_{N-1}+\gamma \ellzero+ (2N-1)C_\Omega\Big),$$
 $C_\Omega$ is a constant depending only on $\Omega$ and $H_{0,\ex}$ and defined in \eqref{comega}, and $\gamma$ is a universal constant first introduced in \cite{BetBreHel}.
\end{definition}

In particular $H_1$ will  coincide (up to $o(1)$) with $\hci$, defined as the first critical field  above  which the energy of a configuration with a vortex line becomes strictly smaller than that of the Meissner state. Then $H_2$ is the critical field above which the energy of a configuration with two vortex lines becomes strictly smaller than that with one, etc. 

The main theorem shows these transitions, and proves that  $W_N$ is the effective interaction energy of the (finite number of) vortex curves.
\begin{theorem}\label{thmain}
   Assume that the smooth simple curve $\ga_0$ is a unique nondegenerate maximizer (in the sense of  Definition \ref{qdeu}) of the ratio $\R$. There exists $c_\ep\to 0 $ as $\ep\to 0$  such that the following holds.
   Assume that 
   \begin{equation*}
   \he \in (H_N-c_\ep, H_N+c_\ep)
   \end{equation*} with $N\ge 1$ independent of $\ep$.
     
  Let $(\u,\A)$ be a minimizer (depending on $\ep$) of $GL_\ep$ in $H^1(\Omega,\C)\times [A_\ex+H_{\curl}]$ and $(u,A)=(e^{-ih_\ex\phi_0}\u,\A-h_\ex A_0 )$ as above.
   Then for any sequence $\{\ep\}$ tending to $0$, there exists a subsequence such that, for $\ep $ small enough (depending on $N$), letting $\mu_\ep^*$ be the pull-back of $\mu(u,A)$ under the horizontal rescaling map defined in the tube coordinates described above by $$(x,y,z) \mapsto  \(\sqrt{\frac{N}{h_\ex}}x, \sqrt{\frac{N}{h_\ex}} y,  z\),$$ then
   $$\lim_{\ep \to 0} \left\|\frac{\mu_\ep^*}{2\pi } - \sum_{i=1}^N \ga_i^*\right\|_* = 0,$$
   where $\ga_i^*(z) = \ga_0(z)+\vu_i^*(z)$ are $H^1$ graphs that minimize $W_N(\ga_1^*,\dots,\ga_N^*)$  as defined in \eqref{defW}.

   Moreover,    as $\ep \to 0$, defining  $K$ to  be such that 
  \begin{equation}\label{interhex}
      h_\ex = H_{c_1}^0 + K\log\lep,\end{equation} 
  we have 
   \begin{multline}\label{lbw0} 
   	GL_\ep(\u,\A)= \he^2J_0 + \frac\pi2 \ellzero  N(N-1)\log h_\ex- 2 \pi K \R_0 \ellzero N \log\lep \\-\frac{\pi}2 \ellzero N(N-1)\log N+ \min W_N+ \gamma N\ellzero+ N^2 C_\Omega+o(1),
   \end{multline}
   where $C_\Omega$ is the constant defined in \eqref{comega} and $\gamma$ is the universal constant of  \cite{BetBreHel}.
\end{theorem}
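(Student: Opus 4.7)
The plan is to establish matching upper and lower bounds on $GL_\ep(\u,\A)$ that agree to order $o(1)$ with the right-hand side of \eqref{lbw0}, and then to optimize over the integer $N$ to read off the transition values $H_N$. The starting point is the algebraic splitting
$$GL_\ep(\u,\A)=h_\ex^2 J_0+F_\ep(u,A)-h_\ex\int_\Omega \mu(u,A)\wedge B_0+o(1),$$
which reduces everything to estimating $F_\ep(u,A)$ from below while keeping the magnetic flux term with its correct sign. The a priori control from \cite{RSS1} tells us that in the regime $\he=H_{c_1}^0+O(\log\lep)$ the vorticity $\mu(u,A)/(2\pi)$ has bounded mass and its $\|\cdot\|_*$-distance to a positive integer multiple of $\ga_0$ is small, so modulo vanishing errors one can assume a finite, a priori bounded collection of nearly parallel vortex filaments sitting inside a $\delta$-tube of $\ga_0$. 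The tube coordinates from Proposition \ref{prop:diffeo} then bring the problem to a curved cylindrical geometry in which $\ga_0$ is the $z$-axis and horizontal slices carry a two-dimensional Ginzburg--Landau structure with metric $g$.

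For the lower bound, the strategy is to slice in $z\in(0,\ellzero)$. After the horizontal blow-up $(x,y,z)\mapsto(\sqrt{N/h_\ex}\,x,\sqrt{N/h_\ex}\,y,z)$, each slice carries at most $N$ points $\vu_1^*(z),\dots,\vu_N^*(z)$, to which the sharp two-dimensional vortex-ball constructions and renormalized-energy estimates from \cite{SanSerBook,almeidabethuel}, adapted to the slice metric $g^\bullet$, apply. Integrating in $z$, the magnetic term $-\he\int\mu\wedge B_0$ combines with the leading length cost $\pi|\Gamma|\lep$ of each filament to produce the first-order gain $-2\pi K\R_0\ellzero N\log\lep$; expanding $\R$ to second order around $\ga_0$ generates the quadratic confinement $\pi\ellzero N\sum_i Q(\vu_i^*)$; the planar logarithmic interaction yields the $-\pi\int_0^{\ellzero}\sum_{i\ne j}\log|\vu_i^*(z)-\vu_j^*(z)|_{g^\bullet}\,dz$ contribution; and the two-dimensional core self-interaction together with boundary matching produce the additive constants $\gamma N\ellzero$ and $N^2 C_\Omega$. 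Moreover, rewriting these logs in rescaled coordinates accounts for the $\frac\pi2\ellzero N(N-1)\log h_\ex$ and $-\frac\pi2\ellzero N(N-1)\log N$ terms; the blow-up scale $\sqrt{N/h_\ex}$ is precisely the one that balances the confinement $\he|\vu|^2$ against the $N\log h_\ex$ coming from the $N(N-1)$ pairwise logarithms. The main obstacle here is capturing the length cost exactly despite the curvature of $\ga_0$ and the fact that vortex filaments leave the graph regime near the endpoints where they hit $\partial\Omega$ orthogonally; this requires combining the three-dimensional toolbox of \cite{Rom} with refined two-dimensional estimates, together with a product-type argument to glue the information across slices.

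For the matching upper bound, the plan is to build a test pair $(\u,\A)$ whose vorticity is $2\pi\sum_{i=1}^N\ga_i^*$ for an arbitrary tuple of graphs $(\ga_i^*)$ minimizing $W_N$. The new ingredient is the Biot--Savart based construction advertised in the introduction: one chooses $A$ so that $\curl A$ matches a smooth $\ep$-thickening of $\sum_i\ga_i^*$, defines the phase of $u$ via the corresponding line integral so that it carries the correct winding around each filament, and lets $|u|$ follow the standard radial profile on slices transverse to the filaments, modulated by $\ga_0$-aligned coordinates to handle the curved geometry. Expanding $GL_\ep$ of this Ansatz then produces exactly the right-hand side of \eqref{lbw0} with $\min W_N$ realized and no excess. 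Combining the two bounds for each fixed $N$ yields a discrete family of asymptotic expansions of $\min GL_\ep$, one per $N\ge 0$; the value of $N$ realizing the smallest of these jumps by one precisely at the thresholds $\he=H_N$, which pins down the transition values and forces the displacements $(\vu_i^*)$ of any minimizer to converge, in the $\|\cdot\|_*$-sense, to a minimizer of $W_N$ as stated. The single hardest step, in my view, is matching the additive constants $\gamma N\ellzero$ and $N^2 C_\Omega$ on both sides in the curved, bounded-domain setting, as opposed to the flat infinite cylinder of \cite{ConJer}: this demands extremely sharp two-dimensional renormalized-energy control under slicing as one approaches the two endpoints of $\ga_0$, where the Biot--Savart construction must be carefully glued to the Meissner state outside the tube.
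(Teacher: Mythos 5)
The proposal captures the overall skeleton (splitting formula, slicing lower bound with BBH-style two-dimensional estimates, Biot--Savart upper bound, optimization over $N$), but it misses the central technical device that makes the lower bound work. Slicing perpendicular to $\ga_0$ and integrating two-dimensional bounds can only ever reach $F_\ep^\perp$, the part of the free energy carrying derivatives tangential to the slices; it entirely misses the $\tfrac12\int g^{33}|\partial_z u - iA_z u|^2$ contribution. Without this piece the linearization of the length of the filaments (the $\tfrac12|\vu'|^2_{g^\bullet}$ term in $Q$) cannot appear, and the confinement part of $W_N$ is lost. The paper's resolution is the two-scale decomposition $F_\ep \ge (1-\ell^2)F_\ep^\perp + \ell^2\tilde F_\ep$ obtained by blowing up the metric horizontally at an intermediate scale $\ell = (\log\lep)^{-2}$ (see \eqref{decompose2}): the 3D lower bounds of \cite{Rom} applied to the blown-up functional $\tilde F_\ep$ recover the missing $\partial_z$ energy with an $O(\log\lep)$ error, and the prefactor $\ell^2$ shrinks that error to $o(1)$. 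Your ``product-type argument to glue the information across slices'' points vaguely in this direction but supplies no concrete mechanism, and without one the lower bound is off by $O(\log\lep)$, far above the $o(1)$ precision the theorem requires.

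There is also a conflation of scales. You use the single scale $\sqrt{N/h_\ex}$ both for the slice analysis and for the convergence of curves, but these play different roles and are established in a different order. A priori the filaments are only known to be within $O(\lep^{-1/14})$ of $\ga_0$; it is only after the preliminary lower bound, the coercivity of $Q_\ell$, and comparison with the upper bound that one learns a posteriori that they lie within $O(\lep^{-1/2})$, which then justifies rescaling by $\sqrt{N/h_\ex}$ and extracting the limits $\vu_i^*$. Relatedly, the refined two-dimensional lower bound with $o(1)$ error (Proposition \ref{lowerprecise}) is only valid on slices where the perpendicular energy is already $\le \pi(N+o(1))\lep$; this must be established slice-by-slice via a separate case analysis, not assumed. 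These are substantive ordering issues, not presentation details, and as written your proposal does not close them.
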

\begin{remark}
   We really show in the course of the proof that for $\he$ as in
   \eqref{interhex}, the functional 
  \begin{multline*}
  	GL_\ep-  \Big(\he^2J_0 + \frac\pi2 \ellzero  N(N-1)\log h_\ex- 2 \pi K \R_0 \ellzero N \log\lep\\-\frac\pi2 \ellzero N(N-1)\log N +\gamma N \ellzero+N^2 C_\Omega\Big) 
  \end{multline*} 
   $\Gamma$-converges to $W_N$.
\end{remark}

This theorem shows a sequence of phase transitions for intensities of the applied magnetic field equal to $H_1, H_2$, etc, at which vortex lines appear one by one, near the optimal curve $\ga_0$, and at a distance to $\ga_0$ of order $\lep^{-1/2}$. 
In particular, it shows {\it the first expansion up to $o(1)$} of the value of the first critical field:
$$H_{c_1}= \frac{1}{2\R_0}\( \lep+ \frac{\gamma\ellzero+C_\Omega}{\pi \ellzero}\)+o_\ep(1),$$
refining that of \cite{Rom2}.

The theorem also
 provides an asymptotic expansion up to order $o(1)$ of the minimal energy in the regime $h_\ex \le \hci +O(\log\lep)$, identifying 
the coefficients in factor of the leading order term in $h_\ex^2$, then the subleading order terms in $\log\lep$, and  finally the sub-subleading order terms of order $1$, which really contains the most detailed and interesting  information about the vortices, 
 showing that the curves congregate near $\ga_0$ and that one needs to zoom in ``horizontally" by a factor  $\sqrt {\frac{h_\ex}{N}}$ near $\ga_0$ to see well separated curves, which converge to a minimizer of $W_N $ in the limit. This is all consistent with the two-dimensional picture obtained in \cites{Ser,SanSer2,SanSerBook}. One may in particular compare the formulas to those in \cite{SanSerBook}*{Chapter 12}. To our knowledge, no result and description so precise can be found in the physics literature.

As a result, if $\ga_0$ is straight, which is the case for instance in a rotationally symmetric geometry with a rotationally symmetric $H_{0,\ex}$, then the vortex curves will be nearly parallel, however they will curve at next-to-leading order, especially as they get closer to the boundary at their endpoints, where they will tend to separate in order to meet the boundary orthogonally to minimize length -- in contrast with the setting of \cite{ConJer} where the curves meet at the poles; see Figure \ref{fig:vortices}. This can be compared to simulations done in rotating Bose--Einstein condensates (where the geometry is naturally rotationally symmetric), in particular the so-called ``S-shaped states" and ``U-shaped states", see \cites{aftalionjerrard,AftRiv},  physics papers \cites{RAVXK,RBD,brandt},  and numerics \cite{AftDan1}.

\begin{figure}[ht]
	\centering
	\begin{subfigure}[b]{0.45\textwidth}
		\centering
		\includegraphics[width=\textwidth]{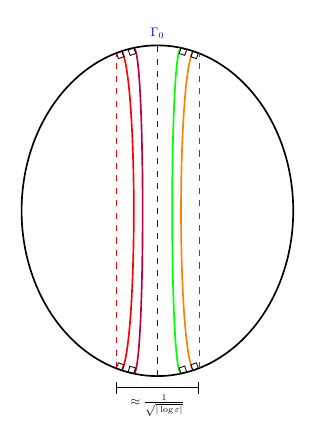}
	\end{subfigure}
	\hfill
	\begin{subfigure}[b]{0.45\textwidth}
		\centering
		\includegraphics[scale=1.6]{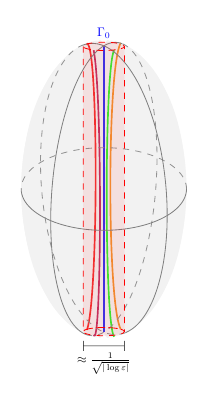}
	\end{subfigure}
	\caption{2D and 3D views of 4 vortex lines.}
	\label{fig:vortices}
\end{figure}

 If the domain and the applied field are such that $\ga_0$ is curved, then all vortex lines will also be curved, again with next-to-leading order deviations from $\ga_0$ which can in principle be estimated via $W_N $.

\subsection{Proof approach} As usual with $\Gamma$-convergence, the proof is based on establishing a general lower bound for the energy, and a matching upper bound obtained via a construction.
The main difficulty in proving the lower bound is that in order to extract the interaction energy, which is a (next-to) next-to-leading order contribution,  the energy needs to be estimated up to a $o(1)$ error as $\ep \to 0$, while the leading order is $\lep$. In contrast, all the prior three-dimensional studies of vortex filaments in Ginzburg--Landau energies, that is \cites{Riv,LinRiv1,LinRiv2,San,BetBreOrl}, had an error or a precision of $o(\lep)$,  with the exception of \cite{ConJer} in the very special setup described above.
To estimate the energetic cost $F_\ep$ of vortex lines, we crucially rely on the
approach of \cite{Rom} which is the only one with the  advantage of being $\ep$-quantitative and robust in the number of curves, i.e.~allowing it to  possibly blow up when $\ep \to 0$.  It proceeds by approximating the vorticity measure $\mu(u_\ep,A_\ep)$ by $2\pi $ times a polyhedral 1-dimensional integer-multiplicity current (in other words, a piecewise straight line), and bounding the energy $F_\ep$ from below by the mass of the current (in other words, the length of the polyhedral curve) times $\lep$. 
That approach  still involves an error at least $O(\log \lep)$, which at first seems to forbid any hope of a $o(1)$ error only.

On the other hand, one may  obtain quite precise lower bounds by slicing perpendicularly to $\ga_0$ and using two-dimensional techniques. 
However, integrating out the energy over slices  misses a part of the energy: the length part in the directions parallel  to the slices, corresponding to the  energy term $\hal\int |\partial_z u|^2$ in $F_\ep$, where $z$ is the coordinate along $\ga_0$.
To recover this energy, we use the following trick, which  is similar in spirit to   \cite{ConJer} but whose implementation somewhat differs, which allows to combine the two ways of obtaining lower bounds: we define (roughly)
$F_\ep^\perp $ as the part of the energy that contains only derivatives of $u$ tangential to the slices,  $\tilde F_\ep$ as  the energy obtained after a applying a  blow-up around $\ga_0$ of a factor $1/\ell\gg 1$ in the ``horizontal" direction perpendicular to $\ga_0$ and no blow-up in the $z$-direction parallel to $\ga_0$.  The lengthscale $\ell$ is a chosen small enough so that the vortex lines essentially (up to small excursions) remain in an $\ell$-neighborhood of $\ga_0$. A change of variables allows to write that
$$F_\ep \ge (1-\ell^2) F_\ep^\perp + \ell^2 \tilde F_\ep.$$

The part $\tilde F_\ep$ contains the missing $\int |\partial_z u|^2$ component. It can be bounded below by using the three-dimensional lower bounds of \cite{Rom}, which yields $O(\log \lep)$ errors. The crucial point is that, once multiplied by $\ell^2$ which is small, these errors become $o(1)$. 

The part $F_\ep^\perp$ is bounded below by slicing  along (curved) slices perpendicular to $\ga_0$, and integrating up  two-dimensional lower bounds obtained by the ball construction method of  \cite{SanSerBook}. These lower bounds are expressed in terms of two-dimensional effective vortices (the points which are the centers of the final balls), which a priori may in some slices  be quite different from the trace along the slice of the polyhedral approximation -- so a difficulty in the whole proof is to reconcile the various vorticity approximations. Another difficulty is that we have to use slices that are adapted to the boundary, as one approaches the endpoints of $\ga_0$, and we may lose information there.  In contrast to the three-dimensional bounds \`a la \cite{Rom}, these bounds already contain some part of the vortex interaction (i.e.~the repulsion effect).  This way all  the leading and subleading energy is recovered.

When combining with the energy upper bound obtained by our new precise construction, and performing a quadratic expansion of the magnetic term in the energy, we find a posteriori that the curves must be confined very near $\ga_0$, at distance of order $1/\sqrt{h_\ex}$ or $1/\sqrt{\lep}$.   It is only at this stage, when combining all the components to the energy, that compactness for the curves (which is a subsubleading order effect as well) can be retrieved, yielding the existence of limiting curves after blow-up.

At this stage, one may finally use even more refined two-dimensional lower bounds \`a la \cite{BetBreHel} -- more precisely we will do it by following \cites{almeidabethuel,Ser} and \cite{ignatjerrard} for the curved aspects -- which contain the exact logarithmic repulsion and can be made so precise to involve only an $o(1)$ error, provided the energy in each slice is bounded above by $O(\lep)$.
This is not known a priori but comes as a consequence of the analysis of \cite{RSS1} which shows that, for the regime of $h_\ex$ that we consider ($\he\le H_{c_1}^0+ O(\log \lep)$), the energy $F_\ep$ does remain bounded by $O( \lep)$  and the vorticity remains bounded in length.
Plugging into the prior estimates yields the energy lower bound up to $o(1)$, the optimal separation  from $\ga_0$, and the fact that the limiting curves must minimize $W_N $. 

The energy upper bound is interesting in itself, it involves an explicit construction relying on the Biot--Savart law, where again the difficulty lies in obtaining $o(1)$ precision on the energy.

\subsection{Plan of the paper} 
The paper starts in Section \ref{sec:prelim} with preliminaries on the splitting formula, the construction of a superconducting current and gauge field from the Biot--Savart law associated to a curve, energy lower bounds from \cite{Rom} and needed results from \cite{RSS1}. It then describes the tubes coordinates, the rewriting of the energy, and the horizontal rescaling.

Section~\ref{sec:graphs} gathers important preliminaries on the isoflux problem and its quadratic expansion. It proves  the nondegeneracy condition for graphs, then coercivity, then the fact that strong nondegeneracy implies weak nondegeneracy.
The section concludes by showing that the strong nondegeneracy condition holds at least for small enough balls.

In Section~\ref{sec:lowerboundslicing}, we prove two types of   energy lower bounds by horizontal slicing: one by vortex balls construction,  and a more precise one, recovering the constant order term, under a very strong upper bound assumption, by  applying two-dimensional  lower bound techniques \`a la  \cites{BetBreHel,almeidabethuel,Ser}.

Section~\ref{sec8}  is the core of the proof, it assembles the prior elements to prove  the main lower bound.
Finally, Section~\ref{sec:upperbound} is devoted to  the upper bound construction.

\noindent
\\
{\bf Acknowledgements:}
C.R. was supported by ANID FONDECYT 1231593. He thanks the Courant Institute of Mathematical Sciences and Université Paris-Est Créteil for their support and kind hospitality during the completion of part of this work. S. S.  was supported by NSF grant DMS-2000205, DMS-2247846,  and by the Simons Foundation through the Simons Investigator program.  This work was also supported by a Labex Bezout funded invitation to Universit\'e Paris-Est Cr\'eteil of the third author.

\section{Preliminaries}\label{sec:prelim}
\subsection{Vector fields, forms, currents and notation}
We introduce certain concepts and notation from the theory of currents and differential forms.
In Euclidean spaces, vector fields can be identified with $1$-forms. In particular, a vector field $F=(F_1,F_2,F_3)$ can be identified with the $1$-form $F_1dx_1+F_2dx_2+F_3dx_3$. We
use the same notation for both the vector field and the $1$-form.

Given $\alpha\in(0,1]$, we let $C_T^{0,\alpha}(\Omega)$ denote the space of vector fields $B \in C^{0,\alpha}(\Omega)$ such that $B\times \vec\nu=0$ on $\partial \Omega$, where hereafter $\vec\nu$ is the outer unit normal to $\partial\Omega$. Such a $B$ may also be interpreted as 2-form, we will not distinguish the two in our notation.

It is worth recalling that the boundary of a $1$-current $T$ relative to a set $\Theta$ is a $0$-current $\partial T$, and that $\partial T=0$ relative to $\Theta$ if $T(d\phi)=0$ for all $0$-forms $\phi$ with compact support in $\Theta$.
In particular, an integration by parts shows that the $1$-dimensional current $\mu(u,A)$ has zero boundary relative to $\Omega$.

We let $\mathcal D^k(\Theta)$ be the space of smooth $k$-forms with compact support in $\Theta$. For a $k$-current $T$ in $\Theta$, we define its mass by
\begin{equation*}
   |T|(\Theta)\colonequals \sup \left\{T(\phi) \ | \ \phi\in \mathcal D^k(\Theta), \ \|\phi\|_{L^\infty}\leq 1\right\}
\end{equation*}
and by
\begin{equation}\label{flatnormdef}
   \|T\|_{\F(\Theta)}\colonequals\sup \left\{T(\phi) \ | \ \phi\in \mathcal D^k(\Theta), \ \max\{\|\phi\|_{L^\infty},\|d\phi\|_{L^\infty}\}\leq 1\right\}
\end{equation}
its flat norm.
\begin{remark}
   For $0$-currents, the flat and $(C_0^{0,1})^*$ norms coincide, whereas for $k$-currents the former is stronger than the latter.

   It is not completely obvious from the definitions \eqref{defstar}  and \eqref{flatnormdef} that $\|\ga\|_*\le C\|\ga\|_{\F(\Omega)}$, since $\|\ga\|_*$ involves testing with vector fields that are not necessarily compactly supported in $\Omega$. Nevertheless it is true if $|\ga|$ is assumed to be bounded, because if  $\|X\|_{L^\infty}, \|\nabla X\|_{L^\infty}\le 1$, then we may consider $X_n(\cdot) = (1-n\dist(\cdot,\partial\Omega))_+ X(\cdot)$, and we have $\pr{X_n,\ga}\to\pr{X,\ga}$ as $n\to +\infty$ while $\|\curl X\|_{L^\infty}$ remains bounded independently of $n$ if $X$ is normal to $\partial\Omega$.
\end{remark}
\subsection{Reference magnetic field and splitting formula}
We may now recall the definition of 
the magnetic field   $B_0$ that we will work with, constructed in
\cite{Rom2}. It appears in the Hodge decomposition of $A_0$ in $\Omega$, where $(e^{ih_\ex\phi_0},h_\ex A_0)$ is the Meissner state,  as $A_0 = \curl B_0 + \nabla \phi_0$,  supplemented with the conditions $\diver B_0 = 0$, and $B_0\times\vec\nu = 0$ on $\partial\Omega$. Moreover, it is such that
\begin{equation*}
   \int_\Omega (- \Delta B_0+ B_0-H_{0,\ex})\cdot A=0,
\end{equation*}
for any divergence-free $A\in C_0^\infty(\Omega,\RR^3)$.  Also, we recall that $\phi_0$ is supplemented with the conditions $\int_\Omega \phi_0=0$ and $\nabla \phi_0\cdot \vec \nu=A_0\cdot \vec \nu$ on $\partial\Omega$.

We now recall the  precise algebraic splitting of the Ginzburg--Landau energy from \cite{Rom2}.
\begin{propositionn} For any sufficiently integrable $(\u,\A)$, letting $u=e^{-ih_\ex\phi_0}\u$ and $A=\A-h_\ex A_0$, where $(e^{ih_\ex\phi_0},h_\ex A_0)$ is the approximate Meissner state, we have
   \begin{equation}\label{Energy-Splitting}
      GL_\ep(\u,\A)=h_\ex^2 J_0+F_\ep(u,A)
      -h_\ex\int_\Omega \mu(u,A)\wedge B_0+r_0,
   \end{equation}
   where $F_\ep(u,A)$ is as in \eqref{freee} and
   $$
      r_0=\frac{h_\ex^2}2 \int_\Omega (|u|^2-1)|\curl B_0|^2.
   $$
   In particular, $|r_0|\leq C\ep h_\ex^2F_\ep(|u|,0)^{\frac12}$.
\end{propositionn}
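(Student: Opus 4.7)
The strategy is a direct algebraic expansion of each of the three pieces of $GL_\ep$ after the substitution $\u = e^{ih_\ex\phi_0}u$, $\A = h_\ex A_0 + A$, followed by two carefully chosen integrations by parts. The potential term is unchanged since $|\u|^2 = |u|^2$. For the kinetic term, gauge covariance gives
\[
   \nab_\A \u = e^{ih_\ex\phi_0}\bigl(\nab_A u + ih_\ex(\nab\phi_0 - A_0)u\bigr),
\]
and the Hodge decomposition $A_0 = \curl B_0 + \nab\phi_0$ turns $\nab\phi_0 - A_0$ into $-\curl B_0$. Squaring and expanding, the only non-obvious piece is the cross term $(\nab_A u, iu\,\curl B_0)$, which I would identify with $\curl B_0\cdot j(u,A)$ via $j_k = \Im(\bar u\,\partial_k u) - A_k|u|^2$, yielding
\[
   \tfrac12\!\int_\Omega\!|\nab_\A \u|^2 = \tfrac12\!\int_\Omega\!|\nab_A u|^2 - h_\ex\!\int_\Omega\!\curl B_0\cdot j(u,A) + \tfrac{h_\ex^2}{2}\!\int_\Omega\!|u|^2|\curl B_0|^2.
\]
Splitting $|u|^2 = 1 + (|u|^2-1)$ in the last term isolates $r_0$ and leaves the Meissner kinetic energy $\tfrac{h_\ex^2}{2}\int_\Omega|\curl B_0|^2 = \tfrac12\int_\Omega |\nab_{h_\ex A_0}e^{ih_\ex\phi_0}|^2$.

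Next I expand the magnetic term using $\H = h_\ex\curl A_0 + H$ with $H = \curl A$ and $H_\ex = h_\ex H_{0,\ex}$, producing the Meissner magnetic energy $\tfrac{h_\ex^2}{2}\int_{\RR^3}|\curl A_0 - H_{0,\ex}|^2$, a cross term $h_\ex\int_{\RR^3}(\curl A_0 - H_{0,\ex})\cdot H$, and $\tfrac12\int_{\RR^3}|H|^2$. The sum of the two Meissner pieces identified so far equals the energy of $(e^{ih_\ex\phi_0},h_\ex A_0)$ (the potential vanishing there), which is by definition $h_\ex^2 J_0$; the three non-cross pieces of $F_\ep(u,A)$ also assemble correctly.

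The decisive step is to match the two remaining cross terms with the coupling $-h_\ex\int_\Omega\mu(u,A)\wedge B_0$. Using that $\mu = dj + dA$ and that the wedge of a $2$-form with a $1$-form in $3$D corresponds to the Euclidean dot product of their vector-field avatars, one has $\int_\Omega\mu\wedge B_0 = \int_\Omega B_0\cdot(\curl j + H)$. For the kinetic cross term, integration by parts gives $-h_\ex\int_\Omega\curl B_0\cdot j = -h_\ex\int_\Omega B_0\cdot \curl j$, the boundary integral $\int_{\partial\Omega}(j\times B_0)\cdot\vec\nu$ vanishing because $B_0\times\vec\nu = 0$ forces $B_0\parallel\vec\nu$ on $\partial\Omega$. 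For the magnetic cross term, I would integrate by parts in $\RR^3$ and invoke the Euler--Lagrange system of the Meissner minimizer --- $\curl(\curl A_0 - H_{0,\ex}) = -\curl B_0$ in $\Omega$, $\curl(\curl A_0 - H_{0,\ex}) = 0$ in $\RR^3\setminus\overline\Omega$, and $[\curl A_0 - H_{0,\ex}]\times\vec\nu = 0$ on $\partial\Omega$ --- equivalent to the weak characterization of $B_0$ stated in the excerpt, to reduce the integral to $-\int_\Omega\curl B_0\cdot A$; a second integration by parts, again killed by $B_0\times\vec\nu = 0$, turns this into $-\int_\Omega B_0\cdot H$. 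Adding the two contributions produces precisely $-h_\ex\int_\Omega\mu(u,A)\wedge B_0$. The main obstacle is exactly this last step: translating an $\RR^3$-integral into an $\Omega$-integral via jump conditions at $\partial\Omega$ and the variational characterization of $B_0$. Finally, $|r_0|\le C\ep h_\ex^2 F_\ep(|u|,0)^{1/2}$ follows by Cauchy--Schwarz, using $\|\curl B_0\|_{L^\infty(\Omega)}<\infty$ (as $B_0$ is smooth on $\overline\Omega$ and depends only on $\Omega$ and $H_{0,\ex}$) together with the standard estimate $\|1-|u|^2\|_{L^2}^2\le 2\ep^2 F_\ep(|u|,0)$ coming from the potential term.
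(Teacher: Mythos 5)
Your argument is correct and is the standard derivation of the splitting formula: gauge-covariance plus the Hodge decomposition $A_0 = \curl B_0 + \nabla\phi_0$ to expand the kinetic term, a direct expansion of the magnetic term, then two integrations by parts (using $B_0\times\vec\nu = 0$) and the second Ginzburg--Landau equation for the Meissner state (equivalent to the weak London equation $\int_\Omega(-\Delta B_0 + B_0 - H_{0,\ex})\cdot A = 0$ for divergence-free compactly supported $A$, plus the jump condition) to recombine the cross terms into $-h_\ex\int_\Omega\mu(u,A)\wedge B_0$. This is precisely the algebraic splitting of \cite{Rom2}, which the paper states without reproducing the proof, and your final Cauchy--Schwarz bound on $r_0$ is also correct (modulo a harmless constant).
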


\subsection{Biot-Savart vector fields and a new constant}

\begin{definition} \label{defbs}The {\em Biot--Savart vector field} associated to a smooth simple closed curve in $\RR^3$ is
   \begin{equation}\label{biotsavart} 
      X_\ga(p) = \frac12\int_t \frac{\ga(t) - p}{|\ga(t) - p|^3}\times \ga'(t)\,dt.
   \end{equation}
   It is divergence free, satisfies $\curl X_\ga = 2\pi\ga$, and belongs to $L^p_\loc$ for any $1\le p<2$.

   Moreover, denoting $p_\ga$ the nearest point to $p$ on $\ga$, and $U$ a bounded neighborhood of $\ga$ on which this projection is well defined, the difference
   \begin{equation}\label{approxBS}X_\ga(p) - \frac{p_\ga - p}{|p_\ga - p|^2}\times\ga'(p_\ga)
   \end{equation}
   is in $L^q(U)$, for any $q\ge 1$.
\end{definition}

The approximation \eqref{approxBS} is classical (see for instance \cite{AKO}) and may be derived from \eqref{biotsavart}. The difference is in fact $O(\log|p-p_\ga|)$.

In the next proposition, to any nice curve $\ga$, we associate via $X_\Gamma$ a current and magnetic gauge pair. This will be in particular useful for the upper bound construction.  
\begin{proposition}\label{jA}
   Assume $\Omega\subset \RR^3$ is a smooth and bounded domain. Assume $\ga$ is a smooth simple closed curve in $\RR^3$ which intersects $\partial\Omega$ transversally.

   Then there exists a  unique  divergence free $j_\ga:\Omega\to \RR^3$, belonging to $L^p$ for any $p<2$, and a unique divergence free $A_\ga\in H^1(\RR^3,\RR^3)$ such that,  $\curl(j_\ga+A_\ga)= 2\pi\ga$ in $\Omega$, such that $\nu\cdot j_\ga = 0$ on $\partial\Omega$, and such that the following equation is satisfied in the sense of distributions in $\RR^3$:
   \begin{equation*}
      \Delta A_\ga  + j_\ga\mathbf 1_\Omega = 0.
   \end{equation*}
   In particular, $j_\ga$ and $A_\ga$ only depend on $\ga\cap\Omega$. It also holds that $A_\ga\in W^{2,\frac32}_\loc(\RR^3,\RR^3)$ and that $j_\ga-X_\ga \in W^{1,q}(\Omega)$, for any $q<4$.

   Moreover, there exists $1<p<2$ such that if $\ga$ and $\ga'$ are two curves as above then,
  \begin{multline}\label{diffjA}\int_{\RR^3}|\nabla A_\ga - \nabla A_{\ga'}|^2 +\int_{\Omega} |j_\ga - X_\ga - (j_{\ga'}-X_{\ga'})|^2\\
  	 \le C (n(\ga)+n(\ga'))\(\|X_\ga- X_{\ga'}\|_{L^p(\partial\Omega)}+ \|X_\ga-X_{\ga'}\|_{L^p(\Omega)}\),
  \end{multline}
  where 
  $$ n(\ga)=\|X_\ga\|_{L^{p}(\partial\Omega)}+ \|X_\ga\|_{L^{p}(\Omega)}.$$

\end{proposition}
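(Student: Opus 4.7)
The plan is to obtain $(j_\ga, A_\ga)$ as the unique solution of a convex constrained minimization problem, with the PDE $\Delta A_\ga + j_\ga\mathbf 1_\Omega = 0$ arising as the Euler--Lagrange equation. Writing $w:=j_\ga - X_\ga$ and using $\diver X_\ga = 0$, $\curl X_\ga = 2\pi\ga$ in $\RR^3$, the requirements of the proposition reduce to the affine constraints
\[
\diver w = 0 \text{ in }\Omega, \quad \nu\cdot w = -\nu\cdot X_\ga \text{ on }\partial\Omega, \quad A \in \dot H^1(\RR^3,\RR^3),\quad \diver A = 0, \quad \curl(w + A) = 0 \text{ in }\Omega.
\]
The compatibility condition $\int_{\partial\Omega}\nu\cdot X_\ga = 0$ needed for admissibility follows from $\ga$ being a closed loop and $\diver X_\ga = 0$.

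I would then minimize
\[
\mathcal E(w,A) := \tfrac12\int_{\RR^3}|\nabla A|^2 + \tfrac12\int_\Omega|w|^2 + \int_\Omega X_\ga\cdot w
\]
(formally $\hal\int|\nabla A|^2 + \hal\int_\Omega|X_\ga + w|^2$ minus the divergent constant $\hal\int_\Omega|X_\ga|^2$) over the affine subspace $\mathcal A$ of pairs $(w,A)$ satisfying the constraints above. The cross term is well defined since $X_\ga\in L^p(\Omega)$ for every $p<2$ and, along $\mathcal A$, the identity $\curl w = -\curl A|_\Omega\in L^2$ together with $\diver w = 0$ and the prescribed normal trace forces $w\in H^1(\Omega)\hookrightarrow L^6$. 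Strict convexity is immediate; coercivity rests on a standard div--curl Poincaré inequality bounding $\|w\|_{H^1(\Omega)}$ by $\|\nabla A\|_{L^2(\RR^3)}$ plus the fixed boundary data. The direct method then produces a unique minimizer $(w_\ga, A_\ga)$, giving $j_\ga := X_\ga + w_\ga\in L^p(\Omega)$ for all $p<2$.

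To extract the Euler--Lagrange relation, I use admissible directions $(\delta w, \delta A)$ constructed as follows: for any divergence-free $\delta A\in\dot H^1(\RR^3,\RR^3)$, set $\delta w := -\delta A|_\Omega + \nabla\phi$, where $\phi$ solves the Neumann problem $\Delta\phi = 0$ in $\Omega$ with $\partial_\nu\phi = \nu\cdot\delta A$ on $\partial\Omega$ (solvable since $\int_{\partial\Omega}\nu\cdot\delta A = 0$). Then $\diver\delta w = 0$, $\nu\cdot\delta w = 0$, and $\curl(\delta w + \delta A) = \curl\nabla\phi = 0$ in $\Omega$, so the perturbation is admissible. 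The first-variation identity
\[
\int_\Omega(X_\ga + w_\ga)\cdot\delta w + \int_{\RR^3}\nabla A_\ga\cdot\nabla\delta A = 0
\]
simplifies through integration by parts, using $\diver(X_\ga + w_\ga) = 0$ and $\nu\cdot(X_\ga + w_\ga) = 0$ to annihilate $\int(X_\ga + w_\ga)\cdot\nabla\phi$, to the weak form of $-\Delta A_\ga = j_\ga\mathbf 1_\Omega$ in $\RR^3$. Elliptic regularity then gives $A_\ga\in W^{2,3/2}_\loc$ since $j_\ga\in L^{3/2}$, and bootstrapping via the div--curl system $\diver w_\ga = 0$, $\curl w_\ga = -\curl A_\ga$ (with boundary data smooth away from $\ga\cap\partial\Omega$) yields $w_\ga = j_\ga - X_\ga\in W^{1,q}(\Omega)$ for every $q<4$.

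For the stability estimate \eqref{diffjA}, observe that $(\hat w,\hat A) := (w_\ga - w_{\ga'}, A_\ga - A_{\ga'})$ satisfies the analogous linear system with source $X_\ga - X_{\ga'}$ in place of $X_\ga$. Testing $-\Delta\hat A = ((X_\ga - X_{\ga'}) + \hat w)\mathbf 1_\Omega$ against $\hat A$, using $\dot H^1(\RR^3)\hookrightarrow L^6$, and combining with an $L^2$ identity for $\hat w$ obtained by integration by parts against the scalar potential of $\hat w + \hat A|_\Omega$ (which exists by the curl-free constraint) together with trace control of $\hat A$ on $\partial\Omega$, gives
\[
\|\hat w\|_{L^2(\Omega)}^2 + \|\nabla\hat A\|_{L^2(\RR^3)}^2 \le C\, D\, \(\|\hat w\|_{L^2(\Omega)} + \|\nabla\hat A\|_{L^2(\RR^3)}\),
\]
where $D := \|X_\ga - X_{\ga'}\|_{L^p(\partial\Omega)} + \|X_\ga - X_{\ga'}\|_{L^p(\Omega)}$ for some $p\in(1,2)$. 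The left-hand side is then $\le C D^2 \le C(n(\ga) + n(\ga'))\, D$ by the triangle inequality, giving \eqref{diffjA}. I expect the main obstacle to be verifying the coercivity of $\mathcal E$ on $\mathcal A$, since the natural boundary data $\nu\cdot X_\ga$ lives only in $L^p(\partial\Omega)$ with $p<2$, due to the $1/|p-p_\ga|$ Biot--Savart singularities at the finitely many transversal points of $\ga\cap\partial\Omega$, which forces a careful fractional-Sobolev analysis of the div--curl system near those points.
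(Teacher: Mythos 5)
Your variational set-up is, after a change of variables, essentially the one in the paper: writing $w + A|_\Omega = \nabla f$ on the constraint manifold and integrating by parts using $\diver X_\ga = 0$ turns your cross term into
\[
\int_\Omega X_\ga\cdot w \;=\; \int_{\partial\Omega} f\,(X_\ga\cdot\nu) - \int_\Omega X_\ga\cdot A,
\]
so your constrained functional $\mathcal E(w,A)$ coincides with the paper's unconstrained functional $I_\ga(f,A)$ over $(f,A)\in H^1(\Omega)\times \dot H^1(\RR^3)$. The paper parameterizes the admissible set by the scalar potential $f$ from the outset (no constraints to manage), you impose the divergence/curl constraints directly and only obtain a scalar potential a posteriori.

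That difference is precisely where the gap sits. Your coercivity argument asserts that $\diver w = 0$, $\curl w = -\curl A|_\Omega\in L^2$ and the prescribed normal trace force $w\in H^1(\Omega)\hookrightarrow L^6$. This fails: the div--curl lemma yields $w\in H^1(\Omega)$ only when $\nu\cdot w|_{\partial\Omega}\in H^{1/2}(\partial\Omega)$, whereas the prescribed data $-\nu\cdot X_\ga$ is only in $L^p(\partial\Omega)$ for $p<2$ because of the $1/r$ Biot--Savart singularity at each transversal intersection ($L^p(\partial\Omega)$ with $p<2$ does not embed into $H^{1/2}$ of a two-dimensional surface). So on your admissible set $w$ is a priori only $L^2(\Omega)$, and $\int_\Omega X_\ga\cdot w$ with $X_\ga\in L^{3/2}$ is not controlled by the quadratic part. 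You in fact flag this yourself at the end as the ``main obstacle,'' but the way you write the estimate (``$\|w\|_{H^1(\Omega)}$ bounded by $\|\nabla A\|_{L^2}$ plus boundary data'') presupposes the very fact that fails.

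The fix is exactly the rewriting above (equivalently, what the paper does from the start): once the cross term is expressed as a boundary integral $\int_{\partial\Omega}f\,X_\ga\cdot\nu$, you only need $f|_{\partial\Omega}\in H^{1/2}(\partial\Omega)\hookrightarrow L^4(\partial\Omega)$ and $X_\ga\cdot\nu\in L^{4/3}(\partial\Omega)$, which is available, giving a bound by $C\|X_\ga\|_{L^{3/2}(\partial\Omega)}\|\nabla f\|_{L^2(\Omega)}$; the term $\int_\Omega X_\ga\cdot A$ is handled with $\dot H^1(\RR^3)\hookrightarrow L^6$. With this correction the remainder of your argument (Euler--Lagrange via admissible variations built from Neumann harmonic corrections, bootstrapping $A_\ga\in W^{2,3/2}_\loc$ and $j_\ga - X_\ga\in W^{1,q}$ for $q<4$, and the stability estimate by testing the first variation and absorbing) matches the paper's proof; your absorption step in the stability estimate even gives the slightly stronger $CD^2$ bound, from which \eqref{diffjA} follows by the triangle inequality $D\le n(\ga)+n(\ga')$.
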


\begin{proof} Assume $\ga$ is as above. For $(f,A)\in H^1(\Omega)\times H^1(\RR^3,\RR^3)$ let
   $$I_\ga(f,A) = \hal\int_{\RR^3}|\nabla A|^2+\hal\int_\Omega|\nabla f - A|^2 + \int_{\partial\Omega}fX_\ga\cdot\nu - \int_\Omega X_\ga\cdot A.$$
   When minimizing $I_\ga$ (recall that $X_\Gamma$ is divergence-free), we have the freedom of adding a constant to $f$ hence we may assume that $f$ has zero average over  $\Omega$. Then, we use the embeddings $H^\hal(\partial\Omega)\hookrightarrow L^4$ and $H^1(\RR^3)\hookrightarrow L^6$ to find 
   $$\int_{\partial\Omega}\left|fX_\ga\cdot\nu\right|\le C\|X_\ga\|_{L^{\frac32}(\partial\Omega)}\|\nabla f\|_{L^2(\Omega)},\quad  \left|\int_\Omega X_\ga\cdot A\right|\le C\|X_\ga\|_{L^{\frac32}(\Omega)}\|\nabla A\|_{L^2(\RR^3)}.$$
   It is not difficult then to check that the infimum of $I_\ga$, which is nonpositive, is achieved by some couple $(f_\ga,A_\ga)$. Moreover, still assuming $f_\ga$ to have zero average over $\Omega$,  we have 
   \begin{equation}\label{estAf}\int_{\RR^3}|\nabla A_\ga|^2, \int_\Omega|\nabla f_\ga - A_\ga|^2, \int_\Omega|\nabla f_\ga|^2\le C\(\|X_\ga\|_{L^{\frac32}(\partial\Omega)}+ \|X_\ga\|_{L^{\frac32}(\Omega)}\).
   \end{equation}

   We let
   $$j_\ga =  X_\ga +\nabla f_\ga- A_\ga.$$
      The Euler--Lagrange equations for $I_\ga$ can then be written as
   \begin{equation}\label{JA}
      \left\{\begin{aligned}
         \Delta A_\ga +j_\ga\mathbf 1_\Omega & = 0\quad       & \text{ in $\RR^3$}   \\
         \diver(j_\ga \mathbf 1_\Omega)      & = 0 \quad      & \text{ in $\RR^3$}   \\
         \curl(j_\ga + A_\ga)                & = 2\pi\ga\quad & \text{ in $\Omega$.}
      \end{aligned}\right.
   \end{equation}
   From \eqref{estAf} and the definition of $j_\ga$ we deduce a bound for $\|j_\ga\|_{L^{\frac32}}$, which in turn implies using the equation for $A_\ga$ that $A_\ga\in W^{2,\frac32}_\loc$ and that 
   $$ \|A_\ga\|_{W^{2,\frac32}(\Omega)}\le C\(\|X_\ga\|_{L^{\frac32}(\partial\Omega)}+ \|X_\ga\|_{L^{\frac32}(\Omega)}\).$$
   Taking the divergence of the first equation in \eqref{JA}, we also find that $A_\ga$ is divergence-free in $\RR^3$.
   
   Moreover, the function $f_\ga$ is harmonic, and its normal derivative on $\partial\Omega$ is  $(X_\ga - A_\ga)\cdot \nu$,  which belongs to $L^p(\partial\Omega)$ for any $p<2$. Since $L^2(\partial\Omega)$ embeds into $W^{-\hal,4}(\partial\Omega)$, we deduce that $f_\ga\in W^{1,q}(\Omega)$ for any $q<4$ (see for instance \cite{Lieberman}). Together with the fact that $A\in W^{2,\frac32}_\loc$, this implies that $j_\ga - X_\ga$ belongs to  $W^{1,q}(\Omega)$ for any $q<4$. We also deduce along these lines the estimate that for some $p<2$
   $$\|f_\ga\|_{L^3(\partial\Omega)} \le C \|X_\ga\|_{L^p(\partial\Omega)}.$$

   To check uniqueness, assume both $(j,A)$ and $(j',A')$ satisfy \eqref{JA} and let $B = A-A'$ and $k = j-j'$. Then $\curl(k+B) = 0$ in $\Omega$ hence there exists $g$ such that $k+B=\nabla g$. From the equation $-\Delta B = k\mathbf 1_\Omega$ integrated against $B$ we find that
   $$\|B\|_{L^2(\RR^3)}^2 = \langle k,B\rangle_{L^2(\Omega)}.$$
   Since $\diver k = 0$ and $k\cdot\nu = 0$ on $\partial\Omega$, it holds that $\nabla g$ and $k$ are orthogonal in $L^2(\Omega)$. Thus we have
   $$\langle k,B\rangle_{L^2(\Omega)} = \langle k,\nabla g - k\rangle_{L^2(\Omega)}= -\|k\|_{L^2(\Omega)}^2.$$
   It follows that $B$ and $k$ are equal to $0$.

   To prove the last assertion of the proposition, assume that $(j,A) = (j_\ga,A_\ga)$ and that $(j',A') = (j_{\ga'},A_{\ga'})$. Then $D(I_\ga)_{(f,A)}(f-f',A-A') = 0$ and $D(I_{\ga'})_{(f',A')}(f-f',A-A') = 0$. Taking the difference of the two identities we find that
   $$ \int_{\RR^3}|\nabla(A-A')|^2+\int_\Omega|\nabla(f-f') - (A-A')|^2 = \int_{\partial\Omega}(f-f') (X_\ga - X_{\ga'})\cdot\nu - \int_\Omega (X_\ga - X_{\ga'})\cdot(A-A').$$
   Then we use the estimates for $ \|A_\ga\|_{W^{2,\frac32}(\Omega)}$ and $\|f_\ga\|_{L^3(\partial\Omega)}$ to find that \eqref{diffjA} holds, noting that $j - X_\ga = \nabla f -A$ and similarly for $j' - X_{\ga'}$.
\end{proof}

We may now define the constant $C_\Omega$ which appears in the expansion of the energy of minimizers of the Ginzburg--Landau energy. It is the equivalent of the renormalized energy of \cite{BetBreHel} for the optimal  curve $\ga_0$.

\begin{definition} Assume $\Omega$ is a smooth bounded domain such that the maximum of the ratio is achieved at a smooth simple curve $\ga_0$ which can be extended to a smooth simple closed curve in $\RR^3$, still denoted $\ga_0$. We set $A_\Omega = A_{\ga_0}$ and $j_\Omega = j_{\ga_0}$ where $(A_{\ga_0},j_{\ga_0})$ are given in Proposition~\ref{jA}.  We define 
\begin{equation}\label{comega}C_\Omega = \frac12\int_{\RR^3}|\curl A_\Omega|^2 + \lim_{\rho\to 0}\left(\frac12\int_{\Omega\setminus \overline{T_\rho(\ga_0)}} |j_\Omega|^2 +\pi\ellzero\log\rho\right),\end{equation}
where $T_\rho(\ga_0)$ denotes the tube of radius $\rho$ around $\ga_0$, intersected with $\Omega$.
\end{definition}

\subsection{\texorpdfstring{$\ep$}{ε}-level lower bounds}

We next recall the $\ep$-level estimates provided in \cites{Rom,RSS1}.
\begin{theoremm}\label{theorem:epslevel} For any $m,n,M>0$ there exist $C,\ep_0>0$ depending only on $m,n,M,$ and $\partial\Omega$, such that, for any $\ep<\ep_0$, if $(u_\ep,A_\ep)\in H^1(\Omega,\C)\times H^1(\Omega,\RR^3)$ is a configuration such that $F_\ep(u_\ep,A_\ep)\leq M|\log\ep|^m$ then there exists a polyhedral $1$-dimensional current $\nu_\ep$ and a measurable set $S_{\nu_\ep}$ such that
   \begin{enumerate}[leftmargin=*,font=\normalfont]
      \item $\nu_\ep /2\pi$ is integer multiplicity.
      \item $\partial \nu_\ep=0$ relative to $\Omega$,
      \item $\mathrm{supp}(\nu_\ep)\subset S_{\nu_\ep}\subset \overline \Omega$ with  $|S_{\nu_\ep}|\leq C|\log\ep|^{-q}$, where  $|\cdot |$ denotes the measure of the set and $q(m,n)\colonequals\frac32 (m+n)$,
      \item
            $$
               \int_{S_{\nu_\ep}}|\nabla_{A_\ep} u_\ep|^2+\frac{1}{2\ep^2}(1-|u_\ep|^2)^2
               \geq  |\nu_\ep|(\Omega)\left(\log \frac1\ep-C \log \log \frac1\ep\right)-\frac{C}{|\log\ep|^n},
            $$
      \item for any $\sigma\in(0,1]$ there exists a constant $C_\sigma$ depending only on $\sigma$ and $\partial\Omega$, such that
            \begin{equation*}
               \|\mu(u_\ep,A_\ep) -\nu_\ep\|_{C_T^{0,\sigma}(\Omega)^*}\leq C_\sigma\frac{F_\ep(u_\ep,A_\ep)+1}{|\log \ep|^{\sigma q}};
            \end{equation*}
      \item and for any $\alpha\in (0,1)$,
            \begin{equation*}
               \|\mu(u_\ep,A_\ep) -\nu_\ep\|_{\F(\Omega_\ep)}\leq C \frac{F_\ep(u_\ep,A_\ep)+1}{|\log \ep|^{\alpha q}},
            \end{equation*}
                      where the flat norm was defined in \eqref{flatnormdef} and 
            $$
               \Omega_\ep\colonequals \left\{x\in \Omega \ | \ \mathrm{dist}(x,\partial\Omega)> |\log\ep|^{-(q+1)}\right\}.
            $$
   \end{enumerate}
\end{theoremm}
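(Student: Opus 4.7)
The plan is to construct $\nu_\ep$ through a 3D grid-and-slicing procedure, combining 2D vortex ball constructions on slices with a degree-matching argument across faces. First, I would cover $\overline\Omega$ by a cubic grid $\mathcal{G}_\ell$ of mesoscopic sidelength $\ell = \lep^{-\tau}$ for a suitable exponent $\tau = \tau(m,n)$, chosen so that, after a generic translation and rotation, the boundaries of the cubes meet $\partial\Omega$ transversally and, on a large proportion of the 2D faces of $\mathcal{G}_\ell$, the sliced energy density is of moderate polylogarithmic order. The existence of sufficiently many such ``good'' faces follows from a Fubini/averaging argument using the global bound $F_\ep(u_\ep,A_\ep) \le M\lep^m$.

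Next, on each good face $F$, I would apply the 2D vortex ball construction of Jerrard--Sandier--Serfaty to the restriction of $(u_\ep,A_\ep)$ to $F$, obtaining disjoint small balls that cover the zeros of $u_\ep|_F$, each carrying an integer degree $d_i^F$. In its multi-vortex form this yields $\int_{\mathrm{balls}} e_\ep \ge \pi\bigl(\sum_i |d_i^F|\bigr)(\lep - C\log\lep)$. The polyhedral current $\nu_\ep$ is then built inside each cube $Q$ of the grid by connecting vortex points on opposite faces by straight segments, each weighted by $2\pi$ times the appropriate signed integer degree. Matching across faces is dictated by the fact that $\mu(u_\ep,A_\ep)$ is a closed 2-form, so by Stokes' theorem the signed degrees on the six faces of any interior cube sum to zero, guaranteeing that a consistent polyhedral connection exists and that $\partial\nu_\ep=0$ relative to $\Omega$. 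The set $S_{\nu_\ep}$ is the union of small $\ell$-tubular neighborhoods of these segments, whose volume is controlled by the number of vortex points times $\ell^2$, giving the $|S_{\nu_\ep}|\le C\lep^{-q}$ bound with $q = \tfrac32(m+n)$.

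The lower bound in item (4) then follows by summing the 2D ball-construction estimates over all good faces and integrating the resulting slice-by-slice lower bounds via Fubini, the $C\log\lep$ loss coming from the $\log\log$ correction in the 2D step. The approximation estimates in (5) and (6) are obtained by testing $\mu(u_\ep,A_\ep) - \nu_\ep$ against $C^{0,\sigma}_T$ or Lipschitz vector fields: inside each cube the true vorticity and the approximating segment differ by a localized object of mass proportional to $\sum_i |d_i^F|$ spread over scale $\ell$, giving an error of order $\ell^\sigma \bigl(F_\ep/\lep\bigr)$ after summation over cubes. Choosing $\tau$ appropriately then converts this into the stated decay $\lep^{-\sigma q}$ for the $C^{0,\sigma}_T$-dual norm, and an analogous estimate gives the flat-norm bound on the interior set $\Omega_\ep$.

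The hardest step I anticipate is the boundary treatment. Vortices whose slices lie on faces abutting $\partial\Omega$ must be capped off by polygonal paths along $\partial\Omega$ so as to enforce $\partial\nu_\ep = 0$ relative to $\Omega$, while keeping both $|\nu_\ep|$ and $|S_{\nu_\ep}|$ within their required bounds. For the flat norm estimate, restricted to the interior set $\Omega_\ep$, this is tractable because the boundary layer of thickness $\lep^{-(q+1)}$ is thin enough to absorb the relevant errors; the $C^{0,\sigma}_T$ estimate on all of $\Omega$ is more delicate and requires introducing tube coordinates near $\partial\Omega$ and carefully controlling how the polyhedral approximation is extended to meet $\partial\Omega$. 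A secondary technical difficulty is balancing the polylogarithmic exponents so that the sharp coefficient $\lep$ in item (4) is preserved, i.e.~so that no ``matching'' or ``boundary closure'' step introduces a loss worse than the permitted $\log\lep$ factor.
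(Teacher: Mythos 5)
This statement is not actually proved in the present paper: the authors explicitly say ``We next recall the $\ep$-level estimates provided in \cites{Rom,RSS1}'' and state Theorem~\ref{theorem:epslevel} as imported. So there is no internal proof to compare against; the relevant reference is \cite{Rom}.

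That said, your high-level strategy does match the cited construction: a mesoscale grid, 2D ball constructions and Jacobian estimates on good faces, a polyhedral current built from the face degrees, and a boundary-capping step. Two parts of your sketch, however, are imprecise enough that as written they would not go through. First, your matching step: you say the polyhedral current is built ``by connecting vortex points on opposite faces by straight segments.'' The vortex points on the six faces of a cube are not in one-to-one correspondence, and the Stokes constraint only forces the total signed degree on $\partial Q$ to vanish (up to the Jacobian-estimate error); one therefore needs a minimal connection inside each cube matching the full collection of signed Dirac masses across all faces, not a pairing of opposite faces. This is where the mass bound $|S_{\nu_\ep}|\le C\lep^{-q}$ and the tuning of $q=\tfrac32(m+n)$ actually come from. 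Second, your lower bound for item (4) conflates two different arguments: ``summing the 2D ball-construction estimates over all good faces'' (a discrete sum over a grid skeleton of measure zero) and ``integrating the resulting slice-by-slice lower bounds via Fubini'' (a continuous co-area argument). Neither alone gives item (4), and if you naively slice in all three axis directions and add, you triple-count the perpendicular energy in each direction. The actual mechanism is to apply the 2D lower bounds on cross-sections of the tubes making up $S_{\nu_\ep}$ transversal to the polyhedral segments and integrate along arclength, so the bound is on $\int_{S_{\nu_\ep}} e_\ep$ rather than on $F_\ep$ globally; as written your sketch does not articulate this localization. Finally, your identification of the boundary treatment as the hardest step is correct, but ``capping off by polygonal paths along $\partial\Omega$'' needs a genuine argument that the added length and the added neighborhood volume stay within the $|\nu_\ep|$ and $|S_{\nu_\ep}|$ budgets; merely asserting that the $\lep^{-(q+1)}$ boundary layer ``absorbs the relevant errors'' for the flat-norm estimate while the full $(C^{0,\sigma}_T)^*$ estimate ``is more delicate'' leaves the hard part unproved.
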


\subsubsection{Bounded vorticity}
We recall that $X$ denotes the class of oriented Lipschitz curves, seen as $1$-current with multiplicity $1$, which do not self intersect and which are either a loop contained in $\Omega$ or have two different endpoints on $\partial \Omega$. We also recall that $\NN$ be the space of normal $1$-currents supported in $\overline\Omega$, with boundary supported on $\partial\Omega$.
\begin{condition}[Weak Nondegeneracy condition]\label{nondegencond} There exists a unique curve $\ga_0$ in $X$ such that
   \begin{equation*}
      \R(\ga_0) = \sup_{\ga\in\NN}\frac{\pr{B_0,\ga}}{|\ga|}.
   \end{equation*}
   Moreover, there exists constants $c_0,P>0$ depending on $\Omega$ and $B_0$ such that
   \begin{equation}\label{cond2}
      \R(\ga_0)-\R(\ga)\geq C_0\min\left(\|\ga-\ga_0\|_*^P,1\right)
   \end{equation}
   for every $\ga\in X$.
\end{condition}
We will see in Section \ref{secstrongweak} that the strong nondegeneracy condition of Definition \ref{qdeu} implies this one.
We now state a result adapted from \cite{RSS1} that, under this weak nondegeneracy condition, locates the vortices of almost minimizers near $\ga_0$. 
\begin{theoremm}\label{teo:boundedvorticity}
   Assume that Condition \ref{nondegencond} holds. Then, for any $K>0$ and $\alpha\in(0,1)$, there exists positive constants $\ep_0,C>0$ depending on $\Omega$, $B_0$, $K$, and $ \alpha$, such that the following holds.

   For any $\ep<\ep_0$ and any $h_\ex<H_{c_1}^0+K\log |\log \ep|$, if $(\u,\A)$ is a configuration in $H^1(\Omega,\C)\times [A_\ex+H_{\curl}]$ such that $GL_\ep(\u,\A)\leq h_\ex^2 J_0$,
   then, letting $(u,A)=(e^{-ih_\ex \phi_0}\u,\A-h_\ex A_0)$, there exists ``good'' Lipschitz curves $\ga_1,\dots,\ga_{N_0}\in X$ and $\tga\in\NN$ such that $N_0\le C$ and
   \begin{enumerate}[leftmargin=*,font=\normalfont]
      \item for $1\le i\le N_0$, we have
            $$
               \R(\ga_0) - \R(\ga_i)\le C\frac{\log\lep}{\lep};
            $$
      \item for $1\le i\le N_0$, we have
            $$
               \|\ga_i-\ga_0\|_*\leq C\(\dfrac{\log\lep}{\lep^\alpha}\)^\frac1P \quad\mathrm{and}\quad \big||\ga_i|-\ellzero\big|\leq C\(\dfrac{\log\lep}{\lep^\alpha}\)^\frac1P;
            $$
      \item $\tga$ is a sum in the sense of currents of curves in $X$ such that
            $$
               |\tga|\leq C\frac{\log\lep}{\lep^{1-\alpha}};
            $$
      \item we have
            \begin{equation*}
               \left\|\mu(u,A)-2\pi\sum_{i=1}^{N_0} \ga_i-2\pi \tga\right\|_{\(C_T^{0,1}(\Omega)\)^*}\leq C\lep^{-2}
            \end{equation*}
            and
            $$
            \left\|\mu(u,A)-2\pi\sum_{i=1}^{N_0} \ga_i-2\pi \tga\right\|_{\F(\Omega_\ep)}\leq C\lep^{-2},
            $$
            where \begin{equation*}
            \Omega_\ep\colonequals \left\{x\in \Omega \ | \ \mathrm{dist}(x,\partial\Omega)> |\log\ep|^{-2}\right\}. \end{equation*}
   \end{enumerate}
\end{theoremm}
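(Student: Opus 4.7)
The plan is to combine the energy splitting formula, the $\ep$-level polyhedral approximation of Theorem~\ref{theorem:epslevel}, and the weak nondegeneracy Condition~\ref{nondegencond} by matching upper and lower bounds for $F_\ep(u,A)$ at scale $\lep$ and tracking the slack.

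First, I would feed the hypothesis $GL_\ep(\u,\A)\le \he^2 J_0$ into the splitting identity \eqref{Energy-Splitting} to get
\begin{equation*}
F_\ep(u,A) \le \he\int_\Omega \mu(u,A)\wedge B_0 -r_0.
\end{equation*}
Since $|r_0|\le C\ep\he^2 F_\ep(|u|,0)^{1/2}$, an absorption argument (together with a crude bound $F_\ep\le C\he \lep$ coming from the ball construction applied once) turns this into $F_\ep(u,A)\le \he\pr{B_0,\mu(u,A)}+ O(1)$. I would then apply Theorem~\ref{theorem:epslevel} (with $m=1$ and $n$ large) to produce a polyhedral $1$-current $\nu_\ep$ with $\nu_\ep/2\pi$ integer-multiplicity, $\partial\nu_\ep=0$ relative to $\Omega$, satisfying a mass lower bound $F_\ep\ge |\nu_\ep|(\Omega)(\lep-C\log\log\lep)-o(1)$ and a $(C^{0,1}_T)^*$-approximation $\|\mu(u,A)-\nu_\ep\|_*\le C\lep^{-\sigma q}$. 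Combining with $h_\ex\R_0 = \lep/2 + K\R_0\log\lep$ and using $\pr{B_0,\nu_\ep}\le \R_0|\nu_\ep|$ (for any sum of elements of $X$), this yields the crucial mass bound $|\nu_\ep|(\Omega)\le C$, so that after decomposing $\nu_\ep/2\pi$ into finitely many simple oriented Lipschitz curves in $\NN$ the number of pieces is uniformly bounded.

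The next step is to split this decomposition into good and bad pieces. Writing $\nu_\ep/2\pi = \sum_j \Gamma_j$ with $\Gamma_j\in X$ (splitting at the polyhedral vertices and reconnecting using the boundary structure), I would call $\Gamma_j$ \emph{good} if $\R_0-\R(\Gamma_j)\le C_1 \log\lep/\lep$ for a suitable large constant $C_1$, and \emph{bad} otherwise. Plugging this decomposition back into the upper/lower bound pair gives
\begin{equation*}
\sum_j |\Gamma_j|\bigl(\tfrac{\lep}{2}-h_\ex\R(\Gamma_j)\bigr)\le C\log\lep,
\end{equation*}
up to $o(\lep)$ errors. Writing $\lep/2-h_\ex\R(\Gamma_j)=h_\ex(\R_0-\R(\Gamma_j))+O(\log\lep)$, the bad piece contributes at least $c\log\lep\cdot |\Gamma_j^{\bad}|$ to the left-hand side, giving $|\tga|:=\sum_{j\ \bad}|\Gamma_j|\le C\log\lep/\lep^{1-\alpha}$ after a standard dyadic argument on the size of the defect $\R_0-\R(\Gamma_j)$. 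For each good curve, the weak nondegeneracy estimate \eqref{cond2} immediately translates $\R_0-\R(\Gamma_j)\le C\log\lep/\lep$ into $\|\Gamma_j-\ga_0\|_*\le (C\log\lep/\lep^\alpha)^{1/P}$, and the same control also forces $\bigl||\Gamma_j|-\ellzero\bigr|$ to be small (for instance by testing $\R$ against a small Lipschitz variation and using that $|\ga_0|=\ellzero$).

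Finally, the approximation assertions for $\mu(u,A)$ follow by stringing together $\|\mu(u,A)-\nu_\ep\|\le C\lep^{-\sigma q}$ (in both the $(C^{0,1}_T)^*$ and the flat norm on $\Omega_\ep$ from Theorem~\ref{theorem:epslevel}) with the identity $\nu_\ep=2\pi\sum_i\ga_i + 2\pi\tga$, taking $\sigma q$ large enough that $\lep^{-\sigma q}\le C\lep^{-2}$. The main obstacle I expect is the step connecting the polyhedral decomposition to curves in $X$: the pieces produced by the ball construction are topological, and some care is needed to reconnect components so that each good piece is actually a single simple curve close (in the dual norm) to $\ga_0$ rather than a union of small arcs, which is where the weak nondegeneracy condition is essential to rule out nontrivial loops or multi-arc concentration. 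This is precisely the kind of reshuffling that is done in \cite{RSS1}, and the adaptation here amounts to keeping track of the quantitative slack $K\log\lep$ in $h_\ex-H_{c_1}^0$ throughout.
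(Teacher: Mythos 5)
The paper does not re-prove this theorem: the remark immediately after the statement says the result (items (1)--(3) and the $(C_T^{0,1})^*$ bound) is \cite{RSS1}*{Theorem 3}, with the only addition being the flat-norm estimate on $\Omega_\ep$, which is obtained by re-running Theorem~\ref{theorem:epslevel} within the proof of \cite{RSS1}. You are therefore reconstructing the proof of \cite{RSS1}*{Theorem 3} from scratch, and the reconstruction has a genuine gap at the single most important step.

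The gap is the claim that ``$\pr{B_0,\nu_\ep}\le\R_0|\nu_\ep|$ yields the crucial mass bound $|\nu_\ep|(\Omega)\le C$.'' Feeding the splitting formula, Theorem~\ref{theorem:epslevel}, and this crude flux bound into the hypothesis $GL_\ep\le h_\ex^2J_0$ gives, after the substitutions you describe,
\begin{equation*}
\tfrac12|\nu_\ep|(\lep-C\log\log\lep) \ \le\ h_\ex\R_0|\nu_\ep|+O(1) \ =\ \bigl(\tfrac{\lep}{2}+K\R_0\log\lep\bigr)|\nu_\ep|+O(1),
\end{equation*}
i.e.\ $-|\nu_\ep|\,(C\log\log\lep+2K\R_0\log\lep)\le O(1)$, which is vacuous. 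At $h_\ex=H_{c_1}^0$ the leading $\lep|\nu_\ep|/2$ cost and the leading magnetic gain cancel exactly, and for $K>0$ the linear-in-$|\nu_\ep|$ balance is actually \emph{favorable} to adding more curves. A linear (in mass) lower bound of the Theorem~\ref{theorem:epslevel} type therefore cannot bound $|\nu_\ep|$ in the window $h_\ex\le H_{c_1}^0+K\log\lep$, no matter how the error constants are tuned. Bounded vorticity in this regime is a genuinely quadratic effect: one needs a lower bound that captures the $\pi N^2\log(\cdot)$ mutual repulsion between nearly coincident filaments (compare Proposition~\ref{lowperp} in this paper, or the two-stage ball growth used there, or the slicing/ball-construction with degree-squared term in \cite{SanSerBook}*{Chapter 9}). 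That is the key lemma your argument omits, and the weak nondegeneracy condition~\eqref{cond2} is not a substitute for it: nondegeneracy localizes good curves near $\ga_0$ but does nothing by itself to limit how many of them there are.

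Two secondary points. First, Theorem~\ref{theorem:epslevel} bounds $\int_{S_{\nu_\ep}}|\nabla_{A}u|^2+\tfrac1{2\ep^2}(1-|u|^2)^2$, so what you get for $F_\ep$ is $\tfrac12|\nu_\ep|(\lep-\cdots)$, not $|\nu_\ep|(\lep-\cdots)$; this is harmless here but should be stated correctly. Second, your dyadic good/bad argument is set up correctly in principle, but note that extracting $|\tga|\le C\log\lep/\lep^{1-\alpha}$ from $\sum_j(\R_0-\R(\Gamma_j))|\Gamma_j|\le CK\log\lep\cdot\sum_j|\Gamma_j|/\lep$ already presupposes that $\sum_j|\Gamma_j|$ is bounded independently of $\ep$ (otherwise the right-hand side is not $O(\log\lep)$). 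So the bad-mass estimate and the good-curve count both hinge on the very mass bound that is missing. Once the quadratic lower bound is supplied, the rest of your outline --- polyhedral decomposition into elements of $X$, the good/bad threshold, and passing closeness in $\R$ to closeness in $\|\cdot\|_*$ via \eqref{cond2} --- is in the right spirit.
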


\begin{remark}
In \cite{RSS1}*{Theorem 3} we did not state the vorticity estimate for the flat norm above. However, it appears from the proof that
$$
\nu_\ep=\sum_{i=1}^{N_0}\ga_i+\tga
$$
and that the vorticity estimate stated in \cite{RSS1}*{Theorem 3} directly follows from applying Theorem \ref{theorem:epslevel}. A straightforward use of this theorem then also yields the vorticity estimate for the flat norm stated above.

\end{remark}

\subsection{Tube coordinates, energy rewriting and horizontal rescaling}\label{sec:coordinates}
\subsubsection{Tube coordinates}
\begin{proposition}\label{prop:diffeo}
   Assume $\ga_0:[0,\ellzero]\to\overline\Omega$ is a smooth curve parametrized by arclength with endpoints $p=\ga_0(0)$ and $q=\ga_0(\ellzero)$ on $\partial\Omega$ and meeting the boundary orthogonally. Then there exists $\delta>0$ and coordinates defined in $T_\delta\cap \overline \Omega$, where $T_\delta$ is the tube of radius $\delta$ around $\ga_0$ (see remark below), such that:
   \begin{itemize}
      \item[-] In these coordinates, $z\mapsto (0,0,z)$ is an arclength parametrization of $\ga_0$.
      \item[-] Denoting by $g(x,y,z)$ the Euclidean metric in these coordinates, we have $g_{13}=g_{23} = 0$ and, from the previous property, $\gz(0,0,z) = 1$ for $z\in[0,\ellzero]$.
      \item[-] These coordinates map a neighborhood of $p$ in $\partial\Omega$ into $\RR^2\times\{0\}$ and  a neighborhood of $q$ into $\RR^2\times\{\ellzero\}$.
   \end{itemize}
   We will denote by $\Cd$ the coordinate patch corresponding to  $T_\delta\cap \overline \Omega$, and by $\Phi:\Cd\to T_\delta\cap \overline \Omega$ the inverse of the coordinate chart.
\end{proposition}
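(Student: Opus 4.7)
The plan is to construct first a smooth scalar height function $Z$ on $T_\delta\cap\overline\Omega$ whose level sets will serve as the coordinate $2$-surfaces $\{z=\mathrm{const}\}$, and then define the transverse coordinates $(x,y)$ by flowing a chart of $\{Z=0\}$ along $\nabla Z/|\nabla Z|^2$. The key geometric point is that orthogonal foliations are exactly those that kill the mixed metric coefficients $g_{13},g_{23}$.

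\medskip

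\emph{Construction of $Z$.} I would first fix a parallel-transported normal frame $(\n(z),\b(z))$ along an extension of $\ga_0$ slightly past its endpoints and form the associated Fermi diffeomorphism $\Phi_0(u_1,u_2,u_3)\colonequals \ga_0(u_3)+u_1\n(u_3)+u_2\b(u_3)$ from a slab onto a Euclidean tube $T_{2\delta}$. Since $\ga_0$ meets $\partial\Omega$ orthogonally at $p$, in these Fermi coordinates $\partial\Omega$ is a graph $u_3=\eta_p(u_1,u_2)$ near $p$ with $\eta_p(0,0)=0$ and $\nabla\eta_p(0,0)=0$, and similarly $u_3=\ellzero+\eta_q(u_1,u_2)$ near $q$. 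Picking smooth cut-offs $\chi_p,\chi_q$ equal to $1$ near $u_3=0$ and $u_3=\ellzero$ respectively and supported in disjoint intervals, set
\[
Z(u_1,u_2,u_3)\colonequals u_3-\chi_p(u_3)\eta_p(u_1,u_2)-\chi_q(u_3)\eta_q(u_1,u_2).
\]
By construction $Z$ vanishes on a neighborhood of $p$ in $\partial\Omega$, equals $\ellzero$ on one near $q$, coincides with arclength along $\ga_0$ (the corrections vanish for $u_1=u_2=0$), and has Euclidean gradient $\nabla Z=\ga_0'(z)$ with $|\nabla Z|=1$ on $\ga_0$ (since the Fermi metric is the identity there). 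Shrinking $\delta$, we have $|\nabla Z|\ge 1/2$ throughout the tube.

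\medskip

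\emph{Definition and verification of $\Phi$.} Set $V\colonequals\nabla Z/|\nabla Z|^2$, so the flow $\Psi^t_V$ satisfies $Z\circ\Psi^t_V=Z+t$ and hence maps $\{Z=c\}$ onto $\{Z=c+t\}$. Choose a smooth parametrization $\psi:B(0,\delta')\subset\RR^2\to\partial\Omega\cap\{Z=0\}$ with $\psi(0,0)=p$ and define $\Phi(x,y,z)\colonequals\Psi^z_V(\psi(x,y))$. Then $\partial_z\Phi=V\circ\Phi$ is proportional to $\nabla Z$, whereas $\partial_x\Phi,\partial_y\Phi$ remain tangent to $\{Z=z\}$ (the image of $\{Z=0\}$ under $\Psi^z_V$), so $g_{13}=g_{23}\equiv 0$ throughout the chart. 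Since $\nabla Z=\ga_0'$ on $\ga_0$, the curve $\ga_0$ is an integral curve of $V$ at unit speed, giving $\Phi(0,0,z)=\ga_0(z)$ and $\gz(0,0,z)=1$. The boundary condition at $p$ is immediate from $\Phi(x,y,0)=\psi(x,y)\in\partial\Omega$, and at $q$ it follows because $\Phi(\cdot,\cdot,\ellzero)$ parametrizes a neighborhood of $q$ in $\{Z=\ellzero\}$ which coincides with $\partial\Omega$ near $q$ by construction. Finally, $D\Phi(0,0,z)$ has columns spanning $T_p\partial\Omega\oplus\RR\,\ga_0'(z)$ at the endpoints and is nondegenerate along all of $\ga_0$; by compactness of $[0,\ellzero]$ and the inverse function theorem, $\Phi$ restricts to a diffeomorphism from some open $\Cd\subset\RR^3$ onto $T_\delta\cap\overline\Omega$ for $\delta$ sufficiently small.

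\medskip

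The main technical point is to choose $Z$ so that vanishing on the boundary pieces, arclength along $\ga_0$, and unit gradient along $\ga_0$ all hold simultaneously. The explicit correction $u_3-\chi_p\eta_p-\chi_q\eta_q$ works precisely because $\eta_p,\eta_q$ vanish to first order at $u_1=u_2=0$, which is itself a direct consequence of the orthogonality assumption $\ga_0\perp\partial\Omega$ at both endpoints.
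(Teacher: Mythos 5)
Your proposal is correct and follows essentially the same strategy as the paper: build a scalar ``height'' function that vanishes on $\partial\Omega$ near $p$, equals $\ellzero$ near $q$, coincides with arclength along $\ga_0$, and has level sets orthogonal to $\ga_0$; then flow a parametrization of the $\{Z=0\}$ slice along $\nabla Z/|\nabla Z|^2$, with orthogonality of $\partial_x\Phi,\partial_y\Phi$ to $\partial_z\Phi\parallel\nabla Z$ giving $g_{13}=g_{23}=0$. The one place you go beyond the paper is that you construct the height function explicitly via Fermi coordinates and cutoffs (the paper simply asserts that a smooth extension of $f$ with orthogonal level sets exists); your observation that $\nabla\eta_p(0,0)=\nabla\eta_q(0,0)=0$ is exactly where the orthogonal-intersection hypothesis is used, so this is a clean and correct way to fill in that step.
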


\begin{remark}
   To define $T_\delta$ we first need to extend $\ga_0$ a little bit outside $\Omega$. Then the tube really refers to this extended curve, so that $T_\delta\cap \overline \Omega$ indeed contains a neighborhood in $\partial\Omega$ of each of the endpoints of $\ga_0$.
\end{remark}

\begin{remark}
   If $\ga_0$ is a critical point of the ratio $\R$, it is easy to check using the fact that $B_0\times\nu = 0$ on $\partial\Omega$ that $\ga_0$ intersects $\partial\Omega$ orthogonally.
\end{remark}

\begin{proof}[Proof of the proposition]
   Let $s\mapsto\ga_0(s)$ be a parametrization of $\ga_0$ by arclength on the interval $[0,\ellzero]$. We define $f(x) = s$ if $x = \ga_0(s)$, let $f = 0$ in a neighborhood of $p\colonequals\ga_0(0)$ in $\partial\Omega$ and let $f = \ellzero$ in a neighborhood of $q\colonequals\ga_0(\ellzero)$ in $\partial\Omega$. The function $f$ may be extended smoothly in a neighborhood $W$ of $\ga_0$ in $\overline\Omega$ in such a way that the level sets of $f$ meet $\ga_0$ orthogonally  and, restricting the neighborhood if necessary, we may assume its gradient does not vanish there.

   If $\eta>0$ is small enough, then $\Sigma_\eta = \bar B(p,\eta)\cap\partial\Omega$ is included in $W$ and diffeomorphic to the disk $\bar D(0,\eta)$, and we let $\varphi$, defined in $ \bar D(0,\eta)$ be such a  diffeomorphism. For $(x,y,z)\in \cyl\colonequals\bar D(0,\eta)\times[0,\ellzero]$,  we define $\Phi(x,y,z)$ to be equal to $\gamma(z)$, where $\gamma$ is the integral curve of the vector field $\nabla f/|\nabla f|^2$ originating at $\varphi(x,y)$, so  that $f(\gamma(z))' = 1$, and hence $f(\Phi(x,y,z)) = z$. In particular, $z\mapsto\Phi(0,0,z)$ is a parametrization of $\ga_0$ by arclength.

   If $\eta$ is chosen small enough, then $\Phi$ is well-defined, injective  and smooth, and its differential is invertible. Moreover $\cyl\cap\{z=0\}$ and $\cyl\cap\{z=\ellzero\}$ are mapped to the boundary of $\Omega$, since $f(\Phi(x,y,z)) = z$. Therefore $\Phi$ is  a diffeomorphism  from $\cyl$ to a neighborhood of $\ga_0$ in $\overline\Omega$. 

   We choose $\delta>0$ small enough so that $T_\delta\cap\overline \Omega\subset \Phi(\cyl)$. Then the coordinate system defined by $\Phi$ has all the desired properties. The fact that, if $g$ denotes the pull-back of the Euclidean metric by $\Phi$, we have $g_{13}=g_{23} = 0$ follows from the fact that $\Phi(\cdot,\cdot,z)$ is mapped to $\{f = z\}$, hence $\partial_x\Phi$ and $\partial_y\Phi$ are orthogonal to $\nabla f$, hence to $\partial_z\Phi$.
\end{proof}

\subsubsection{Energy in the new coordinates}
Then we express the quantities of interest in the new coordinates: $g$ is the metric, pull-back of the Euclidean metric by $\Phi$. Given a configuration $(u,A)$ in $\Omega$, the order parameter transforms as a scalar field and $A$ as a $1$-form. The field $B_0$ transforms as a one-form. Keeping the same notation for the quantities in the new coordinates, we may define the superconducting current $j(u,A) = (iu,du- iAu)$ as in the original coordinates, it is a $1$-form, and the vorticity $\mu(u,A) = dj(u,A) + dA$, which is a two-form. The new coordinates are defined in $\Ud = T_\delta\cap\overline\Omega$, using the notation of the previous proposition, and we let 
$$\Cd = \Phi^{-1}(\Ud).$$

We define
$$ \fep(u,A,\Ud) = \frac12 \int_\Ud e_\ep(u,A),$$
where
$$e_\ep(u,A)=\frac12\(|\nabla_{A}u|^2+\frac1{2\ep^2}(1-|u|^2)^2+|\curl A|^2\).$$

In the new coordinates, this becomes
$$ \fep(u,A,\Ud) = \frac12 \int_\Cd  |du - iAu|_g^2+\frac1{2\ep^2}(1-|u|^2)^2+|d A|_g^2\,d\vol_g,$$
where  $\vol_g$ is the volume form relative to the metric $g$.

Given a $k$-form $\omega$, its norm relative to $g$ is defined by the identity $|\omega|_g^2 \,d\vol_g= \omega\wedge *_g\omega$. Alternatively, if $\e_1,\e_2,\e_3$ is a $g$-orthonormal frame, $|\omega|_g^2 = \sum \omega(\e_{i_1},\dots,\e_{i_k})^2$, where the sum runs over ordered $k$-tuples $1\le i_1<\dots<i_k\le 3$.

\subsubsection{Vertical and perpendicular parts of the energy}
The energy in the new coordinates splits as follows. We let $\gperp = g - \gz\,{dz}^2$.
We have
$$|dA|_g^2 = |dA|_{\gperp}^2+ \sum_{1\le i,j\le2} (dA)_{i3}(dA)_{j3}g^{ij}g^{33}. $$
Note that, since $(g_{ij})_{1\le i,j\le 2}$ is a positive definite matrix, then so is its inverse matrix and hence the sum above is a nonnegative number.

Then we decompose
\begin{align*}
	   \fep(u,A,\Ud)=\fperp (u,A)+ \fpar (u,A),
\end{align*}
where, writing $\Sigma_z$ for the intersection of $\Cd$ with the horizontal plane at height $z$,
\begin{align*}
   \fperp(u,A) & \colonequals\int_{z=0}^{\ellzero}\int_{\Sigma_z} \eperp(u,A)\,d\vol_\gperp\,dz, \\
  \fpar(u,A)  & \colonequals\int_\Cd \epar(u,A)\,d\vol_g, 
\end{align*}
and
\begin{align}\label{defeperp}
   \eperp(u,A) & \colonequals \frac12  \(|du - iAu|_\gperp^2+\frac1{2\ep^2}(1-|u|^2)^2+|d A|_\gperp^2\)\sqrt\gz,                   \\
   \epar(u,A)  & \colonequals \frac12 \( g^{33} |\partial_z u - i A_z u|^2 + \sum_{1\le i,j\le2} (dA)_{i3}(dA)_{j3}g^{ij}g^{33}\).\notag
\end{align}

\subsubsection{Horizontal blow-up}
We now perform the announced horizontal rescaling, at a scale $\ell$ which will be optimized later.
Given $\ell>0$, we now consider in $\Cd$ the metric $\tilde g$  defined by  $\tilde g_{ij} = \ell^{-2} g_{ij}$ if $1\le i,j\le2$ and $\tilde g_{ij} = g_{ij}$ otherwise (recall that $g_{13} = g_{23} = 0$).
The volume element for $\tilde g$ is  $d\vol_{\tilde g} = {\ell^{-2}}d\vol_g$. We let $\tilde\ep = \ep/\ell$,
and
\begin{equation}\label{deftfep}
   \tilde \fep(u,A) = \frac12 \int_\Cd|du - iA u|_{\tilde g}^2+\frac1{2\tilde\ep^2}(1-|u|^2)^2+|d A|_{\tilde g}^2\,d\vol_{\tilde g}.
\end{equation}
We have
\begin{align*}
   |du - iA u|_{\tilde g}^2 & = \ell^2|du - iA u|_\gperp^2 + g^{33} |\partial_z u - i A_z u|^2,                    \\
   |d A|_{\tilde g}^2       & = \ell^4 |dA|_{\gperp}^2+ \ell^2 \sum_{1\le i,j\le2} (dA)_{i3}(dA)_{j3}g^{ij}g^{33},
\end{align*}
so that,  if $\ell \le 1$,  then
\begin{equation}\label{decompose}\ell^2\tilde \fep(u,A)\le \ell^2\fperp(u,A) + \fpar(u,A).\end{equation}
Therefore
\begin{equation}\label{decompose2}\fep(u,A,\Ud)\ge (1-\ell^2)\fperp(u,A)+\ell^2\tilde \fep(u,A).\end{equation}

\section{Preliminaries on the ratio function}\label{sec:graphs}
\subsection{Non-degeneracy condition for graphs and piecewise graphs}
Here we compute the second derivative of the ratio $\ga\mapsto \R(\ga)$ for $\ga$'s that are graphs over $\ga_0$. In addition, we consider a more general class of curves, which we call piecewise graphs over $\ga_0$, and which naturally appear in our setting since, by Theorem~\ref{teo:boundedvorticity}, the approximation of the vorticity is essentially composed of a sum of $N_0$ such objects. We provide a kind of nondegeneracy condition for the ratio among piecewise graphs, and then establish a relation between the second derivative of the ratio and this nondegeneracy condition.

Throughout this section, $B_0$ is the Meissner magnetic field associated to the domain $\Omega$. It is in particular true that the restriction of $B_0$ to a plane tangent to  the boundary of $\Omega$ is zero. Without changing notation, we will work in the coordinates defined in Proposition~\ref{prop:diffeo} in a neighborhood of $\ga_0$. In these coordinates, $\ga_0$ is the interval $[0,\ellzero]$ on the $z$-axis.

We have $\R(\ga) = \pr{B_0,\ga}/|\ga|$ where, in the new coordinates, given a curve $\ga$ in $\Cd$,
$$|\ga| = \int |\ga'(s)|_g\,ds,\quad \pr{B_0,\ga} = \int_\ga B_0.$$

For a map $f$ defined on $\Cd$, we denote by $f_z^\bullet$ the map
$$f_z^\bullet(x,y,z) = f(0,0,z).$$
The map $f_z^\bullet$ may be seen either as a function of $x$, $y$, $z$, or just of $z$. Recall from the introduction that for the metric evaluated along the axis $g(0,0,z)$, the variable $z$ is omitted from the notation, and we write $g^\bullet$ instead of $g^\bullet_z$.

We use the notation $\vu = (x,y,0)$ and $d\vu = dx\,dy$. For any curve or sum of curves $\ga$, we define
\begin{align*}
   \pperp{\ga}& \colonequals \int_\ga  \frac12(d\gz)^\bullet_z(\vu)\,dz\\
    \prll{\ga}& \colonequals \int_\ga  \Ll(z,\vu)\ dz,
\end{align*}
where the linearization of the length is defined as 
\begin{equation}\label{defLl}
	\Ll(z,\vu) =\frac14(d^2\gz)^\bullet_z(\vu,\vu) - \frac18 (d\gz)^\bullet_z(\vu)^2.
\end{equation}

\begin{remark} Note that
   \begin{equation}\label{suplagl} \Ll(z,\vu)\le C|\vu|^2.\end{equation}
\end{remark}

Hereafter $\chi$ is a smooth cutoff function on $\Cd$ taking values in $[0,1]$ such that
\begin{equation}\label{propchi} 
	\text{$\chi (\cdot) = 1$  if $\dist_{g}(\cdot,\ga_0)<\delta/2$}, \quad |\nabla\chi|\le C/\delta,\quad  \text{$\chi (\cdot)= 0$ if $\dist_g(\cdot ,\ga_0)> 3\delta/4$.}
\end{equation}

For any $2$-form $\mu = \mu_{12}\,dx\wedge dy +  \mu_{23}\,dy\wedge dz+  \mu_{31}\,dz\wedge dx$, we let
\begin{equation}\label{defprem}
	\prem{\mu} =  \int_z\int_{\Sigma^z} \chi \sqrt\gz\ \mu_{12}\,d\vu\,dz = \pr{\chi\frac{\partial}{\partial z},\mu}.\end{equation}

We also define, for any curve or sum of curves $\Gamma$, 
\begin{equation*}
   \prem{\ga} \colonequals \int_\ga \chi \ \sqrt\gz \,dz.
\end{equation*}

The following lemma motivates the notation $\prem{\ga}$.

\begin{lemma}
   Assume $\ga$ is a curve or sum of curves,  and that
   $$\left\|\mu - 2\pi\ga\right\|_*\le \eta,$$
   where $\|\cdot\|_*$ is defined in \eqref{defstar}. Then 
   $$\left|\prem{\mu}-2\pi \prem{\ga}\right|\leq C\eta.$$
\end{lemma}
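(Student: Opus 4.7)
The plan is to rewrite $\prem{\cdot}$ as a pairing $\pr{B,\cdot}$ for a specific vector field $B\in C_T^{0,1}(\Omega,\RR^3)$ of bounded norm; the bound then follows directly from the definition \eqref{defstar} of $\|\cdot\|_*$ by linearity.

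The first step is to construct $B$. In the tube coordinate chart $\Cd$, I set $\tilde B=(\chi/\sqrt{\gz})\,\partial_z$ and take $B=\Phi_*\tilde B$, extended by zero outside $T_\delta\cap\overline\Omega$; this extension is smooth on $\overline\Omega$ because $\chi$ is compactly supported in the patch away from the lateral boundary of the tube. A short calculation using $g_{13}=g_{23}=0$ shows that the Euclidean dual $1$-form $B^\flat$ reads $\chi\sqrt{\gz}\,dz$ in tube coordinates. Writing out $\pr{B,\mu}=\int_\Omega \mu\wedge B^\flat$ and $\pr{B,\ga}=\int_\ga B^\flat$ in those coordinates recovers exactly the defining formulas \eqref{defprem} for $\prem{\mu}$ and $\prem{\ga}=\int_\ga \chi\sqrt{\gz}\,dz$, respectively.

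The main obstacle, and the step I would focus on, is verifying the boundary condition $B\times\vec\nu=0$ on $\partial\Omega$. The $C^{0,1}$ bound $\|B\|_{C^{0,1}}\le C$ is routine given the smoothness of $\chi$, $\gz$ and $\Phi$, plus the fact that $\gz$ is bounded away from $0$ on the support of $\chi$. For the boundary condition, the key ingredient is that $\ga_0$ meets $\partial\Omega$ orthogonally at both endpoints $p$ and $q$: from the construction in Proposition~\ref{prop:diffeo} one has $\Phi_*\partial_z=\nabla f/|\nabla f|^2$, and near $p$ (resp.\ near $q$) the level set $\{f=0\}$ (resp.\ $\{f=\ellzero\}$) coincides with an open piece of $\partial\Omega$. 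Hence on $\partial\Omega\cap T_\delta$ the vector $B$ is parallel to $\nabla f$, which is parallel to $\vec\nu$; on the rest of $\partial\Omega$ the vector $B$ vanishes. This is the only place in the argument where the orthogonality assumption on $\ga_0$ is really used.

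With these two facts in hand, the argument concludes by
\[
|\prem{\mu}-2\pi\prem{\ga}|=|\pr{B,\mu-2\pi\ga}|\le \|B\|_{C_T^{0,1}}\,\|\mu-2\pi\ga\|_* \le C\eta,
\]
with $C$ depending only on $\chi$, $\delta$, $g$ and $\Phi$, which proves the claim.
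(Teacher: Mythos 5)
Your proof is correct and takes essentially the same route as the paper: the paper's one-line argument observes that $\prem{\mu}=\int_\Cd (\chi\sqrt{\gz}\,dz)\wedge\mu$ and that the $1$-form $\chi\sqrt{\gz}\,dz$ restricts to zero on $\partial\Cd$, hence lies (after extension by zero) in a bounded multiple of the unit ball of $C_T^{0,1}$, which is exactly the test field $B^\flat$ you construct. Your unpacking of the tangential boundary condition — on the lateral part $\chi=0$, and on the top/bottom pieces $\{z=0\},\{z=\ellzero\}$ the pullback of $dz$ vanishes (equivalently $\Phi_*\partial_z=\nabla f/|\nabla f|^2\parallel\vec\nu$) — makes explicit what the paper leaves implicit, but the argument is the same.
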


\begin{proof}
   We have
   $$\prem{\mu}  =  \int_z\int_{\Sigma^z}\chi\, \sqrt\gz \mu_{12}\,d\vu\,dz = \int_\Cd \(\chi \, \sqrt\gz \,dz\)\wedge \mu.$$
   Since the restriction of $\chi \ \sqrt\gz \,dz$ to $\partial \Cd$ vanishes,  it belongs to $C_T^{0,1}(\Cd)$,  and thus
   $$\left|\int_\Cd \(\chi \,\sqrt\gz \,dz\)\wedge \mu - 2\pi \int_\ga \chi \, \sqrt\gz \,dz\right|\le C\eta.$$
\end{proof}

\begin{figure}
   \centering
   \includegraphics[scale=0.6]{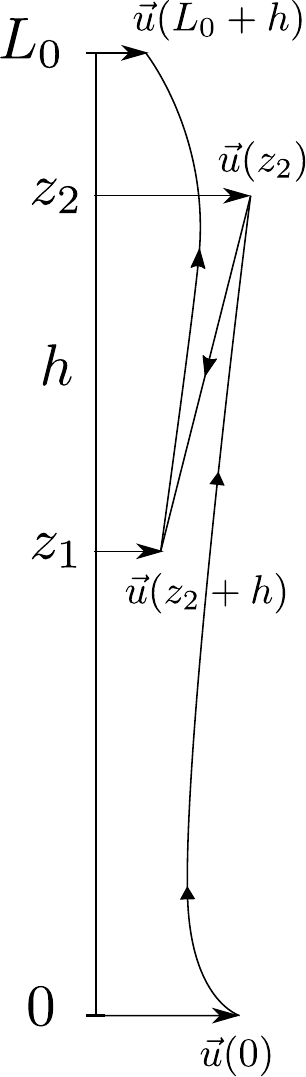}
   \caption{Piecewise graph: $z'(t) = 1$ on $[0,z_2]$ and $[z_2+h,\ellzero+h]$, $z'(t) = -1$ on $[z_2,z_2+h]$.}
   	\label{pwgraph}
\end{figure}

We will need  to expand up to second order the ratio of a curve which is more general than a graph over $\ga_0$, what we call a {\em piecewise graph}.
Such a piecewise graph is defined as a curve $\ga:[0,L]\to \Cd$, where $\ga(t) = \ga_0(z(t))+\vu(t)$, where $z(t):[0,L]\to[0,\ellzero]$ is a sawtooth function --- i.e. its derivative is piecewise constant  with values $\pm1$ --- and $\vu(t)$ is horizontal for any $t\in[0,L]$. An example is shown in Figure~\ref{pwgraph}.

\begin{proposition}\label{lexpansion}
   For any piecewise graph $\ga:[0,L]\to \cyl_{\delta/2}$, where $\ga(t) = \ga_0(z(t))+\vu(t)$ with  $z(0) = 0$ and $z(L) = \ellzero$, we have
   \begin{equation*}
   	\prem{\ga} = \ellzero+\pperp{\ga}+\prll{\ga}+ O\(\|\vu\|_{L^3([0,L])}^3\).\end{equation*}
\end{proposition}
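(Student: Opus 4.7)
The plan is to unwind the notation, exploit the special structure of the tube coordinates ($\gz(0,0,z) = 1$), and Taylor-expand $\sqrt{\gz}$ to second order about the $z$-axis. The whole statement is then a direct bookkeeping exercise, made pleasant by the fact that the curve lies in $\cyl_{\delta/2}$, so $\chi$ can be replaced by $1$ at the outset.

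Concretely, I would first parametrize: writing $\ga(t) = (\vu(t), z(t))$ with $z'(t) = \pm 1$ piecewise, the 1-form integral
\[
\prem{\ga} = \int_\ga \chi\,\sqrt{\gz}\,dz
\]
becomes $\int_0^L \chi(\ga(t))\,\sqrt{\gz(\vu(t),z(t))}\,z'(t)\,dt$. Since $\ga\subset\cyl_{\delta/2}$, the property \eqref{propchi} of $\chi$ yields $\chi(\ga(t))\equiv 1$, so that term drops out.

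Next I would Taylor-expand $\gz$ about the point $(0,0,z(t))$ on the axis. Setting $h(\vu,z) := \gz(\vu,z) - 1$, the identity $\gz(0,0,z) = 1$ gives
\[
h(\vu,z) = (d\gz)^\bullet_z(\vu) + \tfrac12 (d^2\gz)^\bullet_z(\vu,\vu) + O(|\vu|^3),
\]
and then $\sqrt{1+h} = 1 + \tfrac12 h - \tfrac18 h^2 + O(h^3)$ yields
\[
\sqrt{\gz(\vu,z)} = 1 + \tfrac12 (d\gz)^\bullet_z(\vu) + \Ll(z,\vu) + O(|\vu|^3),
\]
with $\Ll$ exactly as in \eqref{defLl}, since the $\tfrac14(d^2\gz)^\bullet_z(\vu,\vu)$ from the quadratic term of $h$ combines with the $-\tfrac18((d\gz)^\bullet_z(\vu))^2$ from $-h^2/8$ to produce $\Ll$.

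Plugging the expansion into the parameter integral and integrating termwise gives
\[
\prem{\ga} = \int_0^L z'(t)\,dt + \int_0^L \tfrac12(d\gz)^\bullet_{z(t)}(\vu(t))\,z'(t)\,dt + \int_0^L \Ll(z(t),\vu(t))\,z'(t)\,dt + O\!\left(\int_0^L |\vu(t)|^3\,dt\right).
\]
The first term telescopes to $z(L)-z(0) = \ellzero$ because $z'=\pm 1$. The middle and third terms are, by definition, the 1-form integrals $\pperp{\ga}$ and $\prll{\ga}$, while the remainder is $O(\|\vu\|_{L^3([0,L])}^3)$, finishing the proof. There is no real obstacle; the only point requiring a moment of care is that even on the "backward" pieces where $z'(t) = -1$ the signs match automatically, because both sides of the claimed identity are integrals of 1-forms against the oriented curve and thus respect the same sign convention.
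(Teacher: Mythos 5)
Your proof is correct and follows essentially the same route as the paper's: replace $\chi$ by $1$ using \eqref{propchi}, Taylor-expand $\sqrt{\gz}$ about the axis using $\gz(0,0,z)=1$, integrate termwise against the oriented curve, and telescope $\int_0^L z'\,dt = \ellzero$. The only cosmetic difference is that you separate the two-stage expansion (first $\gz = 1+h$, then $\sqrt{1+h}$) more explicitly, and you flag the sign-matching on backward arcs, which the paper leaves implicit.
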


\begin{proof}
   Since $\|\vu\|_{L^\infty} < \delta/2$, we have $\chi(\ga(t)) = 1$ for any $t$. Moreover, since  $\ga' (t)= z'(t)\e_3+\vu'(t)$, we have $dz(\ga'(t)) = z'(t)$. Thus
   $$\int_\ga \chi \ \sqrt\gz \,dz = \int_0^L z'(t) \sqrt{\gz\(\ga_0(z(t)) +\vu(t)\)}\,dt.$$
   We Taylor expand to find that, for every $t$,
   $$\gz\(\ga_0(z) +\vu\) =  1+(d\gz)_z^\bullet(\vu)+\frac{1}2(d^2\gz)_z^\bullet(\vu,\vu) + O\(|\vu|^3\),$$
   where $z = z(t)$. We deduce that
   \begin{equation*}
      \sqrt{\gz\(\ga_0(z(t)) +\vu(t)\)} = 1+ \frac12 (d\gz)_z^\bullet(\vu)+\(\frac14(d^2\gz)^\bullet_z(\vu,\vu)  - \frac18 (d\gz)_z^\bullet(\vu)^2\) + O\(|\vu|^3\).\end{equation*}
   Integrating with respect to $t$ yields, since $\chi(\ga) \equiv 1$ and $dz(\ga') = z'$, that
   $$\prem{\ga} = \int_\ga \,dz+\pperp{\ga}+\prll{\ga}+ O\(\|\vu\|_{L^3}^3\).$$
   To conclude, we note  that since $z(0) = 0$ and $z(L) = \ellzero$, we have
   $$\int_\ga \,dz = \int_0^L z'(t)\,dt = \ellzero.$$
\end{proof}

\begin{proposition}\label{bexpansion}
   Assume $\ga:[0,L]\to \cyl_{\delta}$, where $\ga(t) = \ga_0(z(t))+\vu(t)$ with $z(0) = 0$ and $z(L) = \ellzero$.
   Then,
   $$\pr{B_0,\ga} = \pr{B_0,\ga_0} +  \pperp{B_0,\ga} +\prll{B_0,\ga} + O\(\|\vu\|_{L^3}^3+\|\vu\|_{L^\infty}^2\|\vu'\|_{L^1} \),$$
   where
   \begin{align}
   	\pperp{B_0,\ga}& \colonequals \int_0^L z'(t)  (dB_0)^\bullet_{z(t)}(\vu,\e_3) \,dt,\notag\\
   	\prll{B_0,\ga}& \colonequals \int_0^L z'(t)\Lb(z(t),\vu(t),z'(t)\vu'(t))\,dt,\notag \\
   	\label{LB}\Lb(z,\vu,\vv)&\colonequals\frac12 \( (dB_0)^\bullet_{z}(\vu,\vv)+(\partial_{\vu}dB_0)^\bullet_{z}(\vu,\e_3) \).
  \end{align}	
\end{proposition}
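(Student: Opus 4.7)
I would write $\pr{B_0,\ga}=\int_0^L B_0(\ga'(t))\,dt$ where $\ga'(t)=z'(t)\e_3+\vu'(t)$, and Taylor-expand each coordinate component $B_{0,i}(\ga_0(z(t))+\vu(t))$ in the horizontal variable $\vu(t)$ about the axis point $(0,0,z(t))$ up to order two. The pointwise remainder is $O(|\vu|^3)$; multiplied by $|\ga'(t)|\le C(1+|\vu'(t)|)$ and integrated, it contributes at most $O(\|\vu\|_{L^3}^3+\|\vu\|_{L^\infty}^3\|\vu'\|_{L^1})$, which is absorbed into the claimed error since $\|\vu\|_{L^\infty}\le\delta$. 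The explicit cross term $\frac12 u'_i(t)\partial_\vu^2 B_{0,i}^\bullet$ ($i=1,2$) appearing in the expansion similarly contributes $O(\|\vu\|_{L^\infty}^2\|\vu'\|_{L^1})$. It then remains to identify the remaining order $\le 2$ contributions with $\pr{B_0,\ga_0}$, $\pperp{B_0,\ga}$, and $\prll{B_0,\ga}$ in turn.

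\textbf{Zeroth- and first-order terms.} The $z'$-part of the zeroth-order term, $\int_0^L z'(t)B_{0,3}^\bullet(z(t))\,dt$, equals $\int_0^\ellzero B_{0,3}^\bullet(z)\,dz=\pr{B_0,\ga_0}$ by substitution, since $z(0)=0$ and $z(L)=\ellzero$. For the remaining pieces $\sum_{i=1,2}\int_0^L u'_i(t)B_{0,i}^\bullet(z(t))\,dt$, I would integrate by parts in $t$. The boundary terms vanish because $B_{0,1}^\bullet(0)=B_{0,2}^\bullet(0)=0$ and similarly at $\ellzero$: by Proposition~\ref{prop:diffeo} the coordinates map a neighborhood of each endpoint of $\ga_0$ in $\partial\Omega$ into a horizontal plane, so at those points $\nu=\pm\e_3$, and the assumption $B_0\times\nu=0$ on $\partial\Omega$ forces the horizontal components of $B_0$ to vanish there. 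The resulting interior integral, $-\int_0^L z'(t)\sum_{i=1,2}u_i\partial_3 B_{0,i}^\bullet\,dt$, combines with the linear Taylor contribution $\int_0^L z'(t)\partial_\vu B_{0,3}^\bullet\,dt$ to give $\pperp{B_0,\ga}$ via the identity $(dB_0)(\vu,\e_3)=\sum_{i=1,2}u_i(\partial_i B_{0,3}-\partial_3 B_{0,i})$.

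\textbf{Quadratic terms (main step).} The quadratic contribution is $\int_0^L\bigl[\frac{z'(t)}2\partial_\vu^2 B_{0,3}^\bullet+\sum_{i=1,2}u'_i\partial_\vu B_{0,i}^\bullet\bigr]\,dt$. I would split $u'_i u_k=\frac12(u'_i u_k-u_i u'_k)+\frac12\frac{d}{dt}(u_i u_k)$; the antisymmetric part of $\sum_{i,k}u'_i u_k\partial_k B_{0,i}^\bullet$ evaluates directly to $\frac12(dB_0)^\bullet(\vu,\vu')$. The symmetric part is a total time derivative, which I integrate by parts; no boundary contribution survives because $\partial_k B_{0,i}^\bullet(0)=\partial_k B_{0,i}^\bullet(\ellzero)=0$ for $i,k\in\{1,2\}$ (these are tangential derivatives along $\partial\Omega$ of the horizontal components of $B_0$, which vanish identically on $\partial\Omega$ near the endpoints). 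The interior piece is $-\frac12\int_0^L z'(t)\sum_{i,k=1,2}u_i u_k\partial_{3k}B_{0,i}^\bullet\,dt$. Combining with $\frac{z'(t)}2\partial_\vu^2 B_{0,3}^\bullet$ yields $\frac{z'(t)}2(\partial_\vu dB_0)^\bullet(\vu,\e_3)$ via the identity $(\partial_\vu dB_0)(\vu,\e_3)=\partial_\vu^2 B_{0,3}-\sum_{i,k=1,2}u_i u_k\partial_{3k}B_{0,i}$. Using $(z'(t))^2=1$, the two resulting pieces $\frac12(dB_0)^\bullet(\vu,\vu')+\frac{z'(t)}2(\partial_\vu dB_0)^\bullet(\vu,\e_3)$ precisely match $z'(t)\Lb(z(t),\vu(t),z'(t)\vu'(t))$, integrating to $\prll{B_0,\ga}$. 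The main (purely algebraic) obstacle is tracking indices in the two integration-by-parts steps and verifying that all boundary contributions vanish thanks to the orthogonality of $\ga_0$ with $\partial\Omega$ and the condition $B_0\times\nu=0$ on $\partial\Omega$.
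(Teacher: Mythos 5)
Your proof is correct, but it takes a genuinely different route from the paper. The paper invokes Stokes' theorem once: it builds the surface $\Sigma(s,t) = \ga_0(z(t)) + s\vu(t)$, notes that $\int_\ga B_0 - \int_{\ga_0} B_0 = \int_\Sigma dB_0$ because the two horizontal boundary segments of $\Sigma$ lie on $\partial\Omega$ where $B_0$ restricted to horizontal vectors vanishes, and then Taylor-expands the exterior derivative $dB_0$ (not $B_0$ itself) in the parameter $s$. That way the boundary-vanishing input appears in a single geometric step and no integration by parts in $t$ is needed. You instead Taylor-expand the line-integral integrand directly and perform two separate integrations by parts in $t$ --- one at linear order, one at quadratic order --- and each time you must verify that the boundary term at $t=0$ and $t=L$ vanishes: first that $B_{0,i}^\bullet(0)=B_{0,i}^\bullet(\ellzero)=0$ for $i\in\{1,2\}$, then that $\partial_k B_{0,i}^\bullet(0)=\partial_k B_{0,i}^\bullet(\ellzero)=0$ for $i,k\in\{1,2\}$. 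Both facts do follow from $B_0\times\nu=0$ on $\partial\Omega$ together with the adapted tube coordinates from Proposition~\ref{prop:diffeo} (the horizontal components of the $1$-form $B_0$ vanish identically on $\partial\Omega$ near the endpoints, since $g_{13}=g_{23}=0$), so your argument goes through, but the same underlying geometric input has to be reused by hand at each order. Both your proof and the paper's tacitly use $(z')^2=1$ (the piecewise-graph hypothesis, implicit in the statement) to match the final answer with $z'\Lb(z,\vu,z'\vu')$. Your route is more computational and elementary; the paper's is more compact and makes the cancellation of boundary terms manifest from one application of Stokes.
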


\begin{remark} Note that
   \begin{equation}\label{suplagb} \Lb(z,\vu,z'\vu')\le C\(|\vu|^2 +|\vu||\vu'|\).\end{equation}
\end{remark}

\begin{proof}
   First note that, as currents, the curves $t\to\ga_0(t)$ and $t\to\ga_0(z(t))$ are equal since for any smooth vector field $X$ we have
   $$\int_0^L z'(t) \ga_0'(z(t))\cdot X(\ga_0(z(t))\,dt = \int_0^{\ellzero} \ga_0'(s)\cdot X(\ga_0(s))\,ds.$$

   We consider the surface $\Sigma$ parametrized by $(s,t)\to \ga_0(z(t))+ s\vu(t)$, for $s\in(0,1)$ and $t\in(0,L)$. Then the boundary of $\Sigma$, oriented by the $2$-form $ds\wedge dt$ is equal to $\ga - \ga_0$, plus two horizontal segments on the top and bottom boundaries of $\Cd$. On these boundaries  $B_0$ vanishes for horizontal vectors therefore, from Stokes's formula,
   \begin{equation}\label{stokesj}\int_{\ga} B_0 - \int_{\ga_0} B_0 = \int_{\Sigma} dB_0.\end{equation}
   We have
   $$\int_{\Sigma} dB_0 = \int_0^1\int_{0}^{L} (dB_0)_{\ga_0(z(t))+s\vu(t)}(\vu(t),z'(t)\e_3+s\vu'(t))\,dt\,ds.$$
   Then we expand
   $$(dB_0)_{\ga_0(z(t))+s\vu(t)} = (dB_0)^\bullet_{z(t)}+ s(\partial_{\vu(t)}dB_0)^\bullet_{z(t)} + O(|\vu(t)|^2 ),$$
   to obtain, omitting the variable $t$ for clarity,
   \begin{multline*}(dB_0)_{\ga_0(z)+s\vu}(\vu,z'\e_3+s\vu') = (dB_0)^\bullet_z(\vu,z'\e_3) \\+ s(dB_0)^\bullet_z(\vu,\vu') +s(\partial_{\vu}dB_0)^\bullet_z(\vu,z'\e_3)+ O(|\vu|^3+|\vu'||\vu|^2),
   \end{multline*}
   and then
   \begin{multline*}\int_{\Sigma} dB_0 = \int_{0}^{L} \( (dB_0)^\bullet_z(\vu,z'\e_3) +\frac12(dB_0)^\bullet_z(\vu,\vu')+\frac12(\partial_{\vu}dB_0)^\bullet_z (\vu,z'\e_3) \)\,dt \\ + O\(|\vu|^3+|\vu'||\vu|^2)\).\end{multline*}
   Then we replace in \eqref{stokesj} to deduce that
   $$\pr{B_0,\ga} = \pr{B_0,\ga_0} +  \pperp{B_0,\ga} + \prll{B_0,\ga} + O(\|\vu\|_{L^3}^3+\|\vu'\|_{L^1}\|\vu\|_{L^\infty}^2).$$
\end{proof}

\begin{proposition} \label{criticality}
   Assume the maximum of $\ga\mapsto \R(\ga)$ is achieved at $\ga_0(z) = (0,0,z)$ (in tube coordinates). Then for any $z\in(0,\ellzero)$ and for any horizontal vector $\vu$  we have
   \begin{equation}\label{critical}(d B_0)^\bullet_z(\vu, \e_3)=  \frac{\R(\ga_0)}2  (d\gz)^\bullet_z(\vu).\end{equation}
   In particular, for any piecewise graph $\ga$ in $\cyl_{\delta}$ we have
   \begin{equation}\label{critperp} \pperp{\ga}  = \frac{\pperp{B_0,\ga}}{\R(\ga_0)}.\end{equation}
\end{proposition}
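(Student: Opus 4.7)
The plan is to derive \eqref{critical} as the Euler--Lagrange condition for the maximality of $\R$ at $\ga_0$ when we vary $\ga_0$ among graphs, and then to deduce \eqref{critperp} by a direct pointwise application of that identity along a piecewise graph.

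First I would fix a smooth horizontal vector field $\vu:[0,\ellzero]\to\RR^2$ that is compactly supported in $(0,\ellzero)$, and consider the family of graphs $\ga_t(z) = \ga_0(z)+t\vu(z)$ over $\ga_0$. For $|t|$ small, $\ga_t$ lies inside $\cyl_{\delta/2}$, has the same endpoints as $\ga_0$ on $\partial\Omega$, and belongs to $X$, so that the optimality of $\ga_0$ forces $\tfrac{d}{dt}\R(\ga_t)\big|_{t=0}=0$. Writing $\R(\ga_t)=\pr{B_0,\ga_t}/|\ga_t|$ and using the quotient rule, this amounts to
\begin{equation*}
	\frac{d}{dt}\pr{B_0,\ga_t}\Big|_{t=0}=\R(\ga_0)\,\frac{d}{dt}|\ga_t|\Big|_{t=0}.
\end{equation*}

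Next I would compute the two first-order derivatives. For the numerator, Proposition~\ref{bexpansion} applied to the graph $\ga_t$ (so $z(s)=s$, $z'\equiv 1$) yields $\pr{B_0,\ga_t}=\pr{B_0,\ga_0}+t\int_0^\ellzero (dB_0)^\bullet_z(\vu(z),\e_3)\,dz+O(t^2)$, since $\prll{B_0,\ga_t}$ is quadratic in $t\vu$. For the denominator, observing that in the tube coordinates $g_{13}=g_{23}=0$ gives $|\ga_t'(z)|_g^2 = \gz(\ga_t(z)) + t^2|\vu'(z)|_{\gperp}^2$, so that at first order in $t$, $|\ga_t|$ coincides with $\prem{\ga_t}$ (the cutoff $\chi$ equals $1$ along $\ga_t$ for $t$ small). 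Proposition~\ref{lexpansion} then gives $|\ga_t|=\ellzero+t\int_0^\ellzero \tfrac12(d\gz)^\bullet_z(\vu(z))\,dz+O(t^2)$. Substituting these two expansions into the optimality condition produces
\begin{equation*}
	\int_0^\ellzero \Bigl[(dB_0)^\bullet_z(\vu(z),\e_3)-\frac{\R(\ga_0)}{2}(d\gz)^\bullet_z(\vu(z))\Bigr]\,dz = 0.
\end{equation*}
Since $\vu$ is an arbitrary smooth horizontal field vanishing near $0$ and $\ellzero$, the integrand must vanish pointwise, which is exactly \eqref{critical}.

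Finally, for \eqref{critperp}, given any piecewise graph $\ga(t)=\ga_0(z(t))+\vu(t)$ in $\cyl_\delta$ I would plug the pointwise identity \eqref{critical} evaluated at $z=z(t)$ and $\vu=\vu(t)$ into the definition of $\pperp{B_0,\ga}$ given right before Proposition~\ref{bexpansion}:
\begin{equation*}
	\pperp{B_0,\ga}=\int_0^L z'(t)(dB_0)^\bullet_{z(t)}(\vu(t),\e_3)\,dt=\frac{\R(\ga_0)}{2}\int_0^L z'(t)(d\gz)^\bullet_{z(t)}(\vu(t))\,dt,
\end{equation*}
and the last integral is precisely $\R(\ga_0)\pperp{\ga}$ by the definition of $\pperp{\ga}$.

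The only genuinely delicate point is the first step, namely verifying that the variations $\ga_t$ are admissible competitors for the ratio so that optimality really implies vanishing of the first derivative: this requires keeping the endpoints on $\partial\Omega$ (guaranteed by the compact support of $\vu$ in $(0,\ellzero)$), staying within $\overline{\Omega}$ (which follows from $\ga_0$ meeting $\partial\Omega$ orthogonally and $t$ small), and making sure $t\mapsto \R(\ga_t)$ is $C^1$ near $0$ (which follows from the expansions above). Everything else is a one-line algebraic consequence of \eqref{critical}.
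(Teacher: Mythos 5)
Your proof is correct and follows essentially the same route as the paper's: a first-variation computation for the family $\ga_t = \ga_0 + t\vu$ combined with the quotient rule for $\R = \pr{B_0,\cdot}/|\cdot|$. Two small differences worth noting. First, you restrict to $\vu$ compactly supported in $(0,\ellzero)$, so the endpoints of $\ga_t$ are literally fixed and admissibility of the competitor is immediate; the paper instead allows $\vu$ nonzero at the endpoints and then uses the fact that $B_0\times\nu = 0$ on $\partial\Omega$ to kill the boundary terms in Stokes's formula. Both are valid since the target identity is only claimed on the open interval $(0,\ellzero)$. Second, you invoke Propositions~\ref{lexpansion} and~\ref{bexpansion} to read off the first-order terms, whereas the paper redoes the (short) direct computation; your version is a touch more economical and avoids duplication, though note that the formal reason $|\ga_t|$ and $\prem{\ga_t}$ agree to first order is the $O(t^2)$ identity spelled out in the proof of Lemma~\ref{lemformQ} (namely $|\ga_t|-\prem{\ga_t}=\tfrac{t^2}{2}\int|\vu'|^2_{g^\bullet}+O(t^3)$), which you cite implicitly via "$g_{13}=g_{23}=0$"; that is correct. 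Your one parenthetical about orthogonality of $\ga_0$ to $\partial\Omega$ is unnecessary once you've imposed compact support, but this does not affect the argument.
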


\begin{proof}
   Consider a smooth variation of $\ga_0$ written as $\ga_t(z) = \ga_0(z) +t\vu(z)$, where $\vu(z)$ is horizontal for any $z$. We have $\ga_t'(z) = \e_3+t\vu'(z)$ and
   $$|\ga_t'(z)|_g = \sqrt{\gz(\ga_t(z)) + t^2|\vu'(z)|_g^2},$$
   therefore
   $$\frac d{dt}_{|t=0} |\ga_t'(z)|_g = \frac12 (d\gz)_{(0,0,z)}(\vu(z)),$$
   and
   $$\frac d{dt}_{|t=0} |\ga_t| = \frac12 \int_0^{\ellzero} (d\gz)_{(0,0,z)}(\vu(z))\,dz.$$

   To compute the derivative of $\pr{B_0,\ga_t}$ with respect to $t$, we let $\Sigma_t$ be the surface parametrized by $(s,z)\to (0,0,z)+ s\vu(z)$, for $s\in(0,t)$ and $z\in(0,\ellzero)$. Then the boundary of $\Sigma_t$, oriented by the $2$-form $ds\wedge dz$ is equal to $\ga_t - \ga_0$, plus two horizontal segments on the top and bottom boundaries of $\Cd$. On these boundaries  $B_0$ vanishes for horizontal vectors therefore, from Stoke's formula,
   $$\int_{\ga_t} B_0 - \int_{\ga_0} B_0 = \int_{\Sigma_t} dB_0,$$
   and
   \begin{align*} \int_{\Sigma_t} dB_0 & = \int_0^t\int_0^{\ellzero} (dB_0)_{\ga_0(z)+s\vu(z)}(\vu(z),\e_3+s\vu'(z))\,dz\,ds \\
                                    & = t\int_0^{\ellzero} (d B_0)_{(0,0,z)}(\vu(z), \e_3)\,dz +O\(t^2|\vu|_\infty^2\).
   \end{align*}

   If $\ga_0$ is critical for the ratio, then
   $$ \ellzero \frac d{dt}_{|t=0}  \pr{B_0,\ga_t} = \pr{B_0,\ga_0} \frac d{dt}_{|t=0} |\ga_t|, $$
   therefore
   $$ \ellzero \int_0^{\ellzero} (d B_0)_{(0,0,z)}(\vu(z), \e_3)\,dz =  \frac12 \pr{B_0,\ga_0}\int_0^{\ellzero} (d\gz)_{(0,0,z)}(\vu(z))\,dz. $$
   Since this is true for any variation $\ga_t(z) = \ga_0(z) +t\vu(z)$, we deduce \eqref{critical}, and \eqref{critperp} follows in view of the definition of $\pperp{\ga}$ and $\pperp{B_0,\ga}$.
\end{proof}

The above computations imply the following.
\begin{lemma}\label{lemformQ}
   Assume the maximum of $\ga\mapsto \R(\ga)$ is achieved at $\ga_0(z) = (0,0,z)$ and that  $\ga$ is a  graph over $\ga_0$ included in $\Cd$ and parametrized as $\ga(z) = \ga_0(z)+\vu(z)$. We define $\ga_t(z) = \ga_0(z)+t\vu(z)$. Then
   $$Q(\vu) \colonequals - \frac{d^2}{dt^2}_{|t=0}\R(\ga_t) =\frac{2\R(\ga_0)}{\ellzero}\int_0^{\ellzero}\frac12|\vu'|^2_{g^\bullet}+  \Lag(z,\vu(z),\vu'(z))\,dz,$$
   where
   $$\Lag(z,\vu,\vu') = \Ll(z,\vu)-\frac1{\R(\ga_0)}\Lb(z,\vu,\vu'),$$
   with $\Lb$ as in \eqref{LB} and $\Ll$ as in \eqref{defLl}.
\end{lemma}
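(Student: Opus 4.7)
The plan is to compute $\R''(0)$ directly for the variation $\ga_t(z) = \ga_0(z) + t\vu(z)$ by separately Taylor-expanding the numerator $N(t) \colonequals \pr{B_0,\ga_t}$ and the denominator $D(t) \colonequals |\ga_t|$ to second order in $t$, then using the criticality of $\ga_0$ to combine the terms.

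For the length, since $\vu$ is horizontal and $g_{13} = g_{23} = 0$, we have $\ga_t'(z) = \e_3 + t\vu'(z)$ and $|\ga_t'(z)|_g^2 = g_{33}(\ga_t(z)) + t^2|\vu'(z)|^2_{g^\perp(\ga_t(z))}$. Factoring $g_{33}(\ga_t(z))$ from the square root and expanding gives
\begin{equation*}
	|\ga_t'(z)|_g = \sqrt{g_{33}(\ga_t(z))} + \tfrac{t^2}{2}|\vu'(z)|^2_{g^\bullet} + O(t^3).
\end{equation*}
The integral of the first term equals $\prem{\ga_t}$ (for $t$ small enough that $\ga_t \subset \cyl_{\delta/2}$, so that $\chi \equiv 1$ on $\ga_t$, and using $dz(\ga_t') = 1$). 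Proposition~\ref{lexpansion} applied with displacement $t\vu$, combined with the fact that $\Ll(z,\cdot)$ is quadratic, yields
\begin{equation*}
	D(t) = \ellzero + t\int_0^{\ellzero}\tfrac12 (dg_{33})^\bullet_z(\vu)\,dz + t^2 \int_0^{\ellzero}\bigl[\Ll(z,\vu) + \tfrac12 |\vu'|^2_{g^\bullet}\bigr]\,dz + O(t^3).
\end{equation*}
For the flux, I apply Proposition~\ref{bexpansion} directly with $z(t) = t$ and displacement $t\vu$ to get
\begin{equation*}
	N(t) = \pr{B_0,\ga_0} + t\int_0^{\ellzero}(dB_0)^\bullet_z(\vu,\e_3)\,dz + t^2\int_0^{\ellzero}\Lb(z,\vu,\vu')\,dz + O(t^3).
\end{equation*}

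To combine, I use that $\ga_0$ being a maximizer of $\R$ gives $\R'(0) = 0$, equivalently $N'(0) = \R(\ga_0)\,D'(0)$; this is precisely the content of \eqref{critical} in Proposition~\ref{criticality} integrated against $\vu$. Differentiating the identity $\R(t)D(t) = N(t)$ twice and evaluating at $t=0$ (where $\R'(0)$ vanishes and $D(0) = \ellzero$) gives $\R''(0)\ellzero = N''(0) - \R(\ga_0) D''(0)$. Substituting the second-order coefficients above, factoring $\R(\ga_0)$ out, and using the definition $\Lag(z,\vu,\vu') = \Ll(z,\vu) - \Lb(z,\vu,\vu')/\R(\ga_0)$ produces exactly the claimed expression for $Q(\vu) = -\R''(0)$. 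The only subtle point is the $\tfrac12|\vu'|^2_{g^\bullet}$ term, which is \emph{not} captured by Proposition~\ref{lexpansion}: that proposition expands $\prem{\ga}$, which integrates $\sqrt{g_{33}}\,dz$ along the curve but drops the horizontal contribution to the true arclength. Tracking this missing term is what produces the Dirichlet-like principal part of $Q$ that ultimately drives the coercivity on $H^1$.
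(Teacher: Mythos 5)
Your proof is correct and follows essentially the same route as the paper's: both Taylor-expand the numerator $\pr{B_0,\ga_t}$ and denominator $|\ga_t|$ to second order via Propositions~\ref{lexpansion} and~\ref{bexpansion}, both isolate the $\tfrac12|\vu'|^2_{g^\bullet}$ contribution as the gap between $|\ga_t|$ and $\prem{\ga_t}$, and both exploit the criticality $\R'(0)=0$ to reduce the second derivative of the quotient to $\R''(0)\ellzero = N''(0) - \R(\ga_0)D''(0)$. Your explicit remark that $\prem{\ga_t}$ alone misses the Dirichlet term is exactly the observation made in the paper's line computing $|\ga_t| - \prem{\ga_t}$.
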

\begin{proof}
   Let $f(t) = \R(\ga_t)$. Then $f(t) = g(t)/h(t)$, with $g(t) = \pr{B_0,\ga_t}$ and $h(t) = |\ga_t|$. Then, since $f'(0) = 0$, we have
   \begin{equation}\label{derquotient}f''(0) = \frac{g''(0)h(0) - g(0)h''(0)}{h(0)^2}.\end{equation}

   From Lemmas~\ref{lexpansion} and \ref{bexpansion} we have
   $$\frac{d^2}{dt^2}_{|t=0}\prem{\ga_t} = 2\prll{\ga}, \quad \frac{d^2}{dt^2}_{|t=0}\pr{B_0,\ga_t} = 2\prll{B_0,\ga}.$$
   Moreover,
   $$|\ga_t| - \prem{\ga_t} = \int_0^{\ellzero}\sqrt{\gz(\ga_t)+t^2|\vu'|^2_{g(\ga_t)}} - \sqrt{\gz(\ga_t)}\,dz = \frac{t^2}2\int_0^{\ellzero} |\vu'|^2_{g^\bullet}\,dz + O(t^3).$$
   It follows that
   \begin{equation}\label{derpr} \frac{d^2}{dt^2}_{|t=0}\(|\ga_t| - \prem{\ga_t} \) = \int_0^{\ellzero} |\vu'|^2_{g^\bullet}\,dz,\end{equation}
   and then, in view of \eqref{derquotient} and \eqref{derpr}, that
   $$f''(0) =\frac2{\ellzero} \int_0^{\ellzero} \Lb(z,\vu(z),\vu'(z)) - \R(\ga_0)\( \frac12|\vu'|^2_{g^\bullet}+\Ll(z,\vu(z))\)\,dz.$$
\end{proof}

The above computation motivates Definition \ref{qdeu}.

\subsection{Coercivity}

The quantity $\qell(\ga)$ defined below contains the interesting terms in the lower bound for $GL_\ep(\u,\A)$ which we can compute using the idea of horizontal blow-up introduced in \cite{ConJer}. Here $\ell$ represents the scale of the horizontal blow-up and tends to zero with $\ep$, and $\ga$ represents the vortex filaments associated to $(\u,\A)$. In this section we show that the quadratic form $Q$ defined above is essentially the limit of $\qell$ as $\ell\to 0$.

\begin{definition}\label{defqell}
   Assume that $\ga$ is a piecewise graph over $\ga_0$. For $\ell>0$, define
   $$\qell(\ga) \colonequals \ell^2\(|\ga|_\gell - \prem{\ga}\) + (\prem{\ga} - \ellzero)- \frac{\pr{B_0,\ga - \ga_0}}{\R(\ga_0)}, $$
   where $|\ga|_\gell$ is the length of $\ga$ with respect to the metric $\gell = \gz dz^2 + \ell^{-2} \gperp$.
\end{definition}

\begin{lemma}
   If $\ga$ is a piecewise graph in $\cyl_{\delta/2}$,  parametrized as $\ga(t) = \ga_0(z(t)) +\vu(t)$, $t\in[0,L]$, with $z(0) = 0$ and $z(L) = \ellzero$, and if the maximum of $\ga\mapsto \R(\ga)$ is achieved at $\ga_0(z) = (0,0,z)$, then
   \begin{multline}\label{qell} \qell(\ga) = \int_0^L \left\{\ell^2\(\sqrt{\gz(\ga) +\ell^{-2}|\vu'|_{g(\ga)}^2} - z'\sqrt{\gz(\ga)}\)\right.\\
      \left. + z'\( \Ll(z,\vu)-\frac{\Lb(z,\vu,z'\vu')}{\R(\ga_0)}\)\right\}\,dt + O\(\|\vu\|_{L^\infty}^3+\|\vu\|_{L^\infty}^2 \|\vu'\|_{L^1}\).
   \end{multline}
\end{lemma}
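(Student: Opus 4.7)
I would unpack the three pieces of $\qell(\ga)$ in Definition~\ref{defqell} one at a time, handling the length piece by direct computation and the two bracket pieces by Propositions~\ref{lexpansion} and~\ref{bexpansion}. For the length piece, since $\ga\subset\cyl_{\delta/2}$ we have $\chi(\ga)\equiv 1$, so
\begin{equation*}
\prem{\ga}=\int_0^L z'(t)\sqrt{\gz(\ga(t))}\,dt,
\end{equation*}
while the definition $\gell=\gz\,dz^2+\ell^{-2}\gperp$ and $\ga'(t)=z'(t)\e_3+\vu'(t)$ give $|\ga'(t)|_\gell=\sqrt{\gz(\ga)+\ell^{-2}|\vu'|_{g(\ga)}^2}$. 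Subtracting these two integrands and multiplying by $\ell^2$ produces exactly the first term in the integrand claimed in \eqref{qell}, with no error at all.

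Next, I would apply Proposition~\ref{lexpansion} to obtain
\begin{equation*}
\prem{\ga}-\ellzero=\pperp{\ga}+\prll{\ga}+O(\|\vu\|_{L^3}^3),
\end{equation*}
and Proposition~\ref{bexpansion} to obtain
\begin{equation*}
\pr{B_0,\ga-\ga_0}=\pperp{B_0,\ga}+\prll{B_0,\ga}+O(\|\vu\|_{L^3}^3+\|\vu\|_{L^\infty}^2\|\vu'\|_{L^1}).
\end{equation*}
The criticality identity \eqref{critperp} from Proposition~\ref{criticality} says $\pperp{\ga}=\pperp{B_0,\ga}/\R(\ga_0)$, so the two first-order terms cancel exactly in the combination $(\prem{\ga}-\ellzero)-\pr{B_0,\ga-\ga_0}/\R(\ga_0)$. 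This is the only place where the maximality assumption on $\ga_0$ is used, and it is the structural heart of the lemma: the linear terms in $\vu$ disappear precisely because $\ga_0$ is critical.

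There remains
\begin{equation*}
\prll{\ga}-\frac{\prll{B_0,\ga}}{\R(\ga_0)}=\int_0^L z'(t)\left(\Ll(z,\vu)-\frac{\Lb(z,\vu,z'\vu')}{\R(\ga_0)}\right)dt,
\end{equation*}
using the definitions of $\prll{\ga}$ and $\prll{B_0,\ga}$ given just before Proposition~\ref{criticality}. This reproduces the second term in the integrand of \eqref{qell}.

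Finally I would collect error terms: both $O(\|\vu\|_{L^3}^3)$ contributions are bounded by $L\|\vu\|_{L^\infty}^3$, which I absorb into $O(\|\vu\|_{L^\infty}^3)$ since $L$ is controlled (the piecewise graphs we consider have total parameter length comparable to $\ellzero$, as $z'=\pm 1$ and the curve stays in $\cyl_{\delta/2}$). The only mild subtlety is that no constant or bounded-variation assumption is built into the lemma statement, so I would state the bound with an implicit constant allowed to depend on $\ellzero$ (equivalently, on $L$). No step requires anything beyond arithmetic once Propositions~\ref{lexpansion}, \ref{bexpansion} and~\ref{criticality} are invoked, so the main conceptual step is identifying the cancellation of the $\pperp$ terms via criticality; everything else is bookkeeping.
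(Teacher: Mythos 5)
Your proposal is correct and takes essentially the same approach as the paper: the paper's own proof is a one-line reference to Definition~\ref{defqell}, Propositions~\ref{lexpansion}, \ref{bexpansion} and the criticality identity of Proposition~\ref{criticality}, and you have simply unpacked it term by term. Your observation that the constant implicitly depends on $L$ (or $\ellzero$) when passing from the $\|\vu\|_{L^3}^3$ error of Propositions~\ref{lexpansion}--\ref{bexpansion} to the $\|\vu\|_{L^\infty}^3$ error in the lemma is a fair point that the paper leaves tacit, but it matches the regime in which the lemma is applied.
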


\begin{proof}
   The result follows from the definition of $\qell$, $\prem{\ga}$ and Propositions~\ref{lexpansion}, ~\ref{bexpansion}, taking into account the fact that,  because of the criticality of $\ga_0$,  we have from Proposition~\ref{criticality} that $\R(\ga_0)\pperp{\ga} = \pperp{B_0,\ga}$.
\end{proof}

\begin{proposition}\label{coercive}
   Assume that $\ga_0$ is a nondegenerate maximizer of the ratio $\R$.
   \begin{enumerate}
      \item There exists a small constant $c>0$ such that for any $\ell\le 1$ and for any piecewise graph $\ga$ parametrized as $\ga(t) = \ga_0(z(t)) +\vu(t)$ with $\|\vu\|_{L^\infty} < c \ell$  it holds that 
            $$\qell(\ga)\ge c \|\vu\|_{L^\infty}^2,$$ 
            so that in particular $\qell(\ga)\ge 0$.

      \item Assume $\{\ell_n\}_n$, $\{\alpha_n\}_n$ and the piecewise graphs $\{\ga_n\}_n$ --- parametrized as $\ga_n(t) = \ga_0(z_n(t)) +\vun(t)$ for $t\in[0,L_n]$ --- are such that, as $n\to +\infty$, 
      $$0<\alpha_n\ll \ell_n\le 1,\quad \qelln(\ga_n) = O({\alpha_n}^2),\quad \|\vun\|_{L^\infty} =o(\ell_n).$$
      Then, defining $\vec v_n = \vun/\alpha_n$, there exists a subsequence  $\{n\}$ such that $L_n\to \ellzero$ and such that $\{v_n\}_n$ converges in the sense of distributions to some $\vu_*$ which belongs to $H^1((0,\ellzero),\RR^2)$. Moreover, as $n\to +\infty$, 
      \begin{equation}\label{cvcoercive}\max_t |\vec v_n(t) - \vu_*(z_n(t))|\to 0,\quad \frac{\|\ga_n -\ga_{*,n}\|_*}{\alpha_n}\to 0,
      \end{equation}  
      where $\ga_{*,n}(z) = \ga_0(z) + \alpha_n\vu_*(z)$ is a graph over $\ga_0$.
      
      Finally,
      \begin{equation}\label{limq}
               \liminf_{n\to +\infty} \frac{\qelln(\ga_n)}{{\alpha_n}^2} \ge \frac{\ellzero}{2\R(\ga_0)} Q(\vu_*).
            \end{equation}
   \end{enumerate}

\end{proposition}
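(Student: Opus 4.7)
The plan is to start from the explicit formula \eqref{qell}, which writes $\qell(\ga)$ as the integral over $[0,L]$ of the length-type term $\ell^2\bigl(\sqrt{\gz(\ga)+\ell^{-2}|\vu'|^2_{g(\ga)}}-z'\sqrt{\gz(\ga)}\bigr)$ plus the quadratic term $z'\bigl(\Ll(z,\vu)-\Lb(z,\vu,z'\vu')/\R(\ga_0)\bigr)$, up to a cubic remainder. Two observations will drive the proof. On upward pieces ($z'=1$), the square-root inequality $\sqrt{a^2+b^2}-a\ge b^2/(2\sqrt{a^2+b^2})$ combined with a Taylor expansion produces an integrand close to $\tfrac12|\vu'|^2_{g^\bullet}+\Lag(z,\vu,\vu')$, whose integral over a pure graph equals $\ellzero Q(\vu)/(2\R(\ga_0))$ by Lemma~\ref{lemformQ}. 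On downward pieces ($z'=-1$), the integrand is bounded below by $2\ell^2\sqrt{\gz}\ge c\ell^2$, so the total downward length $D$ is penalized by at least $c\ell^2 D$.

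For part (1), first assume that $\ga$ is a pure graph, i.e.\ $z(t)=t$. Combining the square-root bound with the pointwise estimates \eqref{suplagl}, \eqref{suplagb}, Young's inequality to absorb $|\vu||\vu'|$, and the smallness assumption $\|\vu\|_\infty<c\ell$, I would obtain
\[
\qell(\ga)\ge (1-O(c))\,\frac{\ellzero}{2\R(\ga_0)}\,Q(\vu) - O(\|\vu\|_\infty^3).
\]
Strong nondegeneracy (Definition~\ref{qdeu}) gives $Q(\vu)\ge\kappa\|\vu\|_{H^1}^2$, and the one-dimensional Sobolev embedding $H^1\hookrightarrow L^\infty$ then yields $\qell(\ga)\ge c\|\vu\|_\infty^2$ for $c$ small enough. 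For a genuine piecewise graph with $D>0$, the downward pieces alone contribute at least $c\ell^2 D$; combining with the graph-type bound applied to each upward segment and using continuity of $\vu$ at the turning points of $z(t)$ to transfer $L^\infty$ control from upward to adjacent downward segments completes the argument.

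For part (2), the assumption $\qelln(\gan)=O(\alpha_n^2)$ together with the decomposition above yields $c\ell_n^2 D_n\le C\alpha_n^2$, hence $D_n=O(\alpha_n^2/\ell_n^2)=o(1)$ and $L_n\to\ellzero$. The positive upward contribution gives $\int_0^{L_n}|\vun'|^2_{g^\bullet}\,dt=O(\alpha_n^2)$, so $\vec v_n'=\vun'/\alpha_n$ is bounded in $L^2$. Part (1) applied to $\gan$, valid since $\|\vun\|_\infty=o(\ell_n)$, yields $\|\vec v_n\|_\infty\le C$. Regarding $\vec v_n$ as a function of $z\in[0,\ellzero]$ via the inverse of the monotone part of $z_n$, well defined outside a set of measure $D_n\to 0$, we obtain a sequence bounded in $H^1((0,\ellzero),\RR^2)$. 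The 1D Rellich embedding then extracts a subsequence converging weakly in $H^1$ and uniformly to some $\vu_*\in H^1$. Uniform convergence together with $z_n\to\mathrm{id}$ uniformly yields $\max_t|\vec v_n(t)-\vu_*(z_n(t))|\to 0$, and the flat-norm bound follows by testing against Lipschitz vector fields.

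Finally, for \eqref{limq}, divide \eqref{qell} by $\alpha_n^2$. The downward contribution is nonnegative, the cubic remainder is $o(1)$, and on upward pieces the Taylor expansion produces the integrand $\tfrac12|\vec v_n'|^2_{g^\bullet}+\Lag(z,\alpha_n\vec v_n,\alpha_n\vec v_n')/\alpha_n^2$ plus an error $O((\alpha_n/\ell_n)^2|\vec v_n'|^4)=o(1)$. Weak lower semi-continuity of $\int|\vec v_n'|^2\,dz$, combined with strong $L^\infty$ convergence of $\vec v_n$ to handle the terms quadratic in $\vu$ and the cross term bilinear in $(\vu,\vu')$ coming from $\Lag$, gives $\liminf\ge\int_0^{\ellzero}\tfrac12|\vu_*'|^2_{g^\bullet}+\Lag(z,\vu_*,\vu_*')\,dz=\ellzero Q(\vu_*)/(2\R(\ga_0))$ by Lemma~\ref{lemformQ}. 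The main technical obstacle will be the careful reparametrization in part (2): the shrinking downward pieces must be excised and their contribution to the limiting functional shown to be negligible, while the sawtooth function $z_n$ must be matched consistently to the monotone limit parametrization by $z\in[0,\ellzero]$.
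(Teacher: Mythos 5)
Your decomposition of $\qell(\ga)$ into a length-type term (penalizing backtracking at scale $\ell^2$) plus a near-quadratic Lagrangian term on the monotone pieces is the same starting point as the paper's; and deferring Part (1) to a combination of a pure-graph argument plus a penalty for downward pieces is reasonable. However, there are two concrete gaps, both tied to how you pass from a piecewise graph to a single function on $[0,\ellzero]$ that $Q$ can act on.

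First, $Q$ is a quadratic form on $H^1((0,\ellzero),\RR^2)$, not on collections of arcs. When $z$ is non-monotone, the union of upward segments ($z'=1$) covers some $z$-intervals twice, so ``apply the graph-type bound to each upward segment'' is not well-defined: you cannot evaluate $Q$ on an overlapping multi-valued parametrization. The paper resolves this by restricting to the monotone-envelope set $A_n=\{t:\underline z_n(t)=z_n(t)\}$, where $t\mapsto z_n(t)$ is a bijection onto $[0,\ellzero]$ minus a finite set, giving a single-valued candidate. Your sketch for Part (1) with ``continuity at the turning points'' does not produce a function in $H^1((0,\ellzero))$ that $Q$ is applied to; the argument as written does not close.

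Second, and more seriously, even after restricting to the monotone set, the derivative of the reparametrized curve need not be bounded in $L^2$: the lower bound on the set $\{|\vun'|>\delta_n\}$ yields only $L^1$ control (see \eqref{ldots12}), and your claim ``the positive upward contribution gives $\int_0^{L_n}|\vun'|^2\,dt=O(\alpha_n^2)$'' is false as stated, because the Lagrangian term $\Lag$ can be negative and must be absorbed. The paper fixes this by replacing $\vun'$ with $0$ on $A_n\setminus B_n$ to build the surrogate $\wun$, and then, and only then, uses the nondegeneracy of $Q$ to absorb the Lagrangian and error terms and deduce $\|\wun\|_{H^1}\le C\alpha_n$ (see \eqref{beforeQ}). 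Without this excision you get no uniform $H^1$ bound and the Rellich compactness step does not apply. Note also that the paper's cleaner route is to prove Part (2) \emph{first} via this $\wun$-construction and then derive Part (1) by contradiction (taking $\alpha_n=\|\vun\|_{L^\infty}$, which forces $\|\vu_*\|_{L^\infty}=1$, hence $Q(\vu_*)\ge c>0$), which avoids the circularity in your plan of using Part (1) inside Part (2).
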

\begin{remark}
 If the metric $g$ was Euclidean, we could state the result in terms of the horizontal blow-up of the curves $\ga_n$ converging to a curve $\ga_*$. Here, it is really the metric which is blown-up horizontally.
\end{remark}
\begin{remark}
   The first assertion in \eqref{cvcoercive} means that the distance of any point of $\ga_n$ with vertical coordinate $z$ to $\ga_{*,n}(z)$ is $o(\alpha_n)$, uniformly with respect to the point chosen and to $z$. Note that $\ga_n$ need not be a graph, it may even have a diverging number of points of a given height $z$.
   \end{remark}

Note that the first statement follows from the second one, since, by reasoning by absurdity, if we choose $\alpha_n = \|\vun\|_{L^\infty}$, then $\|\vu_*\|_{L^\infty} = 1$ and therefore $Q(\vu_*)$ is bounded below by a constant depending only on $Q$. Thus we only need to prove the second item of the proposition.

\begin{proof}
   Assume $\{\ell_n\}_n$, $\{\alpha_n\}_n$, and  $\{\ga_n\}_n$ are as above, and that $\gan$ is parametrized as $\gan(t) = \ga_0(z_n(t))+\vun(t)$, for $t\in[0,L_n]$. We define $\bzn(t)$ to be the monotone increasing envelope of $t\to z_n(t)$, i.e. $\bzn(t) = \max_{s\le t}z_n(s)$. Note that $\bzn$ is continuous and piecewise affine, with  $\bzn'(t)$ equal to either $0$ or $1$, except at the finite number of points of discontinuity of $\bzn'$.

   Let $q_\eln(\gan)(t)$ denote the integrand in \eqref{qell}. We rewrite \eqref{qell} as
   \begin{equation}\label{qlngan}\qelln(\gan) = \int_{A_n} q_\eln(\gan)(t)dt + \int_{A_n^c} q_\eln(\gan)(t)dt + O\(\|\vu\|_{L^\infty}^3+ \|\vu\|_{L^\infty}^2 \|\vu'\|_{L^1}\),\end{equation}
   where $A_n \colonequals \{t\in[0,L_n]\mid\bzn = z_n\}.$ In particular we have ${z_n}' = 1$, almost everywhere in $A_n$.

   The second integral in \eqref{qlngan} may be bounded below by noticing that the integral of $z_n'$ over $A_n^c$ is equal to $0$. Since $\gz(\ga_0(z)) = 1$ for any $z$, we have
   $$\int_{A_n^c} z_n' \sqrt{\gz(\gan(t))}\,dt = \int_{A_n^c} z_n' \(\sqrt{\gz(\gan(t))} - \sqrt{\gz(\ga_0(z_n(t)))}\) \,dt,$$
   and since $|\sqrt{\gz(\gan(t))} - \sqrt{\gz(\ga_0(z_n(t)))}|\le C\|\vun\|_{L^\infty}$ we find that
   $$\left|\int_{A_n^c} z_n' \sqrt{\gz(\gan(t))}\,dt\right|\le C \left|A_n^c\right|\|\vun\|_{L^\infty}.$$
   Using the bounds \eqref{suplagb} and \eqref{suplagl} we deduce that
   \begin{equation*}\int_{A_n^c} q_\eln(\gan)(t)dt \ge   \int_{A_n^c}\eln^2\sqrt{\gz(\gan) +\eln^{-2}|\vun'|_{g(\gan)}^2}
      - C \|\vun\|_{L^\infty} \int_{A_n^c} \eln^2+|\vun|+ |\vun'|\,dt,\end{equation*}
   and therefore
   $$\int_{A_n^c} q_\eln(\gan)(t)dt \ge  \hal\int_{A_n^c} \eln^2 + \eln|\vun'|_{g(\gan)}\,dt - C \|\vun\|_{L^\infty} \int_{A_n^c} \eln^2+|\vun|+ |\vun'|\,dt.$$
   Since $\|\vun\|_{L^\infty}/\eln\to 0$ as $n\to +\infty$, it follows that if $n$ is large enough, then
   \begin{equation}\label{ldots2}\int_{A_n^c} q_\eln(\gan)(t)dt \ge \frac14 \int_{A_n^c} \eln^2 + \eln|\vun'|_{g(\gan)}\,dt .\end{equation}

   Next, we analyze further the first integral in \eqref{qlngan} by splitting it as the integral on a set  $B_n$ and on its complement: We choose $\delta_n>0$ such that, as $n\to +\infty$, 
   \begin{equation}\max(\alpha_n,\|\vun\|_\infty)\ll\delta_n\ll\eln,\label{deln}\end{equation}
   and then we set
\begin{equation*}
	B_n = \{t\in A_n\mid |\vun'(t)|_{g(\gan)} \le \delta_n\}.\end{equation*}
   If $t\in B_n$, we have
   $$\frac{|\vun'|_{g(\gan)}^2}{\eln^2} \le \frac{\delta_n}\eln,$$
   and the right-hand side goes to $0$ as $n\to +\infty$. Therefore a Taylor expansion yields, as $n\to +\infty$,
   $$\eln^2\int_{B_n} \sqrt{\gz(\gan) +\eln^{-2}|\vun'|_{g(\gan)}^2} - \sqrt{\gz(\gan)}\,dz = \(\hal - o(1)\) \int_{B_n} |\vun'|_{g(\ga_0)}^2\,dz,$$
   where we have also used the fact that $\gz(\gan)\to 1$ and $g(\gan)\to g(\ga_0)$ uniformly as $n\to +\infty$, since $\|\vun\|_{L^\infty}\to 0$. We deduce that, writing $g^\bullet = g(\ga_0)$,
   \begin{equation*}
   	\int_{B_n} q_\eln(\gan)(t)dt \ge \(\hal - o(1)\) \int_{B_n} |\vun'|_{g^\bullet}^2\,dt+  \Ll(z_n(t),\vun)-\frac{\Lb(z_n(t),\vun,\vun')}{\R(\ga_0)}\,dt.\end{equation*}
   Using \eqref{suplagl} and \eqref{suplagb}, we deduce in particular that
   \begin{equation}\label{lowbn}\int_{B_n} q_\eln(\gan)(t)dt \ge -C \|\vun\|_{L^\infty}^2.\end{equation}

   On the other hand, since $(\sqrt{a+x} - \sqrt{a})/\sqrt{x}$ is increasing, we have for any $t$ in $A_n\setminus B_n$ that
   $$\frac{\sqrt{\gz(\gan) +\frac{|\vun'|_{g(\gan)}^2}{\eln^{2}}} - \sqrt{\gz(\gan)}}{|\vun'|_{g(\gan)}/\eln}\ge \frac{\sqrt{\gz(\gan) +\frac{\delta_n^2}{\eln^{2}}} - \sqrt{\gz(\gan)}}{\delta_n/\eln}. $$
   Since $\delta_n\ll \eln$ and since  $\gz(\gan)\to 1$ and $g(\gan)\to g^\bullet$, we deduce
   $$\sqrt{\gz(\gan) +\frac{\delta_n^2}{\eln^{2}}} - \sqrt{\gz(\gan)}\ge \(\hal - o(1)\) \frac{\delta_n^2}{\eln^{2}}, $$
   and therefore 
   $$\eln^2\(\sqrt{\gz(\gan) +\frac{|\vun'|_{g(\gan)}^2}{\eln^{2}}} - \sqrt{\gz(\gan)}\) \ge \(\hal - o(1)\) |\vun'|_{g(\gan)}\delta_n.$$

   From \eqref{suplagb} and \eqref{suplagl} we know that $\Ll(z,\vun)$ and $\Lb(z,\vun,\vun')$ are both negligible compared to $|\vun'|\delta_n$ if $t\in A_n\setminus B_n$, therefore we deduce that
   \begin{equation}\label{ldots12} \int_{A_n\setminus B_n}  q_\eln(\gan)(t)dt \ge  \(\hal - o(1)\) \delta_n \int_{A_n\setminus B_n}|\vun'|_{g^\bullet}\ge \(\hal - o(1)\) \delta_n^2 |A_n\setminus B_n|.\end{equation}

   From the hypothesis $\qelln(\ga_n) = O({\alpha_n}^2)$ and the estimates  \eqref{qlngan}, \eqref{ldots2}, \eqref{lowbn} and \eqref{ldots12}, we find that 
   $$\eln\int_{A_n^c} \eln+ |\vun'|_{g(\gan)}\,dt + \delta_n \int_{A_n\setminus B_n} \delta_n + |\vun'|_{g(\gan)}\,dt\le C\(\alpha_n^2+\|\vun\|_{L^\infty}^2(1+\|\vun'\|_{L^1})\).$$
   Then we can decompose $\|\vun'\|_{L^1} = \|\vun'\|_{L^1(B_n^c)} + \|\vun'\|_{L^1(B_n)}$ , use the fact that $|\vun'|\le \delta_n$ on $B_n$ and use \eqref{deln} to deduce that
   \begin{equation}\label{onbadset} \|\vun'\|_{L^1(B_n^c)} = o\(\alpha_n + \|\vun\|_{L^\infty}\),\quad |B_n^c| = o(1).\end{equation}
   
   We now define $\wun:[0,\ellzero]\to \RR^2$ as follows. First we require that $\wun(0) = \vun(0)$. Then we note that, except for a finite set of values of $z$ --- which we denote $J$ --- there exists a unique $t$ such that $\bzn(t) = z$ and therefore a unique $t\in A_n$ such that $z_n(t) = z$. We then require that for any $t\in A_n$,
   \begin{equation*}
      \wun'(z_n(t)) =
      \begin{cases}
         \vun'(t) & \text{if $t\in B_n$} \\
         0        & \text{if $t\in A_n\setminus B_n$.}
      \end{cases}
   \end{equation*}
   This defines unambiguously $\wun'(z)$ if $z\notin J$, thus $\wun$ is well defined. Moreover, using \eqref{onbadset}, we have for any $t\in A_n$
   \begin{equation}\label{wuinfty}|\wun(z_n(t))-\vun(t)|\le \int_{B_n^c} |\vun'(s)|\,ds = o(\alpha_n+\|\vun\|_{L^\infty}).\end{equation}
   Since $\Ll(z,\cdot)$ is a quadratic form, we thus have that for any $t\in A_n$ 
   $$ \Ll(z_n(t),\wun(z_n(t))) - \Ll(z_n(t),\vun(t))\,dt = o(\alpha_n^2+\|\vun\|_{L^\infty}^2).$$
   Since $\Lb(z,\cdot,\cdot)$ is bilinear, we also deduce that 
   $\Lb(z_n(t),\wun(z_n(t)),\wun'(z_n(t)))  = 0 $ for  $t\in A_n\setminus B_n$ while for $t\in B_n$ we have 
   $$\Lb(z_n(t),\wun(z_n(t)),\wun'(z_n(t)))   - \Lb(z_n(t),\vun(t),\vun'(t))= o((\alpha_n+\|\vun\|_{L^\infty})|\wun'|).$$
   This allows us to write 
   $$\int_{B_n}\Ll(z,\vun)  =  \int_0^\ellzero\Ll(z,\wun)\,dz + o({\alpha_n}^2+\|\vun\|_{L^\infty}^2),$$
   $$\int_{B_n}\Lb(z,\vun,\vun') - \int_0^{\ellzero}\Lb(z,\wun,\wun')\,dz = o({\alpha_n}+\|\vun\|_{L^\infty})\int_0^{\ellzero}|\wun'|.$$
   Note that we have use the change of  variables $z = z_n(t)$, to pass from integrals over $A_n$ to integrals over $[0,\ellzero]$.

   We now use again the hypothesis $\qelln(\ga_n) = O({\alpha_n}^2)$ with the information we have gathered so far to obtain that 
   \begin{equation}
\begin{split} 
   C\alpha_n^2 &\ge \int_{B_n} q_\eln(\gan)(t)dt + O\(\|\vun\|_{\infty}^3+\|\vun\|_{\infty}^2\|\vun'\|_{L^1}\) \\
   &=  \int_0^{\ellzero} (\frac12 - o(1))|\wun'|_{g^\bullet}^2 + \Lag(z,\wun,\wun')\,dz+ o({\alpha_n}^2+\|\vun\|_{\infty}^2) + o({\alpha_n}+\|\vun\|_{\infty})\int_0^{\ellzero}|\wun'|.
 \end{split}\label{beforeQ}
\end{equation}
 
   Then, from the nondegeneracy of the maximizer $\ga_0$ as defined in Definition~\ref{qdeu}, that is, the positive definiteness of $Q$, we deduce that 
   $$\int_0^{\ellzero} |\wun'|_{g^\bullet}^2 + \Lag(z,\wun,\wun')\,dz \ge c_0 \|\wun\|_{H^1}^2,$$
   for some $c_0>0$ independent of $n$. Using also the fact the $H^1$ norm controls the $L^\infty$ norm in one dimension, we see that the error terms in \eqref{beforeQ} may be absorbed by the left-hand side and the first term on the right-hand side to 
   deduce that $\wun/\alpha_n$ subsequentially converges weakly in $H^1([0,\ellzero])$ to some $\vu_*$, and that 
   $$\liminf_n \frac{\qell(\ga_n)}{{\alpha_n}^2} \ge \frac{\ellzero}{2\R(\ga_0)} Q(\vu_*).$$
   The weak $H^1$ convergence also implies that $\wun/\alpha_n$ converges to $\vu_*$ uniformly if we take a further subsequence.   It then follows from \eqref{wuinfty} that $|\vec{v_n}(t) - \vu_*(z_n(t))|$ converges to $0$ as $n\to +\infty$, uniformly w.r.t. $t$, hence proving the first assertion in \eqref{cvcoercive}. 

   We also deduce from \eqref{onbadset} that $L_n\to\ellzero$, since ${z_n}'=1$ on $A_n$ and the integral of ${z_n}'$ on $[0,L_n]$ is equal to $\ellzero$. It also follows from \eqref{onbadset} that $z_n(t)\to t$ uniformly. Together with the uniform convergence of $\vec{v_n}(t)- \vu_*(z_n(t))$ to $0$, this implies that $v_n\to \vu_*$ in the sense of distributions. Note that the fact that  $z_n(t)\to t$ tells us that the limit of the piecewise graphs $\ga_n$ is in fact a graph.

   It remains to prove that $\|\ga_n-\ga_{*,n}\|_* = o(\alpha_n)$. This is a direct consequence of the above and Lemma~\ref{flatstarlem} below, noting that the length of $\ga_n$ is bounded as a consequence of \eqref{onbadset}, and the fact that $\wun'$ is bounded in $L^2$, hence in $L^1$ also. 
\end{proof}

\begin{lemma}\label{flatstarlem} Let $\ga$, $\tga\subset T_\delta\cap\overline\Omega$
   be two curves such that, in tube coordinates, $\ga(t) = \ga_0(z(t))+\vu(t)$ is a piecewise graph defined over $[0,L]$ with $z(0) = 0$ and $z(L) = \ellzero$ and  $\tga(z) = \ga_0(z)+\vv(z)$ is a graph over $\ga_0$. Then
   $$\|\ga-\tga\|_{\F(\Omega)},\ \|\ga-\tga\|_* \le C \(\max_t|\vu(t) - \vv(z(t))|\) \(|\ga|+|\tga|\),$$
   where all norms are understood with respect to the metric $g$. 
\end{lemma}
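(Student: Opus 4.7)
My plan is to build an explicit interpolating $2$-current $\Sigma$ whose boundary, up to pieces on $\partial\Omega$, equals $\tga-\ga$, and whose mass simultaneously controls both norms. Write $M:=\max_t|\vu(t)-\vv(z(t))|$ for brevity. In tube coordinates, I would define
\begin{equation*}
F(s,t) = \ga_0(z(t)) + s\vv(z(t)) + (1-s)\vu(t),\qquad (s,t)\in[0,1]\times[0,L],
\end{equation*}
and set $\Sigma = F_*([0,1]\times[0,L])$. The face $s=0$ reproduces $\ga$, while the face $s=1$ gives the $1$-current $t\mapsto\tga(z(t))$, which coincides with $\tga$ by the same reparametrization trick used in the proof of Proposition \ref{bexpansion}: a sawtooth $z:[0,L]\to[0,\ellzero]$ with $z(0)=0$, $z(L)=\ellzero$ has signed coverage equal to $1$ over $[0,\ellzero]$. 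The edges $t=0,L$ sit in the horizontal planes $\{z=0\}$, $\{z=\ellzero\}$, which by Proposition \ref{prop:diffeo} are contained in $\partial\Omega$. Thus $\partial\Sigma=\tga-\ga+S_0-S_L$ with $S_0,S_L\subset\partial\Omega$.

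Next, I would reduce both norms to $|\Sigma|$ via Stokes. For a flat-norm test $\phi\in\mathcal{D}^1(\Omega)$ with $\max(\|\phi\|_\infty,\|d\phi\|_\infty)\le 1$, the support of $\phi$ avoids $\partial\Omega$, so $S_0(\phi)=S_L(\phi)=0$ and $|(\ga-\tga)(\phi)|=|\Sigma(d\phi)|\le|\Sigma|$. For a $*$-norm test $B\in C_T^{0,1}(\Omega)$ with $\|B\|_{C^{0,1}}\le 1$, the condition $B\times\vec\nu=0$ forces $B$ to be parallel to $\vec\nu$ on $\partial\Omega$; since $S_0,S_L$ lie in $\partial\Omega$ their tangents are orthogonal to $\vec\nu$, so again $S_0(B)=S_L(B)=0$ and $|\langle B,\ga-\tga\rangle|=|\Sigma(dB)|\le C|\Sigma|$.

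The remaining step is to bound $|\Sigma|$. A direct computation of the tangent vectors gives $|\partial_s F|_g\le CM$ (horizontal, of size $|\vv(z(t))-\vu(t)|$) and $|\partial_t F|_g\le C(|z'(t)|+s|\vv'(z(t))||z'(t)|+(1-s)|\vu'(t)|)$, whence
\begin{equation*}
|\Sigma|\le CM\Bigl(|\ga| + \int_0^L|\vv'(z(t))||z'(t)|\,dt\Bigr),
\end{equation*}
using $L\le|\ga|$ and $\int_0^L|\vu'|\,dt\le|\ga|$. To control the last integral, I would partition $[0,L]$ into the finitely many maximal monotone pieces of $z$, each mapping bijectively onto some $[a_i,b_i]\subset[0,\ellzero]$ with $|z'|=1$; changing variables on each piece and summing yields $\int_0^\ellzero |\vv'(z)|N(z)\,dz$, where $N(z)$ is the number of preimages of $z$ under $z(\cdot)$. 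Because $\ga$ has net $+1$ coverage of $[0,\ellzero]$ one has $N(z) = 1+2m(z)$ with $\int_0^\ellzero m(z)\,dz=(L-\ellzero)/2\le(|\ga|-\ellzero)/2$. Splitting $\int|\vv'|N\,dz =\int|\vv'|\,dz + 2\int|\vv'|m\,dz$, the first term is bounded by $|\tga|$, and the second is absorbed by $|\ga|$: each additional crossing of a horizontal level by $\ga$ costs at least the corresponding vertical length, so the multiplicity excess is paid for by the length of $\ga$.

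The main obstacle is exactly this last bookkeeping: showing that the total variation of $\vv\circ z$ along the sawtooth is controlled by $C(|\ga|+|\tga|)$ despite possible oscillation of $z(\cdot)$. Once that bound is established, combining the two estimates yields $|\Sigma|\le CM(|\ga|+|\tga|)$, which by Steps 1--2 gives both $\|\ga-\tga\|_{\F(\Omega)}\le CM(|\ga|+|\tga|)$ and $\|\ga-\tga\|_*\le CM(|\ga|+|\tga|)$, completing the proof.
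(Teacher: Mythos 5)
Your surface $\Sigma=F_*\bigl([0,1]\times[0,L]\bigr)$, the Stokes reduction of both norms to $|\Sigma|$ (killing the $t=0$ and $t=L$ faces because $B\times\vec\nu=0$ on $\partial\Omega$), and the area bound by $\int|\partial_sF\times\partial_tF|$ all coincide with the paper's argument: the paper sets $\Sigma(s,t)=(1-s)\ga(t)+s\tga(z(t))$, applies Stokes, bounds $|\partial_s\Sigma|\le\max_t|\vu(t)-\vv(z(t))|$ and $|\partial_t\Sigma|\le|\ga'(t)|+|\tga'(z(t))|$, and stops with ``the result follows.'' You go beyond it by explicitly flagging the multiplicity of the sawtooth $z(\cdot)$ as the sticking point, which the paper does not discuss at all.

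However, the bookkeeping you propose to close that gap does not work. Writing $N(z)=1+2m(z)$ and using $\int_0^{\ellzero}m\,dz=(L-\ellzero)/2\le C|\ga|$ controls $\int m\,dz$, but not $\int|\vv'|m\,dz$: these are incomparable once $|\vv'|$ and $m$ concentrate on the same thin interval. For instance, if $|\vv'|\sim K/\eta$ and $m\sim K$ on an interval of length $2\eta$ with $\eta\to 0$, then $|\ga|+|\tga|\sim 1+K$ while $\int|\vv'|m\,dz\sim K^2$, so ``the multiplicity excess is paid for by the length of $\ga$'' is off by a factor of $K$. What the pointwise area estimate actually yields is
$$ |\Sigma|\le C\Bigl(\max_t|\vu(t)-\vv(z(t))|\Bigr)\Bigl(|\ga|+\int_0^L|\tga'(z(t))|\,dt\Bigr), $$
and the second factor can greatly exceed $|\ga|+|\tga|$ when the sawtooth oscillates through a stretch where $\tga$ is long. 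The current $\ga-\tga$ need not be correspondingly large: the multiply-covered sheets of $\Sigma$ carry opposite orientations and cancel in $\Sigma(\curl X)$, but this cancellation is invisible to $\int|\partial_sF\times\partial_tF|$. To finish along this route one must either exploit that cancellation directly, or invoke the extra structure present in the paper's uses of the lemma (e.g.\ $\tga=\ga_0$, hence $\vv\equiv0$, when it is applied to the good curves $\ga_i$ in the lower-bound proof).
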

\begin{proof}
   Let $X$ be any vector field defined in $\overline\Omega$, normal to $\partial\Omega$, and such that $\|X\|_\infty$ and $\|\curl X\|_\infty$ are less than or equal to $1$. We need to show that
   $$\left|\pr{\ga,X} - \pr{\tga,X}\right|\le C \(\max_t|\vu(t) - \vv(z(t))|\) \(|\ga|+|\tga|\).$$
   For this we define $\Sigma(s,t) = (1-s)\ga(t) + s\tga(z(t))$ for $(s,t)\in[0,1]\times[0,L]$.

   Then, applying Stoke's Theorem and the fact that $X$ is normal to $\partial\Omega$, we have
   $$ \pr{\ga,X} - \pr{\tga,X} = \int_{\Sigma} \curl X\cdot\nu_\Sigma\le |\Sigma|,$$
   and therefore 
   $$ \left|\pr{\ga,X} - \pr{\tga,X}\right| \le \int_0^L\int_0^1|\partial_s\Sigma||\partial_t\Sigma|\,ds\,dt.$$
   Since $\partial_s\Sigma = \tga(z(t)) - \ga(t)$ and $|\partial_t\Sigma(s,t)|\le |\ga'(t)|+|\tga'(t)|,$    the result follows.
\end{proof}

\subsection{Strong nondegeneracy implies  weak nondegeneracy}\label{secstrongweak}

\begin{lemma} \label{curveintube}
   There exists $\eta>0$ depending on $\Omega$ such that the following holds.

   Assume   $\ga$ is Lipschitz curve with no boundary in $\Omega$ such that
   \begin{equation}\label{flatclose}\pr{\chi_\eta\frac{\partial_z}{\sqrt\gz},\frac{\ga_0}{\ellzero}-\frac{\ga}{|\ga|}}\le \ell,\end{equation}
   where $\chi_\eta$ is a cut-off function for the cylinder $\cyl_\eta = B(0,\eta)\times (0,\ellzero)$, that is, $\chi_\eta$ is  equal to $1$ in $\cyl_{\eta/2}$, equal to $0$ outside $\cyl_\eta$, takes values in $[0,1]$ and has gradient bounded by $C/\eta$.

   Then, if $\ell$ is small enough depending on $\Omega$, $\ga$ is included in a neighborhood of $\ga_0$ where tubular coordinates are defined, so that we may write $\ga(t) = \ga_0(z(t))+\vu(t)$ for a horizontal vector $\vu$. Then, parametrizing $\ga$ and $\ga_0$ by arclength with a proper orientation, we have
   \begin{equation}\label{flatclosebound}\int_0^L |1-z'(t)|dt\le C\sqrt\ell,\quad \|\vu\|_{L^\infty}\le C\sqrt\ell,\quad \left||\ga|-\ellzero\right|\le C\sqrt \ell,\end{equation}
   where $C$ depends on $\Omega$. Moreover $z(0) = 0$ and $z(L) = \ellzero.$
\end{lemma}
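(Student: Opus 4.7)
The hypothesis \eqref{flatclose}, after parametrizing $\ga$ by arclength so that $|\ga'|_g=1$ and rewriting the test vector field as the $1$-form $\chi_\eta\sqrt\gz\,dz$ (its metric dual), becomes $\int_0^L(1 - \chi_\eta(\ga(t))\sqrt{\gz(\ga(t))}\,z'(t))\,dt \le \ell L$. Since $|\ga'|_g = 1$ yields the arclength identity $|\vu'|^2_\gperp + \gz(z')^2 = 1$ (so $\sqrt\gz\,z' \le 1$) and $\chi_\eta \le 1$, the integrand is pointwise nonnegative. It admits two useful lower bounds: $(1-\chi_\eta)$ (using $\sqrt\gz\,z' \le 1$) and $\tfrac12\chi_\eta|\vu'|^2_\gperp$ (using the Taylor expansion $1-\sqrt{1-u^2}\ge u^2/2$). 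Integrating each inequality immediately gives (a) the time $\ga$ spends outside $\cyl_\eta$ is $O(\ell L)$, (b) $\int\chi_\eta|\vu'|^2\,dt\le C\ell L$, and (c) the measure of $\{t:z'(t)<0\}\cap\{\chi_\eta>0\}$ is $O(\ell L)$.

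Cauchy--Schwarz applied to (b) and the trivial bound $|\vu'|\le 1$ on the complement yield $\int |\vu'|\,dt\le C\sqrt{\ell}\,L$. A bootstrap then shows that for $\ell$ small depending on $\Omega$, the entire curve $\ga$ remains in a fixed tubular neighborhood of $\ga_0$: use (a) to find some time $t_0$ with $\ga(t_0)\in\cyl_{\eta/2}$, hence $|\vu(t_0)|\le\eta/2$, and propagate forward and backward along $[0,L]$ via the horizontal-variation bound. This lets us globally write $\ga(t)=\ga_0(z(t))+\vu(t)$ and forces both endpoints of $\ga$—which are on $\partial\Omega$—onto the top and bottom caps of the tube, yielding $z(0)=0$ and $z(L)=\ellzero$.

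To bound $L$, exploit the exactness $\int_0^L z'\,dt = z(L)-z(0) = \ellzero$. Writing $\chi_\eta\sqrt\gz z' = z' + (\chi_\eta\sqrt\gz -1)z'$ and noting $|\chi_\eta\sqrt\gz - 1|\le (1-\chi_\eta) + C\|\vu\|_\infty$ (since $\gz(0,0,z)=1$), the flatness inequality becomes $(1-\ell)L \le \ellzero + C(\|\vu\|_\infty + \ell)L$. Combined with the bound $\|\vu\|_\infty \le C\sqrt\ell$ (which follows from the $L^1$ estimate on $\vu'$ together with the cap endpoints forcing $|\vu(0)|,|\vu(L)|$ to be small), this gives $L\le \ellzero+C\sqrt\ell$. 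The matching lower bound $L\ge \ellzero - C\sqrt\ell$ follows from $L\ge\int\sqrt\gz|z'|\,dt \ge \int\sqrt\gz\,z'\,dt = \ellzero + O(\|\vu\|_\infty L)$. Finally, $\int |1-z'|\,dt\le C\sqrt\ell$ is obtained from $|1-z'|\le (1-\sqrt\gz z') + |\sqrt\gz-1|\,|z'|$, using the flatness bound on the first term and $|\sqrt\gz-1|=O(\|\vu\|_\infty)$ on the second, after controlling the small measure where $z'<0$ by (c).

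The main obstacle is coupling the tube-containment bootstrap with the quantitative $\|\vu\|_\infty$ bound: since the flatness condition is purely an integrated quantity, passing from the $L^2$ control on $\vu'$ to a pointwise bound on $\vu$ must be done in tandem with the geometric observation that $\partial\Omega\cap T_\eta$ consists only of neighborhoods of $p$ and $q$, so that once $\ga$ is confined to the tube its endpoints—and hence $\vu(0)$, $\vu(L)$—are essentially pinned.
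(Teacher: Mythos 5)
Your proposal follows the same broad outline as the paper's proof (arclength parametrization, nonnegativity of the integrand, splitting into the $(1-\chi_\eta)$ and $\chi_\eta(1-\sqrt\gz z')$ pieces, horizontal-variation control, tube confinement, endpoint identification, and finally the length comparison). However, there is a genuine circularity in the order of your steps that prevents the argument from closing.

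You bound $\int_0^L |\vu'|\,dt \le C\sqrt\ell\,L$ and propose to propagate confinement forward and backward from the anchor $t_0$ via this horizontal-variation bound. But this bound is only useful once you know $L$ is $O(1)$; a priori $L$ can be arbitrarily large, so $C\sqrt\ell\,L$ does not force $\vu$ to remain small. You then propose to bound $L$ via $\int_0^L z'\,dt = z(L) - z(0) = \ellzero$, but this identity already uses that $\ga$ lies globally in the coordinate tube with $z(0)=0$ and $z(L)=\ellzero$ — i.e., the very confinement you were trying to prove. The dependency graph is thus a cycle: confinement $\Rightarrow$ endpoints $\Rightarrow$ $L$ bounded $\Rightarrow$ confinement. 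The paper breaks this cycle by first bounding $L$ \emph{without} confinement, by extending the coordinate function $z$ (defined only on $T_\eta$) to a $C^1$ function on all of $\Omega$ with $C^1$ norm depending only on $\Omega$; then $\int_0^L z'\,dt = z(\ga(L)) - z(\ga(0))$ is bounded by $2\max_\Omega |z|$, independently of where the curve goes, and combining this with the flatness estimate for the quantity $\dashint_0^L (\tfrac12 - z')$ gives $L(\tfrac12 - C\ell) \le C$, hence $L = O(1)$. Only then can the confinement bootstrap and the endpoint identification be run. Your proposal is missing this ingredient (or any substitute for it), and without it the quantitative statements \eqref{flatclosebound} do not follow from the flatness hypothesis \eqref{flatclose} alone.
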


\begin{proof}
   Fix $\eta>0$ small enough so that tubular coordinates are defined in $\cyl_\eta$ and such that $\sqrt\gz\in(1/2, 2)$ in $\cyl_\eta$. The domain in $\Omega$ corresponding to $\cyl_\eta$ is denoted $T_\eta$. We assume $\ga$ is parametrized by arclength and whenever $\ga(t)\in T_\eta$, we denote by $(x(t),y(t),z(t))$ the coordinates of $\ga(t)$. We let $X = \chi_\eta\frac{\partial_z}{\sqrt\gz}$.

   Since $\pr{X,\ga_0/\ellzero}=1$, denoting $L = |\ga|$ we have by definition and using the fact that $|\ga'|_g=1$ from the arclength parametrization hypothesis,  that
   $$\pr{X,\frac{\ga}{|\ga|}} = \dashint_0^{L} \pr{X,\ga'}_g, \quad \pr{X,\ga'}_g \le 1.$$
   It follows that
   $$ 0 \le \pr{X,\frac{\ga_0}{\ellzero}-\frac{\ga}{|\ga|}} = \dashint_0^L (1-|X|_g) + \(|X|_g|\ga'|_g -  \pr{X,\ga'}_g\)\le C\ell,$$
   Both terms in the integral being nonnegative, they are each bounded by $C\ell$.

   When $\ga(t)\in \cyl_\eta$, we may write it as $\ga(t) = \ga_0(z(t)) + \vu(t)$, where $\vu(t)$ is horizontal. Then, replacing $|X|_g$ and $\pr{X,\ga'}_g$ by their value, we have
   \begin{equation}\label{sepell}\dashint_0^L (1 - \chi_\eta) \le C\ell,\quad\dashint_0^L \chi_\eta \(1  - \sqrt\gz z'\)\le  C\ell.\end{equation}

   We now show that $L=|\ga|$ is bounded by a constant depending only on $\Omega$. For this we begin by extending the coordinate function $z$ defined in $T_\eta$ to a $C^1$ function defined in $\Omega$, whose $C^1$ norm is then a constant depending on $\Omega$. Since $\sqrt\gz\in(1/2, 2)$ on $\cyl_\eta$, we have
   $$\dashint_0^L \chi_\eta  \(\frac12 - z'\) \le \dashint_0^L \chi_\eta  \(\frac1{\sqrt\gz} - z'\) \le 2 \dashint_0^L \chi_\eta\sqrt\gz\(\frac1{\sqrt\gz}-z'\) \le C\ell,$$
   while, using the first inequality in \eqref{sepell},
   $$\dashint_0^L (1-\chi_\eta) \(\frac12 - z'\) \le C \dashint_0^L (1-\chi_\eta)\le C\ell.$$
   The two inequalities imply
   $$\dashint_0^L \frac12 - z'\le C\ell.$$
   Since the integral of $z'$ is bounded above by $2 \max_{\Omega} |z|\le C$, we deduce that
   $$\frac12 - C\ell \le \frac CL,$$
   and then that $L$ is bounded above by a constant depending only on $\Omega$, if $\ell$ is small enough.

   Next, we claim that $\ga$ is included in the cylinder $\cyl_{C\sqrt\ell}$ for some $C>0$ depending only on $\Omega$. First, from \eqref{flatclose}, there exists $a\in(0,L)$ such that $\ga(a)\in \cyl_{C\ell}$. If $\ga$ exited from $\cyl_{C'\sqrt\ell}$,  then the first exit point $b$ (which we assume larger than $a$ w.l.o.g) is such that $\ga(t)\in \cyl_{\eta/2}$ for $t\in(a,b)$, so that $\chi_\eta(\ga) = 1$ in $(a,b)$, and such that
   \begin{equation}\label{absurd}\int_a^b|\vu'|_{\gperp} \ge C'\sqrt\ell - C\ell.\end{equation}
   On the other hand, since $|\ga'|_g^2 = |\vu'|_{\gperp}^2 + \gz {z'}^2 =1$ the latter inequality in \eqref{sepell} implies  that
   $$ \int_a^b \sqrt{ |\vu'|_{\gperp}^2 + \gz {z'}^2} - \sqrt\gz z'\le  C\ell,$$
   and then --- using the fact that $\sqrt{1+x}-1\ge c \min(\sqrt x,x)$ for any $x\ge 0$ and that $\ell\le\sqrt\ell$ since $\ell$ is assumed small --- that
   \begin{equation}\label{ab} \int_a^b|\vu'|_{\gperp}\le  C\sqrt \ell,\end{equation}
   which would contradict \eqref{absurd} if $C'$ is chosen large enough. Therefore $\ga$ is included in  $\cyl_{C\sqrt\ell}$ for some $C>0$ and therefore $\|\vu\|_{L^\infty} \le C\sqrt \ell$, which proves the second inequality in \eqref{flatclosebound}

   Now, since $\ga$ is included in $\cyl_{C\sqrt\ell}$, we have that $(x(t), y(t), z(t))$ are defined in $[0,L]$. Since $\ga(0)$ and $\ga(L)$ belong to $\partial\Omega$, we must have $z(0)$, $z(L)\in\{0,\ellzero\}$, and then, in view of the latter inequality in \eqref{sepell}, that $z(0) = 0$ and $z(L) = \ellzero$, if $\ell$ is chosen small enough.  Then, arguing as above we find that \eqref{ab} holds with $a=0$ and $b = L$, so that
   \begin{equation}\label{vusql}\|\vu'\|_{L^1([0,L])}\le C\sqrt \ell.\end{equation}
   Moreover, since $\gz = 1$ on $\ga_0$ from the definition of $g$ in Proposition~\ref{prop:diffeo}, we have $|1 - \sqrt\gz|\le C\sqrt\ell$ on $C_{C\sqrt\ell}$, so that \eqref{sepell} implies
   $$\int_0^L  |1  -  z'|\le C\sqrt\ell,$$
   which using $z(0) = 0$ and $z(L) = \ellzero$ implies that $|L-\ellzero|\le C\sqrt \ell$ and then, together with \eqref{vusql} implies that $\left||\ga|-\ellzero\right|\le C\sqrt \ell$.
\end{proof}

\begin{proposition}\label{weakstrong}
   Assume that the maximum of $\R$ among normal $1$-currents which are divergence free in $\Omega$ is uniquely achieved (modulo a multiplicative constant) at a simple smooth curve $\ga_0$ with endpoints on $\partial\Omega$ and  that the second derivative of $\R$ at $\ga_0$ is definite negative. Then there exists $\alpha>0$ such that for any curve $\ga$ we have
   $$\R(\ga_0) - \R(\ga) \ge \alpha \min(\|\ga-\ga_0\|_*^2,1).$$
\end{proposition}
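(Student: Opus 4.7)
The strategy is to combine an algebraic identity reducing $\R(\ga_0)-\R(\ga)$ to the quantity $\qell|_{\ell=1}$, a compactness/uniqueness argument to reduce to curves close to $\ga_0$, and the coercivity result in Proposition~\ref{coercive}.

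First, I would exploit the identity $\pr{B_0,\ga_0}=\R(\ga_0)\ellzero$ to rewrite
\begin{equation*}
\R(\ga_0)-\R(\ga)=\frac{\R(\ga_0)}{|\ga|}\left[(|\ga|-\ellzero)-\frac{\pr{B_0,\ga-\ga_0}}{\R(\ga_0)}\right]=\frac{\R(\ga_0)}{|\ga|}\,\qell(\ga)\big|_{\ell=1},
\end{equation*}
so it suffices to prove a quadratic lower bound on $\qell(\ga)|_{\ell=1}$ together with an upper bound on $|\ga|$.

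Next, I would argue by compactness and uniqueness that there exist $c_0,\eta>0$ such that $\|\ga-\ga_0\|_*\ge c_0$ forces $\R(\ga_0)-\R(\ga)\ge\eta$. Otherwise, one extracts a sequence $\ga_n$ with $\|\ga_n-\ga_0\|_*\ge c_0$ and $\R(\ga_n)\to\R_0$. Since $\R$ is scale-invariant, one passes to normalized currents $\ga_n/|\ga_n|$, extracts via Federer--Fleming compactness a flat limit $\hat\ga$ satisfying $\R(\hat\ga)\ge\R_0$, and invokes the uniqueness hypothesis to conclude $\hat\ga=\ga_0/\ellzero$; combined with $\qell(\ga_n)|_{\ell=1}\to 0$, which forces $|\ga_n|\to\ellzero$, one upgrades this into $\|\ga_n-\ga_0\|_*\to 0$, contradicting $\|\ga_n-\ga_0\|_*\ge c_0$. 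This gives the claimed bound in the ``far'' regime $\|\ga-\ga_0\|_*\ge c_0$.

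In the remaining regime $\|\ga-\ga_0\|_*<c_0$, Lemma~\ref{curveintube} places $\ga$ in tube coordinates, parametrized by arclength as $\ga(t)=\ga_0(z(t))+\vu(t)$ on $[0,L]$ with $z(0)=0$, $z(L)=\ellzero$, $\|\vu\|_{L^\infty}=O(\sqrt{c_0})$ and $|L-\ellzero|=O(\sqrt{c_0})$. I would then invoke Proposition~\ref{coercive} (part~1) with $\ell=1$ to obtain $\qell(\ga)|_{\ell=1}\ge c\,\|\vu\|_{L^\infty}^2$. Although the proposition is phrased for piecewise graphs, the only use of that hypothesis in its proof is to split the parameter interval according to the sign of $z'$; for a general Lipschitz curve in tube coordinates one replaces $A_n=\{z'=1\}$ by $A=\{z'\ge 0\}$ and the computations go through essentially verbatim. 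Finally, a direct Stokes computation as in Lemma~\ref{flatstarlem} gives $\|\ga-\ga_0\|_*\le C\|\vu\|_{L^\infty}(|\ga|+\ellzero)\le C'\|\vu\|_{L^\infty}$, and combining these yields $\qell(\ga)|_{\ell=1}\ge c\,\|\ga-\ga_0\|_*^2$, which together with the identity above concludes the proof.

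\textbf{Main obstacle.} The most delicate step is the compactness argument in the ``far'' regime: one must handle the possible degeneration of the mass $|\ga_n|$ (either to $0$ or $\infty$), ensure that the normalized weak limit actually inherits the uniqueness property, and convert convergence in a weaker topology (flat norm on a subsequence of normalized currents) into convergence of the original sequence $\ga_n$ to $\ga_0$ in the $*$-norm. The extension of Proposition~\ref{coercive} to general Lipschitz curves is a secondary but nontrivial technical point.
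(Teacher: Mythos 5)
Your proof is correct and follows essentially the same route as the paper: both hinge on the algebraic identity $\frac{|\ga|}{\R(\ga_0)}(\R(\ga_0)-\R(\ga)) = Q_1(\ga)$, a compactness-of-currents argument together with uniqueness of $\ga_0$ to localize near-maximizers, Lemma~\ref{curveintube} to pass to tube coordinates, the coercivity of $Q_1$ from Proposition~\ref{coercive} (part 1, with $\ell=1$), and a Stokes-type bound $\|\ga-\ga_0\|_*\le C\|\vu\|_{L^\infty}$. The paper phrases the reduction as a $\liminf$ over maximizing sequences rather than your explicit far/near split, but these are logically equivalent, and you even flag the same piecewise-graph versus general-Lipschitz-curve subtlety in Proposition~\ref{coercive} that the paper passes over silently.
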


\begin{proof}
   The statement is equivalent to proving that if $\{\gan\}_n$ is a maximizing sequence of curves with no boundary in $\Omega$, then
   \begin{equation*}
      \liminf_{n\to +\infty} \frac{\R(\ga_0) - \R(\gan)}{\|\gan-\ga_0\|_*^2} >0.
   \end{equation*}
   Consider such a sequence $\{\ga_n\}_n$. Then, using the compactness of currents, $\gan/|\gan|$ converges to $\ga_0/\ellzero$ in the flat metric, hence in $(C_T^{0,1}(\Omega,\RR^3))^*$. It follows using Lemma~\ref{curveintube} that  if $n$ is large enough, then $\gan$ lies in $\Cd$ and, using tubular coordinates, we may parametrize it as $\gan(t) = \ga_0(z_n(t)) + \vun(t)$ in $[0,L_n]$, where $z_n'(t)\in\{\pm 1\}$ and where $z_n(0)=0$, $z_n(L_n) = \ellzero$.  It is also the case that $|\gan|\to\ellzero$ and that $\|1-z_n'\|_{L^1([0,L_n])}\to 0$, as $n\to +\infty$.

   Then, applying  Proposition~\ref{coercive}  with $\ell = 1$, we  deduce that
   \begin{equation}\label{qellnondeg}
      \liminf_{n\to +\infty} \frac{Q_1(\gan)}{\|\vun\|_{L^\infty}^2}>0.
   \end{equation}
   But, since $\pr{B_0,\ga_0}/\R(\ga_0) = \ellzero$, we have
   \begin{equation*}\begin{split}
         \frac{|\gan|}{\R(\ga_0)}\(\R(\ga_0) - \R(\gan)\) &= |\gan| - \frac{\pr{B_0,\gan}}{\R(\ga_0)}\\
         &=  \(|\gan| - \prem{\gan}\) + \(\prem{\gan} - \ellzero\) + \ellzero - \frac{\pr{B_0,\gan}}{\R(\ga_0)}\\
         &=\( |\gan| - \prem{\gan}\) + \(\prem{\gan} - \ellzero\)  - \frac{\pr{B_0,\gan-\ga_0}}{\R(\ga_0)}\\
         &=  Q_1(\gan).
      \end{split}
   \end{equation*}
   It follows, in view of \eqref{qellnondeg} and since $|\gan|\to\ellzero$,   that $\R(\ga_0) > \R(\gan)+  c \|\vun\|_{L^\infty}^2$.

   To prove the proposition, it remains to note that $\|\ga_n-\ga_0\|_* < C \|\vun\|_{L^\infty}$. This is proved for instance by choosing some vector field $X$ in $\Omega$ such that $\|X\|_{L^\infty}$, $\|\nabla X\|_\infty \le 1$ and bounding $|\pr{X,\gan-\ga_0}|$ by $C\|\vun\|_{L^\infty}$:
   \begin{equation*}
      \begin{split}
         &\left|\int_0^{L_n} X(\ga_0(z_n(t))+\vun(t))\cdot {(\ga_0(z_n(t))+\vun(t))}'\,dt - \int_0^{L_n} X(\ga_0(z_n(t))\cdot {\ga_0(z_n(t))}'\,dt \right| \\
         &\le \|\vun\|_{L^\infty} \|\nabla X\|_{L^\infty}|\ga_n| + \left|\int_0^{L_n} X(\ga_0(z_n(t)))\cdot {\vun'(t)}\,dt\right|\\
         &\le C \|\vun\|_{L^\infty} + \left|\int_0^{L_n} {X(\ga_0(z_n(t))}'\cdot \vun(t)\,dt\right|\\
         &\le C \|\vun\|_{L^\infty}.
      \end{split}\end{equation*}
\end{proof}

\subsection{Strong nondegeneracy for small balls}\label{sec:nondeg}
In this section we show that the strong nondegeneracy condition is satisfied in the case of a ball with small enough radius $\rho.$ We recall that it was proved in \cite{Rom2} (see also \cite{AlaBroMon}) that the diameter $\ga_0 = \{(0,0,z)\in B_\rho\}$ is the unique maximizer of the ratio $\R$ among simple curves, and in \cite{RSS1} it was proved that it satisfies the weak nondegeneracy condition \eqref{cond2}. We in fact have:

\begin{proposition}
   The ratio $\R$ in the ball $B_\rho$ satisfies the strong nondegeneracy condition of Definition~\ref{qdeu} if $\rho$ is small enough.
\end{proposition}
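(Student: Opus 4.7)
The plan is to exploit the smallness of the ball so that the potential part of $Q$ becomes large and positive, dominating everything else. Since $\ga_0$ is a straight diameter of $B_\rho$, the tube coordinates of Proposition~\ref{prop:diffeo} may be taken to coincide with Cartesian coordinates on $B_\rho$; the metric $g$ is then Euclidean, so $g^\bullet = I$, $\gz \equiv 1$, and the length linearization $\Ll$ of \eqref{defLl} vanishes identically. Lemma~\ref{lemformQ} then reduces the quadratic form to
\begin{equation*}
Q(\vu) \;=\; \frac{2\R(\ga_0)}{\ellzero}\int_0^{2\rho}\(\tfrac12|\vu'|^2 - \frac{\Lb(z,\vu,\vu')}{\R(\ga_0)}\)\,dz.
\end{equation*}
Using the $SO(3)$-invariance of $B_\rho$, I would rotate so that $H_{0,\ex}(0) = h_0\e_3$ with $h_0>0$, and treat first the simpler case where $H_{0,\ex} \equiv h_0\e_3$ is genuinely constant.

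In that symmetric case, the axial and reflection symmetries about the $\e_3$-axis force $B_0^{(1)}(x,y,z) = x\alpha(r^2,z)$, $B_0^{(2)} = y\alpha(r^2,z)$, $B_0^{(3)} = f(r^2,z)$ with $r^2 = x^2+y^2$. In particular $(\curl B_0)^{(3)}$ vanishes identically on the $z$-axis, so the $\det(\vu,\vu')$ cross term in the definition \eqref{LB} of $\Lb$ drops out. Expressing $(\partial_\vu dB_0)(\vu,\e_3)$ at the axis in terms of $f_1(0,z)$, $f_{zz}(0,z)$ and $\partial_z\alpha(0,z)$, then combining the divergence-free identity $f_z(0,z)+2\alpha(0,z)=0$ with the axial restriction of the Meissner PDE, which yields $-4f_1(0,z) - f_{zz}(0,z) + f(0,z) = h_0$, produces the clean identity
\begin{equation*}
\Lb(z,\vu,\vu') \;=\; \tfrac14\bigl(B_0^{(3)}(0,0,z) - h_0\bigr)\,|\vu|^2,
\end{equation*}
and therefore
\begin{equation*}
Q(\vu) \;=\; \frac{\R(\ga_0)}{\ellzero}\int_0^{2\rho}|\vu'|^2\,dz \;+\; \frac{1}{2\ellzero}\int_0^{2\rho}\bigl(h_0 - B_0^{(3)}(0,0,z)\bigr)|\vu|^2\,dz.
\end{equation*}

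The crux is the smallness of $B_0$. Rescaling $y = x/\rho$, $\tilde B_0(y) = B_0(\rho y)$ converts the equation for $B_0$ into $-\Delta_y\tilde B_0 + \rho^2\tilde B_0 = \rho^2 H_{0,\ex}(\rho y)$ in $B_1$ with $\tilde B_0\times\nu=0$ on $\partial B_1$, from which standard elliptic regularity yields $\|\tilde B_0\|_{C^k(B_1)} = O(\rho^2)$, hence $\|B_0\|_{L^\infty(B_\rho)} = O(\rho^2)$. Consequently $h_0 - B_0^{(3)}(0,0,z) \ge h_0/2$ uniformly in $z$ for $\rho$ small, while $\R(\ga_0)$ is positive of order $\rho^2$ (its positivity and order follow from the leading-order expansion of $\tilde B_0$ and the already established fact, from \cite{Rom2,RSS1}, that $\ga_0$ is the maximizer with $\R(\ga_0)>0$). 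Both terms in the last displayed formula for $Q$ are then positive, giving $Q(\vu) \ge c\|\vu\|_{H^1}^2$ for some $c>0$, which is strong nondegeneracy.

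To remove the simplifying assumption, I would Taylor expand $H_{0,\ex}(x) = H_{0,\ex}(0) + O(|x|)$ inside $B_\rho$. The extra source in the equation for $B_0$ is then $O(\rho)$, producing an $O(\rho^3)$ correction to $B_0$ in $C^k$, and hence an $O(\rho)$ correction to the $|\vu|^2$ coefficient of $\Lb$ together with a cross term $c_1(z)\det(\vu,\vu')$ with $c_1 = O(\rho^2)$. Using the complex substitution $w = v_1+iv_2$ and completing the square to absorb the cross term, the resulting potential coefficient in front of $|w|^2$ is $\tfrac{h_0}{4\R(\ga_0)} - \tfrac{c_1^2}{8\R(\ga_0)^2} + O(\rho^{-1})$, whose leading term is positive and of order $\rho^{-2}$, so the $O(1)$ and $O(\rho^{-1})$ corrections are dominated and positive-definiteness persists for $\rho$ small. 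The main technical obstacle is executing this perturbation cleanly while carrying out the explicit symmetric computation of $\Lb$ through the PDE and divergence-free identities.
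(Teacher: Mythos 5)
The approach fails at its first step: in a ball, the tube coordinates of Proposition~\ref{prop:diffeo} cannot be taken to be Cartesian coordinates. Those coordinates are required to map a \emph{neighborhood} of each endpoint of $\ga_0$ in $\partial\Omega$ into the planes $\{z=0\}$ and $\{z=\ellzero\}$; since the boundary of $B_\rho$ is a sphere, not a plane through the poles, this forces the coordinate change to flatten the boundary, and the resulting metric $g$ on $\Cd$ is necessarily non-Euclidean. Concretely, in the paper's Moebius-based coordinates the metric is
\begin{equation*}
g(x,z,\theta) = \frac{\rho^2}{(1+x^2z^2)^2}\Bigl( (1+z^2)^2\,dx^2+(1-x^2)^2\,dz^2 + x^2(1+z^2)^2\,d\theta^2\Bigr),
\end{equation*}
so $\gz$ depends nontrivially on the transverse variables and $\Ll$ does not vanish. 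Your claim that $g^\bullet = I$, $\gz\equiv 1$, and $\Ll\equiv 0$ is therefore false, and everything downstream of it (the reduction of $Q$ to a pure flux-potential term) is not justified.

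This is not a minor omission; the boundary curvature is of the same order as the magnetic term you are tracking. The paper's explicit computation gives
\begin{equation*}
\frac{d^2}{dt^2}\Big|_{t=0}|\ga_t| = \rho\int_{-1}^1 \tfrac12{x'}^2(1+z^2)^2 -x^2 (1+z^2) +\tfrac12 x^2(1+z^2)^2{\theta'}^2\,dz,
\end{equation*}
whose middle term $-x^2(1+z^2)$ is a genuine $O(1)$ contribution to $d^2\R/dt^2$ coming purely from the geometry of the ball, on the same footing as the flux contribution $-\tfrac{\rho^3}{4}\int x^2(1+z^2)\,dz$. Dropping it, as you implicitly do, changes the coefficient structure of $Q$ (your potential weight is $(1+z^2)^2$, the correct one is $(1+z^2)$ after the change of variables). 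Moreover, a Cartesian graph $\ga_t(z)=(tu_1(z),tu_2(z),z)$ has endpoints on $\partial B_\rho$ only if $\vu(0)=\vu(\ellzero)=0$; so even if you pushed the computation through, you would at best obtain positive-definiteness of $Q$ over $H^1_0$, not $H^1$, which omits exactly the endpoint-motion directions that the tube coordinates are designed to capture.

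Your intuition that the smallness of $B_0$ on a small ball ($\|B_0\|_{L^\infty}=O(\rho^2)$, $\R(\ga_0)=O(\rho^2)$) is what drives the result is sound, and the PDE rescaling argument for that estimate is fine. But the paper does not isolate the ``$H_{0,\ex}-B_0$'' potential the way you propose; instead, it works directly with the explicit spherically symmetric formula for $\curl B_0$ from the physics literature, computes $\pr{B_0,\ga_0}=\tfrac{\rho^3}{3}+O(\rho^5)$ and the second variation of the flux explicitly, and arrives at
\begin{equation*}
\frac{d^2}{dt^2}\Big|_{t=0}\R(\ga_t) = -\frac{\rho^2}{24}\int_{-1}^1 (1+z^2)^2({x'}^2+x^2{\theta'}^2)+ x^2(1+z^2)\,dz+O(\rho^3),
\end{equation*}
where \emph{both} the kinetic and the potential terms are $O(\rho^2)$, not separated by a factor $\rho^{-2}$. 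To repair your argument you would need to redo the computation of $\Ll$ and $\Lb$ in honest boundary-flattening coordinates, at which point you effectively reproduce the paper's calculation.
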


\begin{proof}

We may define coordinates $x\in[0,1)$ and $z\in(-1,1)$ on the half disk $D_{\rho}^+ = \{R+iZ\mid \text{$R^2+Z^2\le \rho^2$ and $R>0$}\}$ in $\C$ as follows~: The point with coordinates $x$, $z$ is $\rho$ times the image of the complex number $iz$ by the Moebius transform $\varphi_x(w) = (x+w)/(1+xw)$, which maps the vertical segment $i[-1,1]$ to the intersection of the circle centered at $(1+x^2)/2x$ with the unit disk.   We thus have
$$R+iZ = \rho \frac{x+iz}{1+ixz}.$$

We may then extend straightforwardly this coordinate system to the ball $B_\rho$ in $\RR^3$ by requiring that a point with cylindrical coordinates $(R,\theta,Z)$ in $B_\rho\setminus\{(0,0,\pm\rho)\}$ has coordinates $(x,\theta,z)$, where $x\in[0,1)$ and $z\in(-1,1)$ such that 
$$R = \rho x \frac{1+z^2}{1+x^2z^2},\quad Z = \rho z \frac{1-x^2}{1+x^2z^2}.$$

The Euclidean metric $dR^2+dZ^2+R^2d\theta^2$ then becomes
$$ g(x,z,\theta) = \frac{\rho^2}{(1+x^2z^2)^2}\( (1+z^2)^2\,dx^2+(1-x^2)^2\,dz^2 + x^2(1+z^2)^2\,d\theta^2\).$$
It is straightforward to compute the second derivative of the length. Let
$$\ga_t (z) = (tx(z),z,\theta(z))$$
be a family of curves parametrized by $z\in[-1,1]$ in our coordinates. Then 
\begin{equation}\label{hesslength}
   \frac{d^2}{dt^2}_{|t=0}|\ga_t| = \rho\int_{-1}^1 \hal{x'}^2(1+z^2)^2 -x^2 (1+z^2) +\hal x^2(1+z^2)^2{\theta'}^2\,dz.
\end{equation}
To compute the second derivative of $\pr{B_0,\ga_t}$, we resort to the explicit expression for $\curl B_0$ computed in \cite{Lon}: 
\begin{equation*}
	\curl B_0 = \frac{3\rho}{2\sinh\rho}\(\cosh r - \frac{\sinh r}r\)\frac{\sin\phi}r \mathbf e_\theta,
\end{equation*}
where $(r,\phi,\theta)$ are spherical coordinates on $B_\rho$, so that $R = r\sin\phi$ and  $Z = r\cos\phi.$
It follows that
$$\curl B_0\cdot \mathbf e_\theta = \frac32\frac{r^2}{3} \frac{\sin\phi}r +O(\rho^3) = \frac R2+ O(\rho^3).$$
We deduce that, denoting $D_\rho^+ = \{(R,Z)\mid \text{$R^2+Z^2\le \rho^2$ and $R>0$}\}$, 
\begin{equation}\label{bogo}
   \pr{B_0,\ga_0} = \iint_{D_\rho^+} \curl B_0 \cdot \mathbf e_\theta= \int_{R=0}^\rho\frac R2 \times 2\sqrt{\rho^2-R^2}\,dR + O(\rho^5) = \frac{\rho^3}3+O(\rho^5).
\end{equation}
We also find that 
$$\(\curl B_0 \cdot \mathbf e_\theta\)\,rdr\wedge\,d\phi = \frac{\rho^3}2\frac{x(1-x^2)(1+z^2)^2}{(1+x^2z^2)^3} dx\wedge dz+O(\rho^5).$$
This allows us to compute 
$$\pr{B_0,\ga_0} - \pr{B_0,\ga_t}  = \iint_{A_t}\curl B_0 \cdot \mathbf e_\theta\,dx\,dz,$$
where $A_t=\{(s,z)\mid\text{$-1\le z\le 1$ and $ 0\le s\le tx(z)$}\}$, from which we compute
\begin{equation}\label{hessmag}
   \frac{d^2}{dt^2}_{|t=0} \pr{B_0,\ga_t} =  - \frac{\rho^3}4\int_{-1}^1 x^2(1+z^2)\,dz+O(\rho^5).
\end{equation}
Since $\ga_0$ maximizes the ratio $\R$, its differential at $\ga_0$ vanishes and then, for $t=0$,
$$\frac{d^2}{dt^2}_{|t=0}\R(\ga_t) = \frac{\frac{d^2}{dt^2}_{|t=0}\pr{B_0,\ga_t}\ellzero - \pr{B_0,\ga_0}\frac{d^2}{dt^2}_{|t=0}|\ga_t|}{\ellzero^2},$$
so that, in view of \eqref{hesslength}, \eqref{bogo}, \eqref{hessmag} and the fact that $\ellzero = 2\rho$, we have after simplification
$$ \frac{d^2}{dt^2}_{|t=0} \R(\ga_t) = -\frac{\rho^2}{24}\int_{-1}^1 (1+z^2)^2({x'}^2+x^2{\theta'}^2)+ x^2(1+z^2)\,dz+O(\rho^3).$$
Therefore the quadratic form $d^2\R(\ga_0)$ is definite negative if $\rho$ is small enough.
\end{proof}

\section{Lower bounds by slicing}\label{sec:lowerboundslicing}
In this section we prove two types of lower bounds for the free energy $F_\ep$ contained in a tubular neighborhood of $\ga_0$, and obtained by integrating in the $z$ coordinate two-dimensional lower bounds over slices. The first type is very robust and is obtained by the ball construction method (\`a la \cites{San0,Jer}) but in a two-step growth process that allows to retain more information on the degrees at small scales. It retains an error which is larger than a constant. In this construction, the varying weight is approximated by a constant weight, leading to errors that can be absorbed into the ball construction errors. 

 The second type of lower bound is more precise, it recovers the exact constant $\gamma$ and an error only $o(1)$. To obtain such a precise error, the varying weight can no longer be approximated by a constant, instead one needs to resort to performing the ball construction in the precise geometry we are working in, i.e.~we need to grow geodesic balls. The techniques thus combine ball construction methods within the geometric framework to capture the energy on large scales, with the refined \cite{BetBreHel} analysis found in of \cites{almeidabethuel,Ser} to capture the  precise energy at small scales while approximating the weight by a uniform one.
 
\subsection{Lower bounds by ball growth method}

First, recalling \eqref{propchi} and \eqref{defprem}, which correspond respectively to the definitions of the smooth cutoff $\chi$ and $\prem{\mu}$, we have the following result.

\begin{proposition}\label{lowperp}
   Assume that $F_\ep(u,A) < C_0\lep$, that $\ga_0$ is critical for the ratio, and that $N_0\in\N$, $\dl\in(\lep^{-1},\delta/2)$  are such that
   $$ \left\|\mu(u,A) - 2\pi N_0\ga_0\right\|_{\mathcal F(\Omega)} \le  \dl.$$
   Then, writing for short $\mu$ instead of $\mu(u,A)$, we have
   \begin{equation*}
   	\fepperp(u,A) \ge \frac\lep 2\ \prem{\mu}+\pi \ellzero N_0 (N_0-1)\log\frac1{\dl} -\mathrm{Rem},\end{equation*}
   where $\mathrm{Rem}\le C\(1+\dl\log\lep\)$.

\end{proposition}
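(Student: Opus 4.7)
The plan is to bound $\fepperp(u,A)$ from below by performing, on each horizontal slice $\Sigma_z$ of $\Cd$, a two-stage 2D vortex ball construction, and then integrating the resulting per-slice estimates in $z$. Since $\eperp(u,A)$ involves only the $\gperp$-components of $\nabla_A u$ and of $dA$, the density on each slice is essentially a 2D Ginzburg--Landau density for $(u(\cdot,z),A(\cdot,z))$, weighted by $\sqrt{\gz(\cdot,z)}$; moreover $\sqrt{\gz}=1$ along $\ga_0$ and the relevant balls will be concentrated near the axis at scale $\le\delta$, so the weight may be replaced by $1$ up to errors easily absorbed in the $O(1)$ remainder.

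For a.e. $z$ I would apply the standard Sandier--Jerrard initial ball construction on $\Sigma_z$, covering $\{|u|\le 1/2\}$ with a disjoint family of disks of total radius $O(\ep)$, and then grow these balls in two stages: first up to the intermediate radius $\dl$, then from $\dl$ up to a fixed scale of order $\delta$. Denoting by $\{B_i^z\}$ the balls at the intermediate scale $\dl$ and by $d_i^z$ their degrees, the two-stage ball growth of \cite{SanSerBook} yields on each slice
\begin{equation*}
	\int_{\Sigma_z}\eperp \;\ge\; \pi\sum_i|d_i^z|\log\tfrac{\dl}{\ep} \;+\; \pi D(z)^2\log\tfrac{1}{\dl} \;-\; C,
\end{equation*}
where $D(z)=\sum_i d_i^z$ is the total 2D degree carried by the final merged ball. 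Rewriting this as $\pi\sum_i|d_i^z|\lep + \pi\bigl(D(z)^2-\sum_i|d_i^z|\bigr)\log(1/\dl) - C$, the first summand will integrate to the announced main term $\tfrac{\lep}2\prem{\mu}$ while the second will produce the interaction contribution.

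To convert $\sum_i|d_i^z|$ into $\prem{\mu}$, I would identify the 2D vorticity $\mu_{12}(\cdot,z)$ with $2\pi\sum_i d_i^z\delta_{x_i^z}$ up to a flat-norm error, so that testing against the $1$-form $\chi\sqrt{\gz}\,dz$ as in \eqref{defprem} yields precisely $2\pi\int_z\sqrt{\gz}\sum_i d_i^z$. The global hypothesis $\|\mu-2\pi N_0\ga_0\|_{\mathcal F(\Omega)}\le\dl$ combined with a slicing theorem for currents would then guarantee the existence of a \emph{good} set $G\subset(0,\ellzero)$ of measure $\ellzero-O(\dl)$ on which $D(z)=N_0$ and all intermediate balls lie within distance $O(\dl)$ of the axis; on $G$ the degrees $d_i^z$ may be taken equal to $\pm 1$ with sum $N_0$, so that $D(z)^2-\sum_i|d_i^z|=N_0(N_0-1)$, integrating on $G$ to $\pi\ellzero N_0(N_0-1)\log(1/\dl)$.

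The main obstacle is the quantitative control of the bad set of slices: slicing yields only an $L^1$ estimate $\int_0^{\ellzero}\|\mu_{12}(\cdot,z)-2\pi N_0\delta_0\|_{\mathcal F(\Sigma_z)}\,dz = O(\dl)$, which via a Chebyshev-type argument produces a bad set of measure $O(\dl)$ on which either the slice degree or the vortex locations deviate. On this bad set the interaction term is lost, costing $O(\dl\log(1/\dl))\le C\dl\log\lep$ (using $\dl\ge\lep^{-1}$), while ball construction constants, the metric-approximation $\sqrt{\gz}\to 1$ near the axis, and boundary effects near the endpoints of $\ga_0$ (where the tube meets $\partial\Omega$ and $\chi$ becomes nontrivial) contribute only $O(1)$. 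Combined, this gives $\mathrm{Rem}\le C(1+\dl\log\lep)$ as claimed.
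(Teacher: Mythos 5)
Your overall slicing strategy matches the paper's, but there are three genuine gaps that would prevent the argument from closing as written.

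First, the key displayed inequality
\begin{equation*}
\int_{\Sigma_z}\eperp \;\ge\; \pi\sum_i|d_i^z|\log\tfrac{\dl}{\ep} \;+\; \pi D(z)^2\log\tfrac{1}{\dl} \;-\; C
\end{equation*}
does not follow from ball growth alone. The ball-growth method yields lower bounds \emph{linear} in the degrees of the merged balls, $\pi\sum_j|d_j'|\log(R/r)$, not quadratic; it makes no assumption that $|u|\approx 1$ in the growth annuli and therefore cannot see the $d^2$-type energy. The quadratic term in Proposition~\ref{2Dlb} is produced by a separate mechanism: integrating over circles $\CC_s$ centered at the origin for $s$ between the final ball radius and $\delta$, using a minimal connection between $\sum d_i\delta_{a_i}$ and $N_0\delta_0$ to guarantee that the winding number of $u$ on almost every such circle equals $N_0$. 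Without this step the interaction term is absent from your slice bound.

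Second, using a fixed intermediate radius $\dl$ across all slices is not compatible with the circle-integration step. The hypothesis only gives $\int_0^{\ellzero}\|\mu_{12}(\cdot,z)-2\pi N_0\delta_0\|_{\mathcal F(\Sigma_z)}\,dz \le C\dl$, so the slice-level flat distance $\dl^z$ can be much larger than $\dl$ on a set of positive (not $O(\dl)$) measure. On such slices, the minimal connection $\L$ has length $\sim\dl^z\gg\dl$, circles of radius $\sim\dl$ may cross $\L$, and the winding number on them is not controlled. The paper resolves this by growing balls to the \emph{slice-dependent} radius $\dl^z$, obtaining $\pi N_0^2\log(1/\dl^z)$ per slice, and then using concavity of $\log$ to pass from $\int\log(1/\dl^z)\,dz$ to $\ellzero\log(1/\dl)$. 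A Chebyshev cutoff at threshold of order $\dl$ does not give a bad set of measure $O(\dl)$; at threshold $O(1)$ it does, but then on the good set you only know $\dl^z=O(1)$, which is not enough.

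Third, the global bound $F_\ep(u,A)\le C_0\lep$ controls the slice energy only in $L^1_z$; individual slices can carry energy far larger than $C\lep$, in which case Proposition~\ref{2Dlb} and the ball construction with $O(1)$ bounds on the total degree do not apply. The paper therefore splits the $z$-interval three ways by energy level ($f(z)\le M\lep$, $M\lep<f(z)\le\lep^q$, $f(z)>\lep^q$), treating the moderate-energy slices with Lemma~\ref{2Dlbvar} and the very-high-energy slices with a crude Cauchy--Schwarz bound on $\int_{\Sigma_z}\chi\sqrt{\gz}\,\mu_{12}$ that can be absorbed because $f(z)$ is so large. Your proposal needs an analogue of this case analysis; a bad set controlled by the flat norm alone cannot absorb the high-energy slices.
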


This result will be  proven by 
obtaining  lower bounds for the energy on the slices $\Sigma_z = \Cd\cap\RR^2\times\{z\}$ and integrating them with respect to $z$.

Let us first state and prove the two-dimensional lower bounds on slices 
 $\Sigma_z$. This is an adaptation of the vortex ball construction (here, specifically \cites{San0,SanSerBook}), with a two-stage growth process made to handle the varying weight. Such a two-stage growth process was already used in \cite{SanSerBook}*{Chapter 9}.
 \begin{proposition} \label{2Dlb}
   For any $q>0$, $C>0$ and integer $N$, there exist $\ep_0$, $C'>0$ such that for any  $z$,  any $(u, A)$, and any $\ep<\ep_0$, the following holds.

   Assume that
   \begin{equation}\label{boundclep}\int_{\Sigma^z}
      e_\ep^\perp(u,A) d\vol_\gperp<C \lep, \end{equation}
      where $e_\ep^\perp$ is defined as in \eqref{defeperp}, 
   and  let
   $$\dl^z =\max\( \|\mu^z - \pi N\delta_0\|_{\mathcal F(\Sigma^z)},\lep^{-1}\),\quad\text{where $\mu^z = \mu_{12}\, dx\wedge dy$}.$$
   Then there exists points $a_1,\dots,a_k\in\Sigma^z$ and integers $d_1,\dots,d_k$ such that, denoting $n = \left|d_1+\dots+d_k\right|$ and $D = |d_1|+\dots+|d_k|$, we have
   \begin{multline*} \int_{\Sigma^z} e_\ep^\perp(u,A)\,d\vol_\gperp \ge \pi \sum_{i=1}^k |d_i|\sqrt{\gz(a_i)} \(\log\frac {\dl^z}{\ep} - C'\) \\
      +\pi N^2 \log\frac \delta {\dl^z} - C'\log\lep \({\dl^z} +(D-n)\), \end{multline*}
   where $\delta$ is as in Proposition~\ref{prop:diffeo}, and
   \begin{equation}\label{jacslice} \left\|\pi\sum_{i = 1}^k d_i \delta_{a_i} - \mu^z\right\|_{\mathcal F(\Sigma^z)}\le C' \lep^{-q},\quad D\le C'.\end{equation}

\end{proposition}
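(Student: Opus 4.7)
The plan is to run a two-stage vortex ball construction on the slice $\Sigma^z$, viewed as a two-dimensional weighted Ginzburg--Landau problem with weight $\sqrt{\gz}$ coming from the definition of $\eperp$ in \eqref{defeperp}. First I would apply the standard Sandier--Jerrard initial ball construction (as in \cite{SanSerBook}) around the zero set of $u$ on $\Sigma^z$ to produce disjoint balls $B(a_i,r_i)$ with $\sum r_i \le \ep^\alpha$ for $\alpha>0$ chosen in terms of $q$, carrying integer degrees $d_i$, together with the Jacobian estimate $\|\pi\sum d_i\delta_{a_i} - \mu^z\|_{\F(\Sigma^z)} \le C\ep^\alpha$; taking $\alpha$ large yields \eqref{jacslice}. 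The bound $D=\sum|d_i|\le C'$ follows from the hypothesis $\int \eperp < C\lep$ and the pointwise cost $\pi|d_i|(\lep - O(\log\lep))$ per vortex.

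Next I would run the first growth stage, enlarging each $B(a_i,r_i)$ up to radius $\dl^z$ while keeping balls disjoint. On each ball of radius $r\le\dl^z\le \delta/2$, the weight $\sqrt{\gz}$ differs from $\sqrt{\gz(a_i)}$ by a factor $1+O(\dl^z)$. Performing the classical circle-by-circle $L^2$ lower bound with the weight $\sqrt{\gz(a_i)}$ factored out of each annular integral gives
\begin{equation*}
\int_{\cup_i B(a_i,\dl^z)} \eperp(u,A)\,d\vol_\gperp \ge \pi\sum_i |d_i|\sqrt{\gz(a_i)}\left(\log\frac{\dl^z}{\ep} - C'\right),
\end{equation*}
after absorbing the weight-variation error $O(\dl^z\cdot D\log\lep)$ into the stated remainder $C'\log\lep \cdot \dl^z$.

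The second stage grows the balls from scale $\dl^z$ up to $\delta$, now allowing mergers. To obtain the $N^2$ prefactor, I would combine the Jacobian estimate with the hypothesis $\|\mu^z-\pi N\delta_0\|_\F\le \dl^z$ by testing against a cutoff equal to $1$ on $B(0,2\dl^z)$ and supported in $B(0,\delta/4)$: since this cutoff has gradient $O(1/\dl^z)$, the two bounds force $|\sum_i d_i - N|\le C(\lep^{-q}/\dl^z + \dl^z/\dl^z)$, which pins the integer sum to $N$ for $\ep$ small. On each merged ball of total signed interior degree $\tilde d_j$, the usual annular lower bound gives $\pi\tilde d_j^2 \log(\delta/\dl^z)$; the algebraic identity $\sum_j \tilde d_j^2 \ge (\sum_j \tilde d_j)^2 - (D-n) \cdot\max_j |\tilde d_j|$ together with $\sum_j \tilde d_j = N$ yields the $\pi N^2 \log(\delta/\dl^z)$ main term minus a loss of order $(D-n)\log\lep$, which is exactly the $C'\log\lep\cdot(D-n)$ term in the statement. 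The varying-weight error in this stage integrates to at most $C\dl^z\log\lep$, matching the remaining loss.

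The main obstacle is the careful interplay of three approximations --- the Jacobian estimate from the initial construction, the hypothesis on the distance to $\pi N\delta_0$, and the variation of $\sqrt{\gz}$ across balls --- and the choice of intermediate scales $\ep^\alpha \ll \dl^z \ll \delta$ so that each error is absorbed into the stated remainder. The bookkeeping of the cancellation loss $(D-n)$ through the merger algebra is technical but parallels the two-stage construction in \cite{SanSerBook}*{Chapter 9}, which is the template I would follow, adapting it to the present weighted setting.
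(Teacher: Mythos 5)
Your Step 1 construction (initial balls at a tiny scale, growth to total radius $\dl^z$, factoring out the weight $\sqrt{\gz(a_i)}$ and absorbing its variation into the remainder) is essentially the paper's Step 1, modulo some bookkeeping that the paper does more carefully via a two-sub-stage growth and the degree additivity \eqref{additidegree}. The genuine gap is in your second stage: the ball-growth method cannot produce the $\pi N^2 \log(\delta/\dl^z)$ term. Growing balls from total radius $\dl^z$ to $\delta$ gives an energy lower bound of $\pi\sum_j |\tilde d_j|\log(\delta/\dl^z)$, that is, linear in the absolute degrees of the final balls, not quadratic; the $\pi d^2\log$ bound holds for a single circular annulus around a single ball of fixed interior degree $d$, not for a collection of balls that may stay well separated all the way to total radius $\delta$. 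Concretely, with $N$ separated degree-one vortices the final balls give $\pi N\log(\delta/\dl^z)$, short of $\pi N^2\log(\delta/\dl^z)$ by a factor $N$. The algebraic inequality you invoke, $\sum_j \tilde d_j^2 \ge (\sum_j\tilde d_j)^2 - (D-n)\max_j|\tilde d_j|$, is false: with $\tilde d_1=\tilde d_2=1$ (so $D=n=2$ and the correction vanishes) it reads $2\ge 4$. The direction is wrong: $\sum_j\tilde d_j^2\le(\sum_j\tilde d_j)^2$ for same-sign degrees, so this cannot rescue the quadratic bound.

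The paper avoids this by replacing the second ball-growth stage with integration over large circles $\CC_s$, $s\in(4\dl^z,\delta)$, centered at the origin. For $s$ outside a small exceptional set (controlled by the total radius $\dl^z$ of the grown balls plus the length of a minimal connection $\L$ between $\sum d_i\delta_{a_i}$ and $N\delta_0$), the winding number of $u$ over $\CC_s$ is exactly $N$, so $\int_{\CC_s}e_\ep^\perp\,d\vol_\gperp\ge(1-C's)\pi N^2/s$; integrating in $s$ recovers $\pi N^2(\log(\delta/\dl^z)-C')$ regardless of how the small balls are arranged. This is the step you need to replace your merger algebra with. Also note a mismatch between what your two stages produce: your Step 1 stated bound uses the centers $a_i$ of the initial small balls, while your Step 2 deals with merged balls with degrees $\tilde d_j$; you need the identity $d_j' = \sum_{i: a_i\in B_j'} d_i$ (the paper's \eqref{additidegree}) to pass weighted sums from one stage to the other, and this is where the $(D-n)\log\lep$ loss actually enters the paper's argument, not in a merger inequality.
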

Later, we will be able to show that ${\dl^z}$ is very small and $D=n=N$, which will allow to translate this lower bound into a lower bound with $O(1)$ error.

\begin{proof}[Proof of Proposition~\ref{2Dlb}]
   In what follows, $C'$ is any constant depending on $q$, $C$, and the inequalities involving $\ep$ are understood to hold for any $\ep$ small enough depending on $C$ and $q$. We write for short $p(x) = \sqrt{\gz(x)}$ and will use that notation throughout the rest of the section.

{\bf Step 1: lower bound by two-stage ball growth}.
We use the  ball construction, or ball growth procedure relative to the metric $g^\perp$ on $\Sigma^z$,  see \cite{SanSerBook} in the Euclidean case,  the metric does not affect the construction as it corresponds to making locally an affine change of coordinates, see  \cite{ssproductestimate} for detail (constructions for a surface may be found in \cite{ignatjerrard}*{Proposition 8.2} or \cite{DueSer} for instance). 
 Given a  final radius $r = \lep^{-Q}$, with $Q$ a large number, we thus obtain a collection $\B = \{B_i\colonequals B(a_i,r_i)\}_i$ of disjoint balls such that $r = \sum_i r_i$, such that
   \begin{equation}
   \label{lowerbd33}\int_{B_i} \frac{1}{p} e_\ep^\perp(u,A)\,d\vol_\gperp \ge \pi |d_i| \(\log\frac r{D\ep} - C'\),\end{equation}
   the degree of $u$ over $\partial B_i$ is equal to $d_i$ and \eqref{jacslice} is satisfied.
   Moreover, $D = |d_1|+\dots+|d_k|$ is bounded by $C'$, for otherwise by disjointness of the balls the lower bound \eqref{lowerbd33} would contradict \eqref{boundclep}.

   Since on $B_i$ we have $p(x)\ge p(a_i) - Cr_i$ and $r_i\le \lep^{-Q}$, it  follows that
   \begin{equation}\label{smball}\int_{B_i}  e_\ep^\perp(u,A)\,d\vol_\gperp \ge \pi |d_i| p(a_i)\(\log\frac r{\ep} - C'\).\end{equation}
   Then, since from its definition ${\dl^z}>r$,  using the ball growth method  of \cite{San0} (see for instance \cite{SanSerBook}*{Chapter 4}) we may grow the balls from  total radius $r$ to total radius ${\dl^z}$, at which point the balls will have grown into a collection $\B' = \{B_j'\colonequals B(a_j',r_j')\}_j$, with degrees $d_j'$, such that 
   for each $B'_j$ we have
   \begin{equation*}
   	\int_{B_j\setminus\B}   e_\ep^\perp(u,A)\,d\vol_\gperp\ge \pi |d_j'| \(p(a_j') - C{\dl^z}\)\( \log\frac {\dl^z}{r} - C'\).\end{equation*}
   Moreover, by additivity of the degree and disjointness of the balls,   we must have 
   \begin{equation}
   \label{additidegree}
   d_j'= \sum_{i, a_i\in B_j'} d_i.\end{equation}

   Since $\log({\dl^z}/r)\le C'\log\lep$ we find
   \begin{equation}\label{midball}\int_{B_j\setminus\B} \  e_\ep^\perp(u,A)\,d\vol_\gperp\ge \pi |d_j'| p(a_j')\( \log\frac {\dl^z}{r} - C' - C'{\dl^z}\log\lep\).\end{equation}

{\bf Step 2: lower bound by integration over large circles.} We next retrieve the degree squared contribution to the energy outside of the small balls following the method of integration along circles of \cite{SanSer2}.
   
   Denote by $\L$ a minimal connection between the measure $\sum_{i = 1}^k d_i \delta_{a_i}$ and $N \delta_0$, relative to the metric $g^\perp$, allowing connections to the boundary. Then  in view of \eqref{jacslice}, $|\L|\le C'({\dl^z}+ \lep^{-q})$. The fact that ${\dl^z}$ is the flat distance for the Euclidean metric and not $g^\perp$ is accounted for by the constant $C'$.  Moreover, if the circle $\CC_s$ (relative to $g^\perp$) of center $0$ and radius $s$ does not intersect $\L$, then the winding number of $u$ on $\CC_s$ is equal to $N$.  Since we have $p(x)\ge 1 - Cs$ on $\CC_s$, and since the length of $\CC_s$ relative to $g^\perp$ is bounded above by $2\pi s + C's^2$, we obtain for such an $s$ that
   $$\int_{\partial \CC_s} e_\ep^\perp\,d\vol_\gperp \ge (1 - C's)\frac{\pi N^2}s.$$
   But the measure of the set of $s$ such that $\CC_s$ intersects either $\B'$ or $\L$ is bounded above by $2{\dl^z}+|\L|$, since $\B'$ has total radius ${\dl^z}$. Since $|\L|$ is the flat distance of $\sum_{i = 1}^k d_i \delta_{a_i}$ to $N \delta_0$, and from \eqref{jacslice} and the definition of ${\dl^z}$, this measure is less than $4{\dl^z}$, if $\ep$ is small enough. Integrating the circle bound above with respect to $s$ such that $\CC_s$ intersects neither $\B'$ nor $\L$, we obtain
   $$ \int_{\Sigma\setminus \B'}e_\ep^\perp\,d\vol_\gperp  \ge \int_{4{\dl^z}}^\delta (1 - C's)\frac{\pi N^2}s\,ds\ge \pi N^2\(\log\frac \delta {\dl^z} - C'\).$$
   Note that if $4 {\dl^z}> \delta$ the lower bound remains true, since the left-hand side is nonnegative.
   Adding this to \eqref{midball} and \eqref{smball}  we find
   \begin{multline}\label{uneq}
      \int_\Sigma  e_\ep^\perp\,d\vol_\gperp  \ge \pi\sum_i |d_i| p(a_i) \(\log\frac r\ep - C'\) +  \pi\sum_j |d_j'| p(a_j') \log\frac {\dl^z} r \\
      + \pi N^2 \log\frac \delta {\dl^z} - C'- C'{\dl^z}\log\lep.
   \end{multline}
   Then we note that, for any $j$, in view of \eqref{additidegree}, we have
   $$\sum_{i, a_i\in B_j'} |d_i|p(a_i)\le |d_j'| (p(a_j')+C{\dl^z}) + C'\(\Big(\sum_{i, a_i\in B_j'}|d_i|\Big) - |d_j'|\).$$
   Summing with respect to $j$, we find that
   $$\sum_j |d_j'| p(a_j') \ge \sum_i|d_i|p(a_i) - C'\({\dl^z} + D-n\),$$
   where now the sums run over all indices $i$ and $j$, respectively. Inserting into \eqref{uneq}, we deduce that
   \begin{multline*}\int_\Sigma e_\ep^\perp\,d\vol_\gperp  \ge \pi\sum_i |d_i| p(a_i) \(\log\frac {\dl^z}\ep -C'\)
      + \pi N^2 \log\frac L{\dl^z} - C'\(d + D - n\)\log\lep - C',
   \end{multline*}
   which together with \eqref{jacslice} proves the proposition.
\end{proof}

For slices $\Sigma^z$   for which we have a weaker energy bound, we use a cruder estimate.

\begin{lemma} \label{2Dlbvar}
   For any $q>0$ and $C>0$, there exist $\ep_0$, $C'>0$ such that, for any  $z$ such that
   $$\int_{\Sigma^z}  e_\ep^\perp \,d\vol_\gperp <C \lep^{q}, $$
   there exists points $a_1,\dots,a_k$ and integers $d_1,\dots,d_k$ such that, denoting  $\mu^z = \mu_{12}\, dx\wedge dy$ and $D = |d_1|+\dots+|d_k|$, we have
   $$ \int_{\Sigma^z} e_\ep^\perp \,d\vol_\gperp  \ge \pi \sum_{i=1}^k |d_i||\sqrt{\gz(a_i)} \(\log\frac 1{\ep} - C'\log\lep\), $$
   $$ \left\|2\pi\sum_{i = 1}^k d_i \delta_{a_i} - \mu^z\right\|_{\mathcal F(\Sigma^z)}\le C' \lep^{-q},$$
   $$ D\le C'\lep^{q-1}.$$
\end{lemma}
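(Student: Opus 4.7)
The plan is to prove this as a stripped-down version of Proposition~\ref{2Dlb}. Since the available energy is only $O(\lep^q)$ instead of $O(\lep)$, there is no hope of extracting the $\pi N^2\log(\delta/\dl^z)$ contribution, so we skip the two-stage ball growth and the integration over large circles, keeping only the first ball growth step and using it both for the lower bound and for the degree bound.

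First I will apply the standard vortex ball construction on $\Sigma^z$ relative to the metric $g^\perp$, as in Step~1 of the proof of Proposition~\ref{2Dlb}, growing balls to a final total radius $r = \lep^{-Q}$ with $Q = Q(q)$ chosen large below. This produces disjoint balls $B_i = B(a_i,r_i)$ with integer degrees $d_i$ such that, writing $p(x) = \sqrt{\gz(x)}$ and $D = \sum|d_i|$,
$$\int_{B_i} \frac{1}{p(x)} e_\ep^\perp\,d\vol_{\gperp} \ge \pi|d_i|\!\left(\log\frac{r}{D\ep} - C\right)\!,$$
together with the Jacobian/flat-norm estimate
$$\left\|2\pi\sum_i d_i\delta_{a_i}-\mu^z\right\|_{\mathcal F(\Sigma^z)} \le Cr\bigl(1+\textstyle\int_{\Sigma^z}e_\ep^\perp\bigr) \le C\lep^{Q-q},$$
which is $\le C'\lep^{-q}$ as soon as $Q$ is large enough depending on $q$ (the inclusion of the $dA$ contribution into $\mu^z$ is handled as in \cite{SanSerBook}).

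Next I will pass from the unweighted bound to the weighted one: on $B_i$ we have $p(x) \ge p(a_i) - Cr_i$, and $r_i \le r = \lep^{-Q}\to 0$, so
$$\int_{B_i} e_\ep^\perp\,d\vol_{\gperp} \ge \pi|d_i|\,p(a_i)\!\left(\log\tfrac{1}{\ep} - \log D - C'\log\lep\right)\!,$$
where the $-Q\log\lep$ coming from $\log r$ has been absorbed into $C'\log\lep$ at the cost of enlarging the constant. Summing over $i$ and using the hypothesis $\int_{\Sigma^z}e_\ep^\perp\le C\lep^q$, together with $p(a_i)\ge c>0$ and $\log(1/\ep)=\lep$, gives
$$C\lep^q \ge \pi c\, D\,(\lep - C'\log\lep - \log D),$$
which forces $D \le C'\lep^{q-1}$ for $\ep$ small. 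Feeding this back, $\log D \le (q-1)\log\lep + C$ is absorbed into the $C'\log\lep$ error, yielding the stated lower bound
$$\int_{\Sigma^z} e_\ep^\perp\,d\vol_{\gperp} \ge \pi\sum_i|d_i|\,p(a_i)\!\left(\log\tfrac 1\ep - C'\log\lep\right)\!.$$

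The only genuine issue is a bookkeeping one: $Q$ must be chosen large enough that $r = \lep^{-Q}$ makes the flat-norm error $\le \lep^{-q}$, yet the logarithms $\log r = -Q\log\lep$ that this introduces must still be absorbable into $C'\log\lep$. Because every error term appearing in the analysis is at worst a multiple of $\log\lep$, while the leading term is $\lep = \log(1/\ep)$, this absorption is automatic for $\ep$ sufficiently small, and no obstacle beyond choosing the constants in the correct order arises.
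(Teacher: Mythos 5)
Your proof is correct and follows essentially the same route as the paper: a single-stage ball growth to radius $r=\lep^{-Q}$, the weight estimate $\sqrt{\gz(x)}\ge\sqrt{\gz(a_i)}-Cr_i$ to pass from the unweighted to the weighted bound, and the standard Jacobian estimate for the flat norm. The paper's version is terser — it states that the bound on $D$ and the flat-norm estimate "hold" as consequences of the ball construction, without spelling out the feedback argument you give (summing the lower bound, comparing against $C\lep^q$, and concluding $D\le C'\lep^{q-1}$); your extra detail is sound and does not constitute a different approach. Two small slips worth fixing: the flat-norm bound should read $Cr\bigl(1+\int e_\ep^\perp\bigr)\le C\lep^{q-Q}$, not $\lep^{Q-q}$, so the condition is $Q\ge 2q$; and when you write $C\lep^q\ge\pi c\,D(\lep - C'\log\lep-\log D)$ and conclude $D\le C'\lep^{q-1}$, you are implicitly assuming $\log D = O(\log\lep)$ — this is harmless because the ball construction (with the energy bound feeding the length of the construction) already forces $D$ to be at most polynomial in $\lep$, but it would be cleaner to note this rather than leave it as an unstated case to rule out.
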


\begin{proof}
We apply the ball construction (see \cite{SanSerBook}*{Chapter 4}) with final radius $r = \lep^{-Q}$, with $Q$ a large number, we get a collection $\B = \{B_i\colonequals B(a_i,r_i)\}_i$ of balls for which
   $$\int_{\B} \frac{1}{p}e_\ep^\perp\,d\vol_\gperp \ge \pi D\(\log\frac r{D\ep} - C'\),$$
   where $D = |d_1|+\dots+|d_k|$, the last statement of the lemma holds and the second one holds by the Jacobian estimate.
   Since on $B_i$ we have $\sqrt\gz(x)\ge \sqrt\gz(a_i) - Cr_i$, it  also follows that
   \begin{equation*}
   	\int_{B_i} e_\ep^\perp \,d\vol_\gperp \ge \pi |d_i| \sqrt{\gz(a_i)}\(\log\frac r{D\ep} - C'\).\end{equation*}
\end{proof}

\begin{proof}[Proof of Proposition~\ref{lowperp}]
   We denote throughout the proof by $C,q$ generic large positive constants depending only on $\Omega$, $C_0$, and $\chi$ is as above.  We  write
   \begin{equation*}
   	f(z) =\int_{\Sigma^z} e_\ep^\perp(u,A)\,d\vol_\gperp,\end{equation*}
	so that the desired result is a lower bound on $\int_z f(z)dz$.

   We define for any $z$ such that $\Sigma^z$ is not empty  
   $$\dl^z = \max(\|\mu_{12}\,d\vu - 2\pi N_0\delta_0\|_{\mathcal F(\Sigma^z)},\lep^{-1})$$
 Then we define three sets of slices:
   \begin{itemize}
   \item
   We denote by $I$ the set of $z$'s such that $f(z) \le M\lep$, for some $M>0$ to be chosen below.
\item
   We denote by $J$ the set of $z\notin I$ such that $f(z) \le \lep^q$.
\item
   We denote by $K$ the set of $z\notin I\cup J$.
\end{itemize}

Let us first treat the case $z\in K$.
   Since on $\Sigma^z$, the 2-form $\mu_{12}\,d\vu$ is the exterior differential of the current $j(u,A)+A$ restricted to $\Sigma^z$, it follows that
   \begin{equation*}
      \int_{\Sigma^z}\chi\  \sqrt\gz\,\mu_{12}\,d\vu = \int_{\Sigma^z} d(\chi\  \sqrt\gz)\wedge j(u,A)+\int_{\Sigma^z}\chi\  \sqrt\gz\,dA\,d\vu\le Cf(z)^{1/2},\end{equation*}
   from the Cauchy-Schwarz inequality and the definition of $e_\ep^\perp(u,A)$.
   It then follows that  for any  $z\in K$, since $f(z) > \lep^q$ and $q>2$, we have
   \begin{equation}\label{Kterm}f(z) \ge  \frac\lep 2\int_{\Sigma^z}\chi\ \sqrt\gz\ \mu_{12}\,d\vu + \pi N_0 (N_0-1)\log\frac1{\dl}.\end{equation}

   For  $z\in I$, we may apply Proposition~\ref{2Dlb} on $\Sigma^z$  with $N = N_0$ to find that there exists points $a_1,\dots,a_k$ and integers $d_1,\dots,d_k$ such that, denoting $n = \left|d_1+\dots+d_k\right|$ and $D = |d_1|+\dots+|d_k|$, we have $D\le C$ and
   \begin{equation*} 
   	f(z)  \ge \pi \sum_i |d_i|\sqrt{\gz(a_i)} \(\log\frac{ \dl^z}{\ep} - C'\)
      +\pi N_0^2\log\frac\delta{\dl^z} - C'\log\lep \(\dl^z +(D-n)\). \end{equation*}
  Moreover
   \begin{equation}\label{Islicejac}\left\|2\pi\sum_{i = 1}^k d_i \delta_{a_i} - \mu_{12}\,d\vu \right\|_{\mathcal F(\Sigma^z)}\le C' \lep^{-q},\end{equation}
   so that, for $z\in I$,
   \begin{equation*}
   	\left|\int_{\Sigma^z}  \chi\   \sqrt\gz\,\mu_{12}\,d\vu  - 2\pi\sum_{i} d_i \chi(a_i)  \sqrt{\gz(a_i)}\right|\le C\lep^{-q}.\end{equation*}
   It follows that
   \begin{multline*}f(z)\ge \frac12\int_{\Sigma^z}\chi\   \sqrt\gz\,\mu_{12}\,d\vu  \log\frac{\dl^z}\ep+ \pi\(\log\frac{\dl^z}\ep\)\sum_i \(|d_i| - d_i \chi(a_i)\) \sqrt{\gz(a_i)}+\\
      +\pi{N_0}^2\log\frac 1{\dl^z}  - C\log\lep\(D-n+\dl^z\) - C.
   \end{multline*}
   If $z$ is such that $D>n$,  then the error terms on the right-hand side may be absorbed in the second term, noting that $\pi\log(\dl^z/\ep)\sqrt{\gz(a_i)}>\lep/C$ . If not, then  the error term may be written $C(\dl^z\log\lep+1)$. Therefore, integrating with respect to $z\in I$ we find
   \begin{multline}\label{Iterma}\int_{z\in I}f(z)\, dz\ge \int_{z\in I}\left\{\int_{\Sigma^z} \frac12\log\frac{\dl^z}\ep\chi\   \sqrt\gz\,\mu_{12}\,d\vu + \right.\\
      \left. +\pi{N_0}^2\log\frac 1{\dl^z}  - C\(\dl^z\log\lep+1\)\right\}\,dz.
   \end{multline}
   Using the fact that $\dl^z$ bounds the flat distance between $\mu_{12}\,d\vu$ and $2\pi N_0\delta_0$, we have
   $$\log{\dl^z}\int_{\Sigma_z}\chi\   \sqrt\gz\,\mu_{12}\,d\vu \ge 2\pi N_0 \log \dl^z - C\dl^z|\log \dl^z|,$$
   and $\dl^z$ being bounded, since $D\le n$,  the error term above is bounded by a constant. Replacing in \eqref{Iterma}, we obtain
   \begin{multline}\label{Iterm}\int_{z\in I}f(z)\, dz\ge \int_{z\in I}\left\{ \frac\lep 2\int_{\Sigma^z}\chi\   \sqrt\gz\,\mu_{12}\,d\vu  \right.\\
      \left. +\pi({N_0}^2 - N_0)\log\frac 1{\dl^z}  - C\(\dl^z\log\lep+1\)\right\}\,dz.
   \end{multline}

   Let us finally consider $z\in J$.  Then $f(z)\ge M\lep$, for some large constant $M$. We first exclude the case where
   $$\int_{\Sigma^z}\chi\   \sqrt\gz\,\mu_{12} < \frac{f(z)}\lep,$$
   because in this case, noting that $\pi N_0(N_0-1)\log(1/\dl)$  is bounded by $C\log\lep$, the lower bound \eqref{Kterm} is clearly satisfied.

   In the opposite case, we have from the definition of $\dl^z$ and the lower bound $f(z)\ge M\lep$ that
   \begin{equation*}
   	M - 2\pi N_0\le \int_{\Sigma^z}\chi\   \sqrt\gz\,\mu_{12}d\vu - 2\pi N_0 \le C\dl^z.\end{equation*}
   In particular, if $M$ is chosen large enough compared to $N_0$, we deduce that $\dl^z \ge N_0$ and then
   $$\int_{\Sigma^z}\chi\   \sqrt\gz\,\mu_{12}d\vu\le C\dl^z.$$
   We then apply Lemma~\ref{2Dlbvar} to find that
   \begin{equation} \label{Jlow}f(z)   \ge \pi \sum_i |d_i|\sqrt{\gz(a_i)} \(\log\frac{ 1}{\ep} - C\log\lep\).\end{equation}
   Using \eqref{Islicejac} which remains true, we may then rewrite \eqref{Jlow} as
   $$ f(z)\ge \frac\lep 2\int_{\Sigma^z}\chi\   \sqrt\gz\,\mu_{12}\,d\vu - C\dl^z\log\lep.$$
   It follows that
   \begin{equation*}
   	f(z)\ge \frac\lep 2\int_{\Sigma^z}\chi\   \sqrt\gz\,\mu_{12}\,d\vu + \pi \(N_0^2 - N_0\) \log\frac 1{\dl^z} - C\(\dl^z\log\lep+\log\frac1{\dl^z}\),\end{equation*}
   where we have added and substracted the middle term on the right-hand side. This adds an extra undesired error term, which may be absorbed into a constant since $\dl^z \ge N_0$ in this bad case. We deduce that
   \begin{multline}\label{Jterm}\int_{z\in J}f(z)\, dz\ge \int_{z\in J} \left\{ \frac\lep 2\int_{\Sigma^z}\chi\   \sqrt\gz\,\mu_{12}\,d\vu \right.\\ \left.+ \pi \(N_0^2 - N_0\) \log\frac 1{\dl^z} - C\(\dl^z\log\lep+1\)\}\right\}\,dz.\end{multline}

   Adding \eqref{Iterm}, \eqref{Jterm} and the integral of \eqref{Kterm} with respect to $z\in K$, we obtain in view of \eqref{Islicejac} and $\int_z \dl^z\,dz \le C\dl$ (see \cite{Fed}*{4.3.1}) that
   \begin{multline}\label{lowperpalmost}\int_z f(z)\,dz \ge \frac\lep 2\int_z\int_{\Sigma^z}\chi\   \sqrt\gz\,\mu_{12}\,d\vu\,dz + \int_{K} \pi \(N_0^2 - N_0\) \log\frac 1{\dl}\,dz + \\ + \int_{z\in I\cup J}\pi \(N_0^2 - N_0\) \log\frac 1{\dl^z} - C\dl^z\log\lep\,dz - C.\end{multline}

   Finally, we use the concavity of  $\log$ together with the fact that the integral of $\dl^z$ with respect to $z$ is bounded by $C\dl$ --- and also the fact that since 
   $f(z)\le C\lep$ we must have $|I\cup J|\ge \ellzero/2$ if $\ep$ is small enough --- to find that
   $$\int_{z\in I\cup J}\log\frac 1{\dl^z}\,dz\ge \int_{z\in I\cup J}\log\frac 1\dl - C,$$
   which together with \eqref{lowperpalmost} yields
   \begin{multline*}\int_z f(z)\,dz \ge \frac\lep 2\int_z\int_{\Sigma^z}\chi\   \sqrt\gz\,\mu_{12}\,d\vu\,dz + \pi \ellzero\(N_0^2 - N_0\) \log\frac 1{\dl} - C\(\dl\log\lep +1\),\end{multline*} which is the desired estimate.
\end{proof}

\subsection{Lower bound up to \texorpdfstring{$o(1)$}{o(1)} error}

\begin{proposition}\label{lowerprecise} 
 For any $q>0$, $C>0$  and integer $N$, there exist $\ep_0$, $\kappa$,  $C'>0$ such that for any  $z$,  any $(u, A)$, and any $\ep<\ep_0$, the following holds.

   Assume that
   \begin{equation}\label{bsupfictive}\int_{\Sigma^z}
      e_\ep^\perp(u,A) d\vol_\gperp<\pi (N+m) \lep, \end{equation}
      where $e_\ep^\perp$ is defined as in \eqref{defeperp},  and $m<\hal$ is a small enough constant, to be determined below.
      Assume that $\dl^z\le C\lep^{-1/2}$, where 
   $$\dl^z =\max\( \|\mu^z - 2\pi N\delta_0\|_{\mathcal F(\Sigma^z)},\lep^{-1}\),\quad\text{ $\mu^z = \mu_{12}\, dx\wedge dy$}.$$
   Then there exists points $a_1,\dots,a_N\in\Sigma^z$ such that we have 
   \begin{equation*}
		\left\| \mu^z(u, A) - 2\pi \sum_{i=1}^N \delta_{a_i}\right\|_{\mathcal F(\Sigma^z)}\le C\lep^{-q}
	\end{equation*}
	and 
   \begin{multline*}
		\int_{\Sigma^z} e_\ep^\perp(u,A)\,d\vol_\gperp \\
		\ge \pi \sum_{i=1}^N \sqrt{\gz( a_i)} \log \frac1\ep+\pi N^2 \log \delta  - \pi \sum_{i\neq j} \log \dist_{g^\perp}(a_i-a_j) + N\gamma  +o_\ep(1)+ o_\delta(1),\end{multline*}  where $\delta$ is as in Proposition~\ref{prop:diffeo} and $\gamma$ is the universal constant introduced in \cite{BetBreHel}. 

\end{proposition}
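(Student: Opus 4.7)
\medskip
\noindent\textbf{Proof sketch.} The plan is to first identify exactly the $N$ degree-one vortices by combining the coarse lower bound of Proposition~\ref{2Dlb} with the tight energy assumption \eqref{bsupfictive}, and then to refine it via a precise Bethuel--Brezis--Helein-type two-scale expansion in the weighted, curved setting of the slice $\Sigma^z$.

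I apply Proposition~\ref{2Dlb} with $C=\pi(N+m)$. Since $\dl^z\le C\lep^{-1/2}$, we have $\log(\dl^z/\ep)=\lep+O(\log\lep)$, and every ball-construction center $a_i$ lies within flat distance $\dl^z$ of the origin, so $\sqrt{\gz(a_i)}=1+o(1)$ by virtue of $\gz(0,0,z)=1$. Inserting these into the conclusion of Proposition~\ref{2Dlb} and comparing with \eqref{bsupfictive} forces $\sum_i|d_i|\le N+m+o(1)$. On the other hand, testing the flat-norm estimate \eqref{jacslice} together with the hypothesis $\|\mu^z-2\pi N\delta_0\|_{\F}\le C\lep^{-1/2}$ against a smooth cutoff equal to $1$ on a ball containing every $a_i$ yields $\sum_i d_i=N+o(1)$, hence $\sum_i d_i=N$ by integrality. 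Combined with $\sum_i|d_i|\le N+m<N+\tfrac12$, this forces $k=N$ and $d_i=+1$ for every $i$. Thus there are exactly $N$ vortices $a_1,\dots,a_N$ of degree one, all at $\gperp$-distance $\le C\lep^{-1/2}$ of the origin, and the desired flat-norm estimate for $\mu^z$ is precisely \eqref{jacslice}.

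Next I refine the bound through a two-scale BBH expansion, following \cites{almeidabethuel,Ser} in the weighted flat setting and \cite{ignatjerrard} for the curved metric. Fix an intermediate scale $\eta$ with $\dl^z\ll\eta\ll\delta$. Around each vortex the metric $\gperp$ and the weight $\sqrt{\gz}$ are nearly constant; linearly flattening $\gperp(a_i)$ and applying the non-degenerate BBH estimate of \cite{BetBreHel} (in the gauged version of \cites{BetRiv,Ser}) on $B_{\gperp}(a_i,\eta)$ gives
\begin{equation*}
\int_{B_{\gperp}(a_i,\eta)}\eperp(u,A)\,d\vol_{\gperp}\ \ge\ \pi\sqrt{\gz(a_i)}\log\frac{\eta}{\ep}+\gamma+o_\ep(1)+o_\eta(1),
\end{equation*}
the $A$-contribution being lower order since the area of each ball is $O(\eta^2)$ while $\int|dA|_{\gperp}^2\sqrt{\gz}=O(\lep)$. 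On the complement $\Sigma^z\setminus\bigcup_i B_{\gperp}(a_i,\eta)$, an $\ep$-regularity argument using \eqref{bsupfictive} gives $|u|=1+o(1)$ in $L^\infty$, so $u=\rho e^{i\vp}$ for a locally defined phase $\vp$ with winding number $1$ around each $a_i$ and winding number $N$ on the outer boundary. A weighted Hodge decomposition of the gauged Dirichlet integral $\tfrac12\int|d\vp-A|_{\gperp}^2\sqrt{\gz}\,d\vol_{\gperp}$ on the punctured slice, exploiting $|a_i|=o(1)$ and $\gz=1+O(|\vu|)$, then yields
\begin{equation*}
\int_{\Sigma^z\setminus\bigcup_i B_{\gperp}(a_i,\eta)}\eperp(u,A)\,d\vol_{\gperp}\ \ge\ -\pi\sum_i\sqrt{\gz(a_i)}\log\eta-\pi\sum_{i\neq j}\log\distg(a_i,a_j)+\pi N^2\log\delta+o_\ep(1)+o_\eta(1)+o_\delta(1).
\end{equation*}
Summing the two contributions, the $\log\eta$ terms cancel and the stated inequality follows.

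The main obstacle is the outer expansion: one must carry out the weighted and gauged Dirichlet minimization on the punctured, curved slice and correctly identify both the boundary term $\pi N^2\log\delta$ (capturing the monopole behavior at scale $\delta$, where the $N$ degree-one vortices look like a single degree-$N$ singularity on the axis) and the cross-term $-\pi\sum_{i\neq j}\log\distg(a_i,a_j)$ giving the pairwise logarithmic interactions. This is where the curved-surface BBH toolbox of \cite{ignatjerrard} combined with the weighted analyses of \cites{almeidabethuel,Ser} is essential, as the classical \cite{BetBreHel} argument must be upgraded to handle the non-trivial metric $\gperp$, the weight $\sqrt{\gz}$, and the magnetic gauge $A$, all while tracking the error down to $o(1)$ precision.
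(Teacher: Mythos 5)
Your overall plan (locate $N$ degree-one vortices, then do a two-scale BBH expansion adapted to the weight $\sqrt{\gz}$ and the metric $\gperp$) is in the right spirit, and indeed close to the paper's route. However, there are three concrete gaps that make the argument as written fail.

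\textbf{The scale choice $\dl^z\ll\eta\ll\delta$ is inverted.} All that is known a priori is that the vortices $a_i$ sit inside $B(0,C\dl^z)$; nothing prevents two of them from being much closer than $\dl^z$, or indeed all of them from clustering at scale $\sim\ep^\beta$. With $\eta\gg\dl^z$, the balls $B_{\gperp}(a_i,\eta)$ massively overlap (each contains the entire cluster), so the per-ball BBH bound $\pi\sqrt{\gz(a_i)}\log(\eta/\ep)+\gamma$ is both too weak (a ball containing all $N$ vortices has energy $\approx N\pi\log(\eta/\ep)$) and cannot legitimately be summed over $i$ to lower-bound the energy on the union. For the inner expansion to produce $N\gamma$, the inner scale must be much smaller than $\min_{i\neq j}\dist(a_i,a_j)$, and this separation is precisely what is \emph{not} controlled by the hypotheses. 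The paper resolves this via a dichotomy obtained from a two-stage ball-growth: either the degree-one balls are still pairwise separated at radius $\lep^{-8}$, in which case the weighted BBH/\cite{ignatjerrard} machinery applies; or some balls merge at an intermediate radius, in which case the merging itself produces an \emph{extra} term $\pi\log\lep$ of energy which dominates whatever the renormalized interaction $-\pi\sum_{i\neq j}\log\dist_{g^\perp}(a_i,a_j)$ can possibly be, given that all $a_i\in B(0,C\lep^{-1/2})$. Your proposal does not address the merged case at all, and it is exactly this case that invalidates a naive two-scale expansion.

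\textbf{The counting argument does not force $d_i=1$.} From $\sum d_i=N$ and $\sum|d_i|\leq N+m<N+1$ with integrality you deduce $\sum|d_i|=N$, hence all $d_i\geq 0$. But this leaves open $d_1=2$, $d_2=\cdots=d_{N-1}=1$, $d_N=0$, etc. Ruling out zero-degree and multiplicity-$\geq 2$ balls genuinely requires the energy budget $\pi(N+m)\lep$ with $m$ small: the paper shows a zero-degree ball costs an extra $c\lep$ and a ball with $d_i\geq 2$ costs an extra $c'\lep$ through the ball-growth lower bound $\propto\sum|d_i|^2$, both contradicting \eqref{bsupfictive}. Without this, you cannot even assert that there are $N$ distinct singularities to expand around. (Also, the statement that the $a_i$ lie ``within flat distance $\dl^z$ of the origin'' is not literal; the paper deduces it via a minimal connection from $\sum d_i\delta_{a_i}$ to $N\delta_0$, controlled in length by $\dl^z$ and \eqref{proxjac}.)

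\textbf{Finer technical points.} The paper first gauges away $A$ on the slice with a quantitative lemma, rather than invoking ``gauged BBH'' wholesale. It then performs the \cite{almeidabethuel} parabolic regularization to get a smooth comparison map $u^\eta$ with $|u^\eta|\geq\tfrac12$ off the essential balls, and it is for $u^\eta$ that the refined BBH boundary estimate giving $\gamma$ holds; an ad hoc ``$\ep$-regularity'' step would have to be substantiated to replace this. Finally, the outer estimate on $B_{\gperp}(0,\lep^{-1/8})$ is carried out with the Green-function/Robin-function ($R(a_i,a_j)$) identity rather than a generic Hodge decomposition, which is where the $\pi N^2\log\delta$ and the cross term $-\pi\sum_{i\neq j}\log\dist_{g^\perp}(a_i,a_j)$ are actually produced with $o(1)$ accuracy. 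None of these steps are automatic and the proposal does not supply them.
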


The rest of this subsection is devoted to proving this proposition. 

The first step is to show we can neglect $A$ by a good choice of gauge.
\begin{lemma} For any $q,M>0$ there exists $\ep_0,C>0$ such that if $\ep<\ep_0$, then any $(u, A)$ such that $ \int_{\Sigma^z}
      e_\ep^\perp(u,A) d\vol_\gperp \le M \lep$ is gauge-equivalent to some $(v,B)$  such that 
      \begin{equation*} \|\mu^z(u,A) - \mu^z(v,0)\|_{\mathcal F(\Sigma^z)}\le C\lep^{-q}\end{equation*}
      and such that
      \begin{equation}\label{noA}
         \int_{\Sigma^z}
         e_\ep^\perp(u,A) d\vol_\gperp \ge  \int_{\Sigma^z} e_\ep^\perp(v,0) d\vol_\gperp -o_\ep(1)  -o_\delta(1).
      \end{equation}
   \end{lemma}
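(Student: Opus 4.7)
The strategy is a slicewise Hodge-type gauge fixing. On the disk $\Sigma^z$ (in the tube coordinates of Proposition~\ref{prop:diffeo}) I would solve the Neumann problem
\[
-\Delta_\perp\phi^z = \diver_\perp A^\perp|_{\Sigma^z}, \qquad \partial_\nu\phi^z = A^\perp\cdot\nu\ \text{on }\partial\Sigma^z, \qquad \int_{\Sigma^z}\phi^z = 0,
\]
extend $\phi^z$ measurably in $z$ to a function $\phi$ on $\Cd$, and set $v := ue^{-i\phi}$, $B := A - d\phi$. By gauge invariance $\mu^z(v,B) = \mu^z(u,A)$ and $e_\ep^\perp(v,B) = e_\ep^\perp(u,A)$ pointwise, while the horizontal component $B^\perp|_{\Sigma^z}$ is divergence-free and tangent to $\partial\Sigma^z$. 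Hence $B^\perp = \nabla^\perp\psi^z$ with $\Delta\psi^z = \curl_\perp A^\perp$ in $\Sigma^z$ and $\psi^z = 0$ on $\partial\Sigma^z$; two-dimensional elliptic regularity combined with the Sobolev embedding $W^{2,2}\hookrightarrow C^0$ yields
\[
\|\psi^z\|_{L^\infty(\Sigma^z)} + \|B^\perp\|_{L^p(\Sigma^z)}\le C_{p,\delta}\|\curl_\perp A^\perp\|_{L^2(\Sigma^z)}
\]
for every $p<\infty$.

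For the energy inequality I would expand
\[
|dv - iBv|_\gperp^2 = |dv|_\gperp^2 + |B^\perp|^2|v|^2 - 2\langle B^\perp, j^\perp(v,0)\rangle
\]
and discard the nonnegative $|dB|_\gperp^2$ and $|B^\perp|^2|v|^2$ terms in $e_\ep^\perp(u,A)$, so that
\[
\int_{\Sigma^z} e_\ep^\perp(u,A)\,d\vol_\gperp - \int_{\Sigma^z} e_\ep^\perp(v,0)\,d\vol_\gperp \ge -\int_{\Sigma^z}\sqrt\gz\,\langle B^\perp,j^\perp(v,0)\rangle\,d\vu.
\]
The cross term is integrated by parts via $B^\perp = \nabla^\perp\psi^z$, $dj^\perp(v,0) = \mu^z(v,0)$, and $\psi^z|_{\partial\Sigma^z} = 0$: it becomes (up to corrections involving $\nabla\sqrt\gz$) an integration of $\psi^z\sqrt\gz$ against $\mu^z(v,0)$. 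Applying the two-dimensional Jacobian estimate of Theorem~\ref{theorem:epslevel} to replace $\mu^z(v,0)$ by an atomic measure $2\pi\sum_i d_i\delta_{a_i}$ with $\sum|d_i|\le C$, and coupling the $L^\infty$ estimate on $\psi^z$ with the refined control on the magnetic part $\|\curl_\perp A^\perp\|_{L^2(\Sigma^z)}$ available in the application, yields the claimed $o_\ep(1)+o_\delta(1)$ accuracy.

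The flat-norm bound follows from $\mu^z(v,B) = \mu^z(u,A)$ and the pointwise identity
\[
\mu^z(v,B) - \mu^z(v,0) = (1-|v|^2)(dB)_{12}\,dx\wedge dy - d|v|^2\wedge B^\perp,
\]
both terms being controlled in $L^1$ by Cauchy--Schwarz using the potential bound $\|1-|v|^2\|_{L^2(\Sigma^z)}\le C\ep\sqrt\lep$, an interpolation bound on $\|d|v|^2\|_{L^2}$, and the $L^2$ estimates on $B^\perp$ and $dB$ from Step~1. The outcome is an $\ep$-polynomial quantity, which beats $\lep^{-q}$ for any $q$ once $\ep$ is small.

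The main obstacle is the $o_\ep(1)+o_\delta(1)$ precision for the energy cross term: a naive Cauchy--Schwarz gives only $O(\sqrt\lep)$. The improvement hinges on the integration-by-parts that recasts the pairing as an evaluation of the harmonic-up-to-source $\psi^z$ (vanishing on $\partial\Sigma^z$) at a bounded number of vortex cores, using simultaneously the disk topology of $\Sigma^z$ for the Hodge decomposition and the vortex-ball/Jacobian estimates to localize $\mu^z(v,0)$.
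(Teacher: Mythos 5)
Your overall strategy — slicewise Hodge/Coulomb gauge fixing, then integrating the cross term $\int\sqrt\gz\langle B^\perp,j^\perp(v,0)\rangle$ by parts so that it pairs a potential vanishing on $\partial\Sigma^z$ against the localized vorticity — is the same mechanism the paper uses. But there are two points where your implementation would not deliver the claimed $-o_\ep(1)-o_\delta(1)$, and the first one is a genuine gap.

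\textbf{The weight must be built into the gauge.} With the unweighted Coulomb choice $B^\perp=\nabla^\perp\psi^z$, the cross term is $\int_{\Sigma^z}\sqrt\gz\, j\wedge d\psi^z$. Integrating by parts produces not only the desired $\int\sqrt\gz\,\psi^z\,\mu^z(v,0)$ but also a leftover $\int\psi^z\, d\sqrt\gz\wedge j$. This remainder cannot be dismissed as a ``correction involving $\nabla\sqrt\gz$'': with only the hypothesis $\int_{\Sigma^z}e_\ep^\perp\le M\lep$, Cauchy--Schwarz gives $\|\psi^z\|_{L^\infty}\le C\delta\|dA\|_{L^2}$ and $\|j\|_{L^1(\Sigma^z)}\le C\delta\sqrt\lep$, so this term is of size $O(\delta^2\|dA\|_{L^2}\sqrt\lep)$, i.e.\ as large as $O(\delta^2\lep)$, which is not $o_\ep(1)+o_\delta(1)$ for fixed $\delta$. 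Unlike the main term $\int\sqrt\gz\,\psi^z\mu^z$, it cannot be improved by the Jacobian estimate because it pairs $\psi^z$ against the current $j$ itself (which is genuinely $O(\sqrt\lep)$ in $L^2$), not against the concentrated $2$-form $\mu^z$. The paper's resolution is to define the gauge-transformed field as $B=*d\xi/\sqrt\gz$, where $\xi$ solves the \emph{weighted} equation $d^*d\xi-d\sqrt\gz\cdot d\xi=\sqrt\gz*dA$ with $\xi=0$ on $\partial\Sigma^z$; then $\sqrt\gz\langle j,B\rangle d\vol_\gperp=j\wedge d\xi$ identically, and the integration by parts gives exactly $\int\xi\,\mu$ with no leftover. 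This weighted gauge choice is the crucial step your proposal omits.

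\textbf{Do not discard $|dB|^2$.} Even after the integration by parts is done correctly, the estimate one obtains is $\bigl|\int\xi\,\mu\bigr|\le(o_\delta(1)+o_\ep(1))\,\|dA\|_{L^2}$, which still carries a factor of $\|dA\|_{L^2}\le C\sqrt\lep$. The nonnegative term $\tfrac12\int\sqrt\gz\,|dB|^2\ge\tfrac14\|dA\|_{L^2}^2$ (which you propose to drop) is precisely what absorbs this factor: completing the square in $\|dA\|_{L^2}$ yields the final error $-(o_\delta(1)+o_\ep(1))^2=-o_\delta(1)-o_\ep(1)$. Dropping $|dB|^2$ leaves you with $-o_\delta(1)\sqrt\lep$, which is not small.

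Your argument for the flat-norm bound on $\mu^z(u,A)-\mu^z(v,0)$ is fine in spirit, and the $L^p$ elliptic estimates you invoke for $\psi^z$ are the right tools; only the gauge choice and the retained $|dB|^2$ need adjusting to make the energy inequality close.
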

\begin{proof}
We let $p = \sqrt{\gz}$. Then we chose $B = *d\xi/p$ where $\xi$ solves
\begin{equation*}
\left\{\begin{array}{ll}
   d^* d\xi - dp\cdot d\xi = p *dA & \text{in $\Sigma^z$}\\
   \xi= 0 & \text{on $\partial\Sigma^z$}.\end{array}\right.
\end{equation*}
Since $dB = dA$, $(u,A)$ is gauge-equivalent to $(v,B)$, for some $v$. Moreover, using the fact that $p$ is smooth and $|p-1|\le C\delta$ in $\Sigma^z$, elliptic estimates yield bounds for $\xi$ in $H^2(\Sigma^z)$ and imply, in particular, that 
$$\|B\|_{L^2}\le C \delta \|dA\|_{L^2}, \quad \sup_{x,y\in\Sigma^z}\frac{|\xi(x) - \xi(y)}{|x-y|^{1/2}} \le C\delta^{1/2}\|dA\|_{L^2}.$$
The $L^2$ bound for $B$ implies straightforwardly that 
\begin{equation}\label{lepforv}\int_{\Sigma^z} e_\ep^\perp(v,0)d\vol_\gperp \le C \int_{\Sigma^z} e_\ep^\perp(v,B) d\vol_\gperp \le CM\lep\end{equation}
and, since
$$\mu^z(v,0) - \mu^z(u,A) = \mu^z(v,0) - \mu^z(v,B) = d((1-|u|^2)B),$$
that 
$$\|\mu^z(v,0) - \mu^z(u,A)\|_{\mathcal F(\Sigma^z)}\le C\|B\|_{L^2}\|1-|v|^2\|_{L^2}\le C\delta \ep \lep.$$
It remains to prove that \eqref{noA} holds. We write
\begin{equation}\label{splitv}\int_{\Sigma^z} e_\ep^\perp(u,A) d\vol_\gperp - \int_{\Sigma^z} e_\ep^\perp(v,0) d\vol_\gperp \ge  \int_{\Sigma^z} \frac p2\(|dB|^2 - 2\langle j(v,0), B\rangle\) d\vol_\gperp,\end{equation}
and then note that, writing $j$ for the restriction of $j(v,0)$ to $\Sigma^z$ and $\mu=dj$, 
$$\int_{\Sigma^z} p \langle j, B\rangle d\vol_\gperp = \int_{\Sigma^z} j\wedge d\xi = \int_{\Sigma^z}\xi \mu.$$
It is well-known  that \eqref{lepforv} implies that $\mu$ may be approximated by a sum of Dirac masses with total mass bounded by $CM$ and error bounded by $CM\ep^{1/2}\lep$ in the dual of $C^{0,1/2}$. Therefore 
$$\int_{\Sigma^z}\xi \mu \le CM\(\|\xi\|_\infty  + \ep^{1/2}\lep\|\xi\|_{C^{0,1/2}}\) \le \(o_\delta(1) + o_\ep(1)\)\|dA\|_{L^2}.$$
Together with \eqref{splitv} and since $dA = dB$, \eqref{noA} follows.
\end{proof}

The proof of Proposition \ref{lowerprecise} makes use of the parabolic regularization method of \cite{almeidabethuel} (itself borrowed from a preliminary version of \cite{BetBreHel}) to define ``essential balls" for the map $u$, for that we will follow a bit also the presentation in \cites{Ser,Ser2}. 
Let $0<\eta<1$.  We define $u^\eta$ as the minimizer of 
\begin{equation*}
	\min_{v\in H^1(B_{g^\perp} (0, \delta) , \C) } \int_{\Sigma^z}\hal  \sqrt{g_{33}}  \( |dv|_{g^\perp}^2 + \frac1{2\ep^2} (1-|v|^2)^2+  \frac{|u-v|^2 }{2\ep^{2\eta}}\) d\vol_{g^\perp} .\end{equation*}
The minimum is achieved by some map $u^\eta$ (which is not necessarily unique, but we make an arbitrary choice).
The solution $u^\eta$ is regular and satisfies $|u^\eta|\le 1$ and $|d u^\eta|\le \frac{C}{\ep}$ by maximum principle. Also, by obvious comparison
\begin{equation*}
	 \int_{\Sigma^z}
      e_\ep^\perp(u,0) d\vol_\gperp\ge \int_{\Sigma^z}
      e_\ep^\perp(u^\eta,0) d\vol_\gperp.\end{equation*}
We will denote by $B_{g^\perp}$ the geodesic ball with respect to the metric $g^\perp$.
The next lemma provides  vortex balls of  small size (a power of $\ep$) which are well separated and on which the energy is well bounded below.
\begin{lemma}\label{lemab}Let $0<\eta<\beta<\mu<1$. Under the assumptions of Proposition \ref{lowerprecise}, for $\ep$ small enough, 
	there exists a set $\mathcal I$ with $\# \mathcal I$ bounded by some constant independent of $\ep$, points $(a_i)_{i\in \mathcal I}$, $\bar \mu> \beta$, and a radius $\rho>0$ such that 
	$$\ep^\mu \le \rho\le \ep^{\bar \mu}<\ep^{\beta}$$
	and such that, letting $B_i =  B_{g^\perp} (a_i, \rho) $ and $d_i = \deg \( u^\eta, \partial B_i\)$ we have:
	\begin{enumerate}[leftmargin=*,itemsep=3pt,label=\normalfont{(\roman*)}]
		\item $|a_i-a_j|\ge 8\rho$ for all $i\neq j \in \mathcal I$;
		\item $\dist (a_i, \partial \Sigma^z) \ge \ep^\beta$;
		\item if  $x\notin \cup_{i\in \mathcal I} B_i$ then
		\begin{equation}\label{modug}
			|u^\eta(x)|\ge \hal,
		\end{equation}
		whereas
		\begin{equation*}
		|u^\eta|\ge 1- \frac{2}{\lep^2}\  \mathrm{on} \  \cup_{i\in \mathcal I} \partial B_i;\end{equation*}
		\item for $i\in \mathcal I$, it holds that
		\begin{equation}\label{bornbor}
		\int_{\partial B_i} e_\ep^\perp(u^\eta,0) d\vol_{g^\perp}\le \frac{C(\beta, \mu)}{\rho};
		\end{equation}
		\item for $i\in \mathcal I$, it holds that for some $c>0$ independent of $\ep$,
		\begin{equation}\label{minodansbi}
			  \int_{B_i} e_\ep^\perp(u^\eta,0) d\vol_{g^\perp}  \ge \max \( p(a_i)  \pi |d_i| \log \frac{\rho}{\ep} + O(1), c \lep\);
		\end{equation}
		\item and for any $0<\alpha\le 1$, there exists $\kappa>0$ such that 
		\begin{equation}\label{proxjac}
			 \left\| 2\pi \sum_{i\in \mathcal I}d_i \delta_{a_i}- d (iu, du)\right\|_{\mathcal F(\Sigma^z)} \le \ep^\kappa
		\end{equation}
	\end{enumerate}

\end{lemma}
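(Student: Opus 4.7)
The plan is to adapt the Almeida--Bethuel construction, as developed in \cite{almeidabethuel} and presented in \cites{Ser,Ser2}, to the Riemannian setting of the metric $g^\perp$ on $\Sigma^z$, exploiting that the regularized map $u^\eta$ is smooth, bounded by $1$ in modulus, and $C/\ep$-Lipschitz.

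First, I would use the Lipschitz bound to note that if $|u^\eta(x_0)|<1/2$ at some point, then $|u^\eta|<3/4$ on a geodesic $g^\perp$-ball of radius $c\ep$ around $x_0$, whose potential energy is bounded below by a positive constant. Combined with \eqref{bsupfictive}, this covers the region $\{|u^\eta|\le 1/2\}$ by a pairwise disjoint collection of radius-$\ep$ geodesic balls whose cardinality is $O(\lep)$. I would then run the ball growth algorithm of \cite{SanSerBook}, adapted to the metric $g^\perp$ as in \cite{ignatjerrard}, starting from this initial covering up to total radius $\ep^\mu$ and merging balls on collision. Applied to $u^\eta$, this produces a family $(B_{g^\perp}(a_i, r_i))$ with $r_i\ge\ep^\mu$, degrees $d_i$, and a per-ball Sandier--Serfaty lower bound of $\pi|d_i|p(a_i)(1-\mu)\lep(1+o(1))$, so that \eqref{bsupfictive} forces $\sum_i|d_i|\le C(N,\mu)$ and in particular $\#\mathcal{I}$ is bounded.

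Second, I would apply a pigeonhole argument to the radius. Since
$$
\int_{\ep^\mu}^{\ep^\beta}\!\Big(\int_{\cup_i\partial B_{g^\perp}(a_i,r)} e_\ep^\perp\,d\vol_{g^\perp}\Big)dr\ \le\ \int_{\Sigma^z} e_\ep^\perp\,d\vol_{g^\perp}\ \le\ C\lep,
$$
there exists $\bar\mu\in(\beta,\mu)$ for which the inner integral at $\rho=\ep^{\bar\mu}$ is bounded by $C(\beta,\mu)/\rho$, yielding \eqref{bornbor}. Merging pairs with $|a_i-a_j|<8\rho$ into a common enlarged ball gives (i), while the hypothesis $\dl^z\le C\lep^{-1/2}$ precludes centers within $\ep^\beta$ of $\partial\Sigma^z$ (a ball with nonzero degree near $\partial\Sigma^z$ would contribute a mass of at least $\pi$ to the flat distance between $\mu^z$ and $2\pi N\delta_0$, violating the bound on $\dl^z$), giving (ii). The pointwise estimate $|u^\eta|\ge 1-2/\lep^2$ on $\partial B_i$ then follows from the preliminary bound $|u^\eta|\ge 1/2$ of (iii) together with a Cauchy--Schwarz integration of $\partial_\tau|u^\eta|^2$ along $\partial B_i$, using \eqref{bornbor} and the Lipschitz bound on $u^\eta$.

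Third, the lower bound \eqref{minodansbi} comes from applying the standard 2D ball construction inside each $B_i$ with weight $p(a_i)+O(\rho)$, which yields $\pi|d_i|p(a_i)\log(\rho/\ep)+O(1)$, so that the $O(\rho\lep)$ error from the metric variation is absorbed into the $O(1)$; the alternative bound $c\lep$ reflects that any surviving ball must intersect the initial $\ep$-covering of $\{|u^\eta|\le 1/2\}$, forcing a potential contribution of at least $c\lep$ by the Lipschitz bound. The Jacobian estimate \eqref{proxjac} combines the Jerrard--Soner approximation $\|2\pi\sum_i d_i\delta_{a_i}-d(iu^\eta,du^\eta)\|_{\F(\Sigma^z)}\le C\rho^{\kappa'}$ with an estimate $\|d(iu,du)-d(iu^\eta,du^\eta)\|_{\F(\Sigma^z)}\le C\ep^{\kappa''(\eta)}$, the latter deduced from $\|u-u^\eta\|_{L^2}\le C\ep^{1-\eta}\lep^{1/2}$, obtained by testing the variational definition of $u^\eta$ against $u$ itself. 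The main obstacle is the joint calibration of $\eta<\beta<\bar\mu<\mu<1$: the penalty strength $\ep^{-2\eta}$ must be large enough that $u^\eta$ is $\F$-close to $u$ yet small enough to preserve the $C/\ep$ Lipschitz bound driving the covering, while $\rho\in[\ep^\mu,\ep^{\bar\mu}]$ must lie strictly below $\ep^\beta$ for (ii) and above $\ep^\mu$ so that $\log(\rho/\ep)$ retains a positive proportion of $\lep$ in \eqref{minodansbi}.
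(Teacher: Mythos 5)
Your overall plan --- the Almeida--Bethuel parabolic-regularization construction, adapted to geodesic $g^\perp$-balls with the weight $p=\sqrt{\gz}$, with the Jacobian estimate transferred from $u^\eta$ to $u$ via the variational comparison --- is exactly what the paper indicates: the proof of this lemma is omitted there, with a pointer to \cite{almeidabethuel} and \cites{Ser,Ser2} ``except with balls replaced by metric balls, and with the weight.'' Two of your individual justifications, however, do not work.

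For (ii), you argue that a ball carrying nonzero degree within $\ep^\beta$ of $\partial\Sigma^z$ would ``contribute a mass of at least $\pi$ to the flat distance between $\mu^z$ and $2\pi N\delta_0$.'' This is backwards. The flat norm on $\Sigma^z$ is computed against $0$-forms $\phi$ compactly supported in $\Sigma^z$ with $|\phi|,|d\phi|\le 1$, so such a $\phi$ vanishes near $\partial\Sigma^z$ and $|\phi(a)|\le\dist(a,\partial\Sigma^z)$; hence $\|2\pi d\,\delta_a\|_{\F(\Sigma^z)}\le 2\pi|d|\,\dist(a,\partial\Sigma^z)$. A vortex near the artificial slice boundary is therefore \emph{cheap} in flat norm --- order $\ep^\beta\ll\lep^{-1/2}$ --- and the hypothesis $\dl^z\le C\lep^{-1/2}$ cannot exclude it by the mechanism you invoke. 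The confinement must come from the energy budget or from a minimal-connection argument of the type the paper runs in the \emph{subsequent} lemma, not from the vorticity estimate alone.

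For the bound $c\lep$ in (v), you appeal only to the $C/\ep$-Lipschitz bound on $u^\eta$ and the covering of $\{|u^\eta|\le 1/2\}$. This gives a potential contribution of order $(1/\ep^2)\cdot\ep^2=O(1)$ on a disk of radius $c\ep$, not $c\lep$. For $d_i\ne 0$ the bound $c\lep$ is already implied by the first term of the $\max$, since $\log(\rho/\ep)\ge(1-\mu)\lep$; but for $d_i=0$ --- which is precisely the case the paper uses later, when a final ball swallows an initial zero-degree ball --- the correct input is the clearing-out ($\eta$-ellipticity) estimate: if a ball of radius $\rho\ge\ep^\mu$ contains a point where $|u^\eta|<1/2$, its energy is at least $\eta_0\log(\rho/\ep)\ge c\lep$, regardless of degree. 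Without invoking clearing-out, the $c\lep$ lower bound does not follow from what you wrote.
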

We omit the proof which follows closely the lines of \cite{almeidabethuel} or \cites{Ser,Ser2}, except with balls replaced by metric balls, and with the weight. The relation \eqref{proxjac} at the level of $u^\eta$ is a direct consequence of the Jacobian estimates, see for instance \cite{SanSerBook}*{Chapter 6}, it is also true for $u$ by the a priori bound on the energy minimized by $u^\eta$, see \cite{Ser}*{Lemma 4.2}.

Using again the shortcut $p$ for $\sqrt{g_{33}}$, we now have the following result obtained by growing the balls from the prior lemma by  a two-stage ball growth process.
\begin{lemma}
	Under the  assumptions of Proposition \ref{lowerprecise}, we have  $d_i=1$ for all $i\in \mathcal I$ and $\#\mathcal I=N$. 
	Also all the $a_i$'s, $i\in \mathcal I$,  belong to $B_{g^\perp} (0,C \dl^z)\subset B_{g^\perp}(0, C\lep^{-1/2}) $. 	Moreover, either
	\begin{enumerate}[leftmargin=*,itemsep=3pt,label=\normalfont{(\roman*)}]
		\item
		the balls $B_{g^\perp}(a_i, r)$ with $r= \lep^{-8}$ are such that $|a_i-a_j| \gg  r$ for all $i\neq j \in \mathcal I$ and for all $i\in \mathcal I$,
		\begin{equation*}
			\int_{B_{g^\perp}(a_i, r)\backslash B_i} 
			e_\ep^\perp(u^\eta,0) d\vol_{g^\perp}  \ge \pi p(a_i)    \log \frac{r}{\rho}+o(1)\end{equation*}
		or \item there exist a family of disjoint geodesic balls $\bar B_k= B_{g^\perp}(b_k, r_k)$, containing the $B_i$'s,   of total radius $\bar r= Nr_k= \frac{1}{\lep^2}$ such that, letting $\bar d_k = \deg (u^\eta, \partial \bar B_k)$, we have 
		$$
			\sum_k \int_{B_k''} e_\ep^\perp(u^\eta,0)   d\vol_{g^\perp}\\
			\ge \pi \sum_{k}  (\min_{\bar B_k} p  ) |\bar d_k|  \log \frac{\bar r}{\ep} + \pi \log \lep.$$
			\end{enumerate}
\end{lemma}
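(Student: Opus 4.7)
The plan is to combine Lemma~\ref{lemab} with a continued Sandier ball-growth construction in $(\Sigma^z,g^\perp)$, pushed up to the mesoscopic radius $\bar r=\lep^{-2}$, exploiting the tight energy budget $\pi(N+m)\lep$ with $m<\tfrac12$ together with the flat hypothesis $\dl^z\le C\lep^{-1/2}$. Throughout, write $p=\sqrt{\gz}$.

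\emph{Identifying the degrees.} Growing the balls $\{B_i\}$ of Lemma~\ref{lemab} up to total radius $\bar r$ and completing the estimate by a circle bound on the annular region between radii $\bar r$ and $\delta$ (where the winding of $u^\eta$ on almost every circle equals $\sum_i d_i$ since $\dl^z\le C\lep^{-1/2}\ll \bar r$), and using $p=1+O(\delta)$ near the axis, one obtains a lower bound of the form
\begin{equation*}
\int_{\Sigma^z}e_\ep^\perp(u^\eta,0)\,d\vol_{g^\perp}\ge \pi D\lep+2\pi\Bigl(\bigl(\textstyle\sum_i d_i\bigr)^2-D\Bigr)\log\lep+O(1),
\end{equation*}
where $D=\sum_i|d_i|$. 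Comparing with \eqref{bsupfictive} forces $D\le N$ for $\ep$ small. Conversely, combining the Jacobian estimate \eqref{proxjac} with the hypothesis on $\dl^z$ yields
\begin{equation*}
\Bigl\|2\pi\sum_i d_i\,\delta_{a_i}-2\pi N\delta_0\Bigr\|_{\mathcal F(\Sigma^z)}\le C\lep^{-1/2};
\end{equation*}
testing against a Lipschitz cutoff equal to $1$ on $\Sigma^z$ gives $|\sum_i d_i-N|=o(1)$, hence $\sum_i d_i=N$ by integrality. Thus $D=N$, all $d_i$ share the same sign, and since $d_i\ne 0$ for every $i\in\mathcal I$ (else \eqref{minodansbi} would contribute an extra $c\lep$ on top of the $\pi N\lep$ produced by the non-zero balls, violating the budget), we conclude $d_i=1$ for all $i$ and $\#\mathcal I=N$.

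\emph{Localization.} Testing the above flat bound against the $1$-Lipschitz function $x\mapsto\dist_{g^\perp}(x,0)$, which vanishes at~$0$, gives $\sum_i\dist_{g^\perp}(a_i,0)\le C\dl^z$, hence $a_i\in B_{g^\perp}(0,C\dl^z)\subset B_{g^\perp}(0,C\lep^{-1/2})$ for every $i$.

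\emph{Dichotomy.} I would continue the ball growth from $\{B_i\}$, monitoring merger events, and fix an intermediate radius $\ell$ with $r=\lep^{-8}\ll\ell\ll\bar r$. In case~(i), no merger occurs before total radius $N\ell$: the $N$ balls $B_{g^\perp}(a_i,\ell)$ are then pairwise disjoint, so $|a_i-a_j|\ge 2\ell\gg r$; the winding of $u^\eta$ on every circle of $B_{g^\perp}(a_i,r)\setminus B_i$ equals $d_i=1$; and since $p$ varies by $Cr=o(1)$ on this annulus, the standard weighted ball-growth bound produces $\int_{B_{g^\perp}(a_i,r)\setminus B_i}e_\ep^\perp\ge \pi p(a_i)\log(r/\rho)+o(1)$. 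In case~(ii), a merger occurs at some total radius $s_0\le N\ell$; continuing the growth up to total radius $\bar r$ produces disjoint balls $\bar B_k$ of radii $r_k$ with $\sum_k r_k=\bar r$ and $\sum_k\bar d_k=N$. The weighted ball-growth bound inside $\bigcup_k\bar B_k$ contributes $\pi\sum_k(\min_{\bar B_k}p)|\bar d_k|\log(r_k/\ep)$, and the additional $+\pi\log\lep$ is the standard merging bonus (cf.~\cite{SanSerBook}*{Chap.~4}): across the first merger the Jerrard--Sandier refined $\pi d^2$-rate of energy production jumps up by at least $\pi$ relative to the pre-merger $\pi\sum_i|d_i|$-rate, and integrating over the log-interval $[\log s_0,\log\bar r]$, whose length is $\gtrsim\log\lep$, yields the claimed $\pi\log\lep$.

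The main obstacle is the bookkeeping of the merger bonus in case~(ii): one must ensure both that the first merger occurs at a scale $s_0\ll \bar r$ (here $s_0\le N\ell\ll\bar r$, so $\log(\bar r/s_0)\gtrsim\log\lep$) and that the replacement of $p$ by $\min_{\bar B_k}p$ on each merged ball introduces only an error $O(\bar r)\cdot O(\lep)=o(1)$, which is absorbed into the bonus. The use of the refined multiplicity-squared rate across mergers, rather than the cruder $\pi\sum|d_i|$ rate, is what converts the minor ``no gain'' at the merger point into the strict $+\pi\log\lep$ stated in~(ii).
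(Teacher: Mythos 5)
Your overall strategy — ball growth in $(\Sigma^z,g^\perp)$, a minimal-connection/flat-norm argument to pin down the degrees and the location of the $a_i$, then a dichotomy between ``no merger'' and ``merger, hence an extra $\pi\log\lep$'' — matches the paper's proof in spirit. However, there is a genuine gap in the step where you conclude $d_i=1$ for all $i$ and $\#\mathcal I=N$.

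\emph{The gap.} You establish that $D=\sum_i|d_i|\le N$, that $\sum_i d_i=N$, that all the $d_i$ share the same sign, and that $d_i\neq 0$ for every $i$. From these facts you only obtain $d_i\ge 1$ and $\#\mathcal I\le N$; nothing you wrote excludes a configuration such as $\#\mathcal I=N-1$ with one $d_i=2$. Your lower bound $\pi D\lep+2\pi\bigl((\sum_i d_i)^2-D\bigr)\log\lep+O(1)$ is \emph{insensitive} to the distribution of the $d_i$'s among the balls (it only sees $D$ and $\sum d_i$), so comparing with the budget $\pi(N+m)\lep$ cannot distinguish the case $d_i=1$ from the case $d_i\ge 2$. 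The paper rules out $d_i\ge 2$ by a different estimate: in the intermediate growth from the $B_i$ (total radius $\rho\sim\ep^{\bar\mu}$) to the $B_j'$ (total radius $r'=\lep^{-4}$), since the $d_i$ all have the same sign, the circle-by-circle bound actually yields $\pi\min p\,\sum_{i,\,B_i\subset B_j'}|d_i|^2\bigl(\log\frac{r'}{C\rho}-C\bigr)$, and $\sum_i|d_i|^2-\sum_i|d_i|\ge 2$ whenever some $d_i\ge 2$. This costs an extra $c'\lep$, contradicting the budget when $m$ is small. You need to insert this refined $d^2$-rate at the stage where you identify the degrees, not only in the merging-bonus argument for case~(ii).

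\emph{A secondary error.} You write ``$\dl^z\le C\lep^{-1/2}\ll\bar r$'' with $\bar r=\lep^{-2}$, but the inequality runs the other way: since $\lep\to\infty$ we have $\lep^{-1/2}/\lep^{-2}=\lep^{3/2}\to\infty$, so $\dl^z$ can be much \emph{larger} than $\bar r$. Thus circles of radius between $\bar r$ and $\dl^z$ centered at the axis need not enclose all the $a_i$, and your ``circle bound on the annular region between radii $\bar r$ and $\delta$'' must instead start at radius $\sim\dl^z$. This does not destroy the argument (you still get $|\sum d_i - N|=o(1)$ and $D\le N$, since $\log(\delta/\dl^z)\gtrsim\hal\log\lep$), but as written it is incorrect. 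Relatedly, your localization step tests against $x\mapsto\dist_{g^\perp}(x,0)$, which is not compactly supported in $\Sigma^z$; the paper's minimal-connection argument (allowing connections to the boundary, and observing that $D\le N$ then forces all $N$ units of mass to connect to the origin) is the cleaner way to reach $a_i\in B_{g^\perp}(0,C\dl^z)$ while simultaneously showing $d_i\ge 0$.

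Once $d_i=1$ is properly established, your treatment of the dichotomy is sound and in fact a bit more streamlined than the paper's explicit two-stage growth ($r'=\lep^{-4}$, then $r''=\lep^{-2}$): you introduce an intermediate scale $\ell$, observe that a merger before total radius $N\ell$ raises the refined rate from $\pi N$ to at least $\pi(N+2)$, and integrate over a log-interval of length $\gtrsim\log\lep$. That is a valid way to extract the $\pi\log\lep$ bonus.
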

\begin{proof}
First we prove that $d_i\ge 0$ for all $i\in \mathcal I$.
Since $\sqrt{g_{33}}\ge 1-o(1)$ in the support of $\chi$, and choosing the constant $\mu$ close enough to $1$, we get in view of \eqref{minodansbi} combined with \eqref{bsupfictive} that 
$\sum_{i\in \mathcal I} |d_i|<N+2m<N+1$.
On the other hand, as seen in the proof of Proposition \ref{2Dlb}, denoting by $\L$ a minimal connection between  $\sum_{i} d_i \delta_{a_i}$ and $N \delta_0$ relative to the metric $g^\perp$ and allowing connections to the boundary, in view of \eqref{proxjac} we have $|\L|\le C(\dl^z+ \ep^\kappa)\le C \dl^z$.  This implies that there must be at least $N$ points (counted with multiplicity) of $d_i \ge 0$  connected to $0$. Since $\sum_{i\in \mathcal I} |d_i|\le N$, this is only possible if    for each $i\in \mathcal I$, $d_i\ge 0 $, and no  point is connected to the boundary.  This shows that all the points $a_i$ are at distance $\le C\dl^z$ from $0$.

We now turn to the lower bounds.	Let us  grow the geodesic balls $B_i$ via  the ball construction method with weight and metric, exactly as was done in Proposition \ref{2Dlb}, using as final total radius parameter $r'= \frac{1}{\lep^4}$.  This gives a collection of geodesic balls $B_j'$. We then regrow the balls into geodesic balls of final total radius $r''= \frac{1}{\lep^2}$, this gives a collection $B_k''$.
	We have for every $j$,
	\begin{equation}\label{minodansbj}
	\int_{B_j'\backslash \cup_{i\in \mathcal I} B_i}  
	e_\ep^\perp(u^\eta, 0)d\vol_{g^\perp} \ge \pi |d_j'| \Big(\min_{B_j'} p\Big)\( \log \frac{r'}{\ep}-C\)\end{equation}
	and for every $k$, 
	\begin{equation}\label{minodansbk}\int_{B_k''\backslash \cup_{j} B_j'} e_\ep^\perp(u^\eta, 0) d\vol_{g^\perp} \ge \pi |d_k''| \Big(\min_{B_k''} p\Big)\( \log \frac{r''}{r'}-C\).\end{equation}
	Let us now consider a final ball $B_k''$  and $d_k'' $ its degree. Since all  initial degrees were seen above to be nonnegative, we have 
	$|d_k''|= \sum_{i, B_i \subset B_k''} |d_i|$.
	We next add all the energy contributions inside $B_k''$ from \eqref{minodansbi}, \eqref{minodansbj} and \eqref{minodansbk}.  If $B_k''$ contains an initial  ball $(B_i)_{i\in \mathcal I} $ of degree $d_i=0$, then we use  the lower bound by $c\lep$ in \eqref{minodansbi} and we deduce that 
	\begin{multline*}\int_{B_k''}  e_\ep^\perp(u^\eta, 0) d\vol_{g^\perp} \\ 
		\ge \pi |d_k''|\Big(\min_{B_k''} p\Big) \log \frac{r''}{\ep} + c\lep - C
		\ge  \pi |d_k''|\Big(\min_{B_k''} p\Big) \log \frac{r''}{\ep}+ \hal c\lep ,\end{multline*} for $\ep $ small enough.
	Summing over all the balls and comparing with the upper bound \eqref{bsupfictive}, we conclude to a contradiction if $m$ is taken small enough compared to $c$. 
	We have thus shown that $d_i\ge 1$ for all $i \in \mathcal I$, i.e.~for all the balls $B_i$ obtained from Lemma \ref{lemab}.
		Since all degrees are nonnegative, the ball growth procedure from the $B_i$'s to the $B_j'$'s yields really an energy at least $$\pi \min_{B_j'} p \sum_{i, B_i \subset B_j'} |d_i|^2\( \log \frac{r'}{C\rho}-C\).$$
		If $d_i\ge 2$ for some $i\in \mathcal I$, this gives at least an extra energy of $c'\lep$, for some $c'>0$, than announced, which again yields a contradiction with \eqref{bsupfictive} if $m$ is chosen small enough.
	Thus, we have shown that $d_i=1$ for all $i \in \mathcal I$.

	Next, let us first consider the case where for some $k$, one $B_j'\subset B_k''$ contains more than one $B_i$. Since   all the $d_i= 1$,  this implies  $d_j'\ge 2$. 
	Then, since all degrees are nonnegative, the ball growth procedure from the $B_j'$'s to the $B_k''$'s yields really an energy $$\pi \min_{B_k''} p \sum_{j, B_j' \subset B_k''} |d_j'|^2\( \log \frac{r''}{r'}-C\),$$ so at least an extra $\pi \log\lep$ energy compared to what was announced.
	Summing over all balls, we may thus deduce in this case that 
	$$
		\sum_k \int_{B_k''} e_\ep^\perp(u^\eta, 0) d\vol_{g^\perp}\\
		\ge \pi \sum_k \Big(\min_{B_k''} p\Big) |d_k''| \log \frac{r''}{\ep} + \pi \log \lep.$$
	The proof is concluded via item (ii) with the $\bar B_k$'s equal to the $B_k''$'s.
	There remains to consider the case where none of the $B_j'$'s contain more than one $B_i$, which means that $d_j'=1$ and that there are no mergings in the growth from $B_i$ to $B_j'$: each $B_j'$ contains a unique $B_i$, which is simply inflated.
	Let us then consider new $B_i'$'s equal to the geodesic balls of centers equal to the $a_i$ and radii $r=\frac{1}{\lep^8}$, i.e. we restart the ball construction from the $B_i$. Since there were no mergings previously, it means that these balls $B_i'$ are disjoint, and separated by at least $\lep $ times their radii. Moreover, the energy over the annulus $B_j'\backslash B_i$ is bounded below by $\pi p(a_i) \log \frac{r}{\ep}-o(1)$ (the error $o(1)$ is due to the variation in $p$, and to $1-|u^\eta|$, which is very small by \eqref{modug}). The proof is then concluded in this case as well.\end{proof}

\begin{proof}[Proof of Proposition \ref{lowerprecise}]
	Now that we know that all $d_i=1$, we may also refine the upper bound \eqref{bsupfictive} into 
	$$\int_{\Sigma^z}
      e_\ep^\perp(u,A) d\vol_\gperp \le \pi (N+o(1))\lep$$
	because otherwise what we want to prove is true. 
	Thanks to that, an examination of the proof in \cites{almeidabethuel}, \cite{bethuelsaut}*{Appendix}  shows that we may refine \eqref{bornbor} into 
	\begin{equation*} 
		\int_{\partial B_i}
		e_\ep^\perp(u^\eta,0)d\vol_{g^\perp} \le \frac{\pi p(a_i)+o(1)}{\rho}
	\end{equation*}
	and thanks to this bound we have 
	\begin{equation*}
		\int_{\partial B_i}e_\ep^\perp(u^\eta,0) d\vol_{g^\perp}
		\ge \pi p(a_i) \log \frac{\rho}{\ep}+ \gamma+o(1).
	\end{equation*}
	This is an adaptation from the analysis of \cite{BetBreHel}, with metric, and  here $\gamma $ is the constant of \cite{BetBreHel}.
	
	Combining all these results, we have obtained either 
	a collection of $N$ balls $B_{g^\perp}(a_i, r)$, $r= \lep^{-8}$ such that $|a_i-a_j|\gg r$ and  for each $i$, 
	\begin{equation*}
		\int_{B_{g^\perp}(a_i, r)}e_\ep^\perp(u^\eta, 0) d\vol_{g^\perp}\ge \pi p(a_i) \log \frac{r}{\ep}+ \gamma +o(1)\end{equation*}
	or a collection of balls $B_{g^\perp}(b_k, r_k)$ with $r_k \le \lep^{-2}$ and 
	\begin{equation}\label{minonj2}
		\int_{\cup_k B_{g^\perp}(b_k, r_k)}e_\ep^\perp(u^\eta, 0) d\vol_{g^\perp}\ge \pi \sum_i p(a_i) \log \frac{1}{\lep^2 \ep}+\pi\log \lep.\end{equation}Let us call the two cases Case 1 and Case 2. 
	Let $\ell = C\lep^{-1/2}$ with $C \ge 2$ be such that all the balls $B_i$  are in $B_{g^\perp}(0, \ell/2)$. For any $r \in (\ell, \delta)$, in view of \eqref{modug}, the fact that $d_i=1$ and $\#\mathcal I=N$,  we have that $\deg(u^\eta, \partial B_{g^\perp}(0,r))=N$. 
	
	In Case 2, we may grow the balls $B_{g^\perp}(b_k,r_k)$ further to reach a final total radius $\lep^{-1/2}$, and still all the balls will  be included in $B(0, \ell)$. We retrieve this way an extra energy 
	\begin{equation}
		\label{minonj22}
		\int_{B(0,\ell) \backslash \cup_k B_{g^\perp}(b_k, r_k)}
		e_\ep^\perp(u^\eta, 0)
				d\vol_{g^\perp} \ge
		\pi \sum_{i\in \mathcal I}  p(a_i)
		\log (\ell \lep^2)+o(\log \lep) .\end{equation}
 Integrating then  over circles of radius $r \in (\ell, \delta)$, for instance as in Step 2 of  the proof of Proposition \ref{2Dlb},  and using $p \ge 1 -o(1)$, we also obtain
	\begin{equation}\label{intcircles}\int_{B(0, \delta)\backslash B(0, \ell)} 
	e_\ep^\perp(u^\eta, 0)d\vol_{g^\perp}\ge  \pi (1-o(1)) N^2 \log \frac{\delta}{\ell}.\end{equation}
	Adding  \eqref{minonj2}, \eqref{minonj22}  and \eqref{intcircles}, we obtain that
	\begin{equation*}
		\int_{\cup_k B_{g^\perp}(b_k, r_k)}e_\ep^\perp(u^\eta, 0)d\vol_{g^\perp} \ge \pi \sum_{i\in \mathcal I} p(a_i) \log \frac{1}{\ep}+ \pi N^2 \log \delta+ \pi N(N-1) \log \frac1\ell+  \frac\pi2 \log \lep,
	\end{equation*}
	which implies the desired inequality.

	Let us now turn to Case 1.  We consider the energy in $B(0, \lep^{-1/8}) \backslash \cup_i B(a_i, r)$. In this punctured domain, $p=1+O(\lep^{-1/8})$, so we may use this as a bound from below and get 
	\begin{multline*}
		\int_{B_{g^\perp} (0,\lep^{-1/8} )\backslash \cup_i B_{g^\perp}(a_i, r)}
		e_\ep^\perp(u^\eta, 0)
d\vol_{g^\perp}\\ \ge
		(1-O(\lep^{-1/8})  \int_{B_{g^\perp} (0,\lep^{-1/8} )\backslash \cup_i B_{g^\perp}(a_i, r)} \hal  \(|du^\eta|_{g^\perp}^2 + \frac{1}{2\ep^2}(1-|u^\eta|^2)^2 \)
d\vol_{g^\perp}\end{multline*}
	To bound from below the right-hand side we may use the Bethuel--Brezis--H\'elein theory with metric $g$, for instance as written down in \cite{ignatjerrard}*{Section 2.2}.
	This yields 
	\begin{multline*}
		\int_{B_{g^\perp} (0,\lep^{-1/8} )\backslash \cup_i B_{g^\perp}(a_i, r)}e_\ep^\perp(u^\eta, 0)
d\vol_{g^\perp}\\ \ge
		(1-O(\lep^{-1/8}) \( \pi N \log \frac1r+  W_{g^\perp} (a_i, \dots, a_N)+o(1) \)\end{multline*}
	where 
	$$W_{g^\perp}(a_1, \dots , a_N)= - \pi \sum_{i\neq j} \log \dist_{g^\perp}(a_i,a_j)+ \pi \sum_{i, j} R(a_i, a_j)$$
	and 
	$$R(x,y)= 2\pi G(x,y) + \log \dist_{g^\perp}(x,y),$$
where $G$ is the Green function, solution of (see for instance \cite{Aubin}*{Chapter 2})
	$$\left\{\begin{array}{ll}-\Delta_{g^\perp} G(x,y)= \delta_y  & \text{in}\  B_{g^\perp}(0, \lep^{-1/8}) \\
		G(x,y)= 0 & \text{for} \ x\in \partial B_{g^\perp}(0, \lep^{-1/8}).\end{array}\right.$$
		Since we know that all $a_i \in B_{g^\perp}(0,C \lep^{-\hal})$ we have 
	that $R(a_i, a_j) \sim R(0,0)$ as $\ep \to 0$. Moreover,  $R(0,0)$ is easily computed to be $\log \lep^{-1/8} $.
	We thus have found 
	\begin{multline*}
		\int_{B_{g^\perp} (0,\lep^{-1/8} )\backslash \cup_i B_{g^\perp}(a_i, r)}
		e_\ep^\perp(u^\eta, 0)
d\vol_{g^\perp} \\ \ge
		\pi N \log \frac1r+
		\pi \sum_{i\neq j} \log \dist_{g^\perp}(a_i,a_j) + \pi N^2 \log \lep^{-1/8}+o(1).
	\end{multline*}
	Finally, we bound from below the energy over $B_{g^\perp} (0, \delta) \backslash B_{g^\perp} (0, \lep^{-1/8})$. As in \cite{Ser01}*{Lemma A.1},  the co-area formula and the energy upper bound yield the existence of  $t\in \left[1-\frac{3}{\lep^2}, 1-\frac{2}{\lep^2}\right]$  such that 
	$\mathcal H^1 (|u^\eta(x)|=t) \le C \ep \lep^3$. Since $|u^\eta|\ge 1-\frac{2}{\lep^{2}}$ on $\partial B_i$, this implies that the set $S$ of $r$'s such that $\{|u^\eta|<t\}$ intersects  $\partial B_{g^\perp} (a_i, r)$ is of measure less than $C \ep \lep^3$.
 Using also that $p(x)\ge 1- O(|x|)$ and $\deg (u^\eta)= N$,  we may now bound from below 
	\begin{multline*}
		\int_{B_{g^\perp} (0,\delta )\backslash \cup_i B_{g^\perp}(a_i, r)}
	e_\ep^\perp(u^\eta, 0)
	d\vol_{g^\perp}\\
		\ge \hal \(1- \frac{3}{\lep^2}\) \int_{[\lep^{-1/8}, \delta]\setminus S} (1- C r) \frac{(2\pi N)^2} {2\pi r} dr \ge \pi N^2 \log \frac{\delta}{\lep^{-1/8}} - C \delta. \end{multline*} Here we have optimized by checking that the smallest value of the integral is taken when $S$ is at the lower end of the interval.
		
	Adding all the results we conclude that 
	\begin{equation*}
		\int_{B_{g^\perp} (0,\delta )}e_\ep^\perp(u^\eta, 0)
 d\vol_{g^\perp}
		\ge \pi \sum_{i=1}^N p(a_i) \log \frac\delta\ep + N\gamma - \pi \sum_{i\neq j} \log \dist_{g^\perp}(a_i,a_j)+o(1)- C \delta\end{equation*}
	hence the result is proved.
	\end{proof}
\section{Distance of filaments to \texorpdfstring{$\ga_0$}{Γ\textzeroinferior} and main lower bound}\label{sec8}

In this section, we complete the proof of the lower bound part of the main theorem, by proving the following.
\begin{proposition}
   Assume that the smooth simple curve $\ga_0$ is a unique nondegenerate maximizer of the ratio $\R$. For any $\ep>0$, assume $h_\ex=H_{c_1}^0+K\log \lep$ with $K$ bounded independently of $\ep$, and let $(\u,\A)$ be a minimizer  of $GL_\ep$ and $(u,A)= (e^{-ih_\ex\phi_0}\u,\A-h_\ex A_0 )$.

   Then for any sequence $\{\ep\}$ tending to $0$, there exists a subsequence such that $\mu(u,A)/2\pi$ is well approximated by a sum of $N$ simple curves $\ga_1,\dots,\ga_N$, where $N$ is independent of $\ep$ in the sense that,  for an arbitrarily small $\delta>0$ in the tube coordinates definition,   \begin{equation}
      \label{flatstar}
      \left\|\sum_i\ga_i-\mu(u,A)\right\|_{*} \leq  C\frac{(\log\lep)^2}{\lep^{4/3}}, \quad \left\|\sum_i\ga_i-\mu(u,A)\right\|_{\F(\Omega_\ep)} \leq  C\frac{(\log\lep)^2}{\lep^{4/3}},
   \end{equation}
   where
   \begin{equation}\label{omeps}\Omega_\ep=\left\{x\in \Omega\mid \mathrm{dist}(x,\partial\Omega)> |\log\ep|^{-2}\right\}.\end{equation}

   Moreover, the curves $\ga_i$ converge as $\ep\to 0$ to $\ga_0$ uniformly, and writing $\ga_i(t) = \ga_0(z_i(t))+\vu_i(t)$  in tube coordinates as piecewise graphs over $\ga_0$, then the rescaled curves
   $$\tga_i(t) = \ga_0(z_i(t)) + \sqrt{\frac{h_\ex}{N}}\vu_i(t)$$
   converge as $\ep\to 0$ in $\|\cdot \|_*$ norm to $\ga_i^*(z) = \ga_0(z)+\vu_i^*(z)$ and
   \begin{multline}\label{lbw} GL_\ep(\u,\A)\ge \he^2J_0 + \frac\pi2 \ellzero N(N-1)\log h_\ex  -2\pi K\R_0 \ellzero   N\log\lep\\-\frac\pi2 \ellzero N(N-1)\log N + W_N(\ga_1^*,\dots,\ga_N^*) + \pi \ellzero N\gamma + N^2 C_\Omega+o_\ep(1) + C_\delta o_\ep(1) + o_\delta(1),
   \end{multline}
   where  $W_N$ is as in \eqref{defW}.
   Finally, if $N>1$, then $\max\limits_{1\leq i\leq N}\|\vu_i^*\|_{L^\infty} >0$.
\end{proposition}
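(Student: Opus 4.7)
The plan is to combine the coarse three-dimensional vorticity approximation of Theorem~\ref{teo:boundedvorticity} with the two slicing lower bounds of Propositions~\ref{lowperp} and~\ref{lowerprecise}, assemble them through the horizontal blow-up decomposition~\eqref{decompose2}, and close the argument via the coercivity of Proposition~\ref{coercive}. I would first feed the splitting formula~\eqref{Energy-Splitting} together with the Meissner upper bound $GL_\ep(\u,\A)\le h_\ex^2 J_0$ (obtained by testing with the approximate Meissner state, admissible since we are just above $H_{c_1}$) into Theorem~\ref{teo:boundedvorticity}, which decomposes $\mu(u,A)/2\pi$ into at most $C$ good Lipschitz curves $\ga_1,\dots,\ga_{N_0}$ that are $\|\cdot\|_*$-close to $\ga_0$, plus a bad remainder $\tga$ of mass $O(\lep^{\alpha-1}\log\lep)$. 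Working in the tube coordinates of Proposition~\ref{prop:diffeo}, the energy contribution of $\tga$ and of $\Omega\setminus U_\delta$ can then be dismissed as a $o_\delta(1)+o_\ep(1)$ error, so the task reduces to establishing that $N_0=N$ and to locating each $\ga_i$ horizontally inside the $\delta$-tube.

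Inside the tube I would apply the decomposition $F_\ep(u,A,U_\delta)\ge(1-\ell^2)\fepperp(u,A)+\ell^2\tfep(u,A)$ from~\eqref{decompose2}, with a scale $\ell$ tending slowly to zero and eventually set proportional to $\sqrt{N/h_\ex}$. I would bound $\tfep$ from below via Theorem~\ref{theorem:epslevel} applied to the blown-up configuration in the metric $\tilde g$, producing the length contribution $\pi|\ga_i|_{\tilde g}\lep$ with an $O(\log\lep)$ remainder; the crucial dampening is that this remainder, once multiplied by $\ell^2\to 0$, becomes $o(1)$. Simultaneously I would bound $\fepperp$ from below via Proposition~\ref{lowperp}, which supplies the magnetic contribution $(\lep/2)\prem{\mu}+\pi\ellzero N_0(N_0-1)\log(1/\dl)$ with controlled error. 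Expanding the magnetic gain $-h_\ex\int_\Omega\mu\wedge B_0$ in the tube via Propositions~\ref{lexpansion} and~\ref{bexpansion}, and exploiting the criticality identity of Proposition~\ref{criticality}, the leading order $h_\ex\lep/(2\R_0)$-terms cancel out against the length $\pi\ellzero N\lep$, leaving behind exactly the functional $\qell(\ga_i)$ summed over $i$, up to subleading logarithmic corrections.

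Then I would invoke Proposition~\ref{coercive}: part~(1) together with the energy budget forces $\|\vu_i\|_\infty=O(\alpha_\ep)$ with $\alpha_\ep=\sqrt{N/h_\ex}$, so the rescaled curves $\tga_i$ are bounded in $\|\cdot\|_*$; part~(2) then extracts subsequential limits $\vu_i^*\in H^1$ and yields $\liminf\qell(\ga_i)/\alpha_\ep^2\ge(\ellzero/(2\R_0))Q(\vu_i^*)$, producing the quadratic part $\pi\ellzero N\sum_i Q(\vu_i^*)$ of $W_N$ as defined in~\eqref{defW}. At this stage, combining the crude bound with the matching upper bound constructed in Section~\ref{sec:upperbound} implies that on almost every slice $\int_{\Sigma_z}\eperp\le\pi(N+o(1))\lep$, so Proposition~\ref{lowerprecise} applies slice-by-slice, exhibits exactly $N$ vortex points, and supplies both the Bethuel--Brezis--H\'elein interaction $-\pi\sum_{i\neq j}\log\distg(a_i,a_j)$ and the universal constant $\gamma$. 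Identifying these slice-wise points with the traces of the $\ga_i$ and integrating in $z$ yields, after horizontal rescaling by $\sqrt{h_\ex/N}$, both the explicit $\tfrac\pi2\ellzero N(N-1)\log(h_\ex/N)$ prefactor and the repulsion term of $W_N$. The $N^2 C_\Omega$ constant emerges from patching these local two-dimensional $\log\delta$ contributions to the rest of $\Omega$ via the Biot--Savart field $A_{\ga_0}$ of Proposition~\ref{jA}, as a three-dimensional analogue of the standard renormalized-energy limit.

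Finally, the assertion $\max_i\|\vu_i^*\|_{L^\infty}>0$ when $N>1$ is forced because, otherwise, all limits would coincide and the repulsion term $-\pi\int_0^\ellzero\sum_{i\neq j}\log|\vu_i^*-\vu_j^*|_{g^\bullet}$ would equal $+\infty$, contradicting the finite matching upper bound obtained by testing with a spread-out competitor. The \emph{main obstacle} I anticipate is reconciling at the $o(1)$ precision level the three distinct approximations of the vorticity used along the way: the polyhedral current of Theorem~\ref{theorem:epslevel}, the good curves of Theorem~\ref{teo:boundedvorticity}, and the slice-wise point vortices of Proposition~\ref{lowerprecise}. This amounts to proving that on almost every slice the trace of each $\ga_i$ coincides, up to a $o(\alpha_\ep)$ shift, with the Bethuel--Brezis--H\'elein points, that the degrees sum correctly to $N$, and that no mass escapes into the bad remainder $\tga$ at the $o(1)$ order; none of these facts follows directly from the coarse estimates of Section~\ref{sec:prelim}, and they will require quantitative bootstrapping using the sharp upper bound of Section~\ref{sec:upperbound} as a universal a~priori constraint.
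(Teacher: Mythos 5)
Your plan follows the paper's strategy closely: localize the vorticity via Theorem~\ref{teo:boundedvorticity}; split the energy in the tube via~\eqref{decompose2}; use Proposition~\ref{lowperp} together with a re-application of Theorem~\ref{theorem:epslevel} in the rescaled metric $\tilde g$ for the two pieces; invoke Proposition~\ref{coercive} to extract the $H^1$ limits and the quadratic form; and finish with the sharp slicing of Proposition~\ref{lowerprecise} and the Biot--Savart construction for $C_\Omega$. You also correctly single out the key obstacle --- reconciling at $o(1)$ precision the polyhedral approximation, the good Lipschitz curves, and the slice-wise Bethuel--Brezis--H\'elein points --- which the paper indeed resolves by repeated bootstrapping against the sharp upper bound of Section~\ref{sec:upperbound}.

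One step of your plan is not viable as written: you propose to take the horizontal blow-up scale $\ell$ ``eventually set proportional to $\sqrt{N/h_\ex}$.'' In the paper, $\ell = 1/(\log\lep)^2$ is kept fixed throughout; the factor $\sqrt{h_\ex/N}$ is the rescaling applied to the \emph{curves} when defining $\tga_i$, not to the blow-up scale. Conflating the two breaks the argument in two places. First, Theorem~\ref{teo:boundedvorticity} only gives that the good curves lie at distance $O(\lep^{-1/14})$ from $\ga_0$, which is much larger than $\sqrt{N/h_\ex}\sim\lep^{-1/2}$, so with your $\ell$ the curves would exit $\cyl_{\ell/2}$ and the cutoff $\chi_\ell$ would truncate them, invalidating the identity $\chi_\ell\ga_i=\ga_i$. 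Second, Proposition~\ref{coercive}(2) requires both $\alpha_n\ll\ell_n$ and $\|\vun\|_{L^\infty}=o(\ell_n)$; since at the optimal scale $\alpha_n$ and $\|\vun\|_{L^\infty}$ are both of order $\sqrt{N/h_\ex}$, setting $\ell\sim\sqrt{N/h_\ex}$ violates both hypotheses at once. The point of a merely logarithmically small $\ell$ is that the Theorem~\ref{theorem:epslevel} errors multiplied by $\ell^2$ are already $o(1)$ --- there is no gain in pushing $\ell$ to the optimal confinement scale --- while the coercivity argument strictly requires $\ell$ to stay well above that scale.

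A minor point: your argument that $\max_i\|\vu_i^*\|_{L^\infty}>0$ when $N>1$ (via divergence of the repulsion integral) is valid but differs from the paper's, which extracts it already at the coarse stage from~\eqref{lowL}: the bound $\pi\ellzero N(N-1)\log(1/(\dl\sqrt\lep))+cY\le O(1)$ forces $\dl\ge c\lep^{-1/2}$ when $N>1$, and then~\eqref{flatmax} yields $\max_i\|\vu_i\|_{L^\infty}\ge c/\sqrt\lep$ directly, without needing the precise slicing of Step~5.
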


The proof of this proposition involves several steps, the first goal being to compute a lower bound for $GL_\ep(\u,\A)$ up to $O(1)$ in terms of a suitable vortex filament approximation of the vorticity measure $\mu(u,A)$, which then allows to determine the typical distance from the filaments to $\ga_0$, and then improve the lower-bound to $o(1)$ precision.

The first step is to choose the scale $\ell$ of the horizontal blow-up  in a way such that the vorticity remains concentrated near $\ga_0$ at this scale (Step~1), which in turn implies (Step~2)  that we may bound from below $\fperp(u,A)$ in terms of the vorticity in the $\ell$-tube around $\ga_0$. In Step~3 we construct vortex filaments for the tube blown-up at scale $\ell$ horizontally, and show that the distance of the vortex filaments to $\ga_0$ is in fact much smaller than $\ell$, which allows to apply Proposition~\ref{coercive} to bound from below  in a sufficiently precise way  $\fep(u,A)$. The final step uses the resulting lower bound of $GL_\ep(\u,\A)$ and, combining with the matching upper bound, draws the consequences stated above.

\begin{proof}

Throughout the proof we write $\mu$ instead of $\mu(u,A)$.
	
	We start with a preliminary claim, that 
	there exists $C>0$ such that for any curve $\ga$ with no boundary in $\Omega$ and any vector field $X$, we have
   \begin{equation}\label{smallcurves}
      |\pr{X,\ga}| \le C|\ga|^2 \|\curl X\|_{L^\infty}.
   \end{equation}
Indeed,    given $\ga$, there exists a surface $\Sigma$ such that $\ga = \partial\Sigma\cap\Omega$ and such that $|\Sigma|\le C|\ga|^2.$ Then
   $$\pr{X,\ga} = \int_\Sigma \curl X,$$
   from which the claim follows.

\begin{enumerate}[label=\textsc{\bf Step \arabic*.},leftmargin=0pt,labelsep=*,itemindent=*,itemsep=10pt,topsep=10pt]
	
  \item \emph{$\mu$ is $\ell$-concentrated near $\ga_0$.}

   From the nondegeneracy hypothesis and Proposition~\ref{weakstrong}, Condition~\ref{nondegencond} is satisfied with $P = 2$. Then, from Theorem~\ref{teo:boundedvorticity} applied for instance with $\alpha = 3/10$,  we know that for any $\ep>0$ there exists Lipschitz curves $\ga_1,\dots,\ga_k$, with $k$ bounded independently of $\ep$, and a normal current $\tga$ without boundary in $\Omega$ such that for $1\le i\le k$ we have,
   \begin{equation}\label{goodga0}   \|\ga_i - \ga_0\|_*\le \frac C{\lep^{1/7}},\quad \left||\ga_i| - \ellzero\right|\le \frac C{\lep^{1/7}}
   \end{equation}
   and such that 
   \begin{equation}\label{badga}
      |\tga|\leq \frac C{\lep^{2/3}}.
   \end{equation}
   Moreover,
   \begin{equation}\label{jacgoodbad}
      \left\|\mu-2\pi\sum_{i=1}^{N} \ga_i-2\pi \tga\right\|_*,\ \left\|\mu-2\pi\sum_{i=1}^{N} \ga_i-2\pi \tga\right\|_{\F(\Omega_\ep)}\leq C\lep^{-2},
   \end{equation}   
   where, since the number $k$ is bounded independently of $\ep$, we have assumed it  is equal to some fixed integer $N$ by going to a subsequence, and where $\Omega_\ep$ is defined in \eqref{omeps}.  (Note that the $\log\lep$ factor in Theorem~\ref{teo:boundedvorticity}, (3) has been absorbed by using a different power for $\lep$ to obtain \eqref{badga}). 

   In particular, as a consequence of \eqref{badga} and \eqref{smallcurves} we have 
   \begin{equation}\label{flatbadpart}\|\tga\|_{*}\le \frac C{\lep^{4/3}},\quad \|\tga\|_{\F(\Omega)}\le \frac C{\lep^{4/3}} .\end{equation}

   From now on, we let
   \begin{equation}\label{lell}
      \ell \colonequals \frac1{({\log\lep})^2}.
   \end{equation}
   Note that the power $2$ in \eqref{lell}, in \eqref{jacgoodbad}, and in \eqref{omeps} is arbitrary, it could be any large  number. We consider coordinates in a neighborhood of $\ga_0$ as in Proposition~\ref{prop:diffeo}, the coordinate domain being $\Cd$. For carrying out the horizontal blow-up procedure, we need to work in a smaller neighbourhood of $\ga_0$. For convenience we use a cylinder in tube-coordinates. 
   
   Let 
   $$ \cyl_r \colonequals B(0,r)\times (0,\ellzero).$$
      We let $\chi_\ell$ be a cutoff function for the cylinder $\cyl_\ell$:  $\chi_\ell$ is equal to $1$ on $\cyl_{\ell/2}$ and equal to $0$ outside $\cyl_\ell$, its gradient is bounded by $C/\ell$.

   Then, from \eqref{goodga0} and  Lemma~\ref{curveintube}, every $\ga_i$ is included in a tubular neighborhood of $\ga_0$ with radius $C\lep^{-1/14}$,  hence $\chi_\ell\ga_i = \ga_i$. Thus, in view of \eqref{jacgoodbad}, we find that
   \begin{equation}\label{muintube}
      \|(1- \chi_\ell)\mu \|_{*},\ \|(1- \chi_\ell)\mu \|_{\F(\Omega_\ep)}\le C\frac{(\log\lep)^2}{\lep^{4/3}}
   \end{equation}
     and that the same bounds hold for $(\chi_\delta- \chi_\ell)\mu$, with a constant depending on $\delta$.

\item \emph{Lower bound for $\fperp(u,A)$.} 
   Inserting into the splitting formula  \eqref{Energy-Splitting} the definition  \eqref{defhc1}, the fact that 
   $h_\ex = H_{c1}^0 + K\log\lep = \frac\lep{2\R(\ga_0)} +K\log\lep,$
   and the minimality of $(\u,\A)$ which implies that $h_\ex^2J_0\ge GL_\ep(\u,\A)$, we find    \begin{equation}\label{minimality}
      o_\ep(1) \ge \fep(u,A)- \(\frac\lep{2\R(\ga_0)} +K\log\lep\)\pr{B_0,\mu}.
   \end{equation}
   But, again using \eqref{goodga0}--\eqref{jacgoodbad} and the definition \eqref{defratio}, we have
   \begin{equation*}
   	\pr{B_0,\mu} = 2\pi \ellzero N\R(\ga_0)+O(\lep^{-1/7}),\end{equation*}
   which together with \eqref{minimality} implies that
   \begin{equation}\label{feplog} \fep(u,A)\le \pi \ellzero N\lep + O(\lep^{6/7}).\end{equation}
   It also follows directly from \eqref{goodga0}--\eqref{jacgoodbad} that
   \begin{equation}\label{mustar} \|\mu - 2\pi N \ga_0\|_* = O(\lep^{-1/7}),\end{equation}
   but to apply Proposition~\ref{lowperp} on $\Cd$, where $\Cd$ is defined in Proposition~\ref{prop:diffeo}, we  need to check instead that the flat distance between $\mu$ and $2\pi N\ga_0$ tends to $0$ with $\ep$, which we can prove is true in $\Omega_\ep$  but not in $\Omega$.

   From \eqref{goodga0} and Lemma~\ref{curveintube} we find that each $\ga_i$  is included in a tubular neighborhood of $\ga_0$ with radius $C\lep^{-1/14}$, and that, in tube coordinates, its endpoints have vertical coordinate $0$ and $\ellzero$, respectively. Then, Lemma~\ref{flatstarlem} implies that 
   $$
   \|\ga_i-\ga_0\|_{\F(\Omega)}\leq C\lep^{-\frac1{14}}.
   $$
     Hence, combining with \eqref{flatbadpart} and \eqref{jacgoodbad}, we find
    that 
   \begin{equation}\label{muflat} \dl \colonequals \max\left\{ \|\mu - 2\pi N \ga_0\|_{\F(\Omega_\ep)},\lep^{-1} \right\}= O(\lep^{-\frac1{14}}).\end{equation}

   It follows from \eqref{muflat} and \eqref{feplog} that we may apply Proposition~\ref{lowperp} in a subdomain $\Cd^\ep$ of $\Cd$ obtained by stripping layers of height $\lep^{-2}$ at the top and bottom. We find that
   $$\fepperp (u,A)\ge \frac\lep 2\ \int_{\Cd^\ep}\chi_\delta\sqrt\gz\mu_{12}+\pi \ellzero N (N-1)\log\frac1{\dl} -C(1+\dl\log\lep).$$
 But, integrating by parts on each slice, by definition of $\mu$, we have
   \begin{multline*}\int_{\Cd\setminus\Cd^\ep}\chi_\delta\,  \sqrt\gz\,\mu_{12} = \int_{\Cd\setminus\Cd^\ep} d(\chi_\delta\,  \sqrt\gz)\wedge j(u,A)+\chi_\delta\,  \sqrt\gz\,dA\\ \le C\(\int_{\Cd\setminus\Cd^\ep}e_\ep(u,A)\)^{1/2} |\Cd\setminus\Cd^\ep|^{1/2} =o_\ep(1),\end{multline*}
   so that, using also \eqref{muintube}, we conclude that 
   \begin{equation}\label{lowfperp} \fepperp(u,A) \ge \frac\lep 2\ \prem{\chi_\ell\mu}+\pi \ellzero N (N-1)\log\frac1{\dl} -C(1+\dl\log\lep).\end{equation}

\item \emph{Lower bound for $\fep(u,A)-h_\ex \pr{B_0,\mu}$.}
   We apply one more time the curve construction of Theorem~\ref{theorem:epslevel}, this time on the cylinder $\cyl_\ell$ equipped with the metric $\tg$  defined as above by  $\tilde g_{ij} = \ell^{-2} g_{ij}$ if $1\le i,j\le2$ and $\tilde g_{ij} = g_{ij}$ otherwise. We find that there exists a polyhedral 1-current $\nu$ with no boundary relative to $\cyl_\ell$ such that
   \begin{equation}\label{ellcurves} \tfep(u,A)\ge \hal (\lep - C\log\lep)|\nu|_\tg - o_\ep(1),\quad  \|\mu -\nu\|_{*,\ell}\leq \frac{C}{\lep^q},\end{equation}
   where the $\|\cdot\|_{*,\ell}$ denotes the norm in the dual space $\(C_T^{0,1}(\cyl_\ell)\)^*$ and $q$ may be chosen arbitrarily large. Here $\tfep(u,A)$ is defined in \eqref{deftfep}, the integral could in fact be taken over $\cyl_\ell$ but we will not use this fact. 
   
   We also have
   \begin{equation}\label{ellflat} \|\mu -\nu\|_{\F(\cyl_{\ell,\ep})}\leq \frac{C}{\lep^q},\end{equation}
   where $\cyl_{\ell,\ep}$ is the set of points in $\cyl_\ell$ at distance at least $\lep^{-2}$ from the boundary.
   
   Note that, even though we cannot directly apply Theorem~\ref{theorem:epslevel} to the functional $\tilde F_\ep(u,A)$, since it involves a non-Euclidean metric, a straightforward modification of the proof in \cite{Rom} reveals that it holds in this case as well. Indeed the proof involves summing lower-bounds on an appropriate grid of cubes of side-length an arbitrarily large negative power of $\lep$. In our case, we can approximate the metric by a constant metric in each cube, which will thus be Euclidean after a linear change of coordinate. We can then obtain the desired energy lower bound and Jacobian estimate in each cube, the errors due to the non constant metric will be an arbitrarily large negative power of $\lep$.

   Note also that the lower bound really involves $\tep = \ep/\ell$, but this only introduces an error of order $|\log\ell|$ which is absorbed in the term $C\log\lep$. Also, $\|\cdot\|_{*,\ell}$ should be understood relative to the metric $\tg$, but it differs from the Euclidean version by at most  a factor $C\ell^2$ which does not alter the above bound considering that $q$ is arbitrary anyway.

   It follows from \eqref{ellcurves} and \eqref{mustar}, that
   \begin{equation}\label{ganoga}\|\nu - 2\pi N\ga_0\|_{*,\ell}\le\frac C{\lep^{1/7}}.\end{equation}
   In particular, using \eqref{smallcurves} we have that
   \begin{equation}\label{lowfornu} |\nu|_\tg\ge \frac1C.\end{equation}
   Now we recall from \eqref{decompose} the relation
   $$\ell^2\tfep(u,A)\le \ell^2\fperp(u,A) + \fpar(u,A).$$
   Therefore, multiplying \eqref{lowfperp} by $(1 - \ell^2)$, using the choice of \eqref{lell} and adding $\ell^2$ times \eqref{ellcurves}, we obtain that
   \begin{multline}\label{notyet}\fep(u,A) \ge\hal \lep \((1-\ell^2) \prem{\chi_\ell\mu} + \ell^2 |\nu|_{\tg} \)+\\ \pi (1 - \ell^2) \ellzero N (N-1)\log\frac1{\dl} +  O\(1+\ell^2\log\lep|\nu_\tg|\),\end{multline}
   where we have used the fact that $\dl\log\lep = o_\ep(1)$, in view of \eqref{muflat}.

   Moreover, from \eqref{muintube}, \eqref{mustar} we have in view of \eqref{defprem} that
   $$\prem{\chi_\ell\mu} = \pr{\chi_\ell\frac{\partial}{\partial z}, 2\pi N\ga_0} + O(\lep^{-1/7}) = 2\pi N \ellzero + O(\lep^{-1/7}).$$
   Inserting this into \eqref{notyet} and comparing with  \eqref{feplog},  we find that
   \begin{equation}\label{proxiell}|\nu|_{\tg} - 2\pi \ellzero N \le O(\ell^{-2}\lep^{-1/7}) =  O(\lep^{-1/8}).\end{equation}

   We then let, as in the proof of Lemma~\ref{curveintube},
   $$X = \chi_\ell\frac{\partial_z}{\sqrt\gz}.$$
   Note that $\tgz = \gz$. Then from \eqref{ganoga}, \eqref{lowfornu}, and \eqref{proxiell}, we have
   \begin{equation}\label{prx}\pr{X,\frac{\ga_0}{\ellzero}-\frac{\nu}{|\nu|_\tg}}
      = \pr{X,\frac{2\pi N\ga_0- \nu}{|\nu|_\tg}}+\pr{X,2\pi N\ga_0}\(\frac1{2\pi N\ellzero} -\frac1{|\nu|_\tg}\)
      \leq O(\lep^{-1/8}).\end{equation}
       Here we have used the fact that $|\ga_0|_\tg = \ellzero$, that $\pr{X,\ga_0} = \ellzero$ and that the left-hand side  is positive: indeed, since $\|X\|_\infty\le 1$ and since $X$ restricted to $\ga_0$  is precisely the unit tangent vector, it holds that 
   $$ \pr{X,\frac{\nu}{|\nu|_\tg}}\le 1 = \pr{X,\frac{\ga_0}{\ellzero}}.$$ 
  	
   Next,  we decompose $\nu$ as a sum of simple curves $\{\nu_i\}_{i\in I}$, so that
   $$ \pr{X,\frac{\ga_0}{\ellzero}-\frac{\nu}{|\nu|_\tg}} = \sum_{i\in I}\alpha_i \(1 - \pr{X,\frac{\nu_i}{|\nu_i|_\tg}}\)\colonequals\sum_{i\in I}\alpha_i\Delta_i,\quad \alpha_i = \frac{|\nu_i|_\tg}{|\nu|_\tg}.$$
   The $\Delta_i$'s are nonnegative. We let $I^\good$ denote those indices for which $\Delta_i <\lep^{-1/16}$ and we denote $\{\ga_i\}_{i\in I^\good}$ the corresponding curves. The rest of the indices is denoted $I^\bad$, and the sum of corresponding curves $\nu^\bad$, so that
   \begin{equation}\label{decomposenu}\nu = \sum_{i\in I^\good}2\pi \ga_i +\nu^\bad.\end{equation}
   Then \eqref{prx} implies that the sum of the coefficients $\alpha_i$ for $i$ ranging over $I^\bad$ is $O(\lep^{-1/16})$. Therefore, since the total length of $\nu$ is bounded, the total length of bad curves is $O(\lep^{-1/16})$ as well.

   As for the good curves, that we denote $\{\ga_i\}_{i\in I^\good}$, we have if $i\in I^\good$ that
   \begin{equation}\label{goodcurves} \Delta_i = \pr{X,\frac{\ga_0}{\ellzero}-\frac{\ga_i}{|\ga_i|_\tg}} \le \lep^{-1/16}.\end{equation}
   Since this is much smaller than $\ell^2$ defined in \eqref{lell}, we deduce from  Lemma~\ref{curveintube} that the good curves are included in a tube of radius $O(\lep^{-1/32})$ around $\ga_0$ and that each of their lengths is equal to $\ellzero + O(\lep^{-1/32})$. We thus have
   \begin{equation}\label{lengths}
      |\ga_i|_\tg = \ellzero + O(\lep^{-1/32}),\quad |\nu^\bad|_\tg = O(\lep^{-1/16}).
   \end{equation}
   In view of the  estimate \eqref{ganoga}  we deduce that there are exactly $N$ good curves, that we denote from now on $\ga_1,\dots,\ga_{N}$. We recall that from \eqref{decomposenu}, \eqref{lengths}, and  and \eqref{proxiell}, $|\nu|_\tg = 2\pi N \ellzero + o_\ep(1)$.

   Going back to \eqref{notyet} we now express  $\prem{\chi_\ell\mu}$ in terms of the curves $\ga_i$, using the vorticity estimate in \eqref{ellcurves} and \eqref{decomposenu}. Since $|\nu_\tg| = O(1)$, we find
   \begin{multline}\label{lowfep}      \fep(u,A) \ge \frac\lep 2 \((1-\ell^2)  \(\sum_{i=1}^{N} 2\pi\prem{ \ga_i} +\prem{\nu^\bad}\) + \ell^2 |\nu|_{\tg} \)+\\ \pi (1 - \ell^2) \ellzero N (N-1)\log\frac1{\dl} +  O(1),\end{multline}
   where we also used the fact that $\ell^2\log\lep = o_\ep(1)$. 
   
   Similarly, let us rewrite $h_\ex \pr{B_0,\mu}$.
   First we note that, from \eqref{muintube} and \eqref{ellcurves},
   $$h_\ex \pr{B_0,\mu} = h_\ex \pr{\chi_\ell B_0,\mu} + o_\ep(1) = h_\ex \pr{\chi_\ell B_0,\nu}+o_\ep(1).$$
   Then, from \eqref{ganoga} and \eqref{lengths} that
   $$\left\|\sum_{i=1}^N\ga_i-N\ga_0\right\|_*\le C\lep^{-1/8},$$ using again \eqref{smallcurves} to bound $\|\nu^\bad\|_*$. From the vorticity estimate in \eqref{ellcurves} and since $\chi_\ell B_0\in C_T^{0,1}(\cyl_\ell)$, using \eqref{muintube}, we deduce that
   \begin{multline}\label{magterm}
      h_\ex \pr{B_0,\mu} =  \(\frac\lep{2\R(\ga_0)} +K\log\lep\)\(\pr{\chi_\ell B_0,\nu^\bad} + \sum_{i=1}^{N}2\pi\pr{B_0,\ga_i}\) +o_\ep(1) \\
      =\frac{\pi\lep}{\R(\ga_0)}\sum_{i=1}^{N}\pr{B_0,\ga_i - \ga_0} + \pi \ellzero N \lep + 2\pi N K \pr{B_0,\ga_0}\log\lep \\
      + O(\lep\pr{\chi_\ell B_0,\nu^\bad}) + o_\ep(1).
   \end{multline}
   Then we substract off \eqref{magterm} from \eqref{lowfep}, noting that
   $$\ell^2 |\nu|_{\tg} = \ell^2\(|\nu^\bad|_\tg + 2\pi\sum_{i=1}^{N} |\ga_i|_\tg\),$$
   and that, since  $\prem{\nu^\bad}$ and $\pr{B_0,\nu^\bad}$
   are $O(|\nu^\bad|^2)$ --- which is a negative power of $\lep$ times $|\nu^\bad|$ in view of \eqref{smallcurves} --- the terms  $\pi \lep(1-\ell^2) \prem{\nu^\bad}$ and $\lep\pr{\chi_\ell B_0,\nu^\bad}$ may be absorbed in the term $\frac12 \lep\ell^2 |\nu^\bad|_{\tg}$.  Also, from \eqref{muflat} and \eqref{lell} we have that $\ell^2\log(1/\dl) = o_\ep(1).$ Notice that here, the maximization with $\lep^{-1}$ in the definition of $\rho$ in \eqref{muflat} plays a key role. We thus obtain (see Definition~\ref{defqell})
   \begin{multline}\label{lowfep2} \fep(u,A) - h_\ex\pr{B_0,\mu}\ge \pi \ellzero N(N-1)\log\frac1\dl  -2\pi K\pr{B_0,\ga_0} N \log\lep  + \\ + \pi\lep \sum_{i=1}^{N} \qell(\ga_i) + c\ell^2 \lep|\nu^\bad|_\tg + O(1),\end{multline}
   for some $c>0$.

\item \emph{Convergence of blown-up curves.} 
	We write $\ga_i$ as a piecewise graph over $\ga_0$ as above, letting $\ga_i(t) = \ga_0(z_i(t)) + \vu_i(t)$. From \eqref{goodcurves} and Lemma~\ref{curveintube} we have that $\|\vu_i\|_{L^\infty}\ll \ell$ for every $i$. Thus, Proposition~\ref{coercive} implies that
   \begin{equation}\label{qellinf}\qell(\ga_i)\ge c\|\vu_i\|_{L^\infty}^2.\end{equation}
   It also follows from \eqref{muintube}, \eqref{ellflat}, Lemma~\ref{flatstarlem} applied to each of the curves $\ga_1$,\dots,$\ga_{N}$ with $\tga=\ga_0$, and \eqref{smallcurves} applied to estimate $\|\nu^\bad\|_{\F(\cyl_{\ell,\ep})}$ that
   \begin{equation}\label{flatmax}\dl\le C\(\sum_i\|\vu_i\|_{L^\infty} + |\nu^\bad|^2_\tg\) + O((\log\lep)^2\lep^{-4/3})+ \lep^{-1}  .\end{equation}
 
   On the other hand, by minimality of $(\u,\A)$,  we deduce from the upper bound of   Theorem~\ref{thm:upperbound} and \eqref{Energy-Splitting} that
   \begin{equation}\label{upfep}\fep(u,A) - h_\ex\pr{B_0,\mu}\le  \frac\pi 2 \ellzero N(N-1) \log\lep-2\pi K\pr{B_0,\ga_0} N\log\lep + O(1).\end{equation}

   Let
   $$ Y = \lep\(\sum_{i=1}^N \qell(\ga_i) + \ell^2|\nu^\bad|_\tg\).$$
   From \eqref{qellinf}, \eqref{flatmax}, and the fact that we have $\ell^2\ge |\nu^\bad|_\tg^3$ in view of \eqref{lengths}, we get
   \begin{equation}
   \label{dlX}
   \dl^2 \lep \le C Y +o_\ep(1).\end{equation}   Combining \eqref{lowfep2} and \eqref{upfep}, in view of \eqref{dlX}, we find
  \begin{equation}\label{lowL}
    \frac\pi{2} \ellzero N(N-1)\log\frac1{CY+o_\ep(1)}  +  cY \le \pi \ellzero N(N-1) \log \frac{1}{\dl\sqrt\lep} +cY \le O(1).\end{equation}
      It follows that $Y = O(1)$, $\dl \le C \lep^{-1/2}$,  and then that for every $1\le i\le N$, in view of \eqref{qellinf},
   \begin{equation}\label{sqrtlepest} \|\vu_i\|_{L^\infty}\le\frac C{\sqrt\lep},\quad \qell(\ga_i)\le \frac C{\lep}.
   \end{equation}
   It also follows that
   \begin{equation}\label{improvedbad}|\nu^\bad|_\tg\le C\frac{(\log\lep)^4}\lep.\end{equation}
If $N > 1$ then \eqref{lowL} implies in addition that $\dl$ is bounded below by $c\lep^{-1/2}$ and thus, in view of \eqref{improvedbad}, from \eqref{flatmax} we deduce that    \begin{equation}\label{lowinfty} \max_i\|\vu_i\|_{L^\infty}\ge \frac c{\sqrt\lep}.\end{equation}

   Recalling \eqref{decomposenu}, the vorticity estimates in \eqref{flatstar} follow from \eqref{muintube}, the vorticity estimate in \eqref{ellcurves}, \eqref{ellflat}, and \eqref{smallcurves} together with \eqref{improvedbad} and \eqref{decomposenu}.

   We denote $\tga_i$ the curve $\ga_i$ blown up horizontally by a factor $\sqrt{\frac{h_\ex}N}=\sqrt{\frac\lep {2\R_0 N}}+o_\ep(1)$, so that $\tga_i(t) = \ga_0(z_i(t))+\sqrt{\frac{h_\ex}N}\vu_i(t)$. From \eqref{sqrtlepest} and Proposition~\ref{coercive}, there exists a subsequence $\{\ep\}$ tending to zero such that $\tga_i$ converges, for any $i = 1,\dots,N$, as $\ep\to 0$, uniformly and as currents, to an $H^1$-graph $\ga_i^*(z) = \ga_0(z)+\vu_i^*(z)$. Notice that if $\sqrt{\frac{h_\ex}N}\|\vu_i\|_\infty\to 0$, then $\tga_i$ converges to $\ga_0$. Moreover, from \eqref{lowinfty}, if $N>1$, then $\max\limits_{1\leq i \leq N}\|\vu_i^*\|_{L^\infty} >0$.

\item \emph{Improved lower bound.}
   We return to bounding from below $\fep(u,A,\Ud)$ using \eqref{decompose2}. As above we denote by $\nu$ the polyhedral 1-current obtained by applying Theorem~\ref{teo:boundedvorticity} on the cylinder $\cyl_\ell$ equipped with the metric $\gell$. It decomposes as $\nu^\bad+\nu^\good$, where $\nu^\good =2\pi \sum_{i=1}^N\ga_i.$ 
   
   From \eqref{ellcurves}, \eqref{decomposenu}, and \eqref{improvedbad} we find
   \begin{equation}\label{ellcurves2}\tilde\fep(u,A)\ge\pi\sum_{i=1}^N |\ga_i|_{\gell} \lep + O(\log\lep).\end{equation}
      Also, we may slice $\nu$ (resp. $\nu^\good$) by the coordinate function $z$ defined on $U_\delta$. This provides a family of $0$-currents $\{\nu^z\}_z$ (resp. $\{(\nu^\good)^z\}_z$), where $z$ belongs to a set of full measure in $(0,\ellzero)$. Both $\nu^z$ and $(\nu^\good)^z$ are sums of Dirac masses with weights belonging to $2\pi\mathbb Z$ for almost every $z$.
      
   From \eqref{ellflat}, we know that 
   $$\|\chi_\ell(\mu - \nu)\|_{\F(\Omega_\ep)}\le C\ell^{-1} \lep^{-q}.$$
   Moreover $\chi_\ell \nu^\good = \nu^\good$ since, from the previous step (see \eqref{sqrtlepest}), we know that each $\ga_i$ is included in a tube of radius $O(1/\sqrt\lep)$ around $\ga_0$. Thus, in view of \eqref{muintube}, and using \eqref{smallcurves} together with \eqref{improvedbad}  to estimate $\|\nu^\bad\|_{\F(\Omega_\ep)}$, we find that 
   $$\|\mu - \nu^\good \|_{\F(\Omega_\ep)}\le C \frac{\left(\log\lep\right)^2}{\lep^{\frac43}},$$
   and then that 
   $$\int_{z=\lep^{-2}}^{\ellzero-\lep^{-2}}\|\mu^z(u,A) - (\nu^\good)^z\|_{\F(\Sigma^z)} dz\le \|\mu(u,A) - \nu^\good \|_{\F(\Omega_\ep)}\le  C \frac{\left(\log\lep\right)^2}{\lep^{\frac43}}.$$
   
   For any $\ep>0$ we define
   $$\T_\ep=\left\{z\in[\lep^{-2},\ellzero-\lep^{-2}]\mid \|\mu^z - (\nu^\good)^z\|_{\F(\Sigma^z)}\le \lep^{-\frac76}\right\},\quad \T_\ep^c = [0,\ellzero]\setminus \T_\ep.$$
   It follows from the above that 
   \begin{equation}\label{measurebadsetofslices}
   |\T_\ep^c|\le C(\log\lep)^2\lep^{-\frac16}.
   \end{equation}
  
   Let $z\in \T_\ep$. We claim that, for any $\ep>0$ small enough (depending on $z$), we have
   \begin{multline}\label{fzmino} \int_{\Sigma^z} e_\ep^\perp(u,A)\,d\vol_\gperp - \frac{\lep}2\int_{\Sigma_z}\chi_\ell  \sqrt{\gz} \mu_{12} d\vec{u} -\pi N^2 \log \delta  +\pi N(N-1)\log\sqrt{\frac{N}{h_\ex}} \\ \ge - \pi \sum_{i\neq j} \log|\vu_i^*(z) -\vu_j^*(z)|_{g_\bullet} + N\gamma  +o_\ep(1)+ o_\delta(1),
   \end{multline} 	
   if $\vu_i^*(z) \neq \vu_j^*(z)$ for all $i\neq j$, whereas if $\vu_i^*(z) = \vu_j^*(z)$ for some $i\neq j$, then
   $$
   \liminf_{\ep \to 0} \int_{\Sigma^z} e_\ep^\perp(u,A)\,d\vol_\gperp - \frac{\lep}2\int_{\Sigma_z}\chi_\ell  \sqrt{\gz} \mu_{12} d\vec{u} +\pi N(N-1)\log\sqrt{\frac{N}{h_\ex}}=+\infty.
   $$
   To prove the claim, we consider three cases: 
   \begin{enumerate}[label=\textsc{\bf Case \arabic*.},leftmargin=0pt,labelsep=*,itemindent=*,itemsep=10pt,topsep=10pt]
   \item\label{case1} If $\int_{\Sigma^z} e_\ep^\perp(u,A)\,d\vol_\gperp> \lep^3$, then, by integration by parts, we have
   $$
   \int_{\Sigma_z}\chi_\ell  \sqrt{\gz} \mu_{12} d\vec{u}=\int_{\Sigma_z}d(\chi_\ell \sqrt{\gz}) \wedge j(u,A)+\chi_\ell \sqrt{\gz}dA\leq C\ell\left(\int_{\Sigma^z} e_\ep^\perp(u,A)\,d\vol_\gperp\right)^\frac12,
   $$ 
   and therefore 
   $$
   \int_{\Sigma^z} e_\ep^\perp(u,A)\,d\vol_\gperp - \frac{\lep}2\int_{\Sigma_z}\chi_\ell  \sqrt{\gz} \mu_{12} d\vec{u}> \lep^3-C\frac{\lep^\frac52}{(\log\lep)^2},
   $$
   which implies the claim.
   
   \item
   If $\pi(N+m)\lep \leq \int_{\Sigma^z} e_\ep^\perp(u,A)\,d\vol_\gperp \leq \lep^3$ for some $m>0$, we can apply Lemma~\ref{2Dlbvar}, which provides the existence of points $a_1,\dots,a_k$ and integers $d_1,\dots,d_k$ such that 
   \begin{equation}\label{lboundcase2}
   \int_{\Sigma^z} e_\ep^\perp(u,A)\,d\vol_\gperp \geq \pi \sum_i |d_i|\sqrt{\gz(a_i)}\left(\lep -C\log\lep \right),
   \end{equation}
   and 
   $$
   \left\|2\pi \sum_i d_i \delta_{a_i}-\mu_{12} d\vec{u}\right\|_{\F(\Sigma^z)}\leq C\lep^{-3}.
   $$
   In addition, from the definition of $\T_\ep$, we deduce that
   $$
   \left\|2\pi \sum_i d_i \delta_{a_i}-(\nu^\good)^z\right\|_{\F(\Sigma^z)}\leq C\lep^{-\frac76}.
   $$
   In particular, we have that
   \begin{equation}\label{vortestcase2}
   \int_{\Sigma_z}\chi_\ell  \sqrt{\gz} \mu_{12} d\vec{u}=\int_{\Sigma^z}\chi_\ell\sqrt{\gz}(\nu^\good)^z+O(\lep^{-\frac76})=2\pi N+O(\lep^{-\frac12}),
   \end{equation}
   where in the last equality we used the fact that $g_{33}$ is equal to $1$ on the axis $z=0$ and that all good curves are contained in a tubular neighborhood of radius $O(\lep^{-\frac12})$ around $\ga_0$.
     
   Hence, combining \eqref{lboundcase2} with \eqref{vortestcase2}, we find
   \begin{multline*}
   \int_{\Sigma^z} e_\ep^\perp(u,A)\,d\vol_\gperp - \frac{\lep}2\int_{\Sigma_z}\chi_\ell  \sqrt{\gz} \mu_{12} d\vec{u}\\ \geq \pi \sum_i \sqrt{\gz(a_i)}|d_i|\lep -\pi N\lep +O(\lep^{\frac12}).
   \end{multline*}
   If $\sum_{i}|d_i|>N$, the claim immediately follows. On the other hand, if $\sum_{i}|d_i| \leq N$, then by combining $\pi(N+m)\lep \leq \int_{\Sigma^z} e_\ep^\perp(u,A)\,d\vol_\gperp$ with \eqref{vortestcase2}, we find
   $$
   \int_{\Sigma^z} e_\ep^\perp(u,A)\,d\vol_\gperp - \frac{\lep}2\int_{\Sigma_z}\chi_\ell  \sqrt{\gz} \mu_{12} d\vec{u} \geq \pi m \lep+O(\lep^{\frac12}).
   $$
   Once again, since $m>0$, the claim follows.
   
   \item If $\int_{\Sigma^z} e_\ep^\perp(u,A)\,d\vol_\gperp < \pi(N+m)\lep$,  
since all good curves are contained in a tubular neighborhood of radius $O(1/\sqrt\lep)$ around $\ga_0$, we deduce that
	$$
	\left\|2\pi N\delta_0 -(\nu^\good)^z\right\|_{\F(\Sigma^z)}\leq C\lep^{-\frac12},
	$$
	(that we also used in \eqref{vortestcase2}) which together with the definition of $\T_\ep$ gives
	$$
	\dl^z=\max\left\{\left\|\mu^z-2\pi N\delta_0\right\|_{\F(\Sigma^z)},\lep^{-1}\right\}\leq C\lep^{-\frac12}.
	$$

   We can therefore apply Proposition~\ref{lowerprecise} (choosing $m$ sufficiently small), which provides the existence of $N$ distinct points $a_1,\dots,a_N$, such that
   \begin{multline}\label{case3_1}
   \int_{\Sigma_z}e_\ep^\perp(u,A)\,d\vol_\gperp \geq \pi \sum_{i=1}^N \sqrt{\gz(a_i)}\lep \\
   +\pi N^2\log\de -\pi\sum_{i\neq j}\log \dist_{g^\perp}(a_i-a_j)+N\gamma +o_\ep(1)+o_\de(1)
   \end{multline}
   and 
   \begin{equation}\label{vorticityestimatecase3}
   \left\|2\pi \sum_{i=1}^N \delta_{a_i}-\mu_{12} d\vec{u}\right\|_{\F(\Sigma^z)}\leq C\lep^{-s},
	\end{equation}
	where $s>0$ can be chosen arbitrarily large. In addition, from the definition of $\T_\ep$, we deduce that
   \begin{equation}\label{vortestflatgoodz}
   \left\|2\pi \sum_{i=1}^N \delta_{a_i}-(\nu^\good)^z\right\|_{\F(\Sigma^z)}\leq C\lep^{-\frac76}.
   \end{equation}
   
   From \eqref{vortestflatgoodz} we deduce that, for any $i=1,\dots,N$, there exists a point, that we denote $\ga_{i,\ep}(z)$, belonging to $\ga_{i,\ep}$, such that
   $$
   \sum_{i=1}^N \distg(a_i,\ga_{i,\ep}(z))\leq \|\mu^z-(\nu^\good)^z\|_{\F(\Sigma^z)}\leq C\lep^{-\frac76}.
   $$
   From this and the fact that the blown up good curves $\tilde \ga_{i,\ep}$ converge uniformly to $\ga_{i}^*$, letting $\tilde a_i$ denote the point $a_i$ blown up horizontally by a factor $\sqrt{\frac{h_\ex}N}$, we deduce that $\tilde a_i$ converges to $\tilde \ga_i^*(z)$ as $\ep \to 0$. 
   
   Finally, we note that 
   \begin{equation}\label{case3.2}
   -\pi\sum_{i\neq j}\log \dist_{g^\perp}(a_i-a_j)=- \pi \sum_{i\neq j} \log|\tilde a_i-\tilde a_j|_{g_\bullet}-\pi N(N-1)\log\sqrt\frac{N}{h_\ex}+o_\ep(1),
   \end{equation}
   and that, when passing to the limit $\ep\to 0$ and using the lower semi-continuity of $-\log$, we have
   $$
   \liminf_{\ep \to 0} - \pi \sum_{i\neq j} \log|\tilde a_i-\tilde a_j|_{g_\bullet}=- \pi \sum_{i\neq j} \log|\vu_i^*(z) -\vu_j^*(z)|_{g_\bullet}
   $$
   if $\vu_i^*(z) \neq \vu_j^*(z)$ for all $i\neq j$, whereas if $\vu_i^*(z) = \vu_j^*(z)$ for some $i\neq j$, then
   $$
   \liminf_{\ep \to 0} - \pi \sum_{i\neq j} \log|\tilde a_i-\tilde a_j|_{g_\bullet}=+\infty.
   $$
   The claim \eqref{fzmino} thus follows from combining this with \eqref{case3_1}, \eqref{case3.2} and \eqref{vorticityestimatecase3}.
   \end{enumerate}

  We next integrate over $z\in [0, \ellzero]$. We claim that the contribution from $\T_\ep^c$ is bounded below by  $o_\ep(1)$, i.e. that
   \begin{multline}\label{integrationoverz3}
    	\int_{z\in [0,\ellzero]}
       \int_{\Sigma^z} e_\ep^\perp(u,A)\,d\vol_\gperp -  \hal\int_{\Sigma^z} \chi_\ell \sqrt{\gz} \mu_{12} d\vec{u}  \
       \log \frac1\ep
       \\ \ge \int_{z\in \T_\ep} \int_{\Sigma^z} e_\ep^\perp(u,A)\,d\vol_\gperp - \hal\int_{\Sigma^z} \chi_\ell \sqrt{\gz} \mu_{12} d\vec{u}  \   \log \frac1\ep+o_\ep(1).
       \end{multline}

    Let $z\in (\T_\ep)^c$. We consider two cases. First, if $\int_{\Sigma^z} e_\ep^\perp(u,A)\,d\vol_\gperp> \lep^3$, then, arguing exactly as in \ref{case1}, we find that
    \begin{equation}\label{estimatecasebad1}
   	\int_{\Sigma^z} e_\ep^\perp(u,A)\,d\vol_\gperp - \frac{\lep}2\int_{\Sigma_z}\chi_\ell  \sqrt{\gz} \mu_{12} d\vec{u}> \lep^3-C\frac{\lep^\frac52}{(\log\lep)^2}\geq 0.
   	\end{equation}
   	Second, if $\int_{\Sigma^z} e_\ep^\perp(u,A)\,d\vol_\gperp \leq \lep^3$, we can apply once again Lemma~\ref{2Dlbvar}, which provides the existence of points $a_1,\dots,a_k$ and integers $d_1,\dots,d_k$ such that
   	\begin{equation}\label{lboundcase2.2}
   		\int_{\Sigma^z} e_\ep^\perp(u,A)\,d\vol_\gperp \geq \pi \sum_i |d_i|\sqrt{\gz(a_i)}\left(\lep -C\log\lep \right)
   	\end{equation}
   	and 
   	$$
   	\left\|2\pi \sum_i d_i \delta_{a_i}-\mu_{12} d\vec{u}\right\|_{\F(\Sigma^z)}\leq C\lep^{-3}.
   	$$
	
	 In particular, we have that
	\begin{equation}\label{vortestcase2.2}
		\int_{\Sigma_z}\chi_\ell  \sqrt{\gz} \mu_{12} d\vec{u}=2\pi\sum_i d_i \chi_\ell(a_i)\sqrt{\gz(a_i)}+O(\lep^{-3}).
	\end{equation}
	Combining \eqref{lboundcase2.2} with \eqref{vortestcase2.2}, we find
	\begin{multline}\label{estimatecasebad2}
		\int_{\Sigma^z} e_\ep^\perp(u,A)\,d\vol_\gperp - \frac{\lep}2\int_{\Sigma_z}\chi_\ell  \sqrt{\gz} \mu_{12} d\vec{u}\\ \geq \pi \sum_i \sqrt{\gz(a_i)}\left(|d_i|-d_i\chi(a_i)\right)+O(\log\lep)\geq O(\log\lep).
	\end{multline}
	Hence, from \eqref{estimatecasebad1}, \eqref{estimatecasebad2}, and \eqref{measurebadsetofslices}, we deduce that
	\begin{equation*}
	\int_{z\in (\T_\ep)^c} \int_{\Sigma^z} e_\ep^\perp(u,A)\,d\vol_\gperp -   \hal \int_{(\T_\ep)^c} \int_{\Sigma^z} \chi_\ell \sqrt{\gz} \mu_{12} d\vec{u}  \  \log \frac1\ep\geq |(\T_\ep)^c|O(\log\lep)=o_\ep(1),
	\end{equation*}
	which proves \eqref{integrationoverz3}.

      Thus, by definition \eqref{defprem}, in view of \eqref{muintube},  and the estimate \eqref{fzmino}, we have
   \begin{multline}\label{finalFpre}
      \int_{z\in [0,\ellzero]}
      \int_{\Sigma^z} e_\ep^\perp(u,A)\,d\vol_\gperp - \hal \langle\mu^{2D}\rangle
      \log \frac1\ep  -\pi N^2\ellzero \log \delta +\pi N(N-1)\log\sqrt{\frac{N}{h_\ex}}
      \\ \ge
      \int_{z\in \T_\ep} \(-\pi \sum_{i\neq j} \log
      |\vu_i^*(z) -\vu_j^*(z)|_{g_\bullet}+ N\gamma \) dz + o_\delta(1) +o_\ep(1)+C_\delta o_\ep(1).
   \end{multline}
 
   Adding \eqref{finalFpre} times $(1-\ell^2)$ to  \eqref{ellcurves2} times $\ell^2$  we obtain, in view of \eqref{decompose2}
   \begin{multline}\label{942}
      \fep(u,A,\Ud)\ge \pi \log \frac1{\ep} \((1-\ell^2)  \(\sum_{i=1}^{N} \prem{\ga_i} +\prem{\nu^\bad}\) + \ell^2 \sum_{i=1}^N |\ga_i|_{\gell}\)\\+\pi N^2\ellzero \log \delta -\pi (1 - \ell^2) \ellzero N (N-1)\log\sqrt{\frac{N}{h_\ex}} \\+ \int_{z\in \T_\ep} \(-\pi \sum_{i\neq j} \log
      |\vu_i^*(z) -\vu_j^*(z)|_{g_\bullet}+ N\gamma \) dz +C_\delta o_\ep(1)+ o_\delta(1) +o_\ep(1),\end{multline}
	where we used again the fact that $\ell^2\log\lep=o_\ep(1)$.

   On the other hand, using \eqref{magterm}, \eqref{improvedbad} and \eqref{smallcurves} again, we have
   \begin{equation*}
      h_\ex \pr{B_0,\mu} =\frac{\pi\lep}{\R(\ga_0)}\sum_{i=1}^{N}\pr{B_0,\ga_i - \ga_0} + \pi \ellzero N \lep +  2\pi N K\langle B_0, \ga_0\rangle \log\lep+ o_\ep(1).
   \end{equation*}
   Subtracting from \eqref{942} we find, as in \eqref{lowfep2}
   \begin{multline}\label{944} F_\ep (u, A, \Ud) -    h_\ex \pr{B_0,\mu}  \ge
      \frac\pi{2} (1-\ell^2) \ellzero N(N-1)\log\frac{h_\ex}N\\ -2\pi KN \pr{B_0,\ga_0}\log\lep
      +\pi \ellzero N^2  \log \delta
      +  \pi\lep \sum_{i=1}^{N} \qell(\ga_i)\\
      + \int_{z\in \T_\ep} \(-\pi \sum_{i\neq j} \log
      |\vu_i^*(z) -\vu_j^*(z)|_{g_\bullet}+ N\gamma \) dz +C_\de o_\ep(1)+ o_\delta(1) +o_\ep(1),\end{multline}
   Using the coercivity of $Q_\ell$ from Proposition \ref{coercive} and the upper bound of Theorem~\ref{thm:upperbound},
   we deduce that
   $$ \int_{z\in \T_\ep} \(-\pi \sum_{i\neq j} \log
      |\vu_i^*(z) -\vu_j^*(z)|_{g_\bullet}+ N\gamma \) dz\le C$$ with $C>0$ independent of $\ep$ (but depending on $\delta$).

   We may extract a subsequence $\{\ep_k \}_k$ such that $\sum_{k } |(\mathcal T_{\ep_k})^c|<\infty$. From the upper bound above and the monotone convergence theorem it follows that
   \begin{multline*} \lim_{k\to \infty} \int_{z\in \T_{\ep_k} } \(-\pi \sum_{i\neq j} \log
      |\vu_i^*(z) -\vu_j^*(z)|_{g_\bullet}+ N\gamma \) dz\\
      =\int_{z\in [0, \ellzero]} \(-\pi \sum_{i\neq j} \log
      |\vu_i^*(z) -\vu_j^*(z)|_{g_\bullet}+ N\gamma \) dz\end{multline*} and the right-hand side is a convergent integral.

   Using Fatou's lemma, dominated convergence theorem, and \eqref{limq}, by taking the limit in \eqref{944}, we are led to
   \begin{multline}\label{945}\liminf_{\ep \to 0} \(  F_\ep (u, A, \Ud) -    h_\ex \pr{B_0,\mu}  -
     \frac \pi{2} \ellzero N(N-1)\log\frac{h_\ex}{N} -2\pi KN\pr{B_0,\ga_0} \log\lep\)
      \\ \ge \pi N^2 \ellzero \log \delta + \pi \ellzero N \sum_{i=1}^{N} Q(\vu_i^*)
      + \int_{z\in [0,\ellzero]} \(-\pi \sum_{i\neq j} \log
      |\vu_i^*(z) -\vu_j^*(z)|_{g_\bullet}+ N\gamma \) dz + o_\delta(1) .\end{multline}
   There remains to  bound from below $\fep(u,A,\Omega\setminus U_\delta)$. We can assume without loss of generality that $A$ is divergence free in $\RR^3$. From 
   \eqref{Energy-Splitting}, \eqref{945}, and the upper bound of Theorem~\ref{thm:upperbound}  we find
   $\fep(u,A,\Omega\setminus \Ud)\le C$ independent of $N$, for every $\delta$. Thus, taking a subsequence if necessary, $\nab_{A} u$ is bounded in $L^2(\Omega \backslash \Ud)$ and $A$ is bounded in $H^1(\RR^3\setminus \Ud)$. Hence,
   $j(u, A)$ converges weakly to some $j^*$ in $L^2(\Omega \backslash \Ud)$ and $A$ converges weakly to some $A^*$ in $H^1(\RR^3\backslash \Ud)$.

   Since $(\u,\A)$ is a minimizer of $GL_\ep$, it (weakly) satisfies the Ginzburg--Landau equations \eqref{GLeq}. In particular,
   $$
   \curl (\curl A -H_\ex)=(i\u,\nabla_{\A}\u)\chi_\Omega \quad \mathrm{in}\ \RR^3.
   $$
   On the other hand, since $(u,A)= (e^{-ih_\ex\phi_0}\u,\A-h_\ex A_0 )$, we deduce that $(\u,\A)$ is gauge equivalent to $(u,A+h_\ex\curl B_0)$ (recall that $A_0=\nabla \phi_0+\curl B_0$ in $\Omega$), and therefore, it holds (weakly )in $\RR^3$ that
   \begin{align*}
   \curl \big(\curl(A+h_\ex \curl B_0)-H_\ex \big)&=(iu,\nabla_{A+h_\ex \curl B_0}u)\chi_\Omega\\
   &=\big(j(u,A)-h_\ex \curl B_0 |u|^2\big)\chi_\Omega.
\end{align*}
   But $A_0$ (weakly) solves
   $$
   \curl (h_\ex \curl A_0-H_\ex)=-\curl B_0\chi_\Omega \quad \mathrm{in}\ \RR^3.
   $$
   Hence,
   $$
   -\Delta A= \curl\curl A=\big(j(u,A)+h_\ex \curl B_0 (1-|u|^2)\big)\chi_\Omega\quad \mathrm{in}\ \RR^3.
   $$
   Since $h_\ex\curl B_0 (1-|u|^2)$ strongly converges to $0$ in $L^2(\Omega)$, by passing to the limit in the previous equation we find
   $$
   -\Delta A^*=j^*\chi_\Omega \quad \mbox{in }\RR^3.
   $$
   Moreover, from \eqref{mustar}, we find $\mu(u,A) \to 2\pi N \Gamma_0$. Hence, in the sense of distributions, we have
   $$
   \curl(j^*+A^*)=2\pi N \ga_0\quad \mbox{in }\Omega.
   $$
   Finally, from Proposition~\ref{jA} combined with the lower semicontinuity of the energy and \eqref{comega}, we deduce that
   $$\liminf_{\ep \to 0} \(\fep(u, A, \Omega\setminus \Ud) +\frac12\int_{\RR^3\setminus \Ud}|\curl A|^2\) \ge N^2 C_\Omega +\pi N^2 \ellzero \log \frac1\delta + o_\delta(1).$$
   Combining with \eqref{945} and \eqref{Energy-Splitting}, in view of the definition of $\R_0$ \eqref{R0}, we have proved \eqref{lbw}.

\end{enumerate}
\end{proof}

\begin{proof}[Proof of Theorem~\ref{thmain}]
Theorem \ref{thm:upperbound} shows that for minimizers there is equality in \eqref{lbw}, and therefore \eqref{lbw0} holds.
To conclude the proof of Theorem \ref{thmain}, there remains to optimize over $N$ integer.
From \eqref{lbw0}, we know that the minimal energy of a solution with $N$ vortex filaments, assuming $N$ is independent of $\ep$, is given by $g_\ep(N)+o(1)$, where 
$$
g_\ep(N)=f_\ep(N)+\min W_N+\gamma \ellzero N
$$
with 
$$
f_\ep(N)=h_\ex^2 J_0+\pi \ellzero N\lep -2\pi N \ellzero \R_0 h_\ex \\
+\pi \ellzero N(N-1)\log \sqrt{\frac{h_\ex}{N}}+N^2C_\Omega.
$$
This has exactly the same form as the minimal energy of a solution with $N$ vortex points in 2D; see \cite{SanSerBook}*{Chapter 12}. Hence, 
 the optimization is identical to that in \cite{SanSerBook}*{Lemma 12.1 and Theorem 12.1} and yields that  if $\he\in (H_N-o_\ep(1), H_{N+1}+o_\ep(1)) $, then $N$ is the optimal number of curves.
\end{proof}

\section{Upper bound}\label{sec:upperbound}
We now present our upper bound for the 3D Ginzburg--Landau functional.
\subsection{Statement of the main result}

\begin{theorem}\label{thm:upperbound}
Assume $\Omega$ is a smooth bounded domain such that the maximum of the ratio is achieved at a smooth simple curve $\ga_0$ which can be extended to a smooth simple closed curve in $\RR^3$, still denoted $\ga_0$. 

For any $\ep>0$, assume  $\he= H_{c_1}^0 + K\log \lep$ with $K$ bounded independently of $\ep$,  and let $N$ be an integer independent of $\ep$.

Define tube coordinates $(x,z)\in\cyl_\delta$ in a neighborhood of $\ga_0$ using Proposition~\ref{prop:diffeo}.
Assume that for each $\ep$, the curves $\ga_{1,\ep},\dots,\ga_{N,\ep}$ are defined in these coordinates by
\begin{equation}\label{gaiep} \ga_{i,\ep}(z) = \ga_0(z)+\sqrt{\frac{N}{h_\ex}}\vu_i(z),\end{equation}
where $\vu_i:[0,\ellzero]\to\RR^2$ is smooth and independent of $\ep$.

Then, for any $\ep$ sufficiently small, there exists a configuration $(\u_\ep,\A_\ep)$ such that
\begin{multline}\label{upperboundestimate}
	GL_\ep(\u_\ep,\A_\ep)
	\leq  h_\ex^2 J_0
	+\frac\pi2 \ellzero  N(N-1)\log h_\ex -2\pi K  \R_0 \ellzero N  \log\lep\\
	-\frac\pi2 \ellzero N(N-1)\log N+W_N(\Gamma_1,\dots,\Gamma_N)\\
	+\gamma \ellzero N +N^2C_\Omega +o_\de(1)+C_\de o_\ep(1)+o_\ep(1),
\end{multline}
where $\ga_i(z)=\ga_0(z)+\vu_i(z)$ and $W_N$ is defined in \eqref{defW}.

\end{theorem}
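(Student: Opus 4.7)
The plan is to construct $(\u_\ep,\A_\ep)$ by starting from the Meissner state $(e^{ih_\ex\phi_0},h_\ex A_0)$ and superposing the Biot--Savart contribution of the prescribed curves, using the splitting formula \eqref{Energy-Splitting} as the main bookkeeping device. Concretely, for each $i=1,\dots,N$, let $(j_i,A_i)\colonequals(j_{\ga_{i,\ep}},A_{\ga_{i,\ep}})$ be the pair produced by Proposition~\ref{jA}. First I would set $A\colonequals \sum_i A_i$ and $j\colonequals \sum_i j_i$, and design $u=\rho_\ep e^{i\vp}$ so that outside a thin tube $T_\rho$ of radius $\rho$ around $\cup_i\ga_{i,\ep}$ one has $\nabla\vp - A = j$; this is solvable since $\curl(j+A)=2\pi\sum_i\ga_{i,\ep}$ and each loop around a $\ga_{i,\ep}$ picks up an integer. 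Inside each tube, $\rho_\ep$ and $\vp$ are replaced by the standard $2$D Ginzburg--Landau vortex profile taken slice-by-slice orthogonally to $\ga_{i,\ep}$, which is what produces the universal constant $\gamma$ of \cite{BetBreHel}. One then sets $\u_\ep = ue^{ih_\ex\phi_0}$ and $\A_\ep = A + h_\ex A_0$.

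Applying \eqref{Energy-Splitting} reduces the computation to estimating $F_\ep(u,A)$ and the magnetic gain $h_\ex\int_\Omega\mu(u,A)\wedge B_0$. Since by construction $\mu(u,A)=2\pi\sum_i\ga_{i,\ep}$ up to a negligible error, the magnetic gain equals $2\pi h_\ex\sum_i\pr{B_0,\ga_{i,\ep}}$; using Proposition~\ref{bexpansion} to Taylor expand around $\ga_0$, the leading piece $2\pi Nh_\ex\pr{B_0,\ga_0}=\pi Nh_\ex \R_0 \ellzero$ cancels against the leading length contribution $\pi N\ellzero\lep$ of $F_\ep$, while the first-order piece $\pperp{B_0,\ga_{i,\ep}}$ is matched against the corresponding linearization of length via Proposition~\ref{criticality}, leaving only $\prll{B_0,\ga_{i,\ep}}$ which combines with the quadratic expansion of length into $Q$ by Lemma~\ref{lemformQ}. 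The free energy splits as (a) a far-field contribution on $\Omega\setminus T_\rho$, which by minimality of $(j_i,A_i)$ for $I_{\ga_{i,\ep}}$ in Proposition~\ref{jA} is comparable to the quadratic form $\frac12\sum_{i,k}\int_\Omega j_i\cdot j_k + \frac12\sum_{i,k}\int_{\RR^3}\nabla A_i\cdot\nabla A_k$, and (b) a core contribution on $T_\rho$ which by $2$D slicing gives $\pi\sum_i|\ga_{i,\ep}|_g\log(\rho/\ep)+\gamma\sum_i|\ga_{i,\ep}|_g+o(1)$.

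The divergent parts of (a) and (b) cancel as $\rho\to0$: the diagonal self-energies $\frac12\int_{\Omega\setminus T_\rho}|j_i|^2 + \frac12\int_{\RR^3}|\curl A_i|^2$ each equal $-\pi\ellzero\log\rho + C_\Omega + o(1)$ by the definition \eqref{comega} of $C_\Omega$ combined with the continuity estimate \eqref{diffjA} applied between $\ga_{i,\ep}$ and $\ga_0$. The off-diagonal cross terms $\int j_i\cdot j_k$, $i\neq k$, are computed by replacing $j_i$ and $j_k$ by their leading singular Biot--Savart parts using the approximation \eqref{approxBS} and evaluating the resulting $2$D integrals in tube coordinates, slice by slice. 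The horizontal separation $\sqrt{N/h_\ex}\,|\vu_i(z)-\vu_j(z)|$ then makes this cross-integral evaluate to $-\pi\int_0^\ellzero\sum_{i\neq j}\log\bigl(\sqrt{N/h_\ex}\,|\vu_i(z)-\vu_j(z)|_{g^\bullet}\bigr)\,dz + o(1)$, and extracting the prefactor produces the contribution $-\frac\pi2\ellzero N(N-1)\log(h_\ex/N)$ together with the $W_N$-logarithmic interaction through the definition \eqref{defW}. Collecting all terms yields the right-hand side of \eqref{upperboundestimate}.

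The main obstacle will be achieving the $o(1)$ precision. First, the diagonal far-field/core matching must produce $\gamma N\ellzero + N^2 C_\Omega$ exactly, which requires $\rho$ to lie in the asymptotic window $\ep\ll\rho\ll\sqrt{N/h_\ex}$ so that the cores do not overlap and the approximation \eqref{approxBS} of $X_{\ga_{i,\ep}}$ by its leading transverse-radial singular part is sharp enough for the matching with the 2D optimal profile. Second, the cross-interaction integrals must be controlled uniformly as the curves become nearly parallel at distance $O(\lep^{-1/2})$, the very regime in which their Biot--Savart fields begin to interact strongly; converting the $3$D computation into a family of $2$D integrals through tube coordinates is what makes this tractable and what produces the logarithmic interaction entering $W_N$. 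Additional care is required near the endpoints of the $\ga_{i,\ep}$ on $\partial\Omega$, where the orthogonal intersection assumed via Proposition~\ref{prop:diffeo} and the smooth closed extension of $\ga_0$ ensure that the pair $(j,A)$ from Proposition~\ref{jA} extends without generating extra boundary energy.
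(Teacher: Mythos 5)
Your construction — take the gauge pair $(j_{\ga_{i,\ep}},A_{\ga_{i,\ep}})$ from Proposition~\ref{jA}, superpose them, use a standard $2$D vortex profile in the cores, and bookkeep via the splitting formula \eqref{Energy-Splitting} — is exactly the paper's test configuration. The structural difference is the spatial decomposition: you use two regions (cores of radius $\rho$ and the complement $\Omega\setminus T_\rho$), while the paper uses three (cores of radius $r_\ep=\lep^{-3}$, a perforated tube $T_\delta(\ga_0)\setminus\cup_i T_{r_\ep}(\ga_{i,\ep})$ for a \emph{fixed} $\delta$, and the far field $\Omega\setminus T_\delta(\ga_0)$). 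The paper's extra parameter $\delta$ is what makes the error structure $o_\delta(1)+C_\delta o_\ep(1)+o_\ep(1)$ transparent: for fixed $\delta$ all $\ga_{i,\ep}$ collapse to $\ga_0$ in the far field as $\ep\to0$, so \eqref{diffjA} replaces each $j_{\ga_{i,\ep}}$ by $j_{\ga_0}$ there and the far-field energy is literally $N^2$ times the single-curve energy, producing $N^2C_\Omega-\pi N^2\ellzero\log\delta$ in one step; meanwhile the interaction logarithms come from a slice-by-slice Green's-function/integration-by-parts argument in the perforated tube (a potential $\Phi_\ep^z(x)=-\sum_i\log|x-x_{i,\ep}^z|$, not directly the $L^2$ inner products of the $j_i$'s).

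There is a concrete gap in your cross-term accounting. You claim the off-diagonal contribution $\sum_{i\neq k}\tfrac12\int j_i\cdot j_k+\tfrac12\int\nabla A_i\cdot\nabla A_k$ can be computed by ``replacing $j_i$, $j_k$ by their leading singular Biot--Savart parts'' \eqref{approxBS}, and that this evaluates to $-\pi\int_0^\ellzero\sum_{i\neq j}\log\bigl(\sqrt{N/h_\ex}\,|\vu_i-\vu_j|_{g^\bullet}\bigr)\,dz+o(1)$. But the singular (radial $1/r$) parts alone only produce the logarithm; the constant $C_\Omega$ in \eqref{comega} is built out of the \emph{full} renormalized fields $j_\Omega$ and $A_\Omega$, and precisely because $j_\ga-X_\ga\in W^{1,q}$ these regular parts contribute a nonzero finite constant to each pair. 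As the curves collapse to $\ga_0$, each off-diagonal pair energy behaves like $-\pi\ellzero\log d_{ik}+C_\Omega+o(1)$, not merely $-\pi\ellzero\log d_{ik}+o(1)$. Your $N$ diagonal terms give $N\bigl(C_\Omega-\pi\ellzero\log\rho\bigr)$; to reach the $N^2C_\Omega$ in \eqref{upperboundestimate} you also need $N(N-1)C_\Omega$ from the off-diagonals, which the reduction to singular parts discards. Either keep the regular parts in the cross terms (and track carefully the domain of integration $\Omega\setminus T_\rho$ versus all of $\Omega$), or introduce the intermediate scale $\delta$ as the paper does, so the constant $N^2C_\Omega$ is produced wholesale in the far field while the perforated tube furnishes only the logarithms and a $\pi N^2\ellzero\log\delta$ boundary term that cancels against the far-field $-\pi N^2\ellzero\log\delta$.

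Two minor points: the leading-order cancellation is $2\pi N h_\ex\pr{B_0,\ga_0}=2\pi Nh_\ex\R_0\ellzero$, not $\pi Nh_\ex\R_0\ellzero$; and the identity $\nabla\varphi-A=\sum_i j_i$ outside the cores follows directly from the construction $(j_\ga=X_\ga+\nabla f_\ga-A_\ga)$ of Proposition~\ref{jA} rather than from minimality of $I_\ga$, which was used only to produce the pair in the first place.
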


The upper bound is computed using the velocity field  given by the Biot--Savart law (see Definition \ref{defbs})  associated to a collection of $N$ vortex filaments nearly parallel and close to $\ga_0$, as $\ep\to 0$. Outside of a fixed but small tube around $\ga_0$, this velocity field will coincide up to a small error (as $\ep\to 0$) with the field associated to $\ga_0$ by Proposition \ref{jA}. However we must estimate the energy from our construction in the tube, for which we need a simple enough approximation to our velocity field. The rest of this section is devoted to the proof of Theorem~\ref{thm:upperbound}.

\subsection{Definition of the test configuration} We let $\u_\ep=e^{ih_\ex\phi_0}u_\ep$ and $\A_\ep=h_\ex A_0+A_\ep$, where $(e^{ih_\ex\phi_0},h_\ex A_0)$ is the approximate Meissner state. In order to define $(u_\ep,A_\ep)$, we proceed as follows.  
First, we let $r_\ep\colonequals\lep^{-3}$. We then define 
$$
\rho_{i,\ep}(x)=
\left\{
\begin{array}{cl}
	\dfrac1{\degone \left(\frac{r_\ep}\ep\right)}\degone \(\dfrac{\dist(x,\ga_{i,\ep})}\ep \) & \mbox{if }\dist(x,\ga_{i,\ep})\leq r_\ep\\
	1&\mbox{otherwise}.
\end{array}
\right.
$$
where hereafter $\degone$ denotes the modulus of the (unique nonconstant) degree-one radial vortex solution $u_0$, see for instance   \cite{SanSerBook}*{Proposition 3.11}. It is important to recall that, as $R\to \infty$, $\degone(R)\to 1$ and 
\begin{equation}\label{asymptoticsf}
	\frac12 \int_0^R \(|\degone'|^2+\frac{\degone^2}{r^2}+\frac{(1-\degone^2)^2}2 \)rdr=\frac{1}{2\pi}\left(\pi \log R+\gamma+o(1)\right),
\end{equation}
where $\gamma>0$ is still the   fixed constant from \cite{BetBreHel}.

We also define
$$
\rho_\ep(x)\colonequals \min_{i\in \{1,\dots,N\}} \rho_{i,\ep}(x).
$$

On the other hand, we let $\varphi_\ep$ be defined by the relation 
\begin{equation}\label{derivativephi}
	\nabla \varphi_\ep=\sum_{i=1}^N X_{\ga_{i,\ep}}+\nabla f_{\ga_{i,\ep}},
\end{equation}
where $X_{\ga_{i,\ep}}$ and $f_{\ga_{i,\ep}}$ are defined by applying Proposition~\ref{jA}\footnote{To be precise, $f_{\ga_{i,\ep}}$ is defined in the proof of the proposition.} with $\ga=\ga_{i,\ep}$. 

Let us remark that since $\curl \sum_{i=1}^N X_{\ga_{i,\ep}}=2\pi \sum_{i=1}^n \ga_{i,\ep}$, if $\sigma$ denotes a smooth, simple, and closed curve that does not intersect any of the curves $\ga_{i,\ep}$, then, by Stokes' theorem, 
$$
\int_\sigma \left(\sum_{i=1}^N X_{\ga_{i,\ep}}+\nabla f_{\ga_{i,\ep}}\right)=2\pi m,\quad \mbox{for some }m\in \mathbb{Z}.
$$
This ensures that $\varphi_\ep$ is a well-defined function modulo $2\pi$.

We finally let
$$
u_\ep(x)\colonequals \rho_\ep(x) e^{i\varphi_\ep(x)}
$$
and
$$
A_\ep(x)=\sum_{i=1}^N A_{\ga_{i,\ep}}(x),
$$
where, once again, $A_{\ga_{i,\ep}}$ is defined by applying Proposition~\ref{jA} with $\ga=\ga_{i,\ep}$.

\subsection{The difference between the covariant gradient and the gradient is negligible in \texorpdfstring{$T_\delta(\ga_0)$}{T(Γ\textzeroinferior)}}
Hereafter, $T_\delta(\ga_0)$ denotes the tube defined in Proposition \ref{prop:diffeo}. A straightforward computation shows that
$$
|\nabla_{A_\ep}u_\ep|^2-|\nabla u_\ep|^2=\rho_\ep^2\left(|A_\ep|^2-2\nabla \varphi_\ep\cdot A_\ep\right).
$$
Let $p<2$ and $q>2$ be such that  $\frac1p+\frac1q=1$. From H\"older's inequality, we deduce that
$$
\left| \int_{T_\delta(\ga_0)} |\nabla_{A_\ep}u_\ep|^2-|\nabla u_\ep|^2\right|\leq \|A_\ep\|^2_{L^2(T_\delta(\ga_0))}+ C\|\nabla \varphi_\ep\|_{L^p(T_\delta(\ga_0))}\|A_\ep\|_{L^q(T_\delta(\ga_0))}.
$$
Since $A_\ep\in W^{2,\frac32}(\Omega)$, from Sobolev embedding it follows that $A_\ep\in L^r(\Omega)$ for any $r\geq 1$. Moreover, $\nabla \varphi_\ep \in L^r(\Omega)$ for any $r<2$, which follows from Proposition \ref{jA}. Hence, from H\"older's inequality, we deduce that
\begin{equation}\label{covariantderivativeenergy}
	\left| \int_{T_\delta(\ga_0)} |\nabla_{A_\ep}u_\ep|^2-|\nabla u_\ep|^2\right|\leq C\|A_\ep\|_{L^q(T_\de(\ga_0))}\leq  C |T_\delta(\ga_0)|^{\frac1{2q}}\|A_\ep\|_{L^{2q}(T_\de(\ga_0))}\leq C\delta^\frac{1}{q}
\end{equation}
for any $q>2$. The constant $C$ depends on $q$ and blows up as $q\to 2$ and as $q\to \infty$. By fixing its value, we ensure that the RHS is $o_\delta(1)$.

\subsection{Energy estimate in small tubes around the curves}
We first smoothly extend $\ga_{i,\ep}$ to a smooth simple closed curve in $\RR^3$, which we denote $\tilde \ga_{i,\ep}$.  The extension is supported in the complement of $\Omega$, without intersecting its boundary. We have considerable freedom in choosing this extension, except that $\tilde \ga_{i,\ep}$ must intersect $\partial \Omega$ transversally, so that we can apply Proposition \ref{jA} with $\ga=\tilde\ga_{i,\ep}$. We then consider the tube $\tilde T_{r_\ep}(\ga_{i,\ep})$ of radius $r_\ep$ around $\tilde \ga_{i,\ep}$, and its restriction to $\Omega$, that is, $T_{r_\ep}(\ga_{i,\ep})\colonequals \left\{x\in \Omega \ : \ \dist(x,\ga_{i,\ep})<r_\ep \right\}$. We claim that
\begin{equation}\label{energyestimatesmalltube}
	\frac12\int_{T_{r_\ep}(\ga_{i,\ep})}|\nabla u_\ep|^2+\frac{1}{2\ep^2}(1-|u_\ep|^2)^2\leq \pi|\ga_{i,\ep}|\log \frac{r_\ep}{\ep}+\gamma |\ga_{i,\ep}|+o_\ep(1),
\end{equation}
where $\gamma$ is the constant that appears in \eqref{asymptoticsf}.

To prove this we proceed as follows. First, given a point in $\tilde T_{r_\ep}(\ga_{i,\ep})$ we denote by $p_{\ga_{i,\ep}}$ its nearest point on $\ga_{i,\ep}$. We then define
$$
D_z\colonequals \left\{p\in \tilde T_{r_\ep}(\ga_{i,\ep}) \ |\ p_{\ga_{i,\ep}}=\ga_i(z) \right\}.
$$
Observe that $D_z$ is a disk in $\RR^2$ centered at $\ga_{i,\ep}(z)$ with radius $r_\ep$. We now appeal to \eqref{approxBS}, which yields that
$$
h_{i,\ep}(p)\colonequals X_{\ga_{i,\ep}}(p) - Y_{\ga_{i,\ep}}(p)\in L^q\left(\tilde T_{r_\ep}(\ga_{i,\ep})\right)
$$
for any $q\geq 1$, where
$$
Y_{\ga_{i,\ep}}(p)\colonequals \frac{p_{\ga_{i,\ep}} - p}{|p_{\ga_{i,\ep}} - p|^2}\times \tau_{\ga_{i,\ep}}(p_{\ga_{i,\ep}}).
$$
Here, $\tau_{\ga_{i,\ep}}(p_{\ga_{i,\ep}})$ denotes the tangent vector to $\ga_{i,\ep}$ at $p_{\ga_{i,\ep}}$.

Once again, from the proof of Proposition \ref{jA} we know that $f_{\ga_{i,\ep}}\in W^{1,q}(\Omega)$ for any $q<4$ and $i=1,\dots,N$. Then, recalling \eqref{derivativephi} and using H\"older's inequality, we deduce that
\begin{multline}\label{difphiY}
	\int_{T_{r_\ep}(\ga_{i,\ep})}\left| \nabla \varphi_\ep -\sum_{j=1}^N Y_{\ga_{j,\ep}} \right |^2 =\int_{T_{r_\ep}(\ga_{i,\ep})}\left|\sum_{j=1}^N h_{j,\ep}+\nabla f_{\ga_{j,\ep}} \right|^2
	\\
	\leq |T_{r_\ep}(\ga_{i,\ep})|^\frac13 \left\|\sum_{j=1}^N h_{j,\ep}+\nabla f_{\ga_{j,\ep}}\right\|_{L^3(T_{r_\ep}(\ga_{i,\ep}))}^2\leq C r_\ep^\frac23.
\end{multline}
In addition, note that for a.e. $p\in T_{r_\ep}(\ga_{i,\ep})$ and $j\neq i$, we have (recall \eqref{gaiep})
$$
\frac{|Y_j(p)|}{|Y_i(p)|}\leq \frac{|p_{\ga_{i,\ep}} - p|}{|p_{\ga_{j,\ep}} - p|}\leq C \frac{r_\ep}{\sqrt\lep}.
$$
We then deduce that 
\begin{multline}\label{sumY}
	\int_{T_{r_\ep}(\ga_{i,\ep})} \rho_\ep^2 \left|\sum_{j=1}^N Y_{\ga_{j,\ep}} \right |^2 =\int_{T_{r_\ep}(\ga_{i,\ep})} \rho_\ep^2|Y_{\ga_{i,\ep}}|^2\left(1+O\left(\frac{r_\ep}{\sqrt\lep}\right)\right)^2
	\\
	=\left(1+O\left(\frac{r_\ep}{\sqrt\lep}\right)\right)\int_{T_{r_\ep}(\ga_{i,\ep})} \rho_\ep^2|Y_{\ga_{i,\ep}}|^2.
\end{multline}
Using \eqref{difphiY} and \eqref{sumY}, we then deduce that
\begin{multline}\label{energysmalltube1}
	\frac12 \int_{T_{r_\ep}(\ga_{i,\ep})}\left(|\nabla u_\ep|^2+\frac1{2\ep^2}(1-|u_\ep|^2)^2\right)
	=\frac12\int_{T_{r_\ep}(\ga_{i,\ep})}\left(|\nabla \rho_\ep|^2+\rho_\ep^2|\nabla \varphi_\ep|^2+\frac1{2\ep^2}(1-\rho_\ep^2)^2\right)
	\\
	=\left(\frac12+O\left(\frac{r_\ep}{\sqrt\lep}\right)\right)\int_{T_{r_\ep}(\ga_{i,\ep})}\left(|\nabla \rho_\ep|^2+\rho_\ep^2|Y_{\ga_{i,\ep}}|^2+\frac1{2\ep^2}(1-\rho_\ep^2)^2\right)+O\left(r_\ep^\frac23\right).
\end{multline}
On the other hand, by change of coordinates, we have
\begin{multline}\label{energysmalltube2}
	\int_{T_{r_\ep}(\ga_{i,\ep})}\left(|\nabla \rho_\ep|^2+\rho_\ep^2|Y_{\ga_{i,\ep}}|^2+\frac1{2\ep^2}(1-\rho_\ep^2)^2\right)\\
	=\int_0^{|\ga_{i,\ep}|} \left( \int_{D_z \cap \Omega }\left(|\nabla \rho_\ep|^2+\rho_\ep^2|Y_{\ga_{i,\ep}}|^2+\frac1{2\ep^2}(1-\rho_\ep^2)^2\right)|\mathrm{Jac}(\phi)|dxdy\right)dz
	\\
	\leq \int_0^{|\ga_{i,\ep}|} \left( \int_{D_z}\left(|\nabla \rho_\ep|^2+\rho_\ep^2|Y_{\ga_{i,\ep}}|^2+\frac1{2\ep^2}(1-\rho_\ep^2)^2\right)|\mathrm{Jac}(\phi)|dxdy\right)dz,
\end{multline}
where $\phi:D_z\times (0,|\ga_{i,\ep}|)\to \RR^3$ is such that 
$$
\phi\left(\ga_{i,\ep}(z)+(x,y,z)\right)=\ga_{i,\ep}(z)+xv_1(z)+yv_2(z)
$$
with $(v_1(z),v_2(z))$ orthonormal and such that $v_1(z),v_2(z)$ are perpendicular to $\ga'_{i,\ep}$ and smooth with respect to $z$. Notice that here we used the fact that $D_z=\ga_{i,\ep}(z)+D(0,r_\ep)$, where $D(0,r_\ep)$ denotes the disk in $\RR^2$ centered at $0$ and with radius $r_\ep$. Observe that
$$
|\mathrm{Jac}(\phi)|=|\mathrm{det}(\ga_{i,\ep}'+xv_1'(z)+yv_2'(z),v_1,v_2)|=1+O(r_\ep).
$$
Hence, by combining this with \eqref{energysmalltube1} and \eqref{energysmalltube2}, we deduce that
\begin{multline}\label{energysmalltube3}
	\frac12 \int_{T_{r_\ep}(\ga_{i,\ep})}\left(|\nabla u_\ep|^2+\frac1{2\ep^2}(1-|u_\ep|^2)^2\right)+O\left(r_\ep^\frac23\right)
	\\
	\leq \left(\frac12+O\left(\frac{r_\ep}{\sqrt\lep}\right)\right)\int_0^{|\ga_{i,\ep}|} \left( \int_{D_z}\left(|\nabla \rho_\ep|^2+\rho_\ep^2|Y_{\ga_{i,\ep}}|^2+\frac1{2\ep^2}(1-\rho_\ep^2)^2\right)dxdy\right)dz.
\end{multline}
To compute the integral over $D_z$ we use polar coordinates centered at $\ga_{i,\ep}(z)$. Letting $r$ denote the distance to this point and $\theta$ the polar angle, we have
$$
|\nabla \rho_\ep(x)|=\frac{\left|\degone'\left(\frac{r}{\ep}\right)\right|}{\ep \degone\left(\frac{r_\ep}\ep\right)}|\nabla r|=\frac{\left|\degone'\left(\frac{r}{\ep}\right)\right|}{\ep \degone\left(\frac{r_\ep}\ep\right)} \quad \mbox{and}\quad |Y_{\ga_{i,\ep}}|=|\nabla \theta|=\frac{1}{r}.
$$
It follows that
\begin{multline*}
	\int_{D_z}\left(|\nabla \rho_\ep|^2+\rho_\ep^2|Y_{\ga_{i,\ep}}|^2+\frac1{2\ep^2}(1-\rho_\ep^2)^2\right)dxdy\\
	=2\pi\int_0^{r_\ep}\( \frac{\degone'\left(\frac{r}{\ep}\right)^2}{\ep^2\degone\left(\frac{r_\ep}\ep\right)^2}+\frac{\degone\left(\frac{r}{\ep}\right)^2}{r^2\degone\left(\frac{r_\ep}\ep\right)^2}+\frac1{2\ep^2}\left(1-\frac{\degone\left(\frac{r}{\ep}\right)^2}{\degone\left(\frac{r_\ep}\ep\right)^2}\right)^2\)rdr\\
	=2\pi \int_0^{\frac{r_\ep}{\ep}}\( \frac{\degone'(s)^2}{\degone\left(\frac{r_\ep}\ep\right)^2}+\frac{\degone(s)^2}{s^2\degone\left(\frac{r_\ep}\ep\right)^2}+\frac1{2}\left(1-\frac{\degone(s)^2}{\degone\left(\frac{r_\ep}\ep\right)^2}\right)^2\)sds,
\end{multline*}
where in the last equality we used the change of variables $r=\ep s$. Finally, using that $\lim_{\ep \to 0}\degone\left(\frac{r_\ep}\ep\right)=1$, since $\lim_{\ep \to 0}\frac{r_\ep}{\ep}=+\infty$, from \eqref{asymptoticsf} we obtain 
$$
\int_{D_z}\left(|\nabla \rho_\ep|^2+\rho_\ep^2|Y_{\ga_{i,\ep}}|^2+\frac1{2\ep^2}(1-\rho_\ep^2)^2\right)dxdy=2\left(\pi \log \frac{r_\ep}{\ep}+\gamma +o_\ep(1)\right).
$$
Inserting this in \eqref{energysmalltube3}, we obtain \eqref{energyestimatesmalltube}.

\subsection{Energy estimate in the perforated tube}
We consider the perforated tube $T_{r_\ep}^\delta\colonequals T_\delta(\ga_0)\setminus \bigcup_{i=1}^N T_{r_\ep}(\ga_{i,\ep})$. In this region we have $\rho_\ep\equiv 1$, and therefore
$$
E_\ep(u_\ep,T_{r_\ep}^\delta)\colonequals \frac12 \int_{T_{r_\ep}^\delta}\left(|\nabla u_\ep|^2+\frac1{2\ep^2}(1-|u_\ep|^2)^2\right)=\frac12\int_{T_{r_\ep}^\delta}|\nabla \varphi_\ep|^2.
$$
Arguing as when obtaining \eqref{difphiY}, we find
$$
\int_{T_{r_\ep}^\delta}\left| \nabla \varphi_\ep -\sum_{j=1}^N Y_{\ga_{j,\ep}} \right |^2 =\int_{T_{r_\ep}^\delta}\left|\sum_{j=1}^N h_{j,\ep}+\nabla f_{\ga_{j,\ep}} \right|^2
\leq |T_{r_\ep}^\delta|^\frac13 \left\|\sum_{j=1}^N h_{j,\ep}+\nabla f_{\ga_{j,\ep}}\right\|_{L^3({T_{r_\ep}^\delta})}^2\leq C \delta^\frac23,
$$
which yields
\begin{equation}\label{energyperforatedtube}
	E_\ep(u_\ep,T_{r_\ep}^\delta)=\frac12\int_{T_{r_\ep}^\delta}\left|\sum_{j=1}^N Y_{\ga_{j,\ep}} \right|^2+o_\delta(1)=\frac12 \int_0^{\ellzero}\int_{\Sigma_z\cap T_{r_\ep}^\delta} \left|\sum_{j=1}^N Y_{\ga_{j,\ep}} \right|^2\sqrt{g_{33}}d\vol_\gperp+o_\delta(1),
\end{equation}
where in the last equality we used the coordinates we defined in Section \ref{sec:coordinates}.

In order to estimate the integral on the RHS, we proceed in several steps. 

\begin{enumerate}[label=\textsc{\bf Step \arabic*.},leftmargin=0pt,labelsep=*,itemindent=*,itemsep=10pt,topsep=10pt]
	\item \emph{From $g^\perp$ to the Euclidean metric.}	
	Given $z\in [0,\ellzero]$, we define $\Pi:\Sigma_z\to \langle \ga_0'(z)\rangle ^\perp$ be the orthogonal projection of $\Sigma_z$ onto the perpendicular plane to $\ga_0'(z)$. Observe that $D\Pi(\ga_0(z))$ is the identity map from $\langle \ga_0'(z)\rangle ^\perp$ to itself, and, therefore, $\Pi^{-1}$ is well defined (for any sufficiently small $\de$) and such that $D\Pi^{-1}(0)$ is the identity map from $\Sigma_z$ to itself. Moreover
	\begin{equation}\label{estimatederivativePhi}
		\|D\Pi^{-1}(0)-D\Pi^{-1}(x)\|_\infty\leq C|x|
	\end{equation}
	for any $\delta$ sufficiently small. In particular, for any $x,y\in \langle \ga_0'(z)\rangle ^\perp$, we have
	\begin{equation}\label{expansionPhi}
		\Pi^{-1}(x)-\Pi^{-1}(y)=x-y+O\left(|x-y|(|x|+|y|)\right).
	\end{equation}
	Let us now observe that, since $g_{33}=1$ on $\ga_0$, for any $p\in \Sigma_z$ we have
	$$
	g_{33}(p)=1+O(\dist(p,\ga_0)).
	$$
	In particular, if $p=\Pi^{-1}(x)$, using \eqref{expansionPhi}, we obtain
	\begin{equation}\label{estimateg33}
		g_{33}(\Pi^{-1}(x))=1+O(\dist(\Pi^{-1}(x),\ga_0))=1+O(|x|).
	\end{equation}
	Moreover, using again \eqref{expansionPhi}, we deduce that
	\begin{equation}\label{Phi*}
		(\Pi^{-1})^*(g^\perp)(x)=\mbox{Euclidean metric}+O(|x|),
	\end{equation}
	where $(\Pi^{-1})^*(g^\perp)$ denotes the pullback of $g^\perp$ by $\Pi^{-1}$.
	
	\item \emph{Projection on $\ga_{i,\ep}$.} Let $p\in \Sigma_z$. Recall that $p_{\ga_{i,\ep}}$ denotes the projection of $p$ on $\ga_{i,\ep}$. Noting that $\ga_{i,\ep}(z)=\ga_{i,\ep}\cap \Sigma_z$, we also define $d_i=|p-p_{\ga_{i,\ep}}|$, $d_{i,z}=|p-\ga_{i,\ep}(z)|$. 
	
	Notice that we can write $p_{\ga_{i,\ep}}=\ga_{i,\ep}(z+\eta_i)$ for some $\eta_i \in \RR$, which we now proceed to estimate. From Taylor's expansion we have
	\begin{equation}\label{taylorga_i1}
		p_{\ga_{i,\ep}}-\ga_{i,\ep}(z)=\eta_i \ga_{i,\ep}'(z)+O(\eta_i^2)
	\end{equation}
	and
	\begin{equation}\label{taylorga_i2}
		p_{\ga_{i,\ep}}-\ga_{i,\ep}(z)=\eta_i \ga'_{i,\ep}(z+\eta_i)+O(\eta_i^2).
	\end{equation}
	which, in particular, implies that 
	\begin{equation}\label{estimatep_ia_i}
		|p_{\ga_{i,\ep}}-\ga_{i,\ep}(z)|^2=\eta_i^2+O(\eta_i^3).
	\end{equation} 
	On the other hand, by definition of $p_{\ga_{i,\ep}}$, we directly have
	\begin{equation}\label{orthogon}
		(p-p_{\ga_{i,\ep}})\cdot \ga_{i,\ep}'(z+\eta_i)=0.
	\end{equation}
	Moreover, we also have 
	\begin{equation}\label{estimatepa_i}
		(p-\ga_{i,\ep}(z))\cdot \ga_{i,\ep}'(z)=O\left(\frac{d_{i,z}}{\sqrt{\lep}}+d_{i,z}^2\right).
	\end{equation}
	Indeed, this follows from the following facts. First, if we let $\nu$ denote the normal vector to $\Sigma_z$ at $\ga_{i,\ep}(z)$ (with $\Sigma_z$ oriented according to $\ga_{i,\ep}'(z)$), we have that
	$$
	\ga_{i,\ep}'(z)=\ga_0'(z)+O\left(\frac{1}{\sqrt\lep} \right)=\nu+O\left(\frac{1}{\sqrt\lep} \right).
	$$
	Second, since $\Sigma_z$ is smooth and $p,\ga_{i,\ep}(z)\in \Sigma_z$, we have
	$$
	(p-\ga_{i,\ep}(z))\cdot \nu =O(|p-\ga_{i,\ep}(z)|^2)=O(d_{i,z}^2).
	$$
	Therefore
	$$
	(p-\ga_{i,\ep}(z))\cdot \ga_{i,\ep}'(z)=(p-\ga_{i,\ep}(z))\cdot \nu +O\left(\frac{|p-\ga_{i,\ep}(z)|}{\sqrt{\lep}}\right)=O\left(\frac{d_{i,z}}{\sqrt{\lep}}+d_{i,z}^2\right).
	$$

	Observe that
	\begin{equation}\label{expd_i^2}
		d_i^2=|p-\ga_{i,\ep}(z)+\ga_{i,\ep}(z)-p_{\ga_{i,\ep}}|^2=d_{i,z}^2+|p_{\ga_{i,\ep}}-\ga_{i,\ep}(z)|^2-2(p-\ga_{i,\ep}(z))\cdot (p_{\ga_{i,\ep}}-\ga_{i,\ep}(z)).
	\end{equation}
	and
	\begin{equation}\label{expd^2}
		d_{i,z}^2=|p-p_{\ga_{i,\ep}}+p_{\ga_{i,\ep}}-\ga_{i,\ep}(z)|^2=d_i^2+|p_{\ga_{i,\ep}}-\ga_{i,\ep}(z)|^2+2(p-p_{\ga_{i,\ep}})\cdot (p_{\ga_{i,\ep}}-\ga_{i,\ep}(z)).
	\end{equation}
	Using \eqref{orthogon}, we can write
	\begin{align*}
		(p-p_{\ga_{i,\ep}})\cdot (p_{\ga_{i,\ep}}-\ga_{i,\ep}(z))&=(p-p_{\ga_{i,\ep}})\cdot \left(p_{\ga_{i,\ep}}-\ga_{i,\ep}(z)-\eta_i\ga'_{i,\ep}(z+\eta_i)+\eta_i\ga'_{i,\ep}(z+\eta_i)\right)\\
		&=(p-p_{\ga_{i,\ep}})\cdot \left(p_{\ga_{i,\ep}}-\ga_{i,\ep}(z)-\eta_i\ga'_{i,\ep}(z+\eta_i)\right),
	\end{align*}
	which combined with \eqref{taylorga_i2} yields 
	\begin{equation}\label{estimatedotpp_i}
		(p-p_{\ga_{i,\ep}})\cdot (p_{\ga_{i,\ep}}-\ga_{i,\ep}(z))=O(\eta_i^2d_i).
	\end{equation}
	On the other hand, we can write
	\begin{align*}
		(p-\ga_{i,\ep}(z))&\cdot (p_{\ga_{i,\ep}}-\ga_{i,\ep}(z))\\
		&=(p-\ga_{i,\ep}(z))\cdot \left(p_{\ga_{i,\ep}}-\ga_{i,\ep}(z)-\eta_i\ga'_{i,\ep}(z)+\eta_i\ga'_{i,\ep}(z)\right)\\
		&=(p-\ga_{i,\ep}(z))\cdot \left(p_{\ga_{i,\ep}}-\ga_{i,\ep}(z)-\eta_i\ga'_{i,\ep}(z)\right)+\eta_i(p-\ga_{i,\ep}(z))\cdot \ga'_{i,\ep}(z),
	\end{align*}
	which combined with using \eqref{estimatepa_i} and \eqref{taylorga_i1} yields 
	\begin{equation*}
		(p-\ga_{i,\ep}(z))\cdot (p_{\ga_{i,\ep}}-\ga_{i,\ep}(z))=O(\eta_i^2d_{i,z})+\eta_i O\left(\frac{d_{i,z}}{\sqrt{\lep}}+d_{i,z}^2\right).
	\end{equation*}
	Finally, by adding up \eqref{expd_i^2} with \eqref{expd^2}, and using \eqref{estimatep_ia_i}, \eqref{estimatedotpp_i}, and \eqref{estimatepa_i}, we deduce that
	\begin{equation}\label{estimateeta}
		\eta_i=O\left(\frac{d_{i,z}}{\sqrt{\lep}}+d_{i,z}^2\right).
	\end{equation}
	
	\item \emph{Estimating $Y_{\ga_{i,\ep}}$ on $\Sigma_z\cap T_{r_\ep}^\delta.$}
	Let $p\in \Sigma_z$. From \eqref{estimateeta}, we know that
	$$
	\left|p_{\ga_{i,\ep}}-\ga_{i,\ep}(z)\right|=O\left(\frac{|p-\ga_{i,\ep}(z)|}{\sqrt{\lep}}+|p-\ga_{i,\ep}(z)|^2\right)
	$$
	and therefore
	\begin{equation}\label{difpip}
		p_{\ga_{i,\ep}}-p=\ga_{i,\ep}(z)-p+O\left(\frac{|p-\ga_{i,\ep}(z)|}{\sqrt{\lep}}+|p-\ga_{i,\ep}(z)|^2\right).
	\end{equation}
	We now define $x=\Pi(p)$ and $x_{i,\ep}^z=\Pi(\ga_{i,\ep}(z))$. From \eqref{expansionPhi}, since $|x_{i,\ep}^z|=O\left(\frac1{\sqrt\lep}\right)$, we have
	$$
	\ga_{i,\ep}(z)-p=x_{i,\ep}^z-x+O\left(\left(|x|+\frac1{\sqrt\lep}\right)|x_{i,\ep}^z-x|\right),
	$$
	which combined with \eqref{difpip} yields
	\begin{align}\label{difpip2}
		p_{\ga_{i,\ep}}-p&=x_{i,\ep}^z-x+O\left(\frac{|x_{i,\ep}^z-x|}{\sqrt\lep}+|x_{i,\ep}^z-x|^2\right)+O\left(|x_{i,\ep}^z-x|\left(|x|+\frac1{\sqrt\lep}\right)\right)\notag\\
		&=x_{i,\ep}^z-x+O\left(|x_{i,\ep}^z-x|^2 + |x_{i,\ep}^z-x||x|+\frac{|x_{i,\ep}^z-x|}{\sqrt\lep}\right).
	\end{align}
	In particular, we have that 
	\begin{equation}\label{estga_i-p}
		|\ga_{i,\ep}(z)-p|=O(|p_{\ga_{i,\ep}}-p|)=O(|x_{i,\ep}^z-x|).
	\end{equation}
	In addition, from \eqref{difpip2} we find
	\begin{align}\label{estga_i-p2}
		\frac1{|p_{\ga_{i,\ep}}-p|^2}&=\frac1{|x_{i,\ep}^z-x|^2+O\left(|x_{i,\ep}^z-x|^3 + |x_{i,\ep}^z-x|^2|x|+\frac{|x_{i,\ep}^z-x|^2}{\sqrt\lep}\right)}\notag\\
		&=\frac1{|x_{i,\ep}^z-x|^2}\frac1{1+O\left(|x_{i,\ep}^z-x| + |x|+\frac{1}{\sqrt\lep}\right)}\notag\\
		&=\frac1{|x_{i,\ep}^z-x|^2}\left(1+O\left(|x_{i,\ep}^z-x| + |x|+\frac{1}{\sqrt\lep}\right)\right).
	\end{align}
	
	On the other hand, from Taylor's expansion, we deduce that
	$$
	\ga_{i,\ep}'(z+\eta_i)=\ga_{i,\ep}'(z)+O(|p_{\ga_{i,\ep}}-\ga_{i,\ep}(z)|)=\ga_0'(z)+O\left(\frac1{\sqrt\lep}\right)+O(|p_{\ga_{i,\ep}}-\ga_{i,\ep}(z)|).
	$$
	Therefore, using \eqref{estimateeta} and \eqref{estga_i-p}, we find
	$$
	\tau_{\ga_{i,\ep}}(p_{\ga_{i,\ep}})=\ga_{i,\ep}'(z+\eta_i)=\ga_0'(z)+O\left(\frac{1}{\sqrt\lep}+|x_{i,\ep}^z-x|^2\right).
	$$
	Finally, from this, \eqref{difpip2}, and \eqref{estga_i-p2}, we find
	\begin{equation}\label{estimateY_i}
		Y_{\ga_{i,\ep}}(p)=\frac{p_{\ga_{i,\ep}}-p}{|p_{\ga_{i,\ep}}-p|^2}\times \tau_{\ga_{i,\ep}}(p_{\ga_{i,\ep}})=\frac{(x_{i,\ep}^z-x)^\perp}{|x_{i,\ep}^z-x|^2}+O\left(1+\frac{1}{|x_{i,\ep}^z-x|\sqrt\lep}+\frac{|x|}{|x_{i,\ep}^z-x|}\right),
	\end{equation}
	where we recall that $x_{i,\ep}^z-x=\Pi(\ga_{i,\ep}(z))-\Pi(p)$.
	
	\item \emph{Combining the previous steps.}
	Let $z\in [0,\ellzero]$. By change of coordinates, we have
	\begin{multline*}
	\int_{\Sigma_z\cap T_{r_\ep}^\delta} \left|\sum_{j=1}^N Y_{\ga_{j,\ep}} \right|^2\sqrt{g_{33}}d\vol_\gperp
	\\= \int_{\Pi\left(\Sigma_z\cap T_{r_\ep}^\delta\right)} \left|\sum_{j=1}^N Y_{\ga_{j,\ep}}\left(\Pi^{-1}(x)\right) \right|^2\sqrt{g_{33}\left(\Pi^{-1}(x)\right)}d(\Pi^{-1})^*(\vol_\gperp)(x). 
	\end{multline*}
	From \eqref{Phi*}, we find
	$$
	d(\Pi^{-1})^*(\vol_\gperp)(x)=(1+O(|x|))dx.
	$$
	On the other hand, from \eqref{difpip2}, we deduce that 
	$$
	D\left(x_{i,\ep}^z,r_\ep(1-o_\ep(1))\right)\subset \Pi(\Sigma_z\cap T_{r_\ep}(\ga_{i,\ep}))
	$$
	and 
	$$
	\Pi(\Sigma_z\cap T_{\de}(\ga_0)) \subset D\left(0,\de(1+o_\delta(1))\right).
	$$
	In particular,
	$$
	\Pi\left(\Sigma_z \cap T_{r_\ep}^\de\right)\subset A_{r_\ep}^\de(z)\colonequals D\left(0,\de(1+o_\delta(1))\right)\setminus \cup_{i=1}^N  D\left(x_{i,\ep}^z,r_\ep(1-o_\ep(1))\right).
	$$
	From the previous estimates and \eqref{estimateg33}, we then deduce that
	\begin{equation}\label{integralY_jperforated}
		\int_{\Sigma_z\cap T_{r_\ep}^\delta} \left|\sum_{j=1}^N Y_{\ga_{j,\ep}} \right|^2\sqrt{g_{33}}d\vol_\gperp\leq 
		\int_{A_{r_\ep}^\de(z)} \left|\sum_{j=1}^N Y_{\ga_{j,\ep}}\left(\Pi^{-1}(x)\right) \right|^2(1+O(|x|))dx.
	\end{equation}
	
	On the other hand, for $x\in A_{r_\ep}^\de(z)$, using \eqref{estimateY_i}, we have
	\begin{align}\label{errortermY0}
		\Bigg|\sum_{j=1}^N  Y_{\ga_{j,\ep}} & \left(\Pi^{-1}(x)\right) \Bigg|^2(1+O(|x|))\notag
		\\
		&=\left |\sum_{j=1}^N \frac{(x_{j,\ep}^z-x)^\perp}{|x_{j,\ep}^z-x|^2}+O\left(1+\frac{1}{|x_{j,\ep}^z-x|\sqrt\lep}+\frac{|x|}{|x_{j,\ep}^z-x|}\right)\right|^2(1+O(|x|))\notag\\
		&=\left |\sum_{j=1}^N \frac{(x_{j,\ep}^z-x)^\perp}{|x_{j,\ep}^z-x|^2}\right|^2+
		\sum_{j=1}^N O\left(\frac{1}{|x_{j,\ep}^z-x|}+\frac{1}{|x_{j,\ep}^z-x|^2\sqrt\lep}+\frac{|x|}{|x_{j,\ep}^z-x|^2}+1\right).
	\end{align}
	
	Let us now observe that
	\begin{equation}\label{errortermY1}
		\sum_{j=1}^N\int_{A_{r_\ep}^\de(z)}\left(\frac1{|x_{j,\ep}^z-x|}+1\right)dx=O(\de^2),
	\end{equation}
	which directly follows from the integrability of the integrand in compact subets of $\RR^2$.
	
	On the other hand, from a direct computation, recalling that $r_\ep=\lep^{-q}$ for some $q>0$, it follows that
	$$
	\sum_{j=1}^N \int_{A_{r_\ep}^\de(z)}\frac1{|x_{j,\ep}^z-x|^2}dx=O(\log\lep),
	$$
	and therefore
	\begin{equation}\label{errortermY2}
		\frac{1}{\sqrt\lep}\sum_{j=1}^N \int_{A_{r_\ep}^\de(z)}\frac1{|x_{j,\ep}^z-x|^2}dx=o_\ep(1).
	\end{equation}
	
	In addition, we have
	$$
	\sum_{j=1}^N \int_{A_{r_\ep}^\de(z)} \frac{|x|}{|x_{j,\ep}^z-x|^2}dx=
	\sum_{j=1}^N \left(\int_{A_{r_\ep}^\de(z)} \frac{1}{|x_{j,\ep}^z-x|}dx+
	\int_{A_{r_\ep}^\de(z)} \frac{|x_{j,\ep}^z|}{|x_{j,\ep}^z-x|^2}dx\right).
	$$
	Since $|x_{j,\ep}^z|=O\left(\frac1{\sqrt\lep} \right)$, from \eqref{errortermY1} and \eqref{errortermY2}, we find
	\begin{equation}\label{errortermY3}
		\sum_{j=1}^N \int_{A_{r_\ep}^\de(z)} \frac{|x|}{|x_{j,\ep}^z-x|^2}dx=O(\de^2)+o_\ep(1).
	\end{equation}
	By using \eqref{errortermY0}, \eqref{errortermY1}, \eqref{errortermY2}, and \eqref{errortermY3}, from \eqref{integralY_jperforated} it follows that
	\begin{equation}\label{integralY_jperforated2}
		\int_{\Sigma_z\cap T_{r_\ep}^\delta} \left|\sum_{j=1}^N Y_{\ga_{j,\ep}} \right|^2\sqrt{g_{33}}d\vol_\gperp\leq 
		\int_{A_{r_\ep}^\de(z)} \left |\sum_{j=1}^N \frac{(x_{j,\ep}^z-x)^\perp}{|x_{j,\ep}^z-x|^2}\right|^2 dx+ O(\de^2)+o_\ep(1).
	\end{equation}
	
	\item \emph{Renormalized energy.} We claim that
	\begin{align}\label{renormalized1}
		\frac12 \int_{A_{r_\ep}^\de(z)}\Bigg | \sum_{j=1}^N & \frac{(x_{j,\ep}^z-x)^\perp}{|x_{j,\ep}^z-x|^2} \Bigg|^2 dx \notag \\
		&=-\pi \sum_{\substack{i,j=1\\i\neq j}}^N \log |x_{i,\ep}^z-x_{j,\ep}^z|+\pi N \log \frac1{r_\ep}+\pi N^2\log \de +o_\de(1)+C_\delta o_\ep(1)+o_\ep(1).
	\end{align}
	To prove this, we consider 
	$$
	\Phi_{\ep}^z(x)=-\sum_{i=1}^N \log |x-x_{i,\ep}^z|,
	$$
	which satisfies
	\begin{equation}\label{equationPhi_ep}
		-\Delta \Phi_{\ep}^z=2\pi \sum_{i=1}^N \delta_{x_{i,\ep}^z}\quad \mbox{in }\RR^2.
	\end{equation}
	Notice that
	$$
	\nabla \Phi_{\ep}^z(x)=-\sum_{i=1}^N \frac{x-x_{i,\ep}^z}{|x-x_{i,\ep}^z|^2}.
	$$
	In addition
	$$
	|\nabla \Phi_{\ep}^z(x)|=|\nabla^\perp \Phi_{\ep}^z(x)|=\left| \sum_{i=1}^N \frac{(x-x_{i,\ep}^z)^\perp}{|x-x_{i,\ep}^z|^2} \right|.
	$$
	Therefore
	\begin{align}\label{intermediateintegral}
		\frac12 \int_{A_{r_\ep}^\de(z)}\Bigg | \sum_{j=1}^N & \frac{(x_{j,\ep}^z-x)^\perp}{|x_{j,\ep}^z-x|^2} \Bigg|^2 dx=\frac12 \int_{A_{r_\ep}^\de(z)} |\nabla \Phi_{\ep}^z(x)|^2dx\notag\\
		&=-\frac12 \int_{A_{r_\ep}^\de(z)} -\Delta \Phi_{\ep}^z \Phi_{\ep}^z dx +\frac12 \int_{\partial A_{r_\ep}^\de(z)} \Phi_{\ep}^z \frac{\partial \Phi_{\ep}^z }{\partial \nu}dS(x)\notag\\
		&=\frac12 \int_{\partial D\left(0,\de(1+o_\delta(1))\right)}\Phi_{\ep}^z \frac{\partial \Phi_{\ep}^z}{\partial \nu}dS(x)-\frac12 \sum_{i=1}^N \int_{\partial D\left(x_{i,\ep}^z,r_\ep(1-o_\ep(1))\right)}\Phi_{\ep}^z \frac{\partial \Phi_{\ep}^z}{\partial \nu}dS(x),
	\end{align}
	where we used integration by parts and \eqref{equationPhi_ep}.
	
	To estimate the first integral in the RHS of \eqref{intermediateintegral}, we observe that, for $$x\in \partial D\left(0,\de(1+o_\delta(1))\right),$$ we have
	$$
	\Phi_{\ep}^z(x)=-\sum_{i=1}^N \left(\log \frac{|x-x_{i,\ep}^z|}{|x|}+\log|x|\right)=-N\log |x|+O\left(\frac{\sqrt{\lep}}{\delta}\right)
	$$
	and 
	$$
	\frac{\partial\Phi_{\ep}^z(x)}{\partial \nu}=-\sum_{i=1}^N\frac{x-x_{i,\ep}^z}{|x-x_{i,\ep}^z|^2}\cdot \frac{x}{|x|}=-\sum_{i=1}^N\left(\frac1{|x|}+\frac{x_{i,\ep}^z\cdot x-|x_{i,\ep}^z|^2}{|x||x-x_{i,\ep}^z|^2}\right)=-\frac{N}{|x|}+O\left(\frac{\sqrt{\lep}}{\delta^2}\right).
	$$
	Hence
	\begin{multline}\label{intermediateintegral1}
		\frac12 \int_{\partial D\left(0,\de(1+o_\delta(1))\right)}\Phi_{\ep}^z \frac{\partial \Phi_{\ep}^z}{\partial \nu}dS(x)\\
		=\pi N^2 \log\delta+o_\delta(1) +O\left(\frac{\log\delta\sqrt{\lep}}{\delta}+\frac{\lep}{\de^2}\right)=\pi N^2 \log\delta+o_\delta(1)+C_\delta o_\ep(1).
	\end{multline}
	We now proceed to estimate the second integral in the RHS of \eqref{intermediateintegral}. Note that, for $x\in \partial D\left(x_{i,\ep}^z,r_\ep(1-o_\ep(1))\right)$, we have
	\begin{multline*}
		\Phi_{\ep}^z(x)=-\log|x-x_{i,\ep}^z|-\sum_{\substack{j=1\\j\neq i}}^N \left(\log \frac{|x-x_{j,\ep}^z|}{|x_{i,\ep}^z-x_{j,\ep}^z|}+\log |x_{i,\ep}^z-x_{j,\ep}^z|\right) \\
		=-\log|x-x_{i,\ep}^z|-\sum_{\substack{j=1\\j\neq i}}^N \log |x_{i,\ep}^z-x_{j,\ep}^z|+O\left(\frac{r_\ep}{\sqrt\lep}\right)
	\end{multline*}
	and
	\begin{multline*}
		\frac{\partial\Phi_{\ep}^z(x)}{\partial \nu}=-\sum_{j=1}^N\frac{x-x_{j,\ep}^z}{|x-x_{j,\ep}^z|^2}\cdot \frac{x-x_{i,\ep}^z}{|x-x_{i,\ep}^z|}\\
		=-\frac{1}{|x-x_{i,\ep}^z|}\left(1+\sum_{\substack{j=1\\j\neq i}}^N \frac{(x-x_{j,\ep}^z)\cdot (x-x_{i,\ep}^z)}{|x-x_{j,\ep}^z|^2}\right)=-\frac{1}{|x-x_{i,\ep}^z|}\left(1+O\left(\frac{r_\ep}{\sqrt\lep}\right) \right).
	\end{multline*}
	We then obtain that
	$$
	\frac12 \int_{\partial D\left(x_{i,\ep}^z,r_\ep(1-o_\ep(1))\right)}\Phi_{\ep}^z \frac{\partial \Phi_{\ep}^z}{\partial \nu}dS(x)=\pi \log r_\ep +\pi \sum_{\substack{j=1\\j\neq i}}^N \log |x_{i,\ep}^z-x_{j,\ep}^z|+O\left(\frac{r_\ep \log r_\ep}{\sqrt\lep} \right)+o_\ep(1)
	$$
	and thus
	\begin{equation}\label{intermediateintegral2}
		-\frac12 \sum_{i=1}^N \int_{\partial D\left(x_{i,\ep}^z,r_\ep(1-o_\ep(1))\right)}\Phi_{\ep}^z \frac{\partial \Phi_{\ep}^z}{\partial \nu}dS(x)=-\pi N \log r_\ep -\pi \sum_{\substack{i,j=1\\i\neq j}}^N \log |x_{i,\ep}^z-x_{j,\ep}^z|+o_\ep(1).
	\end{equation}
	By inserting \eqref{intermediateintegral1} and \eqref{intermediateintegral2} into \eqref{intermediateintegral}, we obtain the claim \eqref{renormalized1}.
	
	\medskip
	On the other hand, since
	$$
	\ga_{i,\ep}(z)=\Pi^{-1}(x_{i,\ep}^z)\quad \mbox{and}\quad \ga_{j,\ep}(z)=\Pi^{-1}(x_{j,\ep}^z),
	$$
	for $i\neq j$ we have that
	$$
	\distg\left(\ga_{i,\ep}(z),\ga_{j,\ep}(z)\right)\leq \int_0^1 \|D\Pi^{-1}\left(x_{i,\ep}^z+t(x_{j,\ep}^z-x_{i,\ep}^z)\right)\left((x_{j,\ep}^z-x_{i,\ep}^z)\right)\|dt,
	$$
	which combined with \eqref{estimatederivativePhi} yields
	$$
	\distg\left(\ga_{i,\ep}(z),\ga_{j,\ep}(z)\right)\leq (1+O(|x_{i,\ep}^z|+|x_{j,\ep}^z|))|x_{j,\ep}^z-x_{i,\ep}^z|.
	$$
	The concavity of the logarithm function then implies that
	\begin{align}\label{distancegest}
		-\log |x_{j,\ep}^z-x_{i,\ep}^z|&\leq -\log \distg\left(\ga_{i,\ep}(z),\ga_{j,\ep}(z)\right)+O(|x_{i,\ep}^z|+O\left(|x_{j,\ep}^z|)\right)\notag \\
		&\leq -\log \distg\left(\ga_{i,\ep}(z),\ga_{j,\ep}(z)\right)+O\left(\frac1{\sqrt\lep}\right).
	\end{align}
	Finally, by combining \eqref{integralY_jperforated2} with \eqref{renormalized1} and \eqref{distancegest}, we find
	\begin{multline}\label{renormalized2}
		\frac12 \int_{A_{r_\ep}^\de(z)}\left |\sum_{j=1}^N \frac{(x_{j,\ep}^z-x)^\perp}{|x_{j,\ep}^z-x|^2}\right|^2 dx
		\\
		\leq -\pi \sum_{\substack{i,j=1\\i\neq j}}^N \log\distg\left(\ga_{i,\ep}(z),\ga_{j,\ep}(z)\right)+\pi N \log \frac1{r_\ep}+\pi N^2\log \de +o_\de(1)+C_\delta o_\ep(1)+o_\ep(1).
	\end{multline}
	
	\item \emph{Conclusion.} By integrating from $z=0$ to $z=\ellzero$ \eqref{integralY_jperforated2} and \eqref{renormalized2}, and combining with \eqref{energyperforatedtube}, we find
	\begin{multline}\label{renormalizedenergyperforatedtube}
		E_\ep(u_\ep,T_{r_\ep}^\delta)
		\leq 
		-\pi \sum_{\substack{i,j=1\\i\neq j}}^N \int_0^{\ellzero} \log\distg\left(\ga_{i,\ep}(z),\ga_{j,\ep}(z)\right)dz\\
		+\pi N\ellzero \log \frac1{r_\ep}
		+\pi N^2\ellzero\log \de +o_\de(1)+C_\delta o_\ep(1)+o_\ep(1).
	\end{multline}
\end{enumerate}

\subsection{Energy estimate far from \texorpdfstring{$\ga_0$}{Γ\textzeroinferior}}
We now work in $\Omega\setminus \overline{T_\de(\ga_0)}$. In this region $\rho_\ep\equiv 1$ and therefore, using \eqref{derivativephi}, we find
$$
|\nabla_{A_\ep}u_\ep|=|\nabla \varphi_\ep -A_\ep|=\left|\sum_{i=1}^N X_{\ga_{i,\ep}}+\nabla f_{\ga_{i,\ep}}-A_{\ga_{i,\ep}}\right|=\left|\sum_{i=1}^N j_{\ga_{i,\ep}}\right|,
$$
where in the last equality we used once again Proposition \ref{jA}.
Hence
\begin{multline*}
	I_\delta\colonequals \frac12 \int_{\Omega\setminus \overline{T_\delta(\ga_0)}}|\nabla_{A_\ep} u_\ep|^2+\frac{1}{2\ep^2}(1-\rho_\ep^2)^2 +\frac12 \int_{\RR^3}|\curl A_\ep|^2
	\\=\frac12\int_{\Omega\setminus \overline{T_\delta(\ga_0)}}\left|\sum_{i=1}^N j_{\ga_{i,\ep}}\right|^2+\frac12 \int_{\RR^3}\left|\sum_{i=1}^N \curl A_{\ga_{i,\ep}}\right|^2.
\end{multline*}
From \eqref{diffjA} and the explicit formula \eqref{biotsavart}, which yields a control on $\|X_{\ga_{i,\ep}}-X_{\ga_0}\|_{L^2\left(\Omega\setminus \overline{T_\delta(\ga_0)}\right)}$, we deduce that, for any $i=1,\dots,N$, 
$$
\left\|j_{\ga_{i,\ep}}-j_{\ga_0} \right\|^2_{L^2\left(\Omega \setminus \overline{T_\delta(\ga_0)}\right)}+\left\|A_{\ga_{i,\ep}}-A_{\ga_0} \right\|^2_{H^1(\RR^3)}\leq C_\delta o_\ep (1),
$$
which combined with the previous equality yields
$$
I_\delta=N^2\left( \frac12\int_{\Omega\setminus T_\delta(\ga_0)}\left|j_{\ga_{0}}\right|^2+\frac12 \int_{\RR^3}\left| \curl A_{\ga_{0}}\right|^2\right) +C_\delta o_\ep(1).
$$
Recalling \eqref{comega}, we obtain
\begin{equation}\label{energyestimateoutsidetube}
	I_\delta= N^2C_\Omega -\pi N^2\ellzero\log\delta +o_\delta(1)+C_\delta o_\ep(1).
\end{equation}

\subsection{Vorticity estimate}
Let $B\in C^{0,1}_T(\Omega)$. Observe that, by integration by parts, we have
\begin{multline}\label{vortest1}
	\int_\Omega\mu(u_\ep,A_\ep)\cdot B= \int_\Omega \curl (j(u_\ep,A_\ep)+A_\ep)\cdot B =\int_\Omega \left( j(u_\ep,A_\ep)+A_\ep\right) \cdot \curl B\\
	=\int_\Omega \rho_\ep^2\nabla \varphi_\ep\cdot \curl B+\int_\Omega (1-\rho_\ep^2)A_\ep \cdot \curl B\\
	=\int_\Omega \nabla \varphi_\ep \cdot \curl B + \int_\Omega (1-\rho_\ep^2)(A_\ep -\nabla \varphi_\ep)\cdot \curl B.
\end{multline}
Let $q<2$. Recall that $A_\ep -\nabla \varphi \in L^q(\Omega)$, therefore, letting $p>2$ be such that $\frac1p+\frac1q=1$, by H\"older's inequality we have
\begin{align}
	\left|\int_\Omega (1-\rho_\ep^2)(A_\ep -\nabla \varphi_\ep)\cdot \curl B\right|
	&\leq \|\curl B\|_{L^\infty(\Omega)}\|A_\ep -\nabla \varphi_\ep\|_{L^q(\Omega)}\|(1-\rho_\ep^2)\|_{L^p(\Omega)} \notag \\
	&\leq C\|\curl B\|_{L^\infty(\Omega)}\|(1-\rho_\ep^2)\|_{L^2(\Omega)}^\frac{2}{p} \notag \\
	&\leq C\|\curl B\|_{L^\infty(\Omega)}\ep^\frac{2}{p}E_\ep(|u_\ep|) \notag \\
	&\leq C\|\curl B\|_{L^\infty(\Omega)}\ep^\frac{2}{p}\lep,\label{vortest2}
\end{align}
where we used that $1-\rho_\ep^2\in [0,1]$, $\rho_\ep=1$ in $\Omega\setminus \cup_{i=1}^N T_{r_\ep}(\ga_{i,\ep})$, and \eqref{energyestimatesmalltube}.

\medskip
On the other hand, using \eqref{derivativephi}, we have
\begin{equation}\label{vortest3}
	\int_\Omega \nabla \varphi_\ep \cdot \curl B= \sum_{i=1}^N \int_\Omega\left(X_{\ga_{i,\ep}}+\nabla f_{\ga_{i,\ep}}\right)\cdot \curl B=\sum_{i=1}^N \int_\Omega \curl X_{\ga_{i,\ep}}\cdot B,
\end{equation}
where the last equality follows from integration by parts. Finally, since $\curl X_{\ga_{i,\ep}}=2\pi \ga_{i,\ep}$, from \eqref{vortest1}, \eqref{vortest2}, and \eqref{vortest3}, we deduce that
\begin{equation}\label{vorticityestimateconstruction}
	\left\|\mu(u_\ep,A_\ep)-2\pi \sum_{i=1}^N \ga_{i,\ep} \right\|_{(C_T^{0,1}(\Omega))^*}\leq C\ep^\frac{2}{p}\lep ,
\end{equation}
for any $p>2$, where $C$ is a constant that depends on $p$ and blows up as $p\to 2$. We observe that the same estimate holds for the flat norm.

\subsection{Putting everything together}
By putting together \eqref{covariantderivativeenergy}, \eqref{energyestimatesmalltube}, \eqref{renormalizedenergyperforatedtube}, and \eqref{energyestimateoutsidetube}, we obtain
\begin{multline*}
	F_\ep(u_\ep,A_\ep)
	\leq  \sum_{i=1}^N\left(\pi|\ga_{i,\ep}|\log \frac{r_\ep}{\ep}+\gamma |\ga_{i,\ep}|\right)
	-\pi \sum_{\substack{i,j=1\\i\neq j}}^N \int_0^{\ellzero} \log \distg\left(\ga_{i,\ep}(z),\ga_{j,\ep}(z)\right)dz\\
	+\pi N\ellzero \log \frac1{r_\ep}
	+\pi N^2\ellzero\log \de+N^2C_\Omega -\pi N^2\ellzero\log\delta +o_\de(1)+C_\de o_\ep(1)+o_\ep(1).
\end{multline*}
From the computations in the proof of Lemma \ref{lemformQ}, we have
$$
|\ga_{i,\ep}|=\ellzero+O\left(\frac1\lep \right),
$$
which yields (recall that $r_\ep=\lep^{-3}$)
$$
\pi \sum_{i=1}^N|\ga_{i,\ep}|\log r_\ep+\pi N\ellzero\log \frac1{r_\ep}=o_\ep(1)
$$
and that $\gamma |\ga_{i,\ep}|=\gamma \ellzero+o_\ep(1)$. Hence
\begin{multline}\label{freeenergyestiamteconstruction}
	F_\ep(u_\ep,A_\ep)
	\leq  \pi\lep \sum_{i=1}^N|\ga_{i,\ep}|
	-\pi \sum_{\substack{i,j=1\\i\neq j}}^N \int_0^{\ellzero} \log \distg\left(\ga_{i,\ep}(z),\ga_{j,\ep}(z)\right)dz
	\\
	+\gamma N \ellzero+N^2C_\Omega +o_\de(1)+C_\de o_\ep(1)+o_\ep(1).
\end{multline}

Recall that $\u_\ep=e^{ih_\ex\phi_0}u_\ep$ and $\A_\ep=A_\ep+h_\ex A_0$, where $(e^{ih_\ex\phi_0},h_\ex A_0)$ is the approximate Meissner state. By inserting \eqref{freeenergyestiamteconstruction} and \eqref{vorticityestimateconstruction} in the splitting formula \eqref{Energy-Splitting}, we find
\begin{multline*}
	GL_\ep(\u_\ep,\A_\ep)
	\leq  h_\ex^2 J_0+\pi\lep \sum_{i=1}^N|\ga_{i,\ep}|
	-\pi \sum_{\substack{i,j=1\\i\neq j}}^N \int_0^{\ellzero} \log \distg\left(\ga_{i,\ep}(z),\ga_{j,\ep}(z)\right)dz
	\\
	+\gamma N \ellzero+N^2C_\Omega-2\pi \sum_{i=1}^N h_\ex\pr{B_0,\ga_{i,\ep}} +o_\de(1)+C_\de o_\ep(1)+o_\ep(1).
\end{multline*}
Since $h_\ex=\dfrac{\lep}{2\R(\ga_0)}+K\log\lep$, for some $K\geq 0$, we have that
\begin{align*}
	2\pi \sum_{i=1}^N  h_\ex\pr{B_0,\ga_{i,\ep}}&=\pi \lep\sum_{i=1}^N \left(|\ga_{i,\ep}| \frac{\R(\ga_{i,\ep})}{\R(\ga_0)}+2\pi K \log\lep \pr{B_0,\ga_{i,\ep}}\right)\\
	&=\pi \lep\sum_{i=1}^N |\ga_{i,\ep}| \frac{\R(\ga_{i,\ep})}{\R(\ga_0)}+2\pi K N \log\lep \pr{B_0,\ga_0},
\end{align*}
where in the last equality we used the fact that $\pr{B_0,\ga_{i,\ep}}=\pr{B_0,\ga_0}+O\left(\dfrac1{\sqrt\lep}\right)$ (which follows from Proposition \ref{bexpansion}).
Hence
\begin{align*}
	GL_\ep(\u_\ep,\A_\ep)
	\leq & h_\ex^2 J_0+\pi\lep \sum_{i=1}^N|\ga_{i,\ep}|\left(1-\frac{\R(\ga_{i,\ep})}{\R(\ga_0)}\right) 
	-\pi \sum_{\substack{i,j=1\\i\neq j}}^N \int_0^{\ellzero} \log \distg\left(\ga_{i,\ep}(z),\ga_{j,\ep}(z)\right)dz
	\\
	&+\gamma N \ellzero+N^2C_\Omega-2\pi K N \log\lep \pr{B_0,\ga_0} +o_\de(1)+C_\de o_\ep(1)+o_\ep(1).
\end{align*}
Moreover, from Lemma \ref{lemformQ}, for any $i=1,\dots N$ we have
\begin{align*}
\R(\ga_{i,\ep})&=\R(\ga_0)-\frac12  Q\left(\sqrt{\frac{N}{h_\ex}}\vu_i\right)+O\left(\frac1{\lep^\frac32}\right)\\
&=\R(\ga_0)-\frac12  \frac{N}{h_\ex}Q\left(\vu_i\right)+O\left(\frac1{\lep^\frac32}\right),
\end{align*}
which combined with the fact that $|\ga_{i,\ep}|=\ellzero+o_\ep(1)$, yields
$$
\pi\lep \sum_{i=1}^N|\ga_{i,\ep}|\left(1-\frac{\R(\ga_{i,\ep})}{\R(\ga_0)}\right) =\pi\ellzero N\sum_{i=1}^N Q(\vec u_i)+o_\ep(1).
$$
Finally, by observing that 
\begin{multline*}
	-\pi \sum_{\substack{i,j=1\\i\neq j}}^N \int_0^{\ellzero} \log \distg\left(\ga_{i,\ep}(z),\ga_{j,\ep}(z)\right)dz=-\pi\sum_{\substack{i,j=1\\i\neq j}}^N \int_0^{\ellzero} \log |\vu_i(z) -\vu_j(z)|_{g_\bullet}dz\\
	+\frac{\pi}2 \ellzero N(N-1)\log\frac{h_\ex}{N}+o_\ep(1),
\end{multline*}
we obtain \eqref{upperboundestimate}. The proof of Theorem \ref{thm:upperbound} is thus concluded.\qed

\bibliography{referencesRSS}
\end{document}